 \theoremstyle{plain}
 \newtheorem{thm}{Theorem}[section]
 \newtheorem{cor}[thm]{Corollary}
 \newtheorem{lem}[thm]{Lemma}
 \newtheorem{prop}[thm]{Proposition}
 \newtheorem*{thm*}{Theorem}
 \newtheorem*{thma}{Theorem A}
 \newtheorem*{thmb}{Theorem B}
 \newtheorem*{thmc}{Theorem C}
 \theoremstyle{definition}
 \newtheorem{defn}[thm]{Definition}
 \newtheorem{notation}[thm]{Notation}
 \newtheorem{ex}[thm]{Example}
 \theoremstyle{remark}
 \newtheorem{rmk}[thm]{Remark}
 \newtheorem*{rmk*}{Remark}
\def\beq{\begin{eqnarray}}
\def\eeq{\end{eqnarray}}
\newcommand{\bp}{\begin{proof}[Proof]}
\newcommand{\ep}{\end{proof}}
\newcommand{\sq}{\mathord{/\!\!/}}
\newcommand{\lra}[1]{\overset{#1}{\longrightarrow}}
\newcommand{\Prod}[1]{\underset{#1}{\prod}}
\newcommand{\Coprod}[1]{\underset{#1}{\coprod}}
\def\uL{{\mathbb{L}}} %unbased lattice
\def\R{{\mathbb{R}}}
\def\CG{{G^{(2)}}}
\def\F{{\mathbb{F}}}
\def\N{{\mathbb{N}}}
\def\id{{{\rm id}}}
\def\SL{{\rm SL}}
\def\Spin{ {{\rm Spin}}}
\def\SO{ {\rm SO}}
\def\vol{{\rm vol}}
\def\Ell{\mathcal{E}\ell\ell}
\def\pt{{\rm pt}}
\def\ev{{\rm ev}}
\def\cl{{\rm cl}}
\def\odd{{\rm odd}}
\def\cL{{\mathcal{L}}}
\def\Lat{{\sf {Lat}}}
\def\K{{\rm {K}}}
\def\MU{{\rm {MU}}}
\def\C{{\mathbb{C}}}
\def\P{{\mathbb{P}}}
\def\Q{{\mathbb{Q}}}
\def\Z{{\mathbb{Z}}}
\def\E{{\rm {E}}} 
\def\bE{{\mathbb {E}}} 
\def\M{{\mathbb{M}}}
\def\Mc{{\Bord_{\rm c}^{d|\delta}}}
\def\Mcc{{\Bord_{\rm cc}^{d|\delta}}}
\def\X{{\mathcal{X}}}
\def\Y{{\mathcal{Y}}}
\def\Fun{{\sf Fun}}
\def\Map{{\sf Map}}
\def\Map{{\sf Map}}
\def\Vect{{\sf Vect}} 
\def\Sym{{\sf Sym}} 
\def\End{{\sf End}}
\def\Hom{\mathop{\sf Hom}}
\def\Iso{ {{\rm Iso}}}
\def\Bord{\hbox{{\sf Bord}}}
\def\G{{\mathbb{G}}}
\def\L{{\mathbb{L}}}
\def\W{{\mathbb{W}}}
\DeclareMathOperator{\Sub}{\sf Sub}
\DeclareMathOperator{\Cl}{\sf Cl}
\DeclareMathOperator{\Aut}{\sf Aut}
\DeclareMathOperator{\Stack}{\sf Stack}
\DeclareMathOperator{\al}{\alpha}
\DeclareMathOperator{\can}{can}
\DeclareMathOperator{\im}{\rm im}
\def\cX{{\X}}
\def\cY{{\mathcal{Y}}}
\def\coP{\reflectbox{$\mathbb{P}$}}
\def\twocommute{\ensuremath{\rotatebox[origin=c]{30}{$\Rightarrow$}}}
\begin{document}

\title{Power operations in the Stolz--Teichner program}

\author{Tobias Barthel}
\address{Max Planck Institute for Mathematics\\ Bonn, Germany}
\email{tbarthel@mpim-bonn.mpg.de}
\author{Daniel Berwick-Evans}
\address{University of Illinois at Urbana-Champaign \\ Urbana, Illinois, USA}
\email{danbe@illinois.edu}
\author{Nathaniel Stapleton}
\address{University of Kentucky\\ Lexington, Kentucky, USA}
\email{nat.j.stapleton@uky.edu}

\subjclass[2010]{Elliptic cohomology (55N34), Supersymmetric field theories (81T60), $K$-theory operations and generalized cohomology operations (55S25)}

\date{\today}
\begin{abstract}
The Stolz--Teichner program proposes a deep connection between geometric field theories and certain cohomology theories. In this paper, we extend this connection by developing a theory of geometric power operations for geometric field theories restricted to closed bordisms. These operations satisfy relations analogous to the ones exhibited by their homotopical counterparts. We also provide computational tools to identify the geometrically defined operations with the usual power operations on complexified equivariant $\K$-theory. Further, we use the geometric approach to construct power operations for complexified equivariant elliptic cohomology. 
\end{abstract}
\maketitle 
\setcounter{tocdepth}{1}

\section{Introduction}

Multiplicative cohomology theories often carry intricate additional structure. When a cohomology theory is built from geometric cocycles, this additional structure is typically inherited from geometrically defined operations on the representing cocycles. For example, symmetric and exterior powers of vector bundles induce operations on topological $\K$-theory, and similar constructions for bordisms give rise to operations on the complex cobordism spectrum~$\MU$. 
The Stolz--Teichner program indicates a deep but mysterious relationship between $d$-dimensional field theories and height~$d$ cohomology theories, with conjectured cocycle models for K-theory and elliptic cohomology from field theories of dimension~1 and~2, respectively~\cite{ST11}. It is natural to ask whether the geometry of field theories fosters interesting operations on these proposed cocycles. 

The first goal of this paper is to introduce geometrically defined power operations on a large class of field theories. These operations are a consequence of power cooperations that exist on the level of categories of closed bordisms, inspired by constructions in the physics literature~\cite{DMVV}. Furthermore, we indicate how to extend these cooperations to arbitrary bordisms. The second goal is to give explicit formulas for the effect of power operations when restricted to a subcategory of tori. The third goal is to compare these power operations with power operations on complexified equivariant $\K$-theory and to deduce a formula for power operations on complexified equivariant elliptic cohomology. Finally, we exhibit a strong analogy between power operations for $d$-dimensional field theories and those for Borel equivariant height~$d$ Morava $\E$-theory. This is noteworthy as the power operations for Morava $\E$-theory are a consequence of the arithmetic geometry of a universal deformation formal group whereas the power operations studied here come from differential geometry.

Informally, our main contributions can be summarized as follows:

\begin{thm*} 
The restriction of any geometric field theory to closed bordisms admits a consistent theory of power operations. In dimensions $1$ and~$2$ these determine power operations on complexified equivariant $\K$-theory and complexified equivariant elliptic cohomology, respectively.
\end{thm*}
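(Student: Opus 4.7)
The plan is to proceed in three conceptual stages, matching the three goals articulated in the introduction. First I would build a symmetric-monoidal power cooperation $P^n$ on the category $\Mc$ of closed $d$-dimensional bordisms, inspired by the DMVV orbifold construction: given a closed $d$-bordism $\Sigma$, the cooperation should produce the groupoid of degree-$n$ covers of $\Sigma$, or equivalently the symmetric product $\Sigma^{\times n}\sq S_n$ assembled as a bordism-valued functor. One needs to check that these cooperations are coassociative and compatible with disjoint union in a way that dualizes, under $\Fun^\otimes(\Mc,\Vect)$, to operations $P^n\colon E\mapsto E\circ P^n$ on field theories that satisfy the usual Adams/power-operation relations (associativity $P^m\circ P^n\simeq P^{mn}$ on symmetric components, multiplicativity on $\otimes$, and compatibility with Frobenius on fixed points).

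Next I would specialize to the subcategory of tori, where $P^n$ acts explicitly: an isogeny $T\to T'$ of degree $n$ of the source torus corresponds to a connected $n$-fold cover, and the groupoid of all such covers assembles into the $\Isog$-stack appearing in the arithmetic approach to power operations. In this controlled setting I would write out the effect of $P^n$ on the value $E(T)$ of the field theory, identifying it with a sum over (conjugacy classes of) subgroups of index $n$ weighted by transfer-like characters. This is the step that transports the geometric definition into a form amenable to comparison with homotopy theory.

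The third stage is the comparison. In dimension $1$, complexified equivariant $\K$-theory is determined by its values on circles with $G$-bundles, i.e.\ on tori, and the restriction computed above should reproduce the classical symmetric-power operations via the identification of isogenies $S^1\to S^1$ with $n$-fold covers indexed by subgroups of $\Z$. In dimension $2$, complexified equivariant elliptic cohomology is modeled on functions on moduli of $G$-bundles over tori, and the same formula for $P^n$ now encodes Hecke-type operators coming from isogenies of elliptic curves; this is how I would extract the sought-after power operations on equivariant elliptic cohomology. The analogy with Morava $\E$-theory then appears because in both cases the combinatorics are governed by the isogeny category of a formal (or, here, actual) group.

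The main obstacle is not the tori calculation but verifying that the cooperation $P^n$ is well-defined and natural on all of $\Mc$: one must handle disconnected covers, gluing, and the symmetric-monoidal coherences with equivariant data in a homotopy-invariant way, so that the induced operation on field theories is independent of representative. A secondary difficulty is matching conventions so that the explicit torus-level formula coincides on the nose with the standard power operations on $\K_G\otimes\C$; this requires tracking characters carefully through the equivalence between field theories on tori and representation-theoretic data, and then bootstrapping to the elliptic case where no independent definition of the operations is available to check against.
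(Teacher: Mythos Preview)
Your three-stage outline mirrors the paper's architecture (cooperation on closed bordisms $\Rightarrow$ operation on field theories; restriction to tori; comparison with $\K_G\otimes\C$ and equivariant elliptic cohomology), so the strategic shape is right. But the way you describe the cooperation in stage one is off in a way that would block the rest of the argument.

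You propose that $\coP_n$ sends a closed bordism $\Sigma$ to ``the groupoid of degree-$n$ covers of $\Sigma$, or equivalently the symmetric product $\Sigma^{\times n}\sq \Sigma_n$''. Neither description is what works. The paper's cooperation is not an endofunctor of a single bordism category; it is a functor
\[
\coP_n\colon \Mc(\X^{\times n}\sq \Sigma_n)\longrightarrow \Mc(\X)
\]
that \emph{changes the target stack}. The $\Sigma_n$ lives on the target, not on the bordism: a bordism over $\X^{\times n}\sq \Sigma_n$ already carries (via the projection to $\pt\sq\Sigma_n$) the data of a specific $n$-fold cover, and $\coP_n$ is the push--pull $\ev_*\circ\pi^!$ along the correspondence $\X^{\times n}\sq\Sigma_n\leftarrow(\X^{\times n}\times\underline{n})\sq\Sigma_n\to\X$, i.e.\ take the total space of that cover and remember only its map to $\X$. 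Your symmetric product $\Sigma^{\times n}\sq\Sigma_n$ is a different object altogether (and is not even a bordism of the same dimension), and ``the groupoid of all covers'' is a Hecke-style sum that the paper does not use at this stage.

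This matters for the ``consistent theory'' claim. The relations dual to the $H_\infty$ axioms are proved by chasing the maps $\alpha_{j,k},\beta_{j,k},\delta_k$ between symmetric powers of the \emph{target} stack $\X$ through $\N(-)$ and using base-change for 2-pullbacks of finite covers. If the $\Sigma_n$-structure is placed on $\Sigma$ rather than on $\X$, there is no evident home for these maps and no mechanism to prove the compatibilities; your sketch of ``associativity $P^m\circ P^n\simeq P^{mn}$'' does not by itself give the full set of relations (you also need the $\alpha$- and $\delta$-relations, which are what interact with disjoint union and external products).

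Your stages two and three are closer to the mark: once the cooperation is formulated correctly, restricting to super tori and computing via an explicit atlas of $\mathcal{L}^{d|\delta}_0(X\sq G)$ does reduce everything to sublattice/isogeny combinatorics, and the $d=1$ comparison with the equivariant Chern character then goes through by a direct trace computation. But note that in dimension~$2$ the paper does not ``check against'' a pre-existing definition---there is none---so the content there is producing the formula, not matching it.
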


These results deepen the proposed analogy relating field theories and cohomology theories. They also tie in with operations that have been studied in the physics literature, e.g., the work of Dijkgraaf--Moore--Verlinde--Verlinde computing the elliptic genus of a symmetric product~\cite{DMVV}. Physically-inspired approaches to power operations have made prior contact with chromatic homotopy theory in the work of Baker~\cite{Baker}, Ando~\cite{andoisogenies,Andopower}, Tamanoi~\cite{Tamanoi2,Tamanoi1}, and Ganter~\cite{Ganterpower,GanterHecke}. The operations studied below are anchored in these ideas, streamlining prior constructions while also tying in with the Stolz--Teichner program.

The term \emph{consistent theory} in the above theorem refers to a collection of compatibility relations satisfied by our geometric power operations that are analogous to the ones for homotopical power operations. An extension of the power cooperation to the full bordism category would then provide a no-go principle: any cohomology theory admitting a theory of geometric cocycles built out of field theories must have a theory of power operations in the sense of \cite{bmms}. Said differently: 

{\it If a category of field theories provides geometric cocycles for a cohomology theory $E^*$, then the representing spectrum $E$ must support the structure of an $H_{\infty}$-ring spectrum.} 

This principle constrains the conceivable zoo of cohomology theories that one might try to relate to field theories. The existence of an $H_{\infty}$-structure constitutes a substantial constraint on a cohomology theory. For example, the spectrum $\K/p$ representing mod $p$ topological $\K$-theory does not admit an $H_{\infty}$-ring structure, as shown by McClure in \cite[Proposition~IX.1.6]{bmms}.

\subsection*{Power cooperations on geometric bordism categories}\label{sec:introFT}

A field theory is a symmetric monoidal functor from a bordism category to the symmetric monoidal category of complex vector spaces. The symmetric monoidal structure on bordisms is disjoint union, whereas the symmetric monoidal structure on vector spaces is the tensor product. Early versions of this definition are due to Segal~\cite{Segal_cft} and Atiyah~\cite{atiyah_tqft}, though we have in mind the more modern approach of Stolz and Teichner~\cite{ST11} that incorporates smoothness, supersymmetry, a model geometry on bordisms (see~\Cref{appen:model}), and equips bordisms with maps to a smooth stack~$\X$~\cite{Stoffel}. The model geometry specifies the (super) dimension $d|\delta$ of the field theory via the dimension of the bordisms involved. We will typically be interested in global quotient stacks $\X=[X \sq G]$ for a finite group~$G$ acting on a compact manifold~$X$. 

In the case of super Euclidean model geometries (see \cref{ex:superEuc}) Stolz and Teichner have conjectures relating field theories over $[X\sq G]$ of super dimension~$1|1$ and~$2|1$ with cocycle models for $G$-equivariant K-theory and $G$-equivariant elliptic cohomology of~$X$, respectively. The geometry of super Euclidean field theories gives compelling evidence for these conjectures~\cite{ST04,florin_11,Han,PokmanPhD,HST,ST11,Stoffel,stoffel_eqchernchar} as beautifully summarized in~\cite[\S1]{ST11}. The compatibility between the power operations constructed below and the corresponding complexified cohomology theories substantiates these connections further still. 

In fact, the existence of our theory of geometric power operations follows from the more fundamental construction of geometric power cooperations on the level of bordism categories. Fixing a model geometry $\M$ and a smooth stack $\X$, let $\Bord^{d|\delta}(\X)$ denote Stolz and Teichner's category of $(d|\delta)$-dimensional bordisms with $\M$-structure over~$\X$ and let $\mathcal{V}$ be their category of topological vector spaces. The power cooperation is a symmetric monoidal functor
\beq
\xymatrix{\coP_n\colon \Bord^{d|\delta}(\X^{\times n}\sq \Sigma_n) \ar@{.>}[r] &  \Bord^{d|\delta}(\X),}
\label{eq:powercoopFT}
\eeq
that induces a power operation on field theories by precomposition
\beq
\xymatrix{\P_n:=\coP_n^*\colon \Fun^\otimes(\Bord^{d|\delta}(\X),\mathcal{V}) \ar@{.>}[r] & \Fun^\otimes(\Bord^{d|\delta}(\X^{\times n}\sq \Sigma_n),\mathcal{V}).}\label{eq:poweropFT} \eeq
In brief, the cooperation~\eqref{eq:powercoopFT} is defined by pulling back along the finite-sheeted cover $\X^{\times n}\times [n]\sq \Sigma_n\to \X^{\times n}\sq \Sigma_n$ and pushing forward along evaluation $\ev\colon \X^{\times n}\times [n]\sq \Sigma_n\to \X$; see~\cref{defn:coop}.

\begin{rmk*}
A complete construction of~\eqref{eq:powercoopFT} requires a technical modification to Stolz and Teichner's definition of a geometric bordism category. The modification uses the 2-fibered product of stacks to define composition  of bordisms rather than the strict fibered products used in~\cite[Defintions~2.13, 2.21, 2.46, 4.4]{ST11}; see~\S\ref{sec:bordext}. We will pursue this elsewhere. In the introduction, maps depending on this modified definition by dotted arrows. Our focus in this paper is on categories of closed bordisms for which such a modification is not necessary. 
\end{rmk*}

Let $\Mc(\X)\hookrightarrow \Bord^{d|\delta}(\X)$ denote the full subcategory of the bordism category consisting of \emph{closed} bordisms with $\M$-structure over $\X$. Equivalently, $\Mc(\X)$ is the subcategory of endomorphisms of the unit for the symmetric monoidal structure on $\Bord^{d|\delta}(\X)$. We construct a power cooperation $\coP_n\colon \Mc(\X^{\times n}\sq \Sigma_n)\to \Mc(\X)$; we anticipate that it arises as the restriction of~\eqref{eq:powercoopFT},
\beq
\begin{tikzpicture}[baseline=(basepoint)];
\node (A) at (0,0) {$\Bord^{d|\delta}(\X^{\times n}\sq \Sigma_n)$};
\node (B) at (6,0) {$\Bord^{d|\delta}(\X)$};
\node (C) at (0,-1.5) {$\Mc(\X^{\times n}\sq \Sigma_n)$};
\node (D) at (6,-1.5) {$\Mc(\X),$};
\draw[->,dotted] (A) to node [above] {$\coP_n$} (B);
\draw[->] (C) to  (A);
\draw[->] (D) to (B);
\draw[->] (C) to node  [above] {$\coP_n$} (D);
\path (0,-.75) coordinate (basepoint);
\end{tikzpicture}\label{FTrestrict}
\eeq 
but our construction does not depend on \eqref{eq:powercoopFT}. Our reason for believing in the extension to the bordism category is that the power cooperation is determined by constructions on the entire category of supermanifolds with $\M$-structure; see \cref{sec:bordext} for a further elaboration on this point. 

\begin{thma}[\Cref{sec:cc} and \cref{sec:consistentcoop}] \label{thma}
The power cooperation $\coP_n \colon \Mc(\X^{\times n}\sq \Sigma_n) \to \Mc(\X)$ is a symmetric monoidal map of stacks on the site of supermanifolds. The geometric power cooperations satisfies the identities dual to those satisfied by power operations \cite[VIII.1.1]{bmms}. 
\end{thma}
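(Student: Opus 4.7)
The plan is to exploit the explicit description of $\coP_n$ as the composition of two geometric operations: pullback along the $n$-sheeted cover $p \colon \X^{\times n} \times [n] \sq \Sigma_n \to \X^{\times n} \sq \Sigma_n$, followed by postcomposition with the evaluation map $\ev \colon \X^{\times n} \times [n] \sq \Sigma_n \to \X$. Both operations make sense on supermanifolds equipped with maps to the relevant stacks, and both are strictly functorial. Concretely, on an object $(Y \to \X^{\times n} \sq \Sigma_n)$ of $\Mc(\X^{\times n}\sq \Sigma_n)$, I would form the $n$-sheeted \'etale cover $\tilde Y := Y \times_{\X^{\times n} \sq \Sigma_n} (\X^{\times n} \times [n] \sq \Sigma_n)$ and observe that $\tilde Y$ is again a closed $(d|\delta)$-dimensional bordism with $\M$-structure (the $\M$-structure pulls back because the cover is \'etale); postcomposition with $\ev$ then yields the desired object of $\Mc(\X)$.

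To promote this pointwise assignment to a symmetric monoidal map of stacks on supermanifolds, I would verify descent: for any cover $\{S_i \to S\}$ in the site, the cover $\tilde Y$ is built from local data because pullback of stacks commutes with base change along each $S_i \to S$, and postcomposition with $\ev$ is entirely local on the target. Symmetric monoidality is essentially tautological since the monoidal structure on $\Mc$ is disjoint union of closed bordisms and both $p^*$ and postcomposition with $\ev$ preserve disjoint unions of supermanifolds on the nose; the symmetry isomorphism is induced from the symmetry of disjoint union.

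For the second assertion, I would verify the cooperation analogues of the relations in \cite[VIII.1.1]{bmms} one at a time. The identity $\coP_1 = \id$ is immediate because $p$ is an isomorphism and $\ev$ is the identity for $n=1$. Naturality in $\X$ follows from the functoriality of pullback and pushforward of stacks. The compatibility with disjoint unions $\X \sqcup \X'$ reduces to the $\Sigma_n$-set decomposition $[n] \cong \coprod_{k=0}^n \Sigma_n/(\Sigma_k \times \Sigma_{n-k})$ and the induced decomposition of $p$ into components indexed by partitions of $[n]$. The key iteration relation asserts that $\coP_n \circ \coP_m$ agrees, after pullback along the wreath product inclusion $\Sigma_m \wr \Sigma_n \hookrightarrow \Sigma_{mn}$, with $\coP_{mn}$; this follows from a canonical isomorphism between the iterated fibered product of $p$ with itself and the pullback of the $\Sigma_{mn}$-cover $\X^{\times mn} \times [mn] \sq \Sigma_{mn} \to \X^{\times mn} \sq \Sigma_{mn}$ along $\X^{\times mn} \sq (\Sigma_m \wr \Sigma_n) \to \X^{\times mn} \sq \Sigma_{mn}$, together with the matching of the composed evaluations with a single evaluation.

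The main obstacle will be the stacky bookkeeping in the iteration relation: identifying iterated fibered products of quotient stacks with a single $\Sigma_{mn}$-cover requires working up to canonical $2$-isomorphism rather than on the nose, and one must check that this $2$-isomorphism is compatible with the bordism data, the $\M$-structure, and the maps to $\X$. For closed bordisms the situation is more tractable than for the full bordism category, since only the disjoint-union monoidal product is non-trivial and no bordism gluing enters; nonetheless the wreath-product combinatorics demand care in tracking $\Sigma_n$-equivariance of the cover construction and in recognizing that iterated pushforward along $\ev$ agrees with a single pushforward along the canonical $\ev$ at the top level.
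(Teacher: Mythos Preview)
Your approach is essentially the same as the paper's: define $\coP_n$ as pull-then-push along the correspondence, check symmetric monoidality by observing that covers of disjoint unions decompose, and verify the dual power-operation relations by manipulating 2-pullback squares of quotient stacks. Two points of comparison are worth making.

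First, your list of relations is slightly off. The relations dual to \cite[VIII.1.1]{bmms} that the paper actually verifies are the three compatibilities with the maps
\[
\alpha_{j,k}\colon \Sym^j(\X)\times\Sym^k(\X)\to\Sym^{j+k}(\X),\qquad
\beta_{j,k}\colon \Sym^j(\Sym^k(\X))\to\Sym^{jk}(\X),\qquad
\delta_k\colon \Sym^k(\X\times\Y)\to\Sym^k(\X)\times\Sym^k(\Y),
\]
namely $\alpha^*\coP_{j+k}\simeq\nabla\circ(\coP_j\times\coP_k)\circ\can$, $\beta^*\coP_{jk}\simeq\coP_k\circ\coP_j$, and $\can\circ\coP_k\simeq(\coP_k\times\coP_k)\circ\can\circ\N(\delta_k)$. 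Your ``compatibility with disjoint unions $\X\sqcup\X'$'' and the $\Sigma_n$-set decomposition $[n]\cong\coprod_k\Sigma_n/(\Sigma_k\times\Sigma_{n-k})$ is not the $\alpha$-relation; the $\alpha$-relation concerns restriction along $\Sigma_j\times\Sigma_k\hookrightarrow\Sigma_{j+k}$ for a \emph{single} $\X$, and the relevant covering decomposes as a disjoint union of only two pieces, not $n+1$. You also omit the $\delta$-relation entirely.

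Second, the paper organizes all three verifications around a single base-change lemma: if $\pi_0\colon\X'\to\X$ is a finite cover and the square with $\Y'\to\Y$ is a 2-pullback, then $\pi_0^!\circ f_*\simeq g_*\circ\pi_1^!$ on $\N(-)$. Each relation is then proved by writing down the appropriate 2-pullback of universal covers (for $\beta$, this is the pullback of $\underline{jk}\sq\Sigma_{jk}\to\ast\sq\Sigma_{jk}$ along $\ast\sq(\Sigma_k\wr\Sigma_j)\to\ast\sq\Sigma_{jk}$, exactly as you anticipate) and invoking the lemma. Isolating this lemma up front would streamline your ``stacky bookkeeping'' considerably and make the 2-isomorphism tracking you worry about essentially automatic.
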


\subsection{Computing power cooperations on super tori}
We show that one can further restrict~\eqref{FTrestrict} to subcategories of $\Mc(\X)$ that are \emph{cover closed}, meaning all finite covers of objects are also in the given subcategory. For the field theories of interest in the Stolz--Teichner program, a particularly convenient subcategory is the one generated by (super) tori whose map to~$\X$ is essentially constant (in the stacky sense); this is a super-version of the iterated inertia stack or ghost loop stack of~$\X$ that we denote by $\mathcal{L}_{0}^{d|\delta}(\X) \subset \Mc(\X)$. Restricting a field theory to the subcategory $\mathcal{L}_{0}^{d|\delta}(\X)\subset \Bord^{d|\delta}(\X)$ is called \emph{dimensional reduction}. 

Closing $\mathcal{L}_{0}^{d|\delta}(\X)\subset \Bord^{d|\delta}(\X)$ under disjoint unions of super tori is equivalent to taking the free symmetric monoidal stack on $\mathcal{L}_{0}^{d|\delta}(\X)$, denoted $\Sym(\mathcal{L}_{0}^{d|\delta}(\X))\subset \Mc(\X)$. This is a cover closed substack because finite covers of tori are disjoint unions of tori. Consequently, the power cooperation may be restricted to substacks of the form $\Sym(\mathcal{L}_{0}^{d|\delta}(\X))$. 

Now assume that $\X$ is equivalent to a global quotient stack $[X \sq G]$, where $X$ is a manifold and $G$ is a finite group. We construct explicit atlases for the stacks $\mathcal{L}_{0}^{d|\delta}(X \sq G)$, which allow for explicit descriptions of the power cooperations. This allows us to give formulas for the power operations in cases of interest. 

\begin{thmb}[\Cref{thm:mainthm}] \label{thmbintro}
For any global quotient $[X \sq G]$, there is an explicit atlas $\mathcal{U}(X \sq G)$ of $\mathcal{L}^{d|\delta}_0(X\sq G)$ and a $2$-commutative diagram of stacks
\[
\xymatrix{\mathcal{U}(X^{\times n}\sq G\wr \Sigma_n) \ar[r]^-{\widetilde{\coP}_n} \ar[d] & \coprod_{i \in \mathbb{N}}(\mathcal{U}(X\sq G))^{\times i} \ar[d] \\ \mathcal{L}^{d|\delta}_0(X^{\times n}\sq G\wr \Sigma_n) \ar[r]^-{\coP_n} & \Sym(\mathcal{L}^{d|\delta}_0(X\sq G)),}
\]
where $\widetilde{\coP}_n$ admits an explicit description depending on a number of choices. 
\end{thmb}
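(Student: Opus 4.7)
The plan is to construct the atlases explicitly from the combinatorics of commuting $d$-tuples in $G$ and in the wreath product $G \wr \Sigma_n$, and to verify $2$-commutativity by tracing both composites through the cover-and-push description of $\coP_n$. First, I would make $\mathcal{U}(X \sq G)$ explicit: an object of $\mathcal{L}^{d|\delta}_0(X \sq G)$ over a base $S$ is an $S$-family of essentially constant maps $T^{d|\delta} \to [X \sq G]$ from super tori, which by passage to monodromy is equivalent to a commuting tuple $(g_1,\ldots,g_d) \in G^d$ together with a section of the fixed locus $X^{\langle g_1,\ldots,g_d\rangle}$ and residual odd data from the $\delta$-directions of $T^{d|\delta}$. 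This yields an atlas
\[ \mathcal{U}(X \sq G) \;=\; \coprod_{\underline{g}\in G^d_{\mathrm{comm}}} X^{\langle \underline{g}\rangle} \times \mathcal{O}^{\delta}_{\underline{g}}, \]
with $\mathcal{O}^{\delta}_{\underline g}$ encoding the residual odd data specified by $\M$, and the map to the stack sending each point to the constant torus with the prescribed monodromy.

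Second, I would work out the analogous description for $G \wr \Sigma_n$. A commuting $d$-tuple $(\gamma_1,\ldots,\gamma_d) \in (G\wr\Sigma_n)^d$ projects to a commuting tuple $(\sigma_1,\ldots,\sigma_d)$ in $\Sigma_n$ whose common orbits $O_1,\ldots,O_k$ on $\{1,\ldots,n\}$ correspond bijectively to the connected components of the $n$-sheeted cover $T^{d|\delta}\times[n]\sq\langle\sigma_1,\ldots,\sigma_d\rangle \to T^{d|\delta}$. Choosing a base point in each orbit and conjugating representatives produces, on each $O_j$, a well-defined commuting $d$-tuple $\underline g^{(j)} \in G^d$ together with a point of $X^{\langle \underline g^{(j)}\rangle}$ obtained from the $O_j$-components of a point of $X^{\times n}$ fixed by $\langle \gamma_1,\ldots,\gamma_d\rangle$. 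Assembling these outputs across the $k$ orbits defines
\[ \widetilde{\coP}_n \colon \mathcal{U}(X^{\times n}\sq G\wr\Sigma_n) \to \coprod_{i\in\N} (\mathcal{U}(X\sq G))^{\times i}, \]
landing in the component indexed by $i=k$.

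Third, I would verify $2$-commutativity by unpacking both composites on an atlas element. Applying $\coP_n$ to the essentially constant torus classified by $(\gamma_1,\ldots,\gamma_d)$, the pullback along $\X^{\times n}\times [n]\sq\Sigma_n \to \X^{\times n}\sq\Sigma_n$ appearing in~\eqref{eq:powercoopFT} produces exactly the cover of $T^{d|\delta}$ whose components index the orbits $O_j$, and the pushforward along $\ev$ extracts from each component a constant super torus to $[X\sq G]$ with monodromy $\underline g^{(j)}$. On the atlas side, $\widetilde{\coP}_n$ assembles precisely these data, so the square $2$-commutes with the $2$-isomorphism witnessed by the choices of orbit base points and conjugating elements.

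The main obstacle is the coherent handling of these choices: different orbit representatives produce $G$-conjugate but distinct commuting tuples, so the $2$-isomorphism must intertwine the action of the centralizer of $\underline\gamma$ in $G\wr\Sigma_n$ with the action on $\coprod_i \mathcal{U}(X\sq G)^{\times i}$ by reshuffling of factors and simultaneous conjugation. Making this compatibility canonical --- rather than having $\widetilde{\coP}_n$ depend strictly on the chosen representatives --- is the technical heart of the theorem. A secondary subtlety is verifying that the $\delta$-dimensional odd super data on the cover $\widetilde{T}^{d|\delta}$ agrees with the one induced from the orbit decomposition, which requires that $\M$ interact well with finite isogenies of super tori.
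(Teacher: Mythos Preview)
Your overall strategy---decompose the $\Sigma_n$-monodromy into orbits, choose base points in each orbit, and read off a commuting tuple in $G$ together with a point of the corresponding fixed locus---is exactly the skeleton of the paper's argument. However, there is a genuine gap in your atlas, and you have misidentified where the technical work lies.

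\textbf{The missing lattice parameter.} Your proposed atlas
\[
\mathcal{U}(X\sq G)=\coprod_{\underline g\in G^d_{\mathrm{comm}}} X^{\langle\underline g\rangle}\times\mathcal{O}^\delta_{\underline g}
\]
records only the monodromy and fixed-point data, but an object of $\mathcal{L}^{d|\delta}_0(X\sq G)$ also remembers the super torus itself, i.e., the based oriented lattice $\Lambda\colon S\times\uL\to\R^d$ defining $T^{d|\delta}_\Lambda$. The paper's atlas is
\[
\mathcal{U}(X\sq G)=\Lat\times\coprod_{h\in\Hom(\uL,G)}\Map(\R^{0|\delta},X^{\im h}),
\]
and the factor $\Lat$ is not decoration: without it, the map $\mathcal{U}(X\sq G)\to\mathcal{L}^{d|\delta}_0(X\sq G)$ is not even defined, let alone an epimorphism. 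Correspondingly, your $\widetilde{\coP}_n$ is incomplete: on the lattice factor it must send $\Lambda$ to the tuple $(\Lambda\circ M_{\uL_k})_k$, where $M_{\uL_k}\in M_{d\times d}^{\det>0}(\Z)$ is a chosen matrix with image the stabilizer sublattice $\uL_k\subset\uL$. This is what records the geometry of the covering torus over each orbit, and it is precisely this dependence that produces the factors of $|I_k|^{\deg/2}$ and $\det(M_{\uL_k})^{\deg/2}$ in the $\K$-theory and elliptic formulas later in the paper.

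\textbf{Where the work actually is.} You write that the technical heart is making $\widetilde{\coP}_n$ canonical. It is not canonical, and the paper does not try to make it so: $\widetilde{\coP}_n$ depends on the choices of $i_k\in I_k$ and of the bases $\uL\cong\uL_k$, and the theorem only asserts 2-commutativity for any such choice. The real work is twofold. First, one must prove the fixed-point decomposition
\[
(X^{\times n})^{\im h}\;\cong\;\prod_{k\in K} X^{\im h_k},
\]
which you invoke implicitly (``a point of $X^{\langle\underline g^{(j)}\rangle}$ obtained from the $O_j$-components''); this is a nontrivial combinatorial lemma about wreath products that the paper relegates to an appendix. Second, one must exhibit an explicit equivalence of Lie groupoids
\[
\coprod_k (S\times\R^{d|\delta})\sq\uL\;\xrightarrow{\sim}\;(S\times\R^{d|\delta}\times\underline n)\sq\uL
\]
over $X\sq G$, built from the chosen $i_k$ and the chosen isomorphisms $\uL\cong\uL_k$, and then check on objects and morphisms that the triangle with $\coP_n(\phi)$ and $\coprod_k\phi_k$ commutes. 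Your ``secondary subtlety'' about odd data is handled automatically once one works with the Lie groupoid presentation, since the covers are literal pullbacks.
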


Since a functor from a category to a symmetric monoidal category uniquely determines a symmetric monoidal functor out of the free symmetric monoidal category on the source, the functor $\coP_n$ in the diagram is equivalent to the data of the (symmetric monoidal) power cooperation
\[
\coP_n\colon \Sym(\mathcal{L}^{d|\delta}_0(X^{\times n}\sq G\wr \Sigma_n))\to \Sym(\mathcal{L}_{0}^{d|\delta}(X \sq G)).
\]
 
\subsection*{Comparing with power operations in cohomology}
Restricting a field theory along the inclusion of closed bordisms, $\Mc(\X)\hookrightarrow \Bord^{d|\delta}(\X)$, gives a map \beq\label{eq:FTres}
\xymatrix{{\rm res}\colon \Fun^\otimes(\Bord^{d|\delta}(\X),\mathcal{V})\ar[r] & \Fun^\otimes(\Mc(\X),\C)=: C^\infty_\otimes(\Mc(\X)).}
\eeq
By definition, $\Mc(\X)$ is the subcategory of endomorphisms of the monoidal unit in $\Bord(\X)$, so this restriction lands in endomorphisms of the unit in $\mathcal{V}$, i.e., automorphisms of the 1-dimensional vector space, $\End(\C)\cong \C$. Since functors from a stack to~$\C$ are usually called \emph{functions} on the stack, we take the shorthand~$C^\infty_\otimes(\Mc(\X))$ for this functor category. The subscript $\otimes$ indicates that these functions satisfy a condition: disjoint union in~$\Mc(\X)$ is compatible with multiplication in~$\C$. In particular, the restriction map
\beq
C^\infty_\otimes(\Mc(\X))\to C^\infty(\Mcc(\X))\label{eq:ccmap}
\eeq
is an isomorphism, where $\Mcc(\X)\subset \Mc(\X)$ is the subcategory of closed \emph{connected} bordisms. The image of a field theory in $C^\infty(\Mcc(\X))$ under~\eqref{eq:FTres} and~\eqref{eq:ccmap} is called the partition function. 

Combined with the observations above, \cref{thm:mainthm} determines a multiplicative (but not additive) map
\[
\xymatrix{\P_n = C^\infty(\coP_n)\colon C^\infty(\mathcal{L}_{0}^{d|\delta}(\X))\cong C^\infty_\otimes(\Sym(\mathcal{L}_{0}^{d|\delta}(\X))) \ar[r] &  C^\infty(\mathcal{L}_{0}^{d|\delta}(\X^{\times n}\sq \Sigma_n))}
\]
which we call the \emph{geometric power operation}. 

When $d|\delta=1|1$ and $2|1$, functions on $\mathcal{L}_{0}^{d|\delta}(X\sq G)$ can be identified with cocycle models for complexified equivariant K-theory and complex analytic equivariant elliptic cohomology, respectively~\cite{DBE_EquivTMF,stoffel_eqchernchar}. Hence, the power cooperation induces a power operation in each of these equivariant cohomology theories. Furthermore, Theorem B provides a formula for the power operation in terms of the pullback of functions along the map $\widetilde{\coP}_n$ between ordinary supermanifolds. 

\begin{thmc}
The power operation $\P_n$ specializes as follows:
\begin{enumerate}
    \item In dimension $1|1$, $\P_n$ is compatible with the $n^{\rm th}$ power operation on equivariant $\K$-theory via the equivariant Chern character, see \cref{thm:main11}.
    \item In dimension $2|1$, the operations $\P_n$ provide a theory of power operations on complexified equivariant elliptic cohomology (\cref{prop:21formula}) that specializes to the expected Adams operations and are closely related to the formula for the character of the power operation on height~2 Morava $\E$-theory, see  \cref{thm:TMFAdams}.
\end{enumerate}
\end{thmc}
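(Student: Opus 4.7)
The strategy is to leverage the explicit atlas description of the power cooperation from Theorem B to compute the induced power operations on functions, and then compare with known formulas in $\K$-theory and in elliptic cohomology. By Theorem A, the relations dual to those of BMMS-style power operations are already satisfied at the level of bordism stacks, so these relations descend automatically to the induced maps $\P_n = C^\infty(\coP_n)$; this provides the axiomatic structure required for the first clause of part~(2) essentially for free. The remaining content is identifying the explicit formula for $\P_n$ with cohomological power operations on the two relevant equivariant cohomology theories.

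For part~(1), I would first recall the identification of $C^\infty(\mathcal{L}^{1|1}_0(X \sq G))$ with complexified $G$-equivariant $\K$-theory of $X$ via the equivariant Chern character, so that a class is represented by a function on the inertia stack $\coprod_{[g]} [X^g \sq C(g)]$ whose value at $(g,x)$ is the trace of $g$ on the fiber at $x$. The classical $n^{\rm th}$ power operation on equivariant $\K$-theory sends a bundle $V$ to its external tensor power $V^{\boxtimes n}$, carrying its natural $G \wr \Sigma_n$-action; the corresponding function on the inertia stack of $[X^{\times n} \sq G \wr \Sigma_n]$ factors according to the cycle decomposition of the $\Sigma_n$-component, with one trace factor per cycle. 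The main task is to unwind $\widetilde{\coP}_n$ from Theorem B in dimension $1|1$: a super torus mapping to $[X^{\times n} \sq G \wr \Sigma_n]$ admits a canonical $n$-fold cover that splits into a disjoint union of super tori mapping to $[X \sq G]$, indexed by cycles. Pullback of functions along this cover then matches the cycle-decomposition formula on the nose, yielding the desired compatibility.

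For part~(2), after invoking Theorem A to obtain the power operation relations, I would identify $C^\infty(\mathcal{L}^{2|1}_0(X \sq G))$ with complexified equivariant elliptic cohomology of $X$ following Berwick-Evans and Stoffel. To recover Adams operations $\psi^n$, specialize to the trivial group and restrict $\P_n$ along the diagonal. At the atlas level, a $2|1$-dimensional super torus is encoded by an elliptic curve together with a chosen square root of its canonical line, and the diagonal-restricted component of $\widetilde{\coP}_n$ is multiplication by $n$ on the elliptic curve. Pullback of functions along this map is then exactly multiplication by $n$ on the coordinate, which is the expected formula for $\psi^n$ on complexified elliptic cohomology.

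The main obstacle is the comparison with Ando's character formula for the power operation on height $2$ Morava $\E$-theory, which is expressed as a sum over isogenies from the universal deformation of a supersingular formal group, indexed by subgroup schemes. On the geometric side, $\widetilde{\coP}_n$ assembles into a sum over finite covers of super tori, which at the underlying elliptic curve correspond precisely to $n$-isogenies; these are in turn indexed by subgroups of $(\Z/n)^2$. The close relationship then arises by a termwise matching of the two sums: the geometric index set (finite connected covers) corresponds to Ando's index set (cyclic subgroups giving isogenies), and the combinatorial prefactors coming from the $C^\infty$ pullback match those appearing in the character formula. The subtlety, and the reason the theorem states the formulas are only \emph{closely related}, is that the geometric side lives over complex analytic elliptic curves whereas Ando's formula lives over the Lubin--Tate deformation ring, so one obtains analogous formulas of the same shape rather than a literal equality.
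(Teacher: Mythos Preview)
Your outline for part~(1) is essentially the paper's approach: compute $\widetilde{\coP}_n$ on the atlas (\Cref{prop:11formula}), obtain a product over cycles of the form $|I_k|^{\deg/2}\omega_{{\bf g}_k}$, and match this termwise against the Chern character of $V^{\boxtimes n}$ via the trace identity ${\rm Tr}(\exp((\nabla^{\boxtimes |I_k|})^2)\circ\rho(({\bf g},\sigma)_k)) = |I_k|^{\deg/2}{\rm Tr}(\exp(\nabla^2)\circ\rho({\bf g}_k))$.

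For part~(2) there are two genuine gaps. First, your treatment of the Adams operation is both misdirected and incomplete. There is no need to specialize to the trivial group; the paper's \Cref{thm:TMFAdams} is for arbitrary finite $G$, with formula $\omega_{g,g'}\mapsto n^{\deg/2}\omega_{g^n,g'^n}$. More importantly, your sentence ``pullback of functions along this map is then exactly multiplication by $n$ on the coordinate'' glosses over the actual content of the computation. The geometric Adams cooperation replaces the lattice $\Lambda$ by $n\Lambda$, and on the atlas this contributes a factor $(n^2\vol)^{\deg/2}$ from the volume rescaling \emph{together with} a factor $n^{-\deg/2}$ from the weight condition on $\mathcal{O}_\bullet(\Lat)$ (since $F(n\Lambda)=n^{-k}F(\Lambda)$ for $F\in\mathcal{O}_k(\Lat)$). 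These combine to the correct $n^{\deg/2}$. The paper flags this as a ``somewhat miraculous cancellation'' depending on the specific definition of holomorphy; your outline does not account for it, and without it one does not obtain the expected Adams operation.

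Second, your description of the Morava $\E$-theory comparison is not the paper's. You refer to Ando's formula as a sum over isogenies and propose a termwise matching with covers of tori. The paper instead compares with the Barthel--Stapleton character of the total power operation (\Cref{thm:charpo}), whose pseudo-power operation $\P_n^\phi(f)([h]) = \prod_k \phi_{\L_k} f([\psi_{\L_k}^* h_k])$ has exactly the same combinatorial shape as the geometric formula in \Cref{prop:21formula}: a product over transitive components $I_k$, each factor involving the sublattice $\L_k$ and the restricted homomorphism $h_k$. The ``close relationship'' is this structural parallel at the level of formulas, not an isogeny-indexed sum.
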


There have been some exciting recent developments in the understanding of integral equivariant elliptic cohomology, see \cite{gepnermeier_eqell,lurie_elliptic3}. To date, these have not given an ultracommutative (or $G_{\infty}$ in the sense of \cite[Remark 5.1.16]{schwedeglobal}) structure on the equivariant spectrum. Hence, in contrast to the situation for $\K$-theory, there is no complete theory to which we can compare the formula for the total power operation in equivariant elliptic cohomology. 

Finally, in \cref{Sec:Ethy}, we explain that the formulas for power operations extracted from restriction to closed $d$-dimensional bordisms closely mirror those for power operations in $\E$-theory obtained by the first and third author~\cite{charpo}.

\subsection*{Notation and conventions} 

The main constructions in this paper take place in stacks on the site of supermanifolds; see Appendix~\ref{append:A} for a review. 
In short, a \emph{stack} is a presheaf of groupoids on the site of supermanifolds satisfying descent for all covers; covers are surjective submersions of supermanifolds. All diagrams involving stacks should be assumed to be 2-commutative unless stated otherwise; we remind that 2-commutativity is additional data, though often we do not make this explicit. Frequently our stacks will be presented as groupoid objects in supermanifolds. This uses the 2-functor from the 2-category of Lie groupoids, smooth functors, and smooth natural transformations to the 2-category of stacks. For the Lie groupoid $\mathcal{G}=\{\mathcal{G}_1\rightrightarrows \mathcal{G}_0\}$, we use the notation $[\mathcal{G}]$ to denote the value of this functor, i.e., the corresponding stack. We follow the usual convention where the same letter (e.g., $S\in {\sf SMfld}$) is used to denote a supermanifold~$S$, its associated Lie groupoid $\{S\rightrightarrows S\}$, and its underlying stack $[\{S\rightrightarrows S\}]$.

\subsection*{Acknowledgements}

The first author was partly supported by the DNRF92 and the European Unions Horizon 2020 research and innovation programme under the Marie Sklodowska-Curie grant agreement No.~751794. The third author was supported by NSF grant DMS-1906236.

We would like to thank Matt Ando, Nora Ganter, Tom Nevins, Charles Rezk, Chris Schommer-Pries, Stephan Stolz, and Peter Teichner for helpful discussions about the subject matter of this paper. 
Moreover, we would like to thank the Max Planck Institute for Mathematics and the SFB 1085 at the University of Regensburg for its hospitality. 

\tableofcontents

\section{Geometric power cooperations}\label{sec:geompowerops}

In this section we construct and investigate geometric power cooperations with an emphasis on the cooperations on moduli stacks of super tori. We establish compatibility relations between these cooperations and also sketch the extension of the geometric power cooperations to bordism categories.

\subsection{Constructing geometric power (co)operations}\label{sec:21}

Fix a model geometry $\M$ (see~\Cref{appen:model}). For a stack $\X$, let $\M(\X)$ be the stack on the site of supermanifolds (see~\Cref{rmk:itsastack}) whose value on a supermanifold $S$ is the groupoid $\M(\X)(S)$ with objects the set of correspondences 
\[
S\leftarrow T\to \X,
\]
where $T\to S$ is an $S$-family of supermanifolds with $\M$-structure and~$T\to \X$ is a map of stacks. Morphisms in $\M(\X)(S)$ consist of diagrams
\beq
\begin{tikzpicture}[baseline=(basepoint)];\label{eq:twotriangles}
\node (A) at (0,-.75) {$S$};
\node (B) at (3,0) {$T$};
\node (E) at (3,-1.5) {$T'$};
\node (G) at (6,-.75) {$\X$,};
\node (H) at (4,-.75) {$\twocommute$};
\draw[->] (B) to  (A);
\draw[->] (B) to (E);
\draw[->] (E) to (A);
\draw[->] (B) to  (G);
\draw[->] (E) to (G);
\path (0,-.75) coordinate (basepoint);
\end{tikzpicture}
\eeq
where $T\to T'$ is a fiberwise isometry of supermanifolds with $\M$-structure, the left hand triangle strictly commutes, and the right hand triangle $2$-commutes. Given $f\colon S \to S'$, the induced functor $\M(\X)(S') \to \M(\X)(S)$ is given by pulling back $T \to S'$ to $S$ along the given map $f$. This is well-defined since the condition on $T \to S'$ is a fiberwise condition.

\begin{ex} \label{ex:globalquotient}
We are primarily interested in global quotient stacks. These are stacks of the form $\X = [X \sq G]$, where $X$ is a smooth manifold equipped with an action of a finite group $G$. In this case, a map
\[
S \to \M([X\sq G])
\]
is equivalent to an $S$-family $T \to S$ of supermanifolds with $\M$-structure and the choice of a principal $G$-bundle $E \to T$ equipped with a $G$-equivariant map $E \to X$. Thus this data may be displayed as
\[
S \leftarrow T \leftarrow E \rightarrow X.
\]
Assume another map $S \to \M([X \sq G])$ is given by 
\[
S \leftarrow T' \leftarrow E' \rightarrow X.
\] 
An isomorphism in $\M([X \sq G])(S)$ corresponds to a fiberwise isometry $f \colon T \to T'$ of supermanifolds with $\M$-structure and a choice of isomorphism of principal $G$-bundles $E \cong f^*E'$ over $T$. This choice of isomorphism of principal $G$-bundles corresponds to the  $2$-commutative triangle in diagram \eqref{eq:twotriangles}. 
\end{ex}

The disjoint union of $S$-families of supermanifolds with $\M$-structure and the disjoint union of morphisms promotes $\M(\X)$ to a symmetric monoidal stack; see~\cite[Definition~7.21]{HKST} or \Cref{defn:symmonstack} for the definition of a symmetric monoidal stack. Furthermore, a map $f\colon \X\to \mathcal{Y}$ of stacks induces a morphism $f_*\colon \M(\X)\to \M(\mathcal{Y})$ of symmetric monoidal stacks by postcomposition, and a 2-morphism $f\Rightarrow g$ between morphisms of stacks $f,g\colon \X\to \mathcal{Y}$ gives a 2-morphism between~$f_*$ and~$g_*$, 
\beq
\begin{tikzpicture}
\node (C) at (2,-.75) {$S$};
\node (B) at (4,-.75) {$T$};
\node (G) at (6,-.75) {$\X$};
\node (A) at (9,-.75) {$\mathcal{Y}$,};
\node (AA) at (7.5,-.75) {$\Downarrow$};
\draw[->] (B) to  (G);
\draw[->,bend left] (G) to node [above] {$f$} (A);
\draw[->,bend right] (G) to node [below] {$g$}  (A);
\draw[->] (B) to (C);
\end{tikzpicture}\nonumber
\eeq
by composing the given 2-morphism $f\Rightarrow g$ with the identity 2-morphism on $T\to \X$. When referring to objects in $\M(\X)$ below, we will often drop the family parameter $S$ from the notation, writing $T\to \X$ for such an object, where $T$ is an $S$-family of supermanifolds with $\M$-structure.

The geometric power cooperation will be defined as a push-pull construction for the correspondence 
\[
\X^{\times n}\sq \Sigma_n\stackrel{\pi}{\leftarrow} (\X^{\times n}\times \underline{n})\sq \Sigma_n\stackrel{\ev}{\to}\X,
\]
where $\pi$ is induced by the projection $\underline{n}\sq \Sigma_n\to \pt\sq \Sigma_n$, $\ev$ is the evaluation map, and we recall that the quotients by $\Sigma_ n$ are taken in the category of stacks; see \Cref{def:symstack}. Define a map of stacks
\[
\pi^!\colon \M(\X^{\times n}\sq \Sigma_n) \to \M((\X^{\times n}\times \underline{n})\sq \Sigma_n) 
\]
that sends an object $T\to \X^{\times n}\sq \Sigma_n$ to the object $\widetilde{T}\to (\X^{\times n}\times \underline{n})\sq \Sigma_n$ characterized by the diagram
\beq
\begin{tikzpicture}
%\node (A) at (6,1.5) {$\X$};
\node (A) at (3,0) {$\widetilde{T}$};
\node (B) at (3,-1.5) {$T$};
\node (C) at (6,0) {$(\X^{\times n}\times \underline{n})\sq \Sigma_n$};
\node (D) at (6,-1.5) {$\X^{\times n}\sq \Sigma_n$};
\node (E) at (9,0) {$\underline{n}\sq \Sigma_n$};
\node (F) at (9,-1.5) {$\pt\sq \Sigma_n$.};
\draw[->] (A) to  (B);
\draw[->] (C) to node [left] {$\pi$} (D);
\draw[->] (E) to (F);
\draw[->] (A) to (C);
\draw[->] (B) to (D);
\draw[->] (C) to (E);
\draw[->] (D) to (F);
\end{tikzpicture}\nonumber
\eeq
Both squares are 2-pullback squares and the projection $\underline{n}\sq \Sigma_n\to \pt\sq \Sigma_n$ is the universal $n$-sheeted cover, where we identify $\Sigma_n$ with $\Aut(\underline{n})$. Hence, $\widetilde{T}$ is the 2-pullback of the $n$-sheeted cover $\X^{\times n}\times \underline{n}\sq\Sigma_n\to \X^{\times n}\sq \Sigma_n$ along $\pi$. The value of $\pi^!$ on morphisms is similarly defined via pullback.

\begin{defn} \label{defn:coop}
The \emph{$n^{\rm th}$ geometric power cooperation} on $\M(\X)$ is the functor
\[
\coP_n\colon \M(\X^{\times n}\sq \Sigma_n)\to \M(\X)
\]
defined by $\coP_n:=\ev_*\circ \pi^!$, the composition of the pullback along $\pi$ and the pushforward along the evaluation map $\ev$.
\end{defn}

The above defines $\coP_n$ as a functor between stacks up to unique isomorphism, since a 2-pullback is part of the construction of $\coP_n$; it also respects the monoidal structure. 

\begin{lem} \label{lem:copsym} The geometric power cooperation $\coP_n$ is symmetric monoidal. 
\end{lem}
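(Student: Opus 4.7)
The plan is to decompose the cooperation as $\coP_n = \ev_* \circ \pi^!$ and prove that each of the two constituent functors is symmetric monoidal; the composition of symmetric monoidal functors is then symmetric monoidal. The symmetric monoidal structure on each $\M(-)$ is defined by fiberwise disjoint union of $S$-families of supermanifolds with $\M$-structure, so the task reduces to checking compatibility of pushforward and of 2-pullback with disjoint unions and with the empty family, together with the coherence of the symmetry.

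First I would show that for any map of stacks $f \colon \X \to \mathcal{Y}$, postcomposition
\[
f_* \colon \M(\X) \to \M(\mathcal{Y}), \qquad (S \leftarrow T \to \X) \;\longmapsto\; (S \leftarrow T \to \X \xrightarrow{f} \mathcal{Y})
\]
is strictly symmetric monoidal. The $\M$-structure on $T$ does not change under $f_*$, and for $S$-families $T_1, T_2$ with maps to $\X$, the composite $T_1 \sqcup T_2 \to \X \to \mathcal{Y}$ is literally the disjoint union of the composites $T_i \to \X \to \mathcal{Y}$. The empty family is preserved. The same reasoning shows $f_*$ respects morphisms of the form \eqref{eq:twotriangles} and intertwines the symmetry isomorphisms. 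Applied to $f = \ev$, this gives the claim for $\ev_*$.

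Next I would show that $\pi^!$ is symmetric monoidal. The functor $\pi^!$ is defined by a 2-pullback against the representable finite cover $\underline{n}\sq \Sigma_n \to \pt \sq \Sigma_n$. Given an $S$-family $T \to \X^{\times n} \sq \Sigma_n$ presented as a disjoint union $T = T_1 \sqcup T_2$ of $S$-families, the induced map to $\pt \sq \Sigma_n$ factors through $T_1 \sqcup T_2$, and since 2-pullback commutes with finite coproducts in the site of supermanifolds (coproducts are disjoint unions, and pulling back a disjoint union of families along any map yields the disjoint union of pullbacks), we have a canonical isomorphism $\widetilde{T} \cong \widetilde{T_1} \sqcup \widetilde{T_2}$. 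The $\M$-structure on $\widetilde{T}$ is inherited fiberwise from $T$ via the finite étale cover $\widetilde{T} \to T$, and this inheritance is compatible with the decomposition into $\widetilde{T_1} \sqcup \widetilde{T_2}$. The empty family is preserved because its 2-pullback is empty, and the symmetry isomorphisms for $\sqcup$ are pulled back to the corresponding symmetries on the cover. Hence $\pi^!$ is symmetric monoidal.

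Composing the two gives the lemma. The only subtle point to handle carefully is that $\pi^!$ is only defined up to unique isomorphism, so the monoidality of $\pi^!$ should be formulated as coherence of the natural isomorphisms $\pi^!(T_1 \sqcup T_2) \cong \pi^!(T_1) \sqcup \pi^!(T_2)$ and $\pi^!(\emptyset) \cong \emptyset$ coming from the universal property of the 2-pullback; this is where one must exercise a small amount of care, but the coherence is automatic from the uniqueness clause of the universal property. I expect this coherence bookkeeping to be the only nontrivial aspect of the proof; the underlying geometric fact, that pulling back and then postcomposing respects disjoint unions, is essentially immediate.
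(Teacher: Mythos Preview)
Your proposal is correct and follows essentially the same approach as the paper. The paper's proof is terser---it treats $\coP_n$ as a single operation and records only the key geometric fact that a cover of a disjoint union is canonically the disjoint union of the covers (and that the cover of the empty family is empty)---whereas you make the decomposition $\coP_n = \ev_* \circ \pi^!$ explicit and verify monoidality of each factor; the substance is the same.
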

\bp
Suppose an isomorphism~$T\cong T_1\coprod T_2$ witnesses $T$ as a disjoint union. Then this isomorphism pulls back to an isomorphism between covering spaces $\widetilde{T}\cong \widetilde{T}_1\coprod \widetilde{T}_2$. Similarly, isomorphisms between disjoint unions pullback to isomorphisms between their covering spaces. Further, every cover of the empty set is the empty set and the empty set is the unit of the symmetric monoidal structure.
\ep

\subsection{Cover-closed substacks and moduli stacks of super tori}\label{sec:cc}

We will write $\M(-)$ when viewing $\M$ as a 2-functor from the $2$-category of stacks to the $2$-category of stacks on the site of supermanifolds as described in the previous section. One can consider subfunctors of $\M(-)$ that are compatible with the geometric power cooperation.

\begin{defn} We say that a sub 2-functor $\N(-) \subset \M(-)$ is \emph{cover closed} if, for all stacks~$\X$, $\N(\X)$ is a symmetric monoidal substack of $\M(\X)$ and, for every supermanifold $S$, any finite cover of an object in $\N(\X)(S)$ is also in $\N(\X)(S)$. i.e., if $S \leftarrow T\to \X$ is an object of $\N(\X)(S)$, then, for any finite-sheeted covering space~$\widetilde{T}\to T$, we also have that $S\leftarrow \widetilde{T}\to \X$ is an object in $\N(\X)(S)$, where the maps to $\X$ and $S$ come from the compositions $\widetilde{T}\to T\to \X$ and $\widetilde{T}\to T\to S$. 
\end{defn} 

We adopt this definition for the following reason. 

\begin{lem} \label{lem:powerrestr}
Geometric power cooperations $\coP_n$ restrict to any cover closed $\N(-) \subset \M(-)$, i.e., the following diagram 2-commutes
\[
\xymatrix{\N(\X^{\times n}\sq \Sigma_n) \ar[r]^-{\coP_n} \ar[d] & \N(\X) \ar[d] \\
\M(\X^{\times n}\sq \Sigma_n) \ar[r]_-{\coP_n} & \M(\X).}
\]
\end{lem}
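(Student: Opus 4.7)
The plan is to unwind the definition of $\coP_n = \ev_* \circ \pi^!$ and check that each step preserves membership in $\N(-)$, using only the cover-closedness hypothesis together with the fact that $\ev_*$ is simply postcomposition with a map of stacks and hence does not alter the underlying $S$-family.

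First I would take an object of $\N(\X^{\times n}\sq \Sigma_n)(S)$, presented by a correspondence $S \leftarrow T \to \X^{\times n}\sq\Sigma_n$ where $T\to S$ is an $S$-family of supermanifolds with $\M$-structure, and unpack the definition of $\pi^!$. By construction, the object $\pi^!(T) = \widetilde{T}$ fits into the iterated 2-pullback diagram in \Cref{defn:coop}, so in particular $\widetilde{T}\to T$ is the pullback of the universal $n$-sheeted cover $\underline{n}\sq\Sigma_n \to \pt\sq\Sigma_n$ along the composite $T\to \X^{\times n}\sq\Sigma_n\to \pt\sq\Sigma_n$. Consequently $\widetilde{T}\to T$ is a finite (in fact $n$-sheeted) covering map of supermanifolds, and its $\M$-structure as an $S$-family is the one pulled back from $T$ (since an $\M$-structure on an $S$-family is a fiberwise datum and étale maps pull back $\M$-structures fiberwise). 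Applying the cover-closedness hypothesis to the object $S \leftarrow T \to \X^{\times n}\sq\Sigma_n$ in $\N(\X^{\times n}\sq\Sigma_n)(S)$ shows that the resulting $S$-family $S\leftarrow \widetilde{T}$, equipped with any map to a stack factoring through $T$, represents an object of $\N$ on that target.

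Next I would handle the pushforward step. By definition, $\ev_*$ acts on an object $\widetilde{T}\to (\X^{\times n}\times \underline{n})\sq\Sigma_n$ by postcomposing with the evaluation map $\ev\colon (\X^{\times n}\times \underline{n})\sq\Sigma_n \to \X$; it leaves the underlying $S$-family $\widetilde{T}\to S$ and its $\M$-structure unchanged. Since the resulting map $\widetilde{T}\to \X$ factors through the map $\widetilde{T}\to T$ used in the previous step (in the sense that $\widetilde{T}$ is a cover of the $S$-family $T$ from the original object of $\N$), cover-closedness of $\N$ at the stack $\X$ applies directly: the correspondence $S\leftarrow \widetilde{T}\to \X$ is an object of $\N(\X)(S)$. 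The same reasoning, applied to the morphism diagrams~\eqref{eq:twotriangles}, shows that $\coP_n$ sends morphisms in $\N(\X^{\times n}\sq\Sigma_n)(S)$ to morphisms in $\N(\X)(S)$, since a morphism in $\M$ is a fiberwise isometry plus a $2$-cell, and both are stable under the 2-pullback defining $\pi^!$ and under postcomposition with $\ev$.

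Finally I would observe that naturality in $S$ is automatic because both $\pi^!$ and $\ev_*$ are defined pointwise by (2-)pullback and postcomposition, which commute with the restriction functors $\M(\X)(S')\to \M(\X)(S)$ coming from maps $S\to S'$; this gives the claimed 2-commutative square of stacks. Compatibility with the symmetric monoidal structures follows from \Cref{lem:copsym} together with the fact that $\N(-)\subset \M(-)$ is by hypothesis a symmetric monoidal substack. The main (mild) obstacle is the bookkeeping in the first step: one must be careful that the $\M$-structure inherited by the cover $\widetilde{T}$ is precisely the one used to regard it as an object of $\N$, but since $\widetilde{T}\to T$ is étale and $\M$-structures are defined fiberwise, there is no real ambiguity. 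No additional input beyond the cover-closedness axiom is required.
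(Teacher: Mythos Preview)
Your argument is correct and is exactly the elaboration the paper has in mind; in fact the paper states the lemma without proof, treating it as immediate from the definition of cover closed. Your unpacking of $\coP_n = \ev_*\circ\pi^!$ into ``$\pi^!$ produces a finite cover, hence stays in $\N$ by cover-closedness, and $\ev_*$ is postcomposition, hence stays in $\N$ because $\N(-)\subset\M(-)$ is a sub $2$-functor'' is the intended reasoning.

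One small point worth tightening: when you write that the $S$-family $S\leftarrow\widetilde{T}$, ``equipped with any map to a stack factoring through $T$, represents an object of $\N$ on that target,'' you are implicitly using that membership in $\N$ depends only on the underlying $S$-family $T\to S$ and not on the target stack or the map to it. Cover-closedness as literally stated only tells you that $\widetilde{T}$ with the \emph{composed} map $\widetilde{T}\to T\to \X^{\times n}\sq\Sigma_n$ lies in $\N(\X^{\times n}\sq\Sigma_n)$, whereas you need $\widetilde{T}$ with its map to $\X$ (via $\ev$) to lie in $\N(\X)$. The bridge is precisely that $\N(-)$ is a sub $2$-functor of $\M(-)$: in all the examples (closed bordisms, super tori) the defining condition is on the $S$-family alone, so $f_*$ preserves $\N$ for every map $f$ of stacks. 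This is what the paper means by ``sub $2$-functor,'' and once you invoke it explicitly the argument is complete.
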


\begin{rmk} The subcategory $\Mc(\X)\subset \M(\X)$ of closed bordisms with $\M$-structure is an example of a cover closed subcategory. With $\N(\X)=\Mc(\X)$ this gives the first statement in Theorem~A. 
\end{rmk}

From this point on, we shall assume that we are given a model geometry~$\M$ whose underlying model space is the supermanifold~$\R^{d|\delta}$ and whose group of isometries contains the standard translation group~$\bE^d$ that acts on $\R^{d|\delta}$ through the canonical inclusion $\bE^d\cong \R^d\subset \R^{d|\delta}$. Let $\uL = \Z^d$. 

\begin{defn}
An \emph{$S$-family of based lattices} is an inclusion $\Lambda \colon S \times \uL \hookrightarrow S \times \bE^d$ of abelian group objects in the category of supermanifolds over $S$ with the property that the induced map $\Lambda \otimes \R \colon S \times \R^d \to S \times \bE^d$ is an isomorphism.
\end{defn}

\begin{defn}\label{defn:supertori} For a model geometry as described above, an \emph{$S$-family of super tori} is a quotient of the form ($S\times \R^{d|\delta})/\uL$, where $\uL$ acts on $S\times \R^{d|\delta}$ through an $S$-family of based lattices $\Lambda \colon S \times \uL \hookrightarrow S\times \bE^d\subset S\times \bE^{d|\delta}$. Let $T^{d|\delta}_\Lambda \to S$ denote the $S$-family of super tori associated with an $S$-family of lattices. \end{defn}

\cref{ex:supertori} explains why an $S$-family of super tori is an $S$-family of supermanifolds with $\M$-structure.

\begin{defn}\label{defn:stori} Let $\mathcal{L}^{d|\delta}(\X)\subset \M(\X)$ denote the full substack of super tori over $\X$. Thus an object in $\mathcal{L}^{d|\delta}(\X)(S)$ locally has the structure of an $S$-family of super tori with a map to $\X$.
\end{defn}

Because $\uL$ is required to act freely on $S \times \R^{d|\delta}$ in \cref{defn:supertori}, the stack $[(S \times \R^{d|\delta}) \sq \uL]$ is representable by the supermanifold $(S \times \R^{d|\delta})/\uL$. Specifically, there is a canonical equivalence 
\begin{equation}\label{eq:representablequotient}
{\Stack}([(S \times \R^{d|\delta})\sq \uL],\X)\stackrel{\sim}{\leftarrow} {\Stack}((S \times \R^{d|\delta})/\uL,\X)  
\end{equation}
induced by the quotient map of Lie groupoids
\[
(S \times \R^{d|\delta})\sq \uL \rightarrow (S \times \R^{d|\delta}) / \uL.
\]
There is also a zig-zag of Lie groupoids
\[
(S \times \R^{d|\delta})/ \uL\stackrel{\sim}{\leftarrow} (S \times \R^{d|\delta}) \sq \uL \to (S \times \R^{0|\delta}) \sq \uL
\]
determining a map of stacks
\beq
(S\times \R^{d|\delta})/\uL\to [(S \times \R^{0|\delta}) \sq \uL]\label{eq:factorthis}
\eeq
There is an important substack of $\mathcal{L}^{d|\delta}(\X)$ consisting of families of super tori equipped with certain degenerate maps $T^{d|\delta}_\Lambda \to \X$. 

\begin{defn}\label{defn:edinvariantmaps}
Let $\mathcal{L}^{d|\delta}_0(\X)\subset \mathcal{L}^{d|\delta}(\X)$ denote the full substack of super tori over $\X$ that are locally isomorphic to maps that factor through~\eqref{eq:factorthis}, namely as $(S \times \R^{d|\delta})/ \uL \to [(S \times \R^{0|\delta}) \sq \uL]\to \X$.
\end{defn}

\begin{rmk}\label{rmk:edinvariantmaps}
We observe that the map
\[
[(S \times \R^{d|\delta}) \sq \uL] \to [(S \times \R^{0|\delta}) \sq \uL]
\]
is induced by the quotient by the (free) $\bE^d$-action on $\R^{d|\delta}$. In this sense, $\mathcal{L}^{d|\delta}_0(\X)$ is the substack of $\bE^d$ invariant maps. Indeed, the notation comes from viewing $\mathcal{L}^{d|\delta}(\X)$ as a kind of $d$-fold free loop space with geometry and $\mathcal{L}^{d|\delta}_0(\X)$ as the subspace of constant $d$-fold super loops. 
\end{rmk}

Let $\Sym(\mathcal{L}^{d|\delta}(\X))$ and $\Sym(\mathcal{L}_{0}^{d|\delta}(\X))$ denote the free symmetric monoidal stacks on $\mathcal{L}^{d|\delta}(\X)$ and on $\mathcal{L}^{d|\delta}_0(\X)$, respectively; see \Cref{superstacks} for the construction of symmetric powers and the free symmetric monoidal stack on a given stack. Following \Cref{superstacks}, let $\Sym^n(\X) = \X^{\times n} \sq \Sigma_n$ and let $\Sym^{\le n}(\mathcal{\X}):=\coprod_{i\le n} \mathcal{\X}^{\times i}\sq \Sigma_i$.

\begin{prop}
The stack $\Sym(\mathcal{L}^{d|\delta}(\X))$ is equivalent to the full substack of~$\M(\X)$ with objects locally isomorphic to disjoint unions of $S$-families of super tori over~$\X$. 
\end{prop}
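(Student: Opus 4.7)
The plan is to build the comparison functor via the universal property of the free symmetric monoidal stack, and then verify it is an equivalence onto the claimed substack by a local-to-global argument. The fully faithful inclusion $\iota\colon \mathcal{L}^{d|\delta}(\X) \hookrightarrow \M(\X)$ lands in a symmetric monoidal stack (disjoint union of families of supermanifolds with $\M$-structure), so the universal property of $\Sym$ furnishes a symmetric monoidal functor
\[
F\colon \Sym(\mathcal{L}^{d|\delta}(\X)) \to \M(\X)
\]
extending $\iota$, unique up to unique $2$-isomorphism. Concretely, on the $n$-th summand $\mathcal{L}^{d|\delta}(\X)^{\times n}\sq \Sigma_n$ the functor $F$ sends a locally defined $n$-tuple $(T_1\to \X,\dots,T_n\to \X)$ to the disjoint union $T_1\sqcup\cdots\sqcup T_n\to \X$, with $\Sigma_n$ acting by the evident permutation isomorphisms.

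Let $\mathcal{D}(\X)\subset \M(\X)$ denote the full substack whose $S$-objects are locally isomorphic to disjoint unions of $S$-families of super tori over $\X$. By construction, the image of $F$ lies in $\mathcal{D}(\X)$, and the task is to show that $F\colon \Sym(\mathcal{L}^{d|\delta}(\X))\to \mathcal{D}(\X)$ is an equivalence of stacks. For essential surjectivity, given $T\to \X$ in $\mathcal{D}(\X)(S)$ there exists a cover $\{U_\alpha \to S\}$ on which $T|_{U_\alpha}$ decomposes as a finite disjoint union $T_1^\alpha\sqcup\cdots\sqcup T_{n_\alpha}^\alpha$ of $U_\alpha$-families of super tori. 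After further refining the cover so that $n_\alpha$ is constant on each connected component, this decomposition defines an object of $\Sym^{n_\alpha}(\mathcal{L}^{d|\delta}(\X))(U_\alpha)$ with $F$-image isomorphic to $T|_{U_\alpha}$. The descent property of $\Sym(\mathcal{L}^{d|\delta}(\X))$ (it is a stack by construction) then globalizes these local preimages.

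For fully faithfulness, let $X,Y$ be objects of $\Sym^n(\mathcal{L}^{d|\delta}(\X))(S)$ given locally by tuples $(T_i)$ and $(T_i')$, and consider a morphism $F(X)\to F(Y)$ in $\M(\X)(S)$, i.e., a fiberwise isometry $\bigsqcup T_i \to \bigsqcup T_j'$ over $S$ together with a $2$-morphism over $\X$. The fibers of an $S$-family of super tori are connected, so any such fiberwise isometry permutes connected components by a locally constant element of $\Sigma_n$ and restricts to a fiberwise isometry between each matched pair $T_i\to T_{\sigma(i)}'$. This decomposition is precisely the data of a morphism in $\Sym^n(\mathcal{L}^{d|\delta}(\X))(S)$ between $X$ and $Y$, and the identification is natural, giving the required equivalence of morphism groupoids.

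The main obstacle is the interplay between local decompositions and the stackification inherent in the definition of $\Sym(\mathcal{L}^{d|\delta}(\X))$: one must check that the $\Sigma_{n_\alpha}$-torsor data encoded by the local decompositions of $T$ glues to a single global descent datum for $\Sym(\mathcal{L}^{d|\delta}(\X))$. This is guaranteed by the fact that the transition isomorphisms between two local expressions of $T$ as a disjoint union of super tori automatically factor through permutations and componentwise isometries, by the component-counting argument used in fully faithfulness. Once this bookkeeping is made explicit using the presentation of $\Sym^n$ from \Cref{superstacks}, the descent is formal and completes the proof.
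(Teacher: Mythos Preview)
Your proof is correct and follows essentially the same approach as the paper: both construct the comparison functor via the universal property of $\Sym$, and both identify fully faithfulness and the essential image by observing that an isomorphism between disjoint unions of super tori decomposes as a permutation of connected components followed by componentwise isometries. Your version is somewhat more explicit about the descent step in essential surjectivity (refining the cover, gluing local preimages), whereas the paper argues more tersely by directly matching the gluing data on both sides; the underlying idea is the same.
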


\begin{proof}
Since $\mathcal{L}^{d|\delta}(\X) \subseteq \M(\X)$ is a substack and $\M(\X)$ is a symmetric monoidal stack, the free-forgetful adjunction between stacks and symmetric monoidal stacks give a canonical map
\begin{equation} \label{eq:freemap}
\Sym(\mathcal{L}^{d|\delta}(\X))\to \M(\X).
\end{equation}

First we will prove that the essential image of \eqref{eq:freemap} consists of objects locally isomorphic to disjoint unions of $S$-families of super tori over $\X$. The gluing data for such an object consists of a permutation of components followed by a coproduct of isometries of super tori. These are precisely the local isomorphisms in the source of \eqref{eq:freemap}.

To show that \eqref{eq:freemap} is fully faithful, note that an isomorphism between objects locally isomorphic to disjoint unions of $S$-families of super tori over $\X$ is a collection of local isomorphisms. Again, these are determined by a permutation of the connected components followed by an isometry over $S$. These are precisely the local isomorphisms in the source of $\eqref{eq:freemap}$.
\end{proof}

\begin{rmk}
In the remainder of the paper, we will identify $\Sym(\mathcal{L}^{d|\delta}(\X))$ with its essential image in $\M(\X)$ along the map in $\eqref{eq:freemap}$. This results in an equivalent category, but this equivalence is not an equality: objects in $\Sym(\mathcal{L}^{d|\delta}(\X))$ over $S$ come with a (local) decomposition into a disjoint union of super tori, whereas the essential image in $\M(\X)$ consists of families of super manifolds with $\M$-structure for which there exists such a local description as a disjoint union. For example, the target in \cref{lem:factor} is an object in $\M(\X)$ that is in the essential image of $\Sym(\mathcal{L}^{d|\delta}(\X))$ but not in the image.  
\end{rmk}

\begin{cor}\label{lem:coverclosed} 
For each stack $\X$, the full substacks $\Sym(\mathcal{L}^{d|\delta}(\X))$ and $\Sym(\mathcal{L}^{d|\delta}_0(\X))$ of $\M(\X)$ are cover closed. 
\end{cor}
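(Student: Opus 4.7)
The plan is to reduce the cover-closure condition to the classical fact that a finite cover of a torus is a disjoint union of tori, applied fiberwise. Using the identification in the preceding proposition, an object of $\Sym(\mathcal{L}^{d|\delta}(\X))(S)$ is, after refining along a cover of $S$, a disjoint union $\coprod_i T_{\Lambda_i}\to \X$ of $S$-families of super tori over $\X$. Any finite cover of this disjoint union decomposes as a disjoint union of finite covers of the individual $T_{\Lambda_i}$, so the whole problem reduces to the connected case.

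For a single family $T_\Lambda=(S\times\R^{d|\delta})/\uL$, the key step is to pull back along a further cover of $S$ over which a given $n$-sheeted cover $\widetilde T\to T_\Lambda$ has locally constant monodromy; such a trivializing cover exists because the cover is finite-sheeted. Over this refined base the cover is classified by a homomorphism $\rho\colon\uL\to\Sigma_n$, and its connected components correspond to the orbits of $\rho$ on $\underline n$. The component indexed by an orbit with stabilizer $\uL'\subset\uL$ is the quotient $(S\times\R^{d|\delta})/\uL'$. Since a finite-index subgroup of $\uL=\Z^d$ is again free abelian of rank $d$ and sits inside $\bE^d$ through $\uL'\hookrightarrow\uL\hookrightarrow\bE^d$, this exhibits each component as an $S$-family of super tori in the sense of \cref{defn:supertori}. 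Hence $\widetilde T$ is locally a disjoint union of super tori over $\X$, which gives the cover-closure of $\Sym(\mathcal{L}^{d|\delta}(\X))$.

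For $\Sym(\mathcal{L}^{d|\delta}_0(\X))$, I additionally need to check that the degenerate factorization through the constant quotient of \cref{defn:edinvariantmaps} passes to each component of a finite cover. If the map $T_\Lambda\to\X$ factors as $(S\times\R^{d|\delta})/\uL\to[(S\times\R^{0|\delta})\sq\uL]\to\X$, then precomposition with the natural map $(S\times\R^{d|\delta})/\uL'\to[(S\times\R^{0|\delta})\sq\uL']\to[(S\times\R^{0|\delta})\sq\uL]$, induced by the inclusion $\uL'\hookrightarrow\uL$, supplies the required factorization on each component through the corresponding constant quotient. The main obstacle I anticipate is not geometric but bookkeeping: verifying that these locally defined decompositions (choice of trivializing cover of $S$, choice of basepoint in each orbit, identification of the stabilizer as a sublattice) are compatible with the symmetric monoidal structure on $\Sym(-)$ and with the descent data on the substack, so that the local statement on a refinement of $S$ descends to the claim that the covering object genuinely lies in $\Sym(\mathcal{L}^{d|\delta}_0(\X))(S)$ as a substack of $\M(\X)(S)$.
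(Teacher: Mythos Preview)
Your proposal is correct and follows the same core idea as the paper: reduce to the connected case and observe that a finite cover of a super torus $(S\times\R^{d|\delta})/\uL$ decomposes into components $(S\times\R^{d|\delta})/\uL_k$ for finite-index sublattices $\uL_k\subset\uL$. The paper's proof is a two-line sketch of exactly this, so your version is more detailed---in particular, you explicitly verify the $\mathcal{L}^{d|\delta}_0$ case (that the factorization through $[(S\times\R^{0|\delta})\sq\uL]$ is inherited by each component), which the paper leaves implicit, and your worry about descent bookkeeping is unnecessary since these are \emph{full} substacks and the cover-closed condition only asks whether the covering object lies in the substack.
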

\bp
It suffices to check that the covering spaces of a connected super torus are disjoint unions of super tori. Up to isomorphism, such covers are given by $\widetilde{T}_\Lambda^{d|\delta}\cong \coprod_k (S\times \R^{d|\delta})/\uL_k$ for $S$-families of sublattices $\uL_k \subset \uL$. 
\ep

As a consequence of \Cref{lem:powerrestr} and~\Cref{lem:coverclosed}, geometric power cooperations restrict to give functors on moduli spaces of super tori:

\begin{cor}\label{cor:cooprestriction}
There is a commutative diagram 
\beq
\xymatrix{\Sym(\mathcal{L}_{0}^{d|\delta}(\X^{\times n}\sq \Sigma_n)) \ar@{-->}[r]^-{\coP_n} \ar[d] &  \Sym(\mathcal{L}_{0}^{d|\delta}(\X)) \ar[d] \\
\M(\X^{\times n}\sq \Sigma_n) \ar[r]_-{\coP_n} & \M(\X).} \label{eq:toripower}
\eeq
and similarly for $\mathcal{L}^{d|\delta}(\X^{\times n} \sq \Sigma_n)$.
\end{cor}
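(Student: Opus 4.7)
The strategy is to assemble the statement directly from the two pieces already in hand: the cover-closedness established in \Cref{lem:coverclosed} and the restriction principle for geometric power cooperations recorded in \Cref{lem:powerrestr}. The only genuine content is to verify that the assignments $\X \mapsto \Sym(\mathcal{L}^{d|\delta}(\X))$ and $\X \mapsto \Sym(\mathcal{L}^{d|\delta}_0(\X))$ define sub $2$-functors of $\M(-)$; once this is in place, \Cref{lem:powerrestr} produces the desired $2$-commutative square.

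First I would check that $\mathcal{L}^{d|\delta}(-)$ and $\mathcal{L}^{d|\delta}_0(-)$ are sub $2$-functors of $\M(-)$. For a map $f\colon \X\to \mathcal{Y}$ of stacks and an object $S\leftarrow T_\Lambda^{d|\delta}\to \X$ of $\mathcal{L}^{d|\delta}(\X)(S)$, postcomposition gives $S\leftarrow T_\Lambda^{d|\delta}\to \mathcal{Y}$, which is still an $S$-family of super tori over $\mathcal{Y}$; the same argument applies to $2$-morphisms, so $\mathcal{L}^{d|\delta}(-)$ is a sub $2$-functor. For $\mathcal{L}^{d|\delta}_0(-)$, one observes that the factorization through \eqref{eq:factorthis} is a condition on the supermanifold source of the map to the target stack, hence is preserved under postcomposition by $f$. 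Applying the free symmetric monoidal stack construction $\Sym$, which is $2$-functorial, then shows that $\Sym(\mathcal{L}^{d|\delta}(-))$ and $\Sym(\mathcal{L}^{d|\delta}_0(-))$ are sub $2$-functors of $\M(-)$ as well.

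Next, \Cref{lem:coverclosed} says that for each stack $\X$ the substacks $\Sym(\mathcal{L}^{d|\delta}(\X))$ and $\Sym(\mathcal{L}^{d|\delta}_0(\X))$ of $\M(\X)$ are cover closed. Combined with the $2$-functoriality from the previous step, this means the hypotheses of \Cref{lem:powerrestr} are satisfied with $\N(-) = \Sym(\mathcal{L}^{d|\delta}(-))$ and with $\N(-) = \Sym(\mathcal{L}^{d|\delta}_0(-))$. Applying that lemma with $\X$ replaced by $\X$ and $\X^{\times n}\sq \Sigma_n$ respectively, the cooperation $\coP_n$ restricts to these sub $2$-functors, giving exactly the $2$-commutative square \eqref{eq:toripower}.

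The main subtlety to watch for is the compatibility of $\Sym$ with the cover-closedness needed for the power cooperation: a cover of a disjoint union of super tori might a priori mix summands, but any finite cover of $\coprod_i T_{\Lambda_i}^{d|\delta}$ is fiberwise a disjoint union of covers of the individual summands, each of which is itself a disjoint union of super tori by the analysis in the proof of \Cref{lem:coverclosed}. Thus covers of objects in $\Sym(\mathcal{L}^{d|\delta}(\X))$ are again in $\Sym(\mathcal{L}^{d|\delta}(\X))$, and for $\mathcal{L}^{d|\delta}_0$ the $\bE^d$-invariance interpretation from \Cref{rmk:edinvariantmaps} shows that this degeneracy condition is inherited by covers. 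This is really the only step where one must be careful; all remaining assertions are formal consequences of the lemmas above.
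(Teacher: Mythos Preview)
Your proposal is correct and follows exactly the approach the paper takes: the paper simply states the corollary ``as a consequence of \Cref{lem:powerrestr} and \Cref{lem:coverclosed}'' without further argument. Your write-up is more careful in that you explicitly verify the sub $2$-functor condition needed to feed $\N(-)=\Sym(\mathcal{L}^{d|\delta}_{(0)}(-))$ into \Cref{lem:powerrestr}, a point the paper leaves implicit; the remaining discussion about covers of disjoint unions just re-derives what \Cref{lem:coverclosed} already asserts.
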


Since $\coP_n$ is symmetric monoidal, it is determined by its value on \emph{connected} tori, i.e., the substack $\mathcal{L}_{0}^{d|\delta}(\X^{\times n}\sq \Sigma_n)\subset \Sym(\mathcal{L}_{0}^{d|\delta}(\X^{\times n}\sq \Sigma_n))$. For later purposes, we record the following result:

\begin{lem}\label{lem:cooprestriction}
The $n^{\rm th}$ geometric power cooperation restricts restricts to a functor
\[
\coP_n\colon\mathcal{L}_{0}^{d|\delta}(\X^{\times n} \sq \Sigma_n) \to \Sym^{\le n}(\mathcal{L}_{0}^{d|\delta}(\X)),
\]
and similarly for $\mathcal{L}^{d|\delta}(\X^{\times n} \sq \Sigma_n)$.
\end{lem}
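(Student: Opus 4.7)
The strategy is to combine the explicit description of finite covers of super tori used in the proof of \cref{lem:coverclosed} with the push-pull construction of \cref{defn:coop} to bound the number of connected components of the output and to check compatibility with the $\bE^d$-invariance condition defining $\mathcal{L}_0^{d|\delta}$.

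First I would unpack $\coP_n = \ev_* \circ \pi^!$ on a connected $S$-family of super tori $T = T_\Lambda^{d|\delta}$ equipped with a map to $\X^{\times n}\sq \Sigma_n$. Applying $\pi^!$ produces the $n$-sheeted cover $\widetilde T \to T$ obtained by pulling back the universal $n$-sheeted cover $\underline{n}\sq\Sigma_n \to \mathrm{pt}\sq\Sigma_n$, and then $\ev_*$ post-composes with evaluation to yield a map $\widetilde T \to \X$. By the proof of \cref{lem:coverclosed}, any such cover is of the form $\widetilde T \cong \coprod_k (S \times \R^{d|\delta})/\uL_k$ for $S$-families of sublattices $\uL_k \subset \uL$, and so $\widetilde T$ is a disjoint union of super tori.

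Second, I would use the counting of sheets to bound the number of components. Since $\widetilde T \to T$ is an $n$-sheeted covering space, we must have $\sum_k [\uL:\uL_k] = n$; since each index is at least $1$, there are at most $n$ summands. Therefore, $\coP_n(T)$ is locally a disjoint union of at most $n$ super tori over $\X$, and hence defines an object of $\Sym^{\le n}(\mathcal{L}^{d|\delta}(\X))$. Functoriality on morphisms follows because $\pi^!$ and $\ev_*$ are functors and the component-count is invariant under the isomorphisms and $2$-morphisms in $\mathcal{L}^{d|\delta}(\X^{\times n}\sq \Sigma_n)$; this also ensures compatibility with the residual $\Sigma_i$-actions packaged into $\Sym^{\le n}$.

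Third, I would verify the $\mathcal{L}_0^{d|\delta}$ refinement. If the map $T \to \X^{\times n}\sq \Sigma_n$ lies in $\mathcal{L}_0^{d|\delta}$, then it factors locally through $[(S\times \R^{0|\delta})\sq \uL]$, i.e.\ is $\bE^d$-invariant in the sense of \cref{rmk:edinvariantmaps}. Each component $(S \times \R^{d|\delta})/\uL_k$ of $\widetilde T$ inherits the same $\bE^d$-action, and the composite to $\X$ via $\ev$ is obtained from an $\bE^d$-invariant map, so each component factors through $[(S\times \R^{0|\delta})\sq \uL_k]$. Consequently $\coP_n(T) \in \Sym^{\le n}(\mathcal{L}_0^{d|\delta}(\X))$, as desired; the same argument, omitting the $\bE^d$-invariance step, treats the $\mathcal{L}^{d|\delta}$ case. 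The main point requiring care is the bookkeeping between the local decompositions of covers and the $\Sigma_i$-action on components, but this is already handled by the symmetric monoidal compatibility of \cref{lem:copsym}.
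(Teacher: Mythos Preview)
Your proposal is correct and follows the same core idea as the paper: an $n$-sheeted cover of a connected super torus has at most $n$ components, so $\coP_n$ lands in $\Sym^{\le n}$. The paper's own proof is a single sentence to this effect, because the fact that $\coP_n$ already lands in $\Sym(\mathcal{L}_0^{d|\delta}(\X))$ (your third step, on $\bE^d$-invariance) is established immediately beforehand in \cref{cor:cooprestriction} via the cover-closed property; you are simply re-deriving that part rather than citing it.
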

\begin{proof}
By construction, the value of the $n^{\rm th}$ geometric power cooperation on an object in $\mathcal{L}_{0}^{d|\delta}(\X^{\times n}\sq \Sigma_n)(S)$ has, locally in $S$, at most $n$ components. Therefore, it takes value in $\Sym^{\le n}(\mathcal{L}_{0}^{d|\delta}(\X))(S)$.
\end{proof}

\subsection{Relations satisfied by the geometric power cooperations}\label{sec:consistentcoop}
Let $\N(-) \subseteq \M(-)$ be a cover closed. In this section we show that the geometric power cooperations satisfy the dual relations to the relations satisfied by the classical power operations on an $H_{\infty}$-ring spectrum. After taking functions, we will be able to conclude that the geometric power operations satisfy the same relations as classical power operations.

To describe these relations we will make use of a number of canonical maps between symmetric powers of stacks, which are analogous to the maps between symmetric powers of spectra introduced in \cite[Section~I.2]{bmms}. Let $j,k \ge 0$ and let
\begin{equation}\label{eq:symalpha}
    \xymatrix{\Sym^j(\X) \times \Sym^k(\X) \ar[rr]^-{\alpha_{j,k}} \ar@{=}[d] & & \Sym^{j+k}(\X) \ar@{=}[d] \\
    \X^{\times j}\sq\Sigma_j \times \X^{\times k}\sq\Sigma_k \ar[r]_-{\sim} &  \X^{\times j+k}\sq (\Sigma_j \times \Sigma_k) \ar[r] & \X^{\times j+k}\sq \Sigma_{j+k}
    }
\end{equation}
be the map of stacks that is induced by the inclusion $\Sigma_j \times \Sigma_k \subseteq \Sigma_{j+k}$.

The inclusion $\Sigma_j \wr \Sigma_k \subseteq \Sigma_{jk}$ induces a map of stacks 
\begin{equation}\label{eq:symbeta}
    \xymatrix{\Sym^j(\Sym^k(\X)) \ar[rr]^-{\beta_{j,k} } \ar@{=}[d] & & \Sym^{jk}(\X) \ar@{=}[d] \\ 
    (\X^{\times k}\sq \Sigma_k)^{\times j}\sq \Sigma_{j} \ar[r]^-{\sim} & \X^{\times jk}\sq(\Sigma_j \wr \Sigma_k) \ar[r] & \X^{\times jk}\sq\Sigma_{jk}.}
\end{equation}

The diagonal inclusion $\Sigma_k \to \Sigma_k \times \Sigma_k$ gives a map 
\begin{equation}\label{eq:symdelta}
    \resizebox{\textwidth}{!}{
    \xymatrix{\Sym^k(\X \times \Y) \ar[rr]^-{\delta_k} \ar@<-5ex>@{=}[d] & & \Sym^k(\X) \times \Sym^k(\Y)\ar@<5ex>@{=}[d] \\
    (\X \times \Y)^{\times k}\sq \Sigma_k \simeq (\X^{\times k} \times \Y^{\times k})\sq \Sigma_k \ar[rr] & & (\X^{\times k} \times \Y^{\times k})\sq (\Sigma_k \times \Sigma_k) \simeq (\X^{\times k}\sq \Sigma_k) \times (\Y^{\times k}\sq \Sigma_k).
    }
    }
\end{equation}
To ease notation, we will occasionally omit the subscripts on the maps $\alpha$, $\beta$, or $\delta$.

The projection maps $\X\times\Y \to \X$ and $\X\times\Y \to \Y$ induce a map
\begin{equation}\label{eq:can}
    \can\colon \N(\X \times \Y) \to \N(\X) \times \N(\Y).
\end{equation}

Finally, there is a fold map 
\begin{equation}\label{eq:nabla}
\nabla\colon \N(\X) \times \N(\X) \to \N(\X)
\end{equation}
which sends a pair of $S$-points $S \leftarrow T \to \X$ and $S \leftarrow T' \to \X$ to $S \leftarrow (T \coprod T') \to \X$. This is the symmetric monoidal structure on $\N(\X)$

We will repeatedly use the next result. 

\begin{lem} \label{lem:basic}
Suppose $\N(-) \subseteq \M(-)$ is cover closed and consider a 2-pullback of stacks
\[
\xymatrix{
\Y' \ar[r]^-{g} \ar[d]_{\pi_1} & \X' \ar[d]^{\pi_0} \\
\cY \ar[r]_-{f} & \cX,
}
\]
where $\pi_0$ a finite cover (see \cref{defn:finitecover}). Then the following diagram 2-commutes:
\[
\xymatrix{\N(\Y) \ar[r]^-{f_*} \ar[d]_{\pi_1^!} & \N(\X) \ar[d]^{\pi_0^!} \\
\N(\Y') \ar[r]_{g_*} & \N(\X').}
\]
\end{lem}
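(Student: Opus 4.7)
The plan is to chase an $S$-point of $\N(\cY)$ around both paths of the square and identify the two outputs via the pasting law for 2-pullbacks. Recall that an object of $\N(\cY)(S)$ is a correspondence $S \leftarrow T \to \cY$ with $T \to S$ an $S$-family of supermanifolds with $\M$-structure, and that for a finite cover $\pi_0$, the operation $\pi_0^!$ acts on such data by forming the 2-pullback along $T \to \cX$.

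Traversing the square down-then-across, $\pi_1^!$ produces the correspondence $S \leftarrow T \times_\cY \Y' \to \Y'$, which lies in $\N(\Y')(S)$ because $T \times_\cY \Y' \to T$ is the pullback of the finite cover $\pi_0$ (pullbacks of finite covers are finite covers) and $\N$ is cover closed by hypothesis; postcomposing with $g$ then yields $(S \leftarrow T \times_\cY \Y' \xrightarrow{g} \X')$. Traversing across-then-down, $f_*$ first gives $(S \leftarrow T \xrightarrow{f} \cX)$ and then $\pi_0^!$ produces $(S \leftarrow T \times_\cX \X' \to \X')$. Since the given square is a 2-pullback by hypothesis, and the square defining $T \times_\cY \Y'$ is a 2-pullback by construction, the pasting law for 2-pullbacks exhibits the outer rectangle as a 2-pullback, furnishing a canonical equivalence $T \times_\cY \Y' \simeq T \times_\cX \X'$ over $\X'$ and over $S$. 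This equivalence is automatically a fiberwise isometry of supermanifolds with $\M$-structure, since in both constructions the $\M$-structure on the cover is inherited from $T$ along the respective covering map; it therefore supplies the 2-morphism witnessing 2-commutativity. Functoriality in morphisms of $\N(\cY)(S)$ then follows from the naturality of 2-pullbacks under base change, applied to the morphism $T \to T'$ together with its lifts.

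The main obstacle is bookkeeping: one must verify that the canonical equivalence above identifies not only the underlying supermanifolds, but simultaneously the $S$-family structure, the $\M$-structure, and the map to $\X'$. This is handled by the observation that all three structures on the cover are determined by pullback from $T$, so that the universal property of the 2-pullback makes the identification compatible with each of them at once. The cover-closed hypothesis on $\N$ is used precisely to guarantee that the resulting objects remain inside $\N$ at each stage of the chase.
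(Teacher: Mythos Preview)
Your proof is correct and follows essentially the same approach as the paper: both argue on $S$-points and use the pasting law for 2-pullbacks to identify $T \times_\cY \Y'$ with $T \times_\cX \X'$ over $\X'$. The paper's version is terser, leaving the $\M$-structure compatibility and the role of the cover-closed hypothesis implicit, whereas you spell these out explicitly.
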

\begin{proof}
We may test this on $S$-points. Consider the 2-commuting diagram, 
\[
\xymatrix{& \widetilde{T} \ar[r] \ar[d] & \Y' \ar[d]_{\pi_1} \ar[r]^{g} & \X' \ar[d]^{\pi_0} \\ S  & T \ar[l] \ar[r] & \Y \ar[r]^{f} & \X,}
\]
in which the two squares are 2-pullbacks. It follows that the outer rectangle is a 2-pullback as well. This provides a natural isomorphism between $\pi_{0}^{!} f_*(S)$ and $g_* \pi_{1}^{!}(S)$.
\end{proof}

Now we prove a sequence of lemmas describing how the geometric power cooperations interact with the maps $\alpha_{j,k}$, $\beta_{j,k}$, and $\delta_k$.

\begin{lem}\label{lem:cooprelation1}
Let $\X$ be a stack and let $j,k\ge 0$. The following diagram 2-commutes:
\[
\xymatrix{
\N(\Sym^j(\X) \times \Sym^k(\X)) \ar[d]_{\N(\alpha_{j,k})} \ar[rr]^-{\can} &  & \N(\Sym^j(\X)) \times \N(\Sym^k(\X)) \ar[d]^{\coP_j \times \coP_k} \\
\N(\Sym^{j+k}(\X)) \ar[r]_-{\coP_{j+k}} & \N(\X) & \N(\X) \times \N(X). \ar[l]^-{\nabla}
}
\]
\end{lem}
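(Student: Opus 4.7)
\medskip

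The plan is to reduce both composites to a common construction via \cref{lem:basic}, exploiting the fact that the universal cover of $\Sym^{j+k}(\X)$ restricts, along the factorization of $\alpha_{j,k}$ through $\X^{\times(j+k)}\sq(\Sigma_j\times\Sigma_k)$, to a disjoint union of pullbacks of the universal covers of $\Sym^j(\X)$ and $\Sym^k(\X)$.

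First I would set up the key decomposition. The map $\alpha_{j,k}$ factors through $\X^{\times(j+k)}\sq(\Sigma_j\times\Sigma_k)\to\X^{\times(j+k)}\sq\Sigma_{j+k}$, and the fiber $\underline{j+k}$ of the universal $(j+k)$-cover decomposes as $\underline{j}\sqcup\underline{k}$, a decomposition preserved by the $\Sigma_j\times\Sigma_k$-action (trivially on the factor it does not permute). Concretely, I would produce a 2-pullback square
\[
\xymatrix@C=4em{
A_j \sqcup A_k \ar[r] \ar[d] & (\X^{\times(j+k)}\times\underline{j+k})\sq\Sigma_{j+k} \ar[d]^{\pi_{j+k}} \\
\Sym^j(\X)\times\Sym^k(\X) \ar[r]_-{\alpha_{j,k}} & \Sym^{j+k}(\X),
}
\]
where
\[
A_j=\bigl((\X^{\times j}\times\underline{j})\sq\Sigma_j\bigr)\times(\X^{\times k}\sq\Sigma_k),\qquad
A_k=(\X^{\times j}\sq\Sigma_j)\times\bigl((\X^{\times k}\times\underline{k})\sq\Sigma_k\bigr),
\]
and the left vertical map is $(\pi_j\times\mathrm{id})\sqcup(\mathrm{id}\times\pi_k)$. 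Moreover, the composite of the top arrow with $\ev_{j+k}$ restricts on $A_j$ to $\ev_j\circ(\mathrm{pr}_1)$ and on $A_k$ to $\ev_k\circ(\mathrm{pr}_2)$, by construction of the evaluation maps.

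Next I would apply \cref{lem:basic} to rewrite $\pi_{j+k}^{!}\circ\N(\alpha_{j,k})$ as the composite of $\N$-pushforward along the top arrow with the $\N$-pullback along the finite cover on the left, and then use the fact that $\N$ sends disjoint unions of stacks to symmetric-monoidal products in order to split the left pullback into the two $\pi_j^{!}\times\mathrm{id}$ and $\mathrm{id}\times\pi_k^{!}$ operations on the two canonical projections. Composing with $\ev_{j+k\,*}$ and using the identification above of the evaluation on each piece, the composite $\coP_{j+k}\circ\N(\alpha_{j,k})$ applied to $T\to\Sym^j(\X)\times\Sym^k(\X)$ becomes the disjoint union $\widetilde T_j\sqcup\widetilde T_k\to\X$, where $\widetilde T_j$ is the cover produced by $\coP_j$ from the projection to $\Sym^j(\X)$ and $\widetilde T_k$ is produced by $\coP_k$ from the projection to $\Sym^k(\X)$. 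This disjoint union is precisely $\nabla\circ(\coP_j\times\coP_k)\circ\can(T)$, yielding the desired 2-commutativity.

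The main obstacle I expect is bookkeeping: making sure the 2-pullback square in the first step is constructed coherently (including the $2$-isomorphism exhibiting the $\Sigma_{j+k}$-equivariant identification $\underline{j+k}\simeq\underline{j}\sqcup\underline{k}$ as $\Sigma_j\times\Sigma_k$-sets), and then verifying that the resulting identifications compose to a natural isomorphism of functors of $S$-points rather than just a pointwise one. Once this pullback square and the evaluation-on-each-piece identification are in place, the application of \cref{lem:basic} and the identification with $\nabla\circ(\coP_j\times\coP_k)\circ\can$ are formal.
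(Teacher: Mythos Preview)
Your proposal is correct and follows essentially the same route as the paper: both arguments hinge on the 2-pullback of the universal $(j{+}k)$-cover along $\alpha_{j,k}$ splitting as a disjoint union indexed by $\underline{j}\sqcup\underline{k}$, an application of \cref{lem:basic} to commute $\N(\alpha)$ past $\pi^!$, and the identification of the evaluation map on each summand with $\ev_j\circ\mathrm{pr}_1$ and $\ev_k\circ\mathrm{pr}_2$. The paper organizes the verification into two explicit auxiliary 2-pullback squares (one for each summand $A_j$, $A_k$) and a further expanded diagram, whereas you appeal more directly to ``$\N$ sends disjoint unions to monoidal products'' and finish at the level of $S$-points, but the content is the same.
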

\begin{proof}
There is a 2-pullback
\[
\xymatrix{
(\X^{\times j+k}\times \underline{j+k})\sq (\Sigma_j\times\Sigma_k) \ar[r]^-{\widetilde{\alpha}} \ar[d]_{\widetilde{\pi}} & (\X^{\times j+k}\times \underline{j+k})\sq\Sigma_{j+k} \ar[d]^{\pi} \\
\Sym^j(\X) \times \Sym^k(\X) \ar[r]_-{\alpha} & \Sym^{j+k}(\X).
}
\]
The diagram of the lemma expands as follows, where \cref{lem:basic} establishes 2-commutativity of the top left square and the bottom left triangle commutes by inspection:
\begin{equation}\label{eq:proofcooprelation1}
\resizebox{\textwidth}{!}{
\xymatrix{
\N(\Sym^{j+k}(\X)) \ar[d]_{\pi^!} & \N(\Sym^j(\X) \times \Sym^k(\X)) \ar[l]_-{\N(\alpha)} \ar[d]^{\widetilde{\pi}^!} \ar[r]^{\can} & \N(\Sym^j(\X)) \times \N(\Sym^k(\X)) \ar[d]^{\pi^! \times \pi^!} \\
\N((\X^{\times j+k}\times\underline{j+k})\sq\Sigma_{j+k}) \ar[dr]_{\ev_*} & \N((\X^{\times j+k}\times \underline{j+k})\sq (\Sigma_j\times\Sigma_k)) \ar[l]_{\N(\widetilde{\alpha})} \ar[d]^{\ev_*}  & \N((\X^{\times j}\times\underline{j})\sq\Sigma_{j}) \times \N((\X^{\times k}\times\underline{k})\sq\Sigma_{k}) \ar[d]^{\ev_*\times\ev_*} \\
& \N(\X) & \N(\X) \times \N(\X). \ar[l]^-{\nabla}
}
}
\end{equation}
In order to show that the right rectangle 2-commutes as well, we will introduce some auxiliary constructions. The covering $\widetilde{\pi}$ decomposes into a disjoint union
\begin{equation}\label{eq:coverdecompositionrelation1}
\resizebox{\textwidth}{!}{
\xymatrix{
(\X^{\times j}\times\X^{\times k}\times\underline{j})\sq\Sigma_j\times\Sigma_k \coprod (\X^{\times j}\times\X^{\times k}\times\underline{k})\sq\Sigma_j\times\Sigma_k \ar[r]^-{i_1\coprod i_2}_-{\sim} \ar[d]_{\pi_1 \coprod \pi_2} & (\X^{\times j+k}\times \underline{j+k})\sq (\Sigma_j\times\Sigma_k) \ar[d]^{\widetilde{\pi}} \\
\Sym^{j}(\X) \times \Sym^k(\X) \ar[r]_-{=} & \Sym^{j}(\X) \times \Sym^k(\X), 
}
}
\end{equation}
whose components fit into two 2-pullbacks
\begin{equation}\label{eq:two2pullbacksrelation1}
\resizebox{\textwidth}{!}{
\xymatrix{
(\X^{\times j}\times\X^{\times k}\times\underline{j})\sq\Sigma_j\times\Sigma_k \ar[r]^-{p_1} \ar[d]_{\pi_1} & (\X^{\times j}\times\underline{j})\sq\Sigma_j \ar[d]^{\pi} & (\X^{\times j}\times\X^{\times k}\times\underline{k})\sq\Sigma_j\times\Sigma_k \ar[r]^-{p_2} \ar[d]_{\pi_2} & (\X^{\times k}\times\underline{k})\sq\Sigma_k \ar[d]^{\pi} \\
\Sym^{j}(\X) \times \Sym^k(\X) \ar[r] & \Sym^{j}(\X) & \Sym^{j}(\X) \times \Sym^{k}(\X) \ar[r] & \Sym^{k}(\X).
}
}
\end{equation}
The map $\pi_1$ is given by
\[
(\X^{\times j}\times\X^{\times k}\times\underline{j})\sq\Sigma_j\times\Sigma_k \simeq (\X^{\times j} \times \underline{j}) \sq \Sigma_j \times\X^{\times k} \sq \Sigma_k \to \Sym^{j}(\X) \times \Sym^k(\X).
\]
where the last map is induced by the $j$-fold cover, and similarly for $\pi_2$. The unlabelled bottom horizontal maps in \eqref{eq:two2pullbacksrelation1} are the canonical projections onto the corresponding factors. Write 
\[
\mathcal{Z}_1 = (\X^{\times j}\times\X^{\times k}\times\underline{j})\sq\Sigma_j\times\Sigma_k \qquad \text{and} \qquad \mathcal{Z}_2 = (\X^{\times j}\times\X^{\times k}\times\underline{k})\sq\Sigma_j\times\Sigma_k.
\]
Moreover, define $\widetilde{\nabla}$ to be the composite
\[
\widetilde{\nabla} = \nabla \circ ((i_1)_*\times(i_2)_*)\colon \N(\mathcal{Z}_1)\times\N(\mathcal{Z}_2)\to \N((\X^{\times j+k}\times \underline{j+k})\sq (\Sigma_j\times\Sigma_k)).
\]
With this preparation, we can thus expand the right rectangle of \eqref{eq:proofcooprelation1} into a larger diagram:
\[
\resizebox{\textwidth}{!}{
\xymatrix{
\N(\Sym^j(\X) \times \Sym^k(\X)) \ar[d]_{\widetilde{\pi}^!} \ar[rr]^{\can} \ar[rd]^{\pi_1^!\times\pi_2^!} & & \N(\Sym^j(\X)) \times \N(\Sym^k(\X)) \ar[dd]^{\pi^! \times \pi^!} \\
\N((\X^{\times j+k}\times \underline{j+k})\sq (\Sigma_j\times\Sigma_k)) \ar[d]_{\ev_*} &
\N(\mathcal{Z}_1) \times \N(\mathcal{Z}_2)
 \ar[l]^-{\widetilde{\nabla}} \ar[d]_{\ev_*\times\ev_*} \ar[rd]^-{(p_1)_*\times(p_2)_*} \\
\N(\X) & \N(\X) \times \N(\X) \ar[l]^-{\nabla} & \N((\X^{\times j}\times\underline{j})\sq\Sigma_{j}) \times \N((\X^{\times k}\times\underline{k})\sq\Sigma_{k}).  \ar[l]^-{\ev_*\times\ev_*}
}
}
\]
The 2-commutativity of the top right quadrilateral follows from the two 2-pullbacks of \eqref{eq:two2pullbacksrelation1}, while that of the top left triangle is a consequence of the decomposition in \eqref{eq:coverdecompositionrelation1}. Finally, the bottom left square and bottom right triangle 2-commute by direct inspection, thereby finishing the proof.
\end{proof}

\begin{lem}\label{lem:cooprelation2}
For any stack $\X$ and any $j,k\ge 0$, the following diagram 2-commutes:
\[
\xymatrix{
\N(\Sym^j(\Sym^k(\X))) \ar[r]^-{\coP_j} \ar[d]_{\N(\beta_{j,k})} & \N(\Sym^k(\X)) \ar[d]^{\coP_k} \\
\N(\Sym^{jk}\X) \ar[r]_-{\coP_{jk}} & \N(\X).
}
\]
\end{lem}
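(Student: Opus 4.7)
The plan is to follow the same strategy as the proof of \cref{lem:cooprelation1}: unfold both composites in the square using \cref{lem:basic} so that each becomes a single push-pull along a correspondence, and then exhibit an isomorphism of the two resulting correspondences.

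Step 1. Set $A = \Sym^j(\Sym^k(\X))$ and $B = \Sym^{jk}(\X)$, and denote the standard $n$-sheeted covers by $\pi_n \colon \widetilde{\mathcal{Z}}_n \to \mathcal{Z}$ with evaluation $\ev_n \colon \widetilde{\mathcal{Z}}_n \to \X$ (or $\to \Sym^k(\X)$ when iterating). Then $\coP_k \circ \coP_j = \ev_{k*}\,\pi_k^!\,\ev_{j*}\,\pi_j^!$, and the interior mixed piece $\pi_k^! \ev_{j*}$ can be rewritten via \cref{lem:basic} applied to the 2-pullback
\[
\xymatrix{
\widetilde{A}_j \times_{\Sym^k(\X)} \widetilde{\Sym^k(\X)}_k \ar[r]^-{\tilde{\ev}_j} \ar[d]_-{\tilde{\pi}_k} & \widetilde{\Sym^k(\X)}_k \ar[d]^-{\pi_k} \\
\widetilde{A}_j \ar[r]_-{\ev_j} & \Sym^k(\X).
}
\]
This turns the full composite into the single push-pull $(\ev_k \circ \tilde{\ev}_j)_* \circ (\pi_j \circ \tilde{\pi}_k)^!$ along a correspondence $A \leftarrow \widetilde{A}_j \times_{\Sym^k(\X)} \widetilde{\Sym^k(\X)}_k \to \X$.

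Step 2. In the same way, applying \cref{lem:basic} to the 2-pullback of $\beta_{j,k}$ along $\pi_{jk}$ rewrites $\coP_{jk} \circ \N(\beta_{j,k}) = \ev_{jk*}\,\pi_{jk}^!\,(\beta_{j,k})_*$ as a single push-pull along the correspondence $A \leftarrow A \times_B \widetilde{B}_{jk} \to \X$.

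Step 3. I would then exhibit an equivalence between the two correspondences. Using the identification $A \simeq \X^{\times jk} \sq (\Sigma_j \wr \Sigma_k)$ and the analogous rewrite $\widetilde{A}_j \simeq \X^{\times jk} \times \underline{j} \sq (\Sigma_j \wr \Sigma_k)$ (with $\Sigma_k^j$ acting trivially on $\underline{j}$), a direct computation of the 2-pullback in Step 1 shows that the top-left stack becomes $\X^{\times jk} \times \underline{j} \times \underline{k} \sq (\Sigma_j \wr \Sigma_k)$. The top-left stack of Step 2, on the other hand, is the pullback of the standard $jk$-cover along the inclusion $\Sigma_j \wr \Sigma_k \hookrightarrow \Sigma_{jk}$, which is $\X^{\times jk} \times \underline{jk} \sq (\Sigma_j \wr \Sigma_k)$. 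The two match via the canonical $\Sigma_j \wr \Sigma_k$-equivariant bijection $\underline{j} \times \underline{k} \simeq \underline{jk}$ coming from the standard inclusion $\Sigma_j \wr \Sigma_k \hookrightarrow \Sigma_{jk}$. One then checks that under this identification, the two covering maps to $A$ both correspond to forgetting the $\underline{j} \times \underline{k}$ factor, and the two evaluation maps to $\X$ both send $((x_{l,m}), i, n) \mapsto x_{i,n}$.

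The main obstacle will be the combinatorial bookkeeping in Step 3: one has to track the $\Sigma_j \wr \Sigma_k$-equivariance carefully when passing between the description of $\widetilde{A}_j$ as $(\Sym^k(\X))^{\times j} \times \underline{j} \sq \Sigma_j$ and as $\X^{\times jk} \times \underline{j} \sq (\Sigma_j \wr \Sigma_k)$, in order to verify that the equivalence of stacks extends to an equivalence of correspondences from $A$ to $\X$. Once this is done, \cref{lem:basic} delivers the desired 2-commutativity for free.
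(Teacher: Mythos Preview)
Your plan is correct and mirrors the paper's strategy: both arguments unfold the two composites using \cref{lem:basic} and then match the resulting intermediate covers of $A=\Sym^j(\Sym^k(\X))$. The difference lies in the choice of intermediate object. You take the honest $2$-pullback
\[
\widetilde{A}_j \times_{\Sym^k(\X)} \widetilde{\Sym^k(\X)}_k \;\simeq\; (\X^{\times jk}\times\underline{j}\times\underline{k})\sq(\Sigma_j\wr\Sigma_k),
\]
which is a $jk$-fold cover of $A$, and compare it directly with $A\times_{\Sym^{jk}\X}\widetilde{\Sym^{jk}\X}\simeq(\X^{\times jk}\times\underline{jk})\sq(\Sigma_j\wr\Sigma_k)$ via the $(\Sigma_j\wr\Sigma_k)$-equivariant bijection $\underline{j}\times\underline{k}\cong\underline{jk}$. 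The paper instead routes through $(((\X^{\times k}\times\underline{k})\sq\Sigma_k)^{\times j}\times\underline{j})\sq\Sigma_j$ and asserts that the relevant squares involving it are $2$-pullbacks. However, that stack is a $k^j$-fold cover of $\widetilde{A}_j$ (one copy of $\underline{k}$ for each of the $j$ factors), not a $k$-fold cover, so those squares are not $2$-pullbacks for $j,k\ge 2$; your intermediate object is the correct one.

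The combinatorial bookkeeping you flag in Step~3 is genuinely the heart of the argument, but it is routine once you write down the $(\Sigma_j\wr\Sigma_k)$-action on $\underline{j}\times\underline{k}$: the outer $\Sigma_j$ acts on $\underline{j}$, and $(\tau_1,\dots,\tau_j)\in\Sigma_k^{\times j}$ sends $(i,n)\mapsto(i,\tau_i(n))$. With this in hand, both covering maps to $A$ forget the $\underline{j}\times\underline{k}$ factor and both evaluation maps to $\X$ send $((x_{l,m}),i,n)\mapsto x_{i,n}$, exactly as you say.
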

\begin{proof}
The proof proceeds along the same lines as the previous one: we will decompose the diagram in the statement according to the definition of the geometric power cooperation and then verify that each of the subdiagrams 2-commute. The expanded diagram is:
\begin{equation}\label{eq:proofcooprelation2}
\xymatrix{
\N(\Sym^j(\Sym^k(\X))) \ar[r]^-{\pi^!} \ar[dd]_{\N(\beta)} & \N((\Sym^k(\X)^{\times j}\times \underline{j})\sq\Sigma_j) \ar[d]^{\pi^!} \ar[r]^-{\ev_*} & \N(\Sym^k(\X)) \ar[d]^{\pi^!} \\
& \N((((\X^{\times k} \times \underline{k})\sq\Sigma_k)^{\times j} \times \underline{j})\sq \Sigma_j) \ar@{-->}[d]^{\N(\widetilde{\beta})} \ar[r]^-{\ev_*} & \N((\X^{\times k}\times \underline{k})\sq \Sigma_k) \ar[d]^{\ev_*} \\
\N(\Sym^{jk}(\X)) \ar[r]_-{\pi^!} & \N((\X^{\times jk}\times \underline{jk})\sq \Sigma_{jk}) \ar[r]_-{ev_*} & \N(\X),
}
\end{equation}
with the map $\widetilde{\beta}$ yet to be constructed. 

To this end, view $\underline{jk}$ as $\underline{k} \times \underline{j}$, let $\underline{jk} \to \underline{j}$ be the resulting projection, and let $\Sigma_k \wr \Sigma_j \to \Sigma_{jk}$ be the canonical inclusion. The wreath product $\Sigma_k \wr \Sigma_j$ acts on $\underline{j}$ through the projection $\Sigma_k \wr \Sigma_j \to \Sigma_j$ and the usual action of $\Sigma_j$ on $\underline{j}$. These maps and actions assemble in a diagram 
\[
\xymatrix{
((\underline{k}\sq \Sigma_k)^{\times j} \times \underline{j})\sq\Sigma_j \ar[r] \ar[d] & \underline{jk}\sq(\Sigma_k \wr \Sigma_j) \ar[r] \ar[d] & \underline{jk}\sq\Sigma_{jk} \ar[dd] \\
((\ast\sq\Sigma_k)^{\times j} \times \underline{j})\sq\Sigma_j \ar[r] \ar[d] & \underline{j}\sq(\Sigma_k \wr \Sigma_j) \ar[d] \\
(\ast\sq\Sigma_k)^{\times j}\sq\Sigma_j \ar[r] & \ast\sq(\Sigma_k \wr \Sigma_j) \ar[r] & \ast\sq\Sigma_{jk}
}
\]
in which every rectangle is a 2-pullback. By using the 2-for-3 property of 2-pullbacks, this diagram induces a 2-pullback diagram
\[
\xymatrix{
(((\X^{\times k} \times \underline{k})\sq\Sigma_k)^j \times \underline{j})\sq \Sigma_j \ar[r]^-{\widetilde{\beta}} \ar[d]_{\pi} & (\X^{\times jk}\times \underline{jk})\sq \Sigma_{jk} \ar[dd]^{\pi}\\
(\Sym^k(\X)^{\times j}\times\underline{j})\sq\Sigma_j \ar[d]_{\pi}  \\
\Sym^j(\Sym^k(\X)) \ar[r]_-{\beta} & \Sym^{jk}(\X).
}
\]
In particular, we have constructed the map $\widetilde{\beta}$ and shown that the left rectangle in \eqref{eq:proofcooprelation2} 2-commutes. Similarly, one can construct a 2-pullback
\[
\xymatrix{
(((\X^{\times k} \times \underline{k})\sq\Sigma_k)^{\times j} \times \underline{j})\sq \Sigma_j \ar[r]^-{\ev} \ar[d]_{\pi} & (\X^{\times k}\times \underline{k})\sq \Sigma_k \ar[d]^{\pi} \\
(\Sym^k(\X)^{\times j}\times \underline{j})\sq\Sigma_j \ar[r]_-{\ev} & \Sym^k(\X)
}
\]
that witnesses the 2-commutativity of the right upper square in \eqref{eq:proofcooprelation2}, while the proof that the right bottom square 2-commutes is left to the reader.
\end{proof}

\begin{lem}\label{lem:cooprelation3}
For stacks $\X,\Y$ and any $j,k\ge 0$, the following diagram 2-commutes:
\[
\xymatrix{
\N(\Sym^k(\X \times \Y)) \ar[rr]^-{\coP_k} \ar[d]_{\N(\delta_k)} & & \N(\X \times \Y) \ar[d] \\
\N(\Sym^k(\X) \times \Sym^k(\Y)) \ar[r]_-{\can} & \N(\Sym^k(\X)) \times \N(\Sym^k(\Y)) \ar[r]_-{\coP_k \times \coP_k} & \N(\X) \times \N(\Y).
}
\]
\end{lem}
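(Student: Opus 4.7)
The plan is to follow the template of the proofs of \Cref{lem:cooprelation1} and \Cref{lem:cooprelation2}: expand each $\coP_k$ as $\ev_{*}\circ\pi^{!}$ and verify 2-commutativity of the resulting subdiagrams via \Cref{lem:basic} and direct inspection. The simplification specific to this lemma is that the right-hand map $\can$ is, by construction \eqref{eq:can}, the pair of pushforwards along the two projections $(\pi_{\X})_{*}$ and $(\pi_{\Y})_{*}$, and likewise the composite $\can\circ\N(\delta_{k})$ is the pair $(\N(\Sym^k(\pi_{\X})),\N(\Sym^k(\pi_{\Y})))$, since projecting $\Sym^k(\X)\times\Sym^k(\Y)$ onto the $\X$-factor and then precomposing with $\delta_k$ recovers $\Sym^k(\pi_{\X})$. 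Since maps into $\N(\X)\times\N(\Y)$ are determined by their two factor projections, the lemma reduces to the single identity
\[
(\pi_{\X})_{*}\circ\coP_k \;=\; \coP_k\circ\N(\Sym^k(\pi_{\X}))\colon \N(\Sym^k(\X\times\Y))\longrightarrow\N(\X),
\]
together with its $\Y$-analogue.

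To establish this identity, I would first check that the square
\[
\xymatrix{
((\X\times\Y)^{\times k}\times\underline{k})\sq\Sigma_k \ar[r]^-{\widetilde{p}} \ar[d]_-{\pi} & (\X^{\times k}\times\underline{k})\sq\Sigma_k \ar[d]^-{\pi} \\
\Sym^k(\X\times\Y) \ar[r]_-{\Sym^k(\pi_{\X})} & \Sym^k(\X)
}
\]
is a 2-pullback, where $\widetilde{p}$ is induced by the $\Sigma_k$-equivariant projection $(\X\times\Y)^{\times k}\simeq\X^{\times k}\times\Y^{\times k}\to\X^{\times k}$ and by the identity on $\underline{k}$. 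This follows from the 2-for-3 property for 2-pullbacks, since both the right-hand column and the outer rectangle obtained by pasting with the defining 2-pullback of the universal cover $\underline{k}\sq\Sigma_k\to\pt\sq\Sigma_k$ along $\Sym^k(\X\times\Y)\to\pt\sq\Sigma_k$ are themselves 2-pullbacks. Given this, \Cref{lem:basic} yields $\pi^{!}\circ\N(\Sym^k(\pi_{\X})) \;=\; \widetilde{p}_{*}\circ\pi^{!}$.

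Finally, I would verify that the triangle $\pi_{\X}\circ\ev = \ev\circ\widetilde{p}$ strictly commutes, which is immediate from the shared formula $((x_j,y_j)_{j=1}^{k},i)\mapsto x_i$ for both composites. Postcomposing the previous identification with $\ev_{*}$ on the two sides then gives $(\pi_{\X})_{*}\circ\coP_k = \coP_k\circ\N(\Sym^k(\pi_{\X}))$, completing the reduction. The main obstacle is bookkeeping rather than content: one must ensure that the 2-morphisms produced by the 2-pullback square and by the compatibility of $\ev_{*}$ with composition paste into a single coherent 2-morphism, but this is ensured by the universal property of 2-pullbacks exactly as in \Cref{lem:cooprelation1,lem:cooprelation2}.
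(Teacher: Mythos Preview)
Your proposal is correct and follows essentially the same approach as the paper: both identify the two 2-pullback squares over the projections to $\Sym^k(\X)$ and $\Sym^k(\Y)$, invoke \Cref{lem:basic}, and check compatibility of the evaluation maps. Your reduction to the factor-wise identity $(\pi_{\X})_{*}\circ\coP_k = \coP_k\circ\N(\Sym^k(\pi_{\X}))$ is a slightly cleaner packaging of what the paper does in its expanded diagram (where the same reduction appears as the top-left triangle $\can\circ\N(\delta)=\N(p)\times\N(p)$), but the content is the same.
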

\begin{proof}
The proof is similar to the arguments used in the previous two lemmas; we omit the details. The key observation is that there exists two 2-pullbacks
\[
\resizebox{\textwidth}{!}{
\xymatrix{
(\X^{\times k}\times\Y^{\times k}\times\underline{k})\sq\Sigma_k \ar[r]^-{\widetilde{p}} \ar[d]_{\pi} & (\X^{\times k}\times\underline{k})\sq\Sigma_k \ar[d]^{\pi} & (\X^{\times k}\times\Y^{\times k}\times\underline{k})\sq\Sigma_k \ar[r]^-{\widetilde{p}} \ar[d]_{\pi} & (\Y^{\times k}\times\underline{k})\sq\Sigma_k \ar[d]^{\pi} \\
\Sym^k(\X\times\Y) \ar[r]_p & \Sym^k(\X) & \Sym^k(\X\times\Y) \ar[r]_p & \Sym^k(\Y).
}
}
\]
This allows us to check that the following expanded diagram 2-commutes:
\[
\resizebox{\textwidth}{!}{
\xymatrix{
& \N(\Sym^k(\X\times\Y)) \ar[r]^-{\pi^!} \ar[dl]_{\N(\delta)} \ar[d]_{\N(p)\times \N(p)} & \N((\X^{\times}\times\Y^{\times k}\times\underline{k})\sq\Sigma_k) \ar[d]_{\N(\widetilde{p})\times\N(\widetilde{p})} \ar[r]^-{\ev_*} & \N(\X\times\Y) \ar[d]^{\can} \\
\N(\Sym^k(\X) \times \Sym^k(\Y)) \ar[r]_-{\can} & \N(\Sym^k(\X)) \times \N(\Sym^k(\Y)) \ar[r]_-{\pi^!\times\pi^!} & \N((\X^{\times k}\times\underline{k})\sq\Sigma_k) \times \N((\Y^{\times k}\times\underline{k})\sq\Sigma_k) \ar[r]_-{\ev_*\times\ev_*} & \N(\X)\times\N(\Y),
}
}
\]
using that the 2-commutativity of the subdiagrams can be established for each of the two factors individually.
\end{proof}

\begin{rmk}
Each of the last three lemmas also admits a proof using $S$-points. To prove these results using $S$-points, one needs an $S$-point description of each of the maps $\N(\alpha_{j,k})$, $\N(\beta_{j,k})$, and $\N(\delta_k)$. For instance, in the case of $\N(\alpha_{j,k})$, a pair
\[
S \leftarrow T \to \Sym^j(\X) \qquad S \leftarrow T' \to \Sym^k(\X)
\]
corresponding to a $j$-fold cover $\widetilde{T}$ of $T$ equipped with a $\Sigma_j$-equivariant map $\widetilde{T}\to \X^j$ and a $k$-fold cover $\widetilde{T'}$ of $T'$ equipped with a $\Sigma_j$-equivariant map $\widetilde{T'}\to \X^k$ is sent to the $j+k$-fold cover $\widetilde{T} \coprod \widetilde{T'}$ and the $\Sigma_{j+k}$-equivariant map $\widetilde{T} \coprod \widetilde{T'} \to \X^{j+k}$.
\end{rmk}

\subsection{Geometric power operations}

The geometric power operation is the operation given by applying multiplicative functions to the geometric power cooperation. In this subsection, we define the geometric power operation and describe its first properties.

\begin{defn} \label{defn:funs}A \emph{function} on a stack is a morphism to the sheaf~$C^\infty(-)$ of $\C$-valued smooth functions on the site of supermanifolds. A function on a symmetric monoidal stack is \emph{multiplicative} if this morphism is symmetric monoidal, where we take multiplication of functions as a symmetric monoidal structure on $C^\infty(-)$. 
\end{defn} 

Applying this to $\N(\X)$, for $\N(-) \subset \M(-)$ cover closed, a function $f\in C^\infty(\N(\X))$ is multiplicative if 
\[
f(T\to \X)=f(T_1\to \X)\cdot f(T_2\to \X)\in C^\infty(S),\qquad 
\]
for any $S$, where $T$ is related to $T_1$ and $T_2$ via an isomorphism in $\N(\X)(S)$ of the form 
\[
\begin{tikzpicture}
\node (A) at (0,-.75) {$S$};
\node (B) at (3,0) {$T_1 \coprod T_2$};
\node (E) at (3,-1.5) {$T$};
\node (G) at (6,-.75) {$\X$.};
\node (H) at (4,-.75) {$\twocommute$};
\draw[->] (B) to (A);
\draw[->] (E) to (A);
\draw[->] (B) to node [left] {$\cong$} (E);
\draw[->] (B) to  (G);
\draw[->] (E) to (G);
\path (0,-.75) coordinate (basepoint);
\end{tikzpicture}\nonumber
\]
We denote the ring of these multiplicative functions by $C^\infty_\otimes (\N(\X))\subset C^\infty(\N(\X))$. 

Since the pushforward along a smooth map $\X \to \Y$ gives a symmetric monoidal functor $\N(\X)\to \N(\Y)$, the induced map on functions preserves the subset of multiplicative functions. The geometric power cooperation $\coP_n$ is symmetric monoidal by \cref{lem:copsym}. Thus we may make the following definition:

\begin{defn}\label{defn:powerop} The \emph{$n^{\rm th}$ geometric power operation} $\P_n$ is the map induced by $\coP_n$ on multiplicative functions, 
\[
\P_n:=\coP_n^*\colon C^\infty_\otimes (\N(\X))\to C^\infty_\otimes (\N(\X^{\times n}\sq \Sigma_n)).
\]
\end{defn}

Next we define the concordance relation on functions. 

\begin{defn} \label{concordance2} Two functions $f_0,f_1\in C^\infty(\N(\X))$ are \emph{concordant} if there exists a function $f\in C^\infty(\N(\X\times \R))$ such that the restrictions along $i_0,i_1\colon \X\hookrightarrow \X\times \R$ to $0,1\in \R$ satisfy
\[
i_0^*f=f_0\qquad i_1^*f = f_1. 
\]
\end{defn}
Concordance defines an equivalence relation (e.g., see~\cite[\S1]{HKST}), and equivalence classes are called \emph{concordance classes} of functions. 

\begin{lem} \label{concordance} 
The $n^{\rm th}$ geometric power operation descends to a map on concordance classes. 
\end{lem}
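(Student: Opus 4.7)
The plan is to lift a concordance between $f_0$ and $f_1$ to a concordance between $\P_n(f_0)$ and $\P_n(f_1)$ by applying $\coP_n$ and then pulling back along a diagonal-induced map. Because $\Sigma_n$ acts trivially on $\R$, the smooth map
\[
\X^{\times n}\times \R \longrightarrow (\X\times \R)^{\times n},\qquad ((x_1,\ldots,x_n),t)\longmapsto ((x_1,t),\ldots,(x_n,t)),
\]
is $\Sigma_n$-equivariant and descends to a map of stacks $\phi\colon \Sym^n(\X)\times \R\to \Sym^n(\X\times \R)$. A direct computation shows that for each $k\in \R$, the composition $\phi\circ(\id_{\Sym^n(\X)}\times\{k\})$ equals $\Sym^n(i_k)$, where $i_k\colon \X\hookrightarrow \X\times \R$ is the inclusion at the point $k$.

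Given a concordance $f\in C^\infty(\N(\X\times \R))$ between $f_0$ and $f_1$, I would take as candidate concordance the function
\[
F:=\phi^*\coP_n^*(f)\in C^\infty(\N(\Sym^n(\X)\times \R)),
\]
noting that $\coP_n^*$ is a priori defined on all (not only multiplicative) functions. To verify that $i_k^*F=\P_n(f_k)$ for $k=0,1$, the key formula $\phi\circ i_k=\Sym^n(i_k)$ gives $i_k^*F=\Sym^n(i_k)^*\coP_n^*(f)$, and so the verification reduces to the naturality of $\coP_n$ in $\X$: for any map of stacks $g\colon \X\to \Y$, the square
\[
\xymatrix{
\N(\Sym^n(\X)) \ar[r]^-{\coP_n^\X} \ar[d]_-{\Sym^n(g)_*} & \N(\X) \ar[d]^-{g_*} \\
\N(\Sym^n(\Y)) \ar[r]_-{\coP_n^\Y} & \N(\Y)
}
\]
2-commutes. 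Specializing this to $g=i_k$ yields $\Sym^n(i_k)^*\P_n(f)=\P_n(i_k^*f)=\P_n(f_k)$, completing the argument.

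The main technical step is the naturality square above, which I expect to be a direct consequence of \cref{lem:basic}. Indeed, both $n$-fold covers $\pi_\X\colon (\X^{\times n}\times \underline{n})\sq\Sigma_n\to \Sym^n(\X)$ and $\pi_\Y\colon (\Y^{\times n}\times \underline{n})\sq\Sigma_n\to \Sym^n(\Y)$ are pulled back from the universal cover $\underline{n}\sq\Sigma_n\to \pt\sq\Sigma_n$, so the square obtained by replacing $\coP_n$ by $\pi^!$ is a 2-pullback and \cref{lem:basic} applies to supply the $\pi^!$-half of naturality. The $\ev_*$-half follows from the obvious 2-commutativity of the evaluation squares in $\X$ and $\Y$. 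Composing these naturality statements via the definition $\coP_n=\ev_*\circ \pi^!$ yields the 2-commutativity needed to conclude.
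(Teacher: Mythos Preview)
Your proof is correct and follows essentially the same approach as the paper: your map $\phi$ is exactly the paper's diagonal $\Delta\colon (\X^{\times n}\sq\Sigma_n)\times\R\to (\X\times\R)^{\times n}\sq\Sigma_n$, and your candidate concordance $F=\phi^*\coP_n^*(f)$ is precisely the paper's $\Delta^*\P_n(f)$. The only difference is that where the paper invokes ``naturality of the construction of $\P_n$'' without elaboration, you spell out the naturality square for $\coP_n$ explicitly and derive it from \cref{lem:basic}; this makes your argument slightly more detailed but not genuinely different.
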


\bp
Naturality of the construction of $\P_n$ gives the commuting square on the left
\beq
\begin{tikzpicture}
\node (A) at (0,0) {$C^\infty_\otimes (\N(\X\times \R))$};
\node (B) at (5,0) {$C^\infty_\otimes (\N((\X\times \R)^{\times n}\sq \Sigma_n))$};
\node (C) at (0,-1.5) {$C^\infty_\otimes (\N(\X))$};
\node (D) at (5,-1.5) {$C^\infty_\otimes (\N(\X^{\times n}\sq \Sigma_n))$};
\node (E) at (10,-.75) {$C^\infty_\otimes (\N(\X^{\times n}\sq \Sigma_n\times \R))$,};
\draw[->] (A) to node [above] {$\P_n$} (B);
\draw[->] (C) to node [below]  {$\P_n$} (D);
\draw[->] (B) to node [right] {$j_0^*,j_1^*$} (D);
\draw[->] (A) to node [left] {$i_0^*,i_1^*$} (C);
\draw[->] (B) to node [above] {$\Delta^*$} (E);
\draw[->] (E) to node [below] {$i_0^*,i_1^*$} (D);
\end{tikzpicture}\nonumber
\eeq
where $j_0,j_1\colon \X^{\times n}\sq \Sigma_n \to (\X\times \R)^{\times n}\sq \Sigma_n$ include at $(0,0,\dots, 0)\in \R^n$ and $(1,1,\dots,1)\in \R^n$, respectively. The triangle on the right commutes because $j_0$ and $j_1$ factor through the map $\Delta\colon (\X^{\times n}\sq \Sigma_n) \times \R\to (\X\times \R)^{\times n}\sq \Sigma_n$ induced by the diagonal $\R\to \R^n$. In particular, this diagram sends a concordance $f\in C^\infty_\otimes (\N(\X\times \R))$ between functions $f_0,f_1\in C^\infty_\otimes (\N(\X))$ to a concordance $\Delta^*\P_n(f)\in C^\infty_\otimes (\N(\X^{\times n}\sq \Sigma_n\times \R))$ between functions $\P_n(f_0),\P_n(f_1)\in C^\infty_\otimes (\N(\X^{\times n}\sq \Sigma_n))$. Therefore the geometric power operation preserves concordance, so is well-defined on concordance classes of functions. 
\ep

Our next goal is to show that the geometric power operation restricts to functions on moduli stacks of super tori. We start with the following lemma which is a consequence of the definition of a free symmetric monoidal stack, see \Cref{def:symstack}.

\begin{lem} \label{lem:multrestr}
For any stack $\X$, restriction of functions induces natural isomorphisms
\[
C^\infty_\otimes(\Sym(\mathcal{L}^{d|\delta}(\X))) \cong C^\infty(\mathcal{L}^{d|\delta}(\X)) \qquad \text{and} \qquad C^\infty_\otimes(\Sym(\mathcal{L}_0^{d|\delta}(\X))) \cong C^\infty(\mathcal{L}_0^{d|\delta}(\X)).
\]
%A multiplicative function on $\Sym(\mathcal{L}^{d|\delta}(\X))$ (respectively, $\Sym(\mathcal{L}^{d|\delta}_0(\X))$) is determined by its restriction to the subcategory $\mathcal{L}^{d|\delta}(\X)$ (respectively, $\mathcal{L}^{d|\delta}_0(\X)$). 
\end{lem}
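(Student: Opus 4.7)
The plan is to leverage the universal property of the free symmetric monoidal stack. Recall that $\Sym(\mathcal{L}^{d|\delta}(\X))$ is defined so that, for any symmetric monoidal stack $\mathcal{C}$, restriction along the canonical inclusion $\iota \colon \mathcal{L}^{d|\delta}(\X) \hookrightarrow \Sym(\mathcal{L}^{d|\delta}(\X))$ (as the weight-one summand, i.e., the image of $\Sym^1$) induces a natural equivalence between symmetric monoidal functors $\Sym(\mathcal{L}^{d|\delta}(\X)) \to \mathcal{C}$ and arbitrary maps of stacks $\mathcal{L}^{d|\delta}(\X) \to \mathcal{C}$. Since multiplicative functions are by \Cref{defn:funs} precisely the symmetric monoidal functors into $C^\infty(-)$ equipped with its multiplicative monoidal structure, applying this universal property with $\mathcal{C} = C^\infty(-)$ immediately yields the desired bijection at the level of sets.

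More concretely, first I would define the restriction map $\iota^* \colon C^\infty_\otimes(\Sym(\mathcal{L}^{d|\delta}(\X))) \to C^\infty(\mathcal{L}^{d|\delta}(\X))$ by pulling back along $\iota$. Then, for the inverse, given $f \in C^\infty(\mathcal{L}^{d|\delta}(\X))$ I would construct $\widetilde{f} \in C^\infty_\otimes(\Sym(\mathcal{L}^{d|\delta}(\X)))$ by the formula
\[
\widetilde{f}(S \leftarrow T_1 \sqcup \cdots \sqcup T_k \to \X) = \prod_{i=1}^{k} f(S \leftarrow T_i \to \X) \in C^\infty(S),
\]
where the right-hand side uses the ordinary product of smooth functions on~$S$. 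This is well-defined on $S$-points of $\Sym(\mathcal{L}^{d|\delta}(\X))$, since the objects there come with (local) decompositions into super tori, and the product is invariant under the $\Sigma_k$-action by permutation; it glues to a morphism of stacks by descent because both sides are sheaves of smooth functions. The assignment $\widetilde{f}$ is manifestly multiplicative with respect to disjoint union, hence lands in $C^\infty_\otimes$.

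Finally, I would check that $\iota^*$ and $f \mapsto \widetilde{f}$ are mutual inverses. The composite $\iota^*(\widetilde{f}) = f$ is immediate from the formula evaluated on a connected object, while the composite $\widetilde{\iota^* g} = g$ for $g \in C^\infty_\otimes(\Sym(\mathcal{L}^{d|\delta}(\X)))$ follows because any object of $\Sym(\mathcal{L}^{d|\delta}(\X))(S)$ is locally a disjoint union of objects of $\mathcal{L}^{d|\delta}(\X)(S)$, and both $g$ and $\widetilde{\iota^* g}$ agree on such disjoint unions by the multiplicativity of $g$. The argument for $\mathcal{L}_0^{d|\delta}(\X)$ is word-for-word identical, replacing $\mathcal{L}^{d|\delta}(\X)$ by its full substack $\mathcal{L}_0^{d|\delta}(\X)$ of essentially constant super tori throughout.

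The only potentially delicate point is ensuring that the extension $\widetilde{f}$ is well-defined as a morphism of stacks rather than just as a set-theoretic assignment on objects: one must verify functoriality under permutations of components and compatibility with the descent data presenting objects of $\Sym(\mathcal{L}^{d|\delta}(\X))$ as local disjoint unions. However, this is exactly the content of the universal property of $\Sym$ as formulated in \Cref{def:symstack}, so no separate argument is required once that universal property is invoked.
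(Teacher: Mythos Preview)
Your proposal is correct and matches the paper's approach: the paper does not give a proof beyond the single sentence ``which is a consequence of the definition of a free symmetric monoidal stack, see \Cref{def:symstack},'' and your argument simply spells out that consequence by invoking the free--forgetful adjunction with $\mathcal{C}=C^\infty(-)$ and checking the inverse explicitly.
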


Applying $C^\infty_\otimes(-)$ to the restriction of the geometric power cooperation of \eqref{eq:toripower}, the $n^{\rm th}$ geometric power operation gives a map
\beq
C^\infty_\otimes(\Sym(\mathcal{L}^{d|\delta}(\X)))\to C^\infty_\otimes(\Sym(\mathcal{L}^{d|\delta}(\X^{\times n}\sq \Sigma_n))). \label{eq:toripowerfn}
\eeq
By \Cref{lem:multrestr}, the multiplicative functions on the source and on the target are determined by their restriction to $\mathcal{L}^{d|\delta}(\X)$. From \Cref{cor:cooprestriction}, we thus obtain:

\begin{prop}\label{prop:oprestriction}
For any stack $\cX$, the $n^{\rm th}$ geometric power operation restricts to functions on moduli spaces of super tori, i.e., the following diagram commutes:
\[
\xymatrix{C^\infty_\otimes (\M(\X))\ar[r]^-{\P_n} \ar[d] & C^\infty_\otimes (\M(\X^{\times n}\sq \Sigma_n)) \ar[d] \\
C^\infty(\mathcal{L}^{d|\delta}(\X))\ar@{-->}[r]_-{\P_n} & C^\infty(\mathcal{L}^{d|\delta}(\X^{\times n}\sq \Sigma_n)).
}
\]
\end{prop}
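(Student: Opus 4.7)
The plan is to deduce the diagram by applying $C^\infty_\otimes(-)$ contravariantly to the (analogue for $\mathcal{L}^{d|\delta}$ of the) square in \cref{cor:cooprestriction} and then using \cref{lem:multrestr} to translate the bottom row into ordinary functions on the moduli stack of super tori.

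First I would observe that the left-hand vertical arrow in the statement factors through the category of multiplicative functions on the free symmetric monoidal substack. Concretely, the inclusion $\mathcal{L}^{d|\delta}(\X)\hookrightarrow \M(\X)$ decomposes canonically as
\[
\mathcal{L}^{d|\delta}(\X)\hookrightarrow \Sym(\mathcal{L}^{d|\delta}(\X))\hookrightarrow \M(\X),
\]
with the second inclusion symmetric monoidal, so that restriction of multiplicative functions along the composite is the same as restriction along the second inclusion followed by the natural isomorphism of \cref{lem:multrestr}. Because the inclusion $\Sym(\mathcal{L}^{d|\delta}(\X))\hookrightarrow \M(\X)$ is symmetric monoidal, pullback indeed preserves multiplicativity.

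Next I would apply $C^\infty_\otimes(-)$ to the 2-commutative square of \cref{cor:cooprestriction} to obtain a commutative square
\[
\xymatrix{
C^\infty_\otimes(\M(\X)) \ar[r]^-{\P_n} \ar[d] & C^\infty_\otimes(\M(\X^{\times n}\sq \Sigma_n)) \ar[d] \\
C^\infty_\otimes(\Sym(\mathcal{L}^{d|\delta}(\X))) \ar[r]_-{\coP_n^*} & C^\infty_\otimes(\Sym(\mathcal{L}^{d|\delta}(\X^{\times n}\sq \Sigma_n))),
}
\]
whose horizontal arrows are the power operations $\P_n=\coP_n^*$ by \cref{defn:powerop}. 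The bottom horizontal map is well-defined on multiplicative functions because $\coP_n$ is symmetric monoidal (\cref{lem:copsym}) and restricts to a symmetric monoidal functor between the $\Sym(\mathcal{L}^{d|\delta}(-))$'s.

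Finally, I would invoke \cref{lem:multrestr} to identify the bottom row with $C^\infty(\mathcal{L}^{d|\delta}(-))$, which turns the square into the one in the statement; the induced bottom map is by construction the desired dashed $\P_n$. The argument is essentially a formal consequence of the results already established, so I do not expect a serious obstacle; the only point that warrants explicit verification is that the composite restriction through $\Sym(\mathcal{L}^{d|\delta}(\X))$ matches the restriction along the direct inclusion $\mathcal{L}^{d|\delta}(\X)\hookrightarrow \M(\X)$, and this is immediate from the canonical factorization recorded above.
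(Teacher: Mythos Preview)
Your proposal is correct and follows essentially the same route as the paper: apply $C^\infty_\otimes(-)$ to the square of \cref{cor:cooprestriction} and then invoke \cref{lem:multrestr} to replace $C^\infty_\otimes(\Sym(\mathcal{L}^{d|\delta}(-)))$ by $C^\infty(\mathcal{L}^{d|\delta}(-))$. The paper presents this argument even more tersely, packaging it into the paragraph immediately preceding the proposition; your write-up just makes the factorization through $\Sym(\mathcal{L}^{d|\delta}(\X))$ and the use of symmetric monoidality explicit.
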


We emphasize that $\P_n$ usually does not respect addition of functions, so it is not a map of algebras. 

Finally, we will use the results of \cref{sec:consistentcoop} to deduce relations among geometric power operations analogous to the relations afforded by homotopical power operations. Applying $\N(-)$ to the functions of stacks \eqref{eq:symalpha}, \eqref{eq:symbeta}, and \eqref{eq:symdelta} induces maps on functions which we will denote by $\alpha^*$, $\beta^*$, and $\delta^*$, respectively. With this notation, we obtain the following relations for the geometric power operations.

\begin{prop}\label{prop:opconsistenttheory}
Let $j,k \ge 0$ be integers, $\X$ and $\Y$ stacks, and let $\N(-) \subseteq \M(-)$ be cover closed. For $x \in C^\infty_\otimes (\N(\X))$ and $y \in C^\infty_\otimes (\N(\Y))$, we have: 
    \begin{enumerate}
        \item $\alpha^*\P_{j+k}(x) = \P_j(x)\P_k(x) \in C^\infty_\otimes (\N(\Sym^j(\X) \times \Sym^k(\X)))$.
        \item $\beta^*\P_{jk}(x) = \P_j(\P_k(x)) \in C^\infty_\otimes (\N(\Sym^j(\Sym^k(\X))))$.
        \item $\delta^*(\P_k(x)\P_k(y)) = \P_k(xy) \in C^\infty_\otimes(\N(\Sym^k(\X \times \Y)))$.
    \end{enumerate}
\end{prop}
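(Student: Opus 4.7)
The plan is to deduce each of the three identities from the dual cooperation relations already established, namely \cref{lem:cooprelation1,lem:cooprelation2,lem:cooprelation3}. The key observation is that applying the contravariant functor $C^\infty_\otimes(-)$ to a $2$-commuting square of symmetric monoidal stacks produces a commuting square of sets of multiplicative functions, in which each pushforward $f_*$ becomes the pullback $f^*$. Under this functor, the $n^{\rm th}$ geometric power cooperation $\coP_n$ becomes the $n^{\rm th}$ geometric power operation $\P_n = \coP_n^*$ by \cref{defn:powerop}, while the maps $\N(\alpha_{j,k})$, $\N(\beta_{j,k})$ and $\N(\delta_k)$ become $\alpha^*$, $\beta^*$ and $\delta^*$ by definition. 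Two further facts will be used repeatedly: (i) for any multiplicative function $f\in C^\infty_\otimes(\N(\X))$, the pullback $\nabla^* f \in C^\infty(\N(\X)\times \N(\X))$ along the fold map $\nabla$ of \eqref{eq:nabla} equals the external product, since multiplicativity gives $(\nabla^* f)(T_1,T_2) = f(T_1\coprod T_2) = f(T_1)f(T_2)$; and (ii) for a product of stacks, the product of two multiplicative functions $\P_j(x)\P_k(x)$ and $\P_k(x)\P_k(y)$ referenced in the statement is understood as the pullback of the corresponding external product along the map $\can$ of \eqref{eq:can}.

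For part~(1), I would apply $C^\infty_\otimes(-)$ to the diagram of \cref{lem:cooprelation1} and chase an input $x \in C^\infty_\otimes(\N(\X))$ around both paths. Along the upper-left route, $x$ is sent first to $\P_{j+k}(x) = \coP_{j+k}^* x$ and then to $\alpha^*\P_{j+k}(x)$. Along the lower-right route, $x$ is sent to $\nabla^* x$, which by (i) is the external product $x \boxtimes x$; then $(\coP_j \times \coP_k)^*$ carries this to $\P_j(x)\boxtimes \P_k(x)$, and finally $\can^*$ produces precisely the product $\P_j(x)\P_k(x)$ in the interpretation (ii). The $2$-commutativity from \cref{lem:cooprelation1} yields the desired equality.

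Parts~(2) and~(3) proceed in the same way, applying $C^\infty_\otimes(-)$ to the diagrams of \cref{lem:cooprelation2} and \cref{lem:cooprelation3} respectively. For (2), chasing $x$ through the dualized diagram gives $\beta^*\P_{jk}(x)$ on one side and the iterated pullback $\coP_j^*\coP_k^* x = \P_j(\P_k(x))$ on the other. For (3), starting with $x \in C^\infty_\otimes(\N(\X))$ and $y \in C^\infty_\otimes(\N(\Y))$, one first forms $x\boxtimes y$, whose restriction along $\can$ represents the product $xy \in C^\infty_\otimes(\N(\X\times\Y))$ by (ii); applying $\P_k = \coP_k^*$ and chasing the dualized diagram of \cref{lem:cooprelation3} yields $\P_k(xy)$ on one side and $\delta^*(\P_k(x)\P_k(y))$ on the other.

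The substantive work has already been done in \cref{sec:consistentcoop}; the only thing to check carefully is the dictionary between pushforwards of stacks and pullbacks of multiplicative functions, in particular observation~(i) about $\nabla^*$, since this is the one place where the \emph{multiplicative} nature of the functions (as opposed to general smooth functions) is essential. This is therefore the main, though ultimately brief, obstacle; the rest of the argument is formal once the chase of $x$ and $y$ through each $2$-commutative diagram is correctly interpreted.
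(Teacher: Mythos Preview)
Your proposal is correct and follows essentially the same approach as the paper: the paper's proof is simply ``The relations follow immediately from \cref{lem:cooprelation1}, \cref{lem:cooprelation2}, and \cref{lem:cooprelation3} by passing to multiplicative functions.'' You have spelled out in more detail exactly what ``passing to multiplicative functions'' entails, in particular the observations about $\nabla^*$ and $\can^*$, but the strategy is the same.
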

\begin{proof}
The relations follow immediately from \cref{lem:cooprelation1}, \cref{lem:cooprelation2}, and \cref{lem:cooprelation3} by passing to multiplicative functions. 
\end{proof}

In the situation of \cref{prop:opconsistenttheory}, we say that the collection $\{\P_k\}_{k \ge 0}$ is a \emph{consistent set of geometric power operations}. This is in analogy to the terminology used in \cite[Chapter VIII,\S1]{bmms}, where it is shown that the data of a consistent set of power operations for a ring spectrum $E$ is equivalent to the data of an $H_{\infty}^0$-structure on $E$, see Proposition 1.2 in \cite[Chapter VIII]{bmms}.

\subsection{Extension to bordism categories}\label{sec:bordext}

In this section, we sketch the construction of geometric power cooperations on bordism categories as in~\eqref{eq:powercoopFT}, following the discussion in~\cref{sec:introFT}.

Ignoring some important technical details, we recall that Stolz and Teichner define an $S$-family of geometric bordisms over~$\X$ to be a triple conisting of: (1) a proper $S$-family $B\to S$ of $d|\delta$-dimensional supermanifolds with $\M$-structure, (2) a map of stacks~$\phi\colon  B \to \X$, and (3) incoming and outgoing boundary data specified by $(d-1)|\delta$-dimensional super manifolds~$Y_{\rm in}\sqcup Y_{\rm out}= \partial B\hookrightarrow B$ over $S$. The $\M$-structure determines the super dimension $d|\delta$ of the bordisms~$B$. The \emph{geometric bordism category} is a category internal to symmetric monoidal stacks, $\Bord^{d|\delta}(\X)$, whose morphism stack is given by the above triples and whose object stack is given by $S$-families of supermanifolds~$Y\to S$ equipped with a collar; see~\cite[Defintions~2.13, 2.21, 2.46, 4.4]{ST11} in the case~$\X=\pt$. 

The construction of the geometric power cooperation 
\[
\xymatrix{\coP_n\colon \Bord^{d|\delta}(\X^{\times n}\sq \Sigma_n) \ar@{.>}[r] & \Bord^{d|\delta}(\X)}
\]
should proceed as follows. Given an $S$-family of bordisms over $\X^{\times n}\sq \Sigma_n$, pulling back the universal $n$-sheeted covering $(\X^{\times n}\times \underline{n})\sq \Sigma_n\to \X^{\times n}\sq \Sigma_n$ and post-composing with the evaluation map to $\X$ yields a bordism $\widetilde{\phi}\colon \widetilde{B}\to \X$ with incoming and outgoing boundary $\widetilde{Y}_{\rm in}\sqcup \widetilde{Y}_{\rm out} \hookrightarrow \widetilde{B}$, summarized by the 2-commuting diagram:
\beq
\begin{tikzpicture}[baseline=(basepoint)]
\node (A) at (0,0) {$\widetilde{B}$};
\node (B) at (3,0) {$(\X^{\times n} \times \underline{n})\sq \Sigma_n$};
\node (D) at (0,-1.5) {$B$};
\node (E) at (3,-1.5) {$\X^{\times n}\sq \Sigma_n$.};
\node (F) at (6,0) {$\X$};
\node (G) at (-2,-1.5) {$Y_{\rm in}\sqcup Y_{\rm out}$};
\node (H) at (-2,0) {$\widetilde{Y}_{\rm in}\sqcup \widetilde{Y}_{\rm out}$};
\draw (.25,-.75) -- (.25,-.5) -- (.5,-.5);
\draw (-1.5,-.75) -- (-1.5,-.5) -- (-1.25,-.5);
\draw[->] (A) to  (B);
\draw[->] (A) to (D);
\draw[->] (B) to (E);
\draw[->] (D) to node [below] {$\phi$} (E);
\draw[->] (B) to node [below] {$\ev$} (F);
\draw[->,right hook-latex] (G) to (D);
\draw[->] (H) to (G);
\draw[->,right hook-latex] (H) to (A);
\draw[->,bend left=20] (A) to node [above] {$\widetilde{\phi}$} (F);
\path (0,-.75) coordinate (basepoint);
\end{tikzpicture}\label{eq:poweropFTdiag}
\eeq 
The operation is well-defined on geometric bordisms: if $Y_{\rm in}\sqcup Y_{\rm out}$ comprise the boundary of~$B$, $\widetilde{Y}_{\rm in}\sqcup \widetilde{Y}_{\rm out}$ is the boundary of $\widetilde{B}$; and if $B$ is equipped with a model geometry, $\widetilde{B}$ has a uniquely determined model geometry (see~\Cref{rmk:covermodel}). Furthermore, the operation is symmetric monoidal: a cover of a disjoint union is canonically isomorphic to a disjoint union of covers. 

To extend the above sketch to the asserted power cooperation on geometric bordism categories $\xymatrix{\coP_n\colon \Bord^{d|\delta}(\X^{\times n}\sq \Sigma_n) \ar@{.>}[r] & \Bord^{d|\delta}(\X)}$, one encounters a problem: the structure maps (source, target, unit, and composition) in Stolz and Teichner's definition of geometric bordism category are required to satisfy certain strict conditions. For example, composition is defined on the strict fibered product of morphisms over objects~\cite[pg.~20]{ST11}, and a functor between bordism categories is required to be strictly compatible with source and target maps~\cite[Definition~2.18]{ST11}. Because the construction of~$\coP_n$ requires 2-pullbacks in stacks, composition in $\Bord^{d|\delta}(\X^{\times n}\sq \Sigma_n)$ and $\Bord^{d|\delta}(\X)$ cannot be made strictly compatible with~$\coP_n$. Geometrically, this is because the $n$-fold cover of a boundary is canonically isomorphic (but not equal) to the boundary of an $n$-fold cover. Hence, the power cooperation only determines a map between the 2-fibered products defining composition, not the strict ones as required in~\cite[Definition~2.18]{ST11}. For the same geometric reason,~$\coP_n$ cannot be made strictly compatible with source and target maps. However, we expect that a weakening of Stolz and Teichner's definition leads to a closely related geometric bordism category on which the power cooperation is defined. 

There is an unambiguous piece of this proposed extension of Stolz and Teichner's definition, namely the subcategory of closed bordisms, $\Bord^{d|\delta}_{\rm c}(\X)$. The subtleties described above disappear on this subcategory, e.g., composing along the empty bordism is the same as the disjoint union. The sketch of~$\coP_n$ in diagram~\eqref{eq:poweropFTdiag} restricted to $\Bord^{d|\delta}_{\rm c}(\X)$ is then equivalent to the definition of~$\coP_n$ given earlier in the section.

\section{Computing geometric power operations using an atlas} \label{sec:Section3}

In this section, we provide tools for calculating the geometric power operations. We will apply these in the subsequent sections to complexified equivariant $K$-theory and complexified equivariant elliptic cohomology. We specialize to a global quotient stack $\X=[X\sq G$]. The primary goal of this section is to describe an atlas for the stack $\mathcal{L}_{0}^{d|\delta}(X\sq G)$ and produce a map of atlases covering the geometric power cooperation
\[
\coP_n\colon \Sym(\mathcal{L}_{0}^{d|\delta}((X\sq G)^{\times n}\sq \Sigma_n))\to \Sym(\mathcal{L}_{0}^{d|\delta}(X\sq G)). 
\]
We will use this map of atlases to give explicit formulas for the geometric power operations in the cases of interest.

\subsection{Super tori over $X\sq G$} \label{sec:tori}

The goal of this section is to gain an understanding of the local structure of the stacks $\mathcal{L}^{d|\delta}(X\sq G)$ and $\mathcal{L}_{0}^{d|\delta}(X\sq G)$.

Let $S$ be a supermanifold. The 2-functor from the 2-category of Lie groupoids to the 2-category of stacks gives a morphism of groupoids
\beq
{\sf Grpd}((S\times\R^{d|\delta})\sq \uL,X\sq G) \to {\Stack}((S\times \R^{d|\delta})/\uL,[X\sq G]), \label{eq:grpdequiv}
\eeq
that sends the groupoid of functors between Lie groupoids to the groupoid of maps between their corresponding stacks, see \cref{ex:liestackcomparison}; here we have also used the equivalence of stacks $(S\times \R^{d|\delta})/\uL\simeq [(S\times \R^{d|\delta})\sq \uL]$ from \eqref{eq:representablequotient}.

\begin{lem} \label{lem:stackgrpd}
Locally in $S$, the functor~\eqref{eq:grpdequiv} is an equivalence of groupoids.
\end{lem}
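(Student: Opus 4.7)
The plan is to identify both the source and target of \eqref{eq:grpdequiv} with explicit geometric data on $S\times\R^{d|\delta}$ and then match these up after shrinking $S$. Using \eqref{eq:representablequotient} and the description in \cref{ex:globalquotient}, an object of ${\Stack}((S\times\R^{d|\delta})/\uL,[X\sq G])$ amounts to a $\uL$-equivariant principal $G$-bundle $\widetilde{E}\to S\times\R^{d|\delta}$ together with a $(\uL\times G)$-equivariant map $\widetilde{E}\to X$. A smooth functor $(S\times\R^{d|\delta})\sq\uL \to X\sq G$ of Lie groupoids is a pair $(\phi,\psi)$ consisting of $\phi\colon S\times\R^{d|\delta}\to X$ and a cocycle $\psi\colon S\times\R^{d|\delta}\times\uL \to G$ satisfying $\phi(\lambda\cdot s)=\psi(s,\lambda)\cdot\phi(s)$ and the usual cocycle identity in $\uL$; this is the same data as the previous description together with a choice of trivialization of the underlying principal $G$-bundle.

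For essential surjectivity, locally in $S$ we can trivialize $\widetilde{E}$: since $G$ is finite, a principal $G$-bundle is a finite $G$-cover, and such a cover trivializes over any contractible base. Restricting to a contractible open $U\subset S$ (which exists about any point of $S$), the bundle $\widetilde{E}|_{U\times\R^{d|\delta}}$ is trivializable because $U\times\R^{d|\delta}$ is contractible. A choice of trivialization converts the $\uL$-equivariant structure into a cocycle $\psi$ and the equivariant map into $\phi$, producing a Lie groupoid functor whose image under \eqref{eq:grpdequiv} is isomorphic to the given stack morphism. For full faithfulness, a morphism on the stack side is an isomorphism of $\uL$-equivariant bundles compatible with the maps to $X$; once trivializations are fixed on both sides, such an isomorphism is uniquely encoded by a smooth function $U\times\R^{d|\delta}\to G$, which is precisely the data of a smooth natural transformation between the corresponding Lie groupoid functors.

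The hard part is the trivialization step, and it is the reason the lemma is only a local statement: in general, the principal $G$-bundle associated to a stack morphism is nontrivial over $S\times\R^{d|\delta}$, so no global lift to a Lie groupoid functor exists. Once triviality over contractible opens $U\times\R^{d|\delta}$ is granted, the remainder is a routine unwinding of how smooth functors between action groupoids correspond to stack morphisms between their quotient stacks, noting that natural transformations on both sides are captured by the same $G$-valued function data.
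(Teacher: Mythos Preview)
Your proof is correct and follows essentially the same strategy as the paper's: both reduce to trivializing the underlying principal $G$-bundle locally in $S$ and then identifying the remaining data with that of a Lie groupoid functor. The only cosmetic difference is that you work with the pulled-back $\uL$-equivariant bundle on $S\times\R^{d|\delta}$ and invoke contractibility explicitly, whereas the paper phrases things on the quotient $(S\times\R^{d|\delta})/\uL$ and appeals to bibundle language for full faithfulness; note also that the paper observes full faithfulness holds for all $S$ (not just locally), which your argument in fact also gives since objects in the image of \eqref{eq:grpdequiv} come with canonical trivializations.
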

\begin{proof}
The functor~\eqref{eq:grpdequiv} admits an explicit description. An object in the source groupoid is a pair of maps
\[
(\phi_{\rm ob}\colon S\times \R^{d|\delta}\to X, \phi_{\rm mor}\colon S\times \R^{d|\delta}\times \uL\to X \times G)
\] 
that fit together to define a functor between Lie groupoids. Similarly to \cref{ex:globalquotient}, the image under~\eqref{eq:grpdequiv} is the ``bundlization" of a functor between Lie groupoids, which in this case is a principal $G$-bundle over $(S\times \R^{d|\delta})/\uL$ with a $G$-equivariant map to~$X$:
\beq
(S\times \R^{d|\delta})/\uL \leftarrow (S\times \R^{d|\delta}\times G)/\uL \to X.\label{eq:Gbundconstr}
\eeq
The left arrow is the obvious projection, whereas the right arrow comes from the $\uL$-equivariant map
\[
\xymatrixcolsep{3.5pc}{
\xymatrix{S\times \R^{d|\delta}\times G \ar[r]^-{\phi_{\rm ob}\times \id_G} & X\times G \ar[r]^-{{\rm act}} & X}}
\]
for the trivial $\uL$-action on $X$ and the $\uL$-action on the trivial $G$-bundle from $S\times \R^{d|\delta}\times \uL\to X\times G\to G$ covering the $\uL$-action on $S\times \R^{d|\delta}$. 
See, for example,~\cite[Examples~16 and~17]{schommerpries_centralext} for details. 

We must show that the functor~\eqref{eq:grpdequiv} is locally fully faithful and essentially surjective. It follows from the definition of a map of bibundles (see \cite[Definition 20]{schommerpries_centralext}) that the map is fully faithful (for all $S$).

To see that it is essentially surjective we will need to work locally in $S$. Given an arbitrary principal $G$-bundle $P\to (S\times \R^{d|\delta})/\uL$, there exists an open cover $(S_i)$ of~$S$ such that $P|_{S_i}\simeq (S_i\times \R^{d|\delta}\times G)/\uL$, i.e., is of the form \eqref{eq:Gbundconstr}. Hence the asserted functor is essentially surjective locally in $S$. 
\end{proof}

Precomposition with the quotient map 
\[
(S\times\R^{d|\delta})\sq \uL \to ((S\times\R^{d|\delta})/\bE^d)\sq \uL \cong (S\times\R^{0|\delta})\sq \uL,
\] 
gives a functor
\[
{\sf Grpd}((S\times\R^{0|\delta})\sq \uL,X\sq G) \to {\sf Grpd}((S\times\R^{d|\delta})\sq \uL,X\sq G).
\]
By taking $\bE^d$-invariant maps (see \Cref{defn:edinvariantmaps} and \cref{rmk:edinvariantmaps}) in ~\eqref{eq:grpdequiv}, we have the following corollary of \cref{lem:stackgrpd}.

\begin{cor} \label{cor:stackgrpd}
Locally in $S$, the functor~\eqref{eq:grpdequiv} induces an equivalence of groupoids
\[
{\sf Grpd}((S\times\R^{0|\delta})\sq \uL,X\sq G)\stackrel{\sim}{\to} {\Stack}([(S\times \R^{0|\delta})\sq\uL],[X\sq G]).
\] 
\end{cor}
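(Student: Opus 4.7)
The plan is to deduce the corollary from \cref{lem:stackgrpd} by restricting the equivalence there to $\bE^d$-invariant maps on both sides, in the sense of \cref{rmk:edinvariantmaps}.

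First, I would identify the source and target of the corollary's functor with full subgroupoids of the source and target of \eqref{eq:grpdequiv}, via precomposition with the Lie groupoid quotient $(S\times\R^{d|\delta})\sq\uL \to (S\times\R^{0|\delta})\sq\uL$ and with the stack quotient \eqref{eq:factorthis}, respectively. On the groupoid side, this identifies ${\sf Grpd}((S\times\R^{0|\delta})\sq\uL,X\sq G)$ with the full subgroupoid of Lie groupoid morphisms whose object and morphism components are constant in the $\bE^d$-direction, by the universal property of the free $\bE^d$-quotient $S\times\R^{d|\delta}\to S\times\R^{0|\delta}$. On the stack side, it identifies ${\Stack}([(S\times\R^{0|\delta})\sq\uL],[X\sq G])$ with the $\bE^d$-invariant maps of \Cref{defn:edinvariantmaps}, via descent along \eqref{eq:factorthis}.

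Next, I would verify that \eqref{eq:grpdequiv} matches these two subgroupoids: the bundlization \eqref{eq:Gbundconstr} of a Lie groupoid morphism that is constant in the $\bE^d$-direction is manifestly an $\bE^d$-invariant principal $G$-bundle equipped with an $\bE^d$-invariant $G$-equivariant map to $X$. Hence the restriction of \eqref{eq:grpdequiv} yields the functor of the corollary, and full faithfulness is inherited from \cref{lem:stackgrpd}. For essential surjectivity locally in $S$, given an $\bE^d$-invariant principal $G$-bundle on $(S\times\R^{d|\delta})/\uL$ with equivariant map to $X$, \cref{lem:stackgrpd} produces, on some open cover $(S_i)$ of $S$, a Lie groupoid morphism $(S_i\times\R^{d|\delta})\sq\uL\to X\sq G$ representing it. After possibly refining the cover, I would arrange that this morphism is itself constant in the $\bE^d$-direction: the $\bE^d$-invariance of the bundle provides coherent isomorphisms between its pullbacks along $\bE^d$-translates, which can be absorbed, via fullness of \eqref{eq:grpdequiv}, into an $\bE^d$-equivariant change of local trivialization.

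The main obstacle is this last step: upgrading a merely local trivialization of an $\bE^d$-invariant bundle to an $\bE^d$-equivariant one. It is tractable because the essential surjectivity in \cref{lem:stackgrpd} was established by choosing a local trivialization in the $S$-direction alone, and this construction decouples from the $\R^{d|\delta}$-direction along which $\bE^d$-invariance is imposed. Consequently, on a sufficiently refined cover of $S$, the chosen trivialization can be replaced by one whose associated Lie groupoid morphism factors through $(S_i\times\R^{0|\delta})\sq\uL$, producing the required preimage and completing the proof.
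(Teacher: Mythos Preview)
Your approach is correct and is precisely the one the paper indicates: the paper does not give a separate proof for the corollary, but simply records in the sentence preceding it that the statement follows ``by taking $\bE^d$-invariant maps (see \Cref{defn:edinvariantmaps} and \cref{rmk:edinvariantmaps}) in~\eqref{eq:grpdequiv}.'' You have faithfully unpacked what that one-line deduction entails, including the identification of both sides with full subgroupoids of $\bE^d$-invariant maps and the care needed in the essential surjectivity step; the paper leaves all of this implicit.
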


Below we will make use of the following equivalences of stacks (induced by equivalences of Lie groupoids)
\beq\label{eq:wrisos}
&&(X\sq G)^{\times n}\sq \Sigma_n\simeq X^{\times n}\sq G\wr \Sigma_n\qquad ((X\sq G)^{\times n}\times \underline{n})\sq \Sigma_n\simeq (X^{\times n}\times \underline{n})\sq G\wr\Sigma_n\label{eq:ofstacks}
\eeq
that identify a $G^{\times n}$-bundle over a $\Sigma_n$-bundle over $S$ with a $G\wr\Sigma_n$-bundle over $S$. Let $(x_1,\dots,x_n,i) \in X^n \times \underline{n}$ and write $(g_1,\dots,g_n,\sigma) \in G \wr \Sigma_n$ for $g_1, \ldots, g_n \in G$ and $\sigma\in \Sigma_n=\Aut(\underline{n})$. The right action of $G \wr \Sigma_n$ on $X^{\times n} \times \underline{n}$ is given by
\[
(x_1,\dots,x_n,i) \times (g_1,\dots,g_n,\sigma) \mapsto (x_{\sigma(1)}g_1,\ldots,x_{\sigma(n)}g_n,\sigma).
\]

\begin{lem} \label{lem:ev}
The evaluation map $(X^{\times n}\times \underline{n})\sq G\wr\Sigma_n\stackrel{\ev}{\to} X\sq G$
is the map of Lie groupoids
\beq
\begin{tikzpicture}
\node (A) at (0,0) {$X^{\times n}\times \underline{n}\times G \wr \Sigma_n$};
\node (B) at (5,0) {$X\times G$};
\node (D) at (0,-1.5) {$X^{\times n}\times \underline{n}$};
\node (E) at (5,-1.5) {$X$};
\draw[->] (A) to node[above] {$\ev_{\rm{mor}}$} (B);
\draw[->] (A) to node[left] {$s,t$} (D);
\draw[->] (B) to node[right] {$s,t$} (E);
\draw[->] (D) to node[below] {$\ev_{\rm{ob}}$} (E);
\end{tikzpicture}\nonumber
\eeq 
given by the maps on objects and morphisms
\[
\ev_{\rm{ob}}\colon(x_1,\dots,x_n,i)\mapsto x_i\qquad \ev_{\rm{mor}}\colon(x_1,\dots,x_n,i,g_1,\dots,g_n,\sigma)\mapsto (x_i,g_{\sigma^{-1}(i)})
\]
for $(x_1,\dots,x_n)\in X^{\times n}$, $i\in \underline{n}$, $(g_1,\dots,g_n)\in G^{\times n}$ and $\sigma\in \Sigma_n$. 
\end{lem}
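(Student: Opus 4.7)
My plan is to write down the proposed functor of Lie groupoids explicitly, verify that it really is a functor, and then check that passing to stacks recovers the evaluation map of \cref{defn:coop}. First I would recall that $\ev$ is obtained by postcomposing an $S$-family $T\to ((X\sq G)^{\times n}\times \underline{n})\sq \Sigma_n$ with the map of stacks $(X\sq G)^{\times n}\times\underline{n}\to X\sq G$ sending $((\phi_1,\dots,\phi_n),i)$ to $\phi_i$. Under the identification \eqref{eq:wrisos}, the universal $S$-point of the source corresponds to the identity map out of the presenting Lie groupoid $(X^{\times n}\times\underline{n})\sq G\wr\Sigma_n$. Tracking through this universal picture, the induced action on objects is precisely $\ev_{\rm ob}(x_1,\dots,x_n,i)=x_i$.

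For morphisms, I would unpack the right action of $G\wr\Sigma_n$ on $X^{\times n}\times\underline{n}$: an arrow at $(x_1,\dots,x_n,i)$ labelled by $(g_1,\dots,g_n,\sigma)$ has target $(x_{\sigma(1)}g_1,\dots,x_{\sigma(n)}g_n,\sigma^{-1}(i))$, where the $\sigma^{-1}$ on $\underline{n}$ is forced by the right-action convention. Evaluating this target at the shifted index yields $x_{\sigma(\sigma^{-1}(i))}\,g_{\sigma^{-1}(i)}=x_i\,g_{\sigma^{-1}(i)}$. Since $\ev_{\rm mor}$ must assign a morphism in $X\sq G$ from $\ev_{\rm ob}(x_1,\dots,x_n,i)=x_i$ to this target and morphisms in $X\sq G$ are parametrized by $G$, the only consistent choice is the claimed $\ev_{\rm mor}(x_1,\dots,x_n,i,g_1,\dots,g_n,\sigma)=(x_i,\,g_{\sigma^{-1}(i)})$.

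The main technical step, and the point I expect to require the most care, is checking functoriality: compatibility of $\ev_{\rm mor}$ with composition in $G\wr\Sigma_n$. Writing the product as $(g,\sigma)(h,\tau)=(k,\sigma\tau)$ with $k_m=g_{\tau(m)}h_m$ (the formula uniquely determined by the right-action property above), a short calculation gives $k_{(\sigma\tau)^{-1}(i)}=g_{\tau(\tau^{-1}\sigma^{-1}(i))}\,h_{(\sigma\tau)^{-1}(i)}=g_{\sigma^{-1}(i)}\,h_{(\sigma\tau)^{-1}(i)}$, which is exactly the composite in $X\sq G$ of $(x_i,g_{\sigma^{-1}(i)})$ with the evaluation of the second morphism at the moved index $\sigma^{-1}(i)$. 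Compatibility with source, target and identity arrows is then immediate. Finally, invoking the 2-functor from Lie groupoids to stacks described in the Notation and Conventions, the functor constructed here presents a map of stacks that agrees with $\ev$ on the universal object, so the two maps coincide.
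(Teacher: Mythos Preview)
Your proposal is correct. The route you take is a direct, computational one: you write down the candidate functor, verify functoriality by the explicit wreath-product calculation, and then identify the resulting map of stacks with $\ev$ via the universal $S$-point. The paper instead characterises $\ev$ more abstractly, as the image of the identity under the adjunction equivalence
\[
{\sf Grpd}_{\Sigma_n}\big(\Map(\underline{n},X\sq G),\Map(\underline{n},X\sq G)\big)\simeq {\sf Grpd}_{\Sigma_n}\big(\Map(\underline{n},X\sq G)\times \underline{n},\,X\sq G\big),
\]
and then argues that the formula is pinned down by the (well-known) non-equivariant evaluation together with $\Sigma_n$-equivariance, leaving the functoriality check as ``easy to check.'' Your approach carries out precisely that check in detail, which is useful; the paper's approach has the advantage of explaining conceptually why the formula must take this shape rather than merely confirming that it works.
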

\bp
Using \eqref{eq:wrisos}, there is an equivalence
\[
{\sf Grpd}_{\Sigma_n}(\Map(\underline{n},X\sq G)\times \underline{n},X\sq G) \simeq {\sf Grpd}((X^{\times n}\times \underline{n})\sq G\wr\Sigma_n,X\sq G)
\]
when the $\Sigma_n$-action on $X\sq G$ is trivial. The adjunction between internal hom and product gives an equivalence of groupoids
\[
{\sf Grpd}_{\Sigma_n}(\Map(\underline{n},X\sq G),\Map(\underline{n},X\sq G))\simeq {\sf Grpd}_{\Sigma_n}(\Map(\underline{n},X\sq G)\times \underline{n},X\sq G),
\]
where ${\sf Grpd}_{\Sigma_n}$ is the category of Lie groupoid with a (strict) $\Sigma_n$-action. Under this equivalence, the identity map on the left is sent to the evaluation map. 

To verify that the formula is correct, note that removing the symmetric group action gives the well-known formula for the evaluation map. This determines the $\Sigma_n$-equivariant evaluation map, and it is easy to check that this gives a well-defined map of Lie groupoids.
\ep

It will be useful to have an explicit local formula for $\pi^!$. Consider a map of Lie groupoids
\[
T \in {\sf Grpd}((S\times\R^{d|\delta})\sq \uL,X^{\times n} \sq G \wr \Sigma_n).
\]
We define $\pi^!T$ to be the map of groupoids induced by the pullback of Lie groupoids (the pullback of supermanifolds on the level of objects and morphisms): 
\beq\label{eq:liesquarepullbackformula}
\begin{tikzpicture}[baseline=(basepoint)];
\node (A) at (0,0) {$(S\times \R^{d|\delta}\times \underline{n})\sq \uL$};
\node (B) at (5,0) {$(X^{\times n}\times \underline{n})\sq G\wr\Sigma_n$};
\node (D) at (0,-1.5) {$(S\times \R^{d|\delta})\sq \uL$};
\node (E) at (5,-1.5) {$X^{\times n}\sq G\wr\Sigma_n$.};
\draw[->] (A) to node[above]{$\pi^!T$}  (B);
\draw[->] (A) to (D);
\draw[->] (B) to (E);
\draw[->] (D) to node[above]{$T$} (E);
\path (0,-.75) coordinate (basepoint);
\end{tikzpicture}
\eeq 
In the pullback, the $\uL$-action on~$\underline{n}$ is through the composition of group homomorphisms $S \times \uL\to G\wr\Sigma_n\to \Sigma_n$ coming from the map $T$. 

The pullback above decomposes according to the action of $\uL$.

\begin{lem} \label{lem:factor}
Let $\underline{n}=\coprod I_k$ be the decomposition of $\underline{n}$ into transitive $\uL$-sets and let $\uL_k \subset \uL$ the stabilizer of an element in $I_k$. Given choices of elements $i_k\in I_k$ for all $k$, there is an equivalence of Lie groupoids
\[
\coprod_k (S\times \R^{d|\delta})\sq \uL_{k} \stackrel{\sim}{\to}  (S\times \R^{d|\delta}\times \underline{n})\sq \uL.
\]
\end{lem}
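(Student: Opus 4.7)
The plan is to write down the equivalence explicitly and verify that it is fully faithful and (smoothly) essentially surjective, i.e., a Morita equivalence of Lie groupoids. The construction is forced by the requirement that $i_k$ be the chosen basepoint of $I_k$: on the $k$-th summand, send an object $(s,r)\in S\times\R^{d|\delta}$ to $(s,r,i_k)\in S\times\R^{d|\delta}\times\underline{n}$, and send a morphism $(s,r,\lambda)$ with $\lambda\in\uL_k$ to $(s,r,i_k,\lambda)$. This is a well-defined morphism in the target: because $\uL_k$ is by definition the stabilizer of $i_k$, the source $(s,r,i_k)$ and target $(s,r+\Lambda(\lambda),i_k)$ of the image morphism are both in the image of the object map. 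Functoriality in $\lambda$ is immediate because $\uL_k\subset \uL$ is a subgroup.

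For fully faithfulness, I would compute the morphism set in the target between $(s_1,r_1,i_k)$ and $(s_2,r_2,i_{k'})$. A morphism is an element $\lambda\in\uL$ satisfying $s_1=s_2$, $\Lambda(\lambda)=r_2-r_1$, and $\lambda\cdot i_k=i_{k'}$. When $k\neq k'$, the last condition is empty since $i_k$ and $i_{k'}$ lie in distinct $\uL$-orbits; when $k=k'$, it picks out exactly $\uL_k\subset\uL$, which coincides with the morphism set in the source. Because $\uL$ is discrete, smoothness of this bijection is automatic.

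For essential surjectivity, any object $(s,r,j)$ of the target lies over some orbit $I_k$, so there exists $\lambda\in\uL$ with $\lambda\cdot i_k=j$; then $(s,r-\Lambda(\lambda),i_k,\lambda)$ is a morphism from $F_0(s,r-\Lambda(\lambda))$ to $(s,r,j)$, and since $\underline{n}$ is discrete and finite, choosing such a $\lambda$ for each $j$ gives a smooth (locally constant in the $\underline{n}$-direction) section of the anchor map
\[
\underline{n}\times_{\underline{n}}(\mathcal{L}_{0}^{d|\delta})_1 \longrightarrow (\text{objects of target}),
\]
which is the smooth essential surjectivity condition.

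I do not anticipate any serious obstacle: this is the standard decomposition of a diagonal action groupoid $(X\times Y)\sq\uL$ along the orbit decomposition of $\uL\curvearrowright Y$, applied to the discrete and finite set $Y=\underline{n}$. The only point that needs a brief mention is that the $\uL$-action on $\underline{n}$, which a priori comes from a smooth map $S\times\R^{d|\delta}\times\uL\to \Sigma_n$ induced by $T$, is locally constant on $S$ and constant in $r$ (since $\Sigma_n$ is discrete and $\R^{d|\delta}$ is connected); this justifies speaking of a fixed orbit decomposition in the statement of the lemma.
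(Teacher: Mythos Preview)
Your proposal is correct and takes essentially the same approach as the paper: both define the functor by including the $k$-th summand at the chosen basepoint $i_k\in I_k$ and then verify that it is fully faithful (because $\uL_k$ is the stabilizer of $i_k$) and essentially surjective (because $\uL$ acts transitively on each $I_k$). The only cosmetic difference is that the paper factors the equivalence through the intermediate isomorphism $\coprod_k (S\times \R^{d|\delta}\times I_k)\sq \uL \cong (S\times \R^{d|\delta}\times \underline{n})\sq \uL$ before invoking transitivity on each orbit, whereas you do it in one step; your extra remark about local constancy of the $\uL$-action in $S$ is sound but not needed for the lemma as stated, since the orbit decomposition is already given in the hypothesis.
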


\bp The desired equivalence is a composite of two equivalences,
\[
\coprod_k (S\times \R^{d|\delta})\sq \uL_{k}\stackrel{\sim}{\dashrightarrow} \coprod_k (S\times \R^{d|\delta}\times I_k)\sq \uL \stackrel{\sim}{\to} (S\times \R^{d|\delta}\times \underline{n})\sq \uL.
\]
The solid arrow on the right is an isomorphism of Lie groupoids (a diffeomorphism on both objects and morphisms). The dashed arrow depends on the choice of elements $i_k\in I_k$ for all~$k$, defining inclusions on objects
\[
S\times \R^{d|\delta}\cong S\times \R^{d|\delta}\times \{i_k\}\hookrightarrow S\times \R^{d|\delta}\times I_k,
\]
and similar inclusions on morphisms using that for each $k$, $\L_k\subset \L$ is a sublattice. Since the $\uL$-action is transitive on $I_k$, this dashed arrow is fully faithful and essentially surjective, verifying the lemma. 
\ep

As the notation suggests, $[\pi^!T]$ is a $2$-pullback, as we will verify in the next lemma:

\begin{lem}\label{lem:pull}
The map of stacks $[\pi^!T] \in {\Stack}([(S\times \R^{d|\delta}\times \underline{n})\sq \uL],[X^{\times n}\times \underline{n} \sq G \wr \Sigma_n])$ is a $2$-pullback of $\pi$ along $[T]$.
\end{lem}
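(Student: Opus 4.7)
The plan is to verify the universal property of a 2-pullback of stacks directly by testing on $S'$-points, exploiting the fact that $\pi$ is a representable finite étale map. The square in the statement is obtained from the strict pullback of Lie groupoids in \eqref{eq:liesquarepullbackformula}; the goal is to argue that passing from Lie groupoids to stacks preserves this pullback. The key observation is that $\pi$ is representable: on the level of Lie groupoids, it is given by projection $X^{\times n}\times \underline{n}\to X^{\times n}$ on objects (a trivial $n$-fold cover) and the analogous projection on morphisms, with $G\wr\Sigma_n$ permuting $\underline{n}$ through its quotient $\Sigma_n$. Consequently, for any map $Y\to [X^{\times n}\sq G\wr\Sigma_n]$ from a supermanifold, the 2-pullback along $\pi$ is representable by the $n$-sheeted cover of $Y$ classified by the composite $Y\to[X^{\times n}\sq G\wr\Sigma_n]\to [\pt\sq\Sigma_n]$.

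Next, I would take a supermanifold $S'$ together with maps $a\colon S'\to [(S\times\R^{d|\delta})\sq \uL]$ and $b\colon S'\to [(X^{\times n}\times \underline{n})\sq G\wr\Sigma_n]$ and a 2-isomorphism $\phi\colon \pi\circ b\Rightarrow [T]\circ a$, and produce an essentially unique lift $S'\to [(S\times \R^{d|\delta}\times\underline{n})\sq\uL]$. Working locally on $S'$ (using stack descent) I can refine to an open cover on which $a$ and $b$ are represented by strict Lie groupoid maps, giving on objects maps $U\to S\times \R^{d|\delta}$ and $U\to X^{\times n}\times\underline{n}$. Since $\pi$ is étale on objects, the 2-isomorphism $\phi$ uniquely pins down the $\underline{n}$-component of $b$ in terms of $T\circ a$, producing a map $U\to S\times\R^{d|\delta}\times\underline{n}$. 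The $\uL$-equivariance of this map, along the composition $S\times\uL\to G\wr\Sigma_n\to\Sigma_n$ induced by $T$, is exactly the action built into $\pi^!T$ in diagram \eqref{eq:liesquarepullbackformula}, so the local lifts land in the correct groupoid. The cocycle condition on $\phi$ then forces the local lifts to glue into a global $S'$-point of $[(S\times \R^{d|\delta}\times\underline{n})\sq\uL]$, with uniqueness again coming from étaleness of $\pi$.

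The main obstacle I anticipate is the careful bookkeeping of the $\uL$-action on $\underline{n}$, which depends nontrivially on $T$; one must verify that the locally constructed lifts are compatible with this twisted action and descend. A cleaner alternative would be to invoke the general principle that strict fibered products of Lie groupoids present 2-pullbacks of their associated stacks whenever one of the legs is a fibration (equivalently, presents a representable submersion of stacks), which applies here because $\pi$ is a surjective submersion on both objects and morphisms and is essentially surjective with the appropriate morphism-lifting property. Either route reduces the lemma to the étaleness and representability of $\pi$, both of which are visible from the product structure on $\underline{n}$.
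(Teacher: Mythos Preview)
Your proposal is correct in outline but takes a different route from the paper. Rather than verifying the universal property of the 2-pullback by hand on $S'$-points, the paper exploits representability more aggressively. Since the $\uL$-action on $S\times\R^{d|\delta}$ is free, both $[(S\times\R^{d|\delta})\sq\uL]$ and $[(S\times\R^{d|\delta}\times\underline{n})\sq\uL]$ are representable by the honest quotient supermanifolds, and (after applying \cref{lem:factor}) the latter is identified with $\coprod_k(S\times\R^{d|\delta})/\uL_k$. Because $\pi$ is a finite cover in the sense of \cref{defn:finitecover}, the 2-pullback of $\pi$ along $[T]$ is automatically representable by some $n$-fold cover $E\to (S\times\R^{d|\delta})/\uL$. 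The 2-commuting square then yields, via the universal property, a canonical map of $n$-fold covers $\coprod_k(S\times\R^{d|\delta})/\uL_k\to E$ over the base, and it remains only to check that this map of covering spaces is an isomorphism---which is done by pulling back to the universal cover $S\times\R^{d|\delta}$, where both sides become the trivial $n$-fold cover.

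The advantage of the paper's argument is that it sidesteps entirely the bookkeeping you flag as the main obstacle: there is no need to track the twisted $\uL$-action on $\underline{n}$ through descent data, since the comparison is reduced to a map between finite covers of an ordinary supermanifold. Your alternative suggestion---invoking a general principle that strict pullbacks along isofibrations of Lie groupoids present 2-pullbacks of stacks---is closer in spirit to the paper's strategy and would also give a clean proof, though the paper does not state such a principle explicitly and instead argues directly with covering spaces.
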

\bp
After applying $[-]$ to \eqref{eq:liesquarepullbackformula}, both the top left and bottom left stacks are representable, so, after applying \cref{lem:factor}, we get a $2$-commuting diagram of stacks
\beq
\begin{tikzpicture}[baseline=(basepoint)];
\node (A) at (0,0) {$\coprod_k (S\times \R^{d|\delta}) / \uL_{k}$};
\node (B) at (5,0) {$[(X^{\times n}\times \underline{n})\sq G\wr\Sigma_n]$};
\node (D) at (0,-1.5) {$(S\times \R^{d|\delta}) / \uL$};
\node (E) at (5,-1.5) {$[X^{\times n}\sq G\wr\Sigma_n]$.};
\draw[->] (A) to node[above]{$[\pi^!T]$}  (B);
\draw[->] (A) to (D);
\draw[->] (B) to (E);
\draw[->] (D) to node[above]{$[T]$} (E);
\path (0,-.75) coordinate (basepoint);
\end{tikzpicture}
\eeq 
Since $\pi$ is a finite cover (\cref{defn:finitecover}), the $2$-pullback of $\pi$ along $[T]$ is representable by a finite cover $E$ of $(S\times \R^{d|\delta}) / \uL$. By the universal property of the $2$-pullback, there is a canonical map of $n$-fold covers 
\[
\coprod_k (S\times \R^{d|\delta}) / \uL_{k} \to E
\]
over $(S\times \R^{d|\delta}) / \uL$. It suffices to check that this is an isomorphism. But this can be seen by pulling back to the universal cover $S \times \R^{d|\delta} \to (S\times \R^{d|\delta}) / \uL$.
\ep

\subsection{An atlas for $\mathcal{L}^{d|\delta}_0(X\sq G)$}

Suppose we are given a geometric stack~$\mathcal{Z}$ and an atlas $U\to \mathcal{Z}$; see~\Cref{superstacks} for a quick review of geometric stacks. Then the set of functions on $\mathcal{Z}$ in the sense of \Cref{defn:funs} can be identified with the set of smooth functions $f\in C^\infty(U)$ such that $s^*f=t^*f$, where the atlas $U\to \mathcal{Z}$ determines a groupoid presentation with source and target maps $s$ and $t$, respectively; see~\eqref{diag:weak2} in \cref{superstacks}. With this in mind, we will construct an atlas for $\mathcal{L}^{d|\delta}_0(X\sq G)$ so that we can compute the effect of geometric power operations on functions.

Recall that $\uL = \Z^d$. We define a map
\beq
\mathcal{U}(X\sq G):=\Lat\times \coprod_{h \in \Hom(\uL, G)} \Map(\R^{0|\delta},X^{\im h})\to \mathcal{L}^{d|\delta}_0(X\sq G),\label{eq:generalatlas}
\eeq
where $\Lat$ is the manifold of based oriented lattices (a lattice with a choice of ordered basis that is positively oriented) in $\R^d\subset \R^{d|\delta}$, $h\colon \uL \to G$ is a group homomorphism, $X^{\im h}\subset X$ is the fixed point set of the image of ${h}$, and $\Map$ is the mapping supermanifold (see \Cref{superstacks}). To an $S$-point of $\mathcal{U}(X\sq G)$, we associate the $S$-family of super tori $(S\times \R^{d|\delta})/\uL$ with lattices coming from the corresponding $S$-point of~$\Lat$, and the map of stacks $(S\times \R^{d|\delta})/\uL \to [X\sq G]$ associated to the homomorphism of Lie groupoids, 
\[
(S\times \R^{d|\delta})\sq \uL\to (S\times \R^{0|\delta})\sq \uL \to X\sq G
\]
that on objects is the composite map $S\times \R^{d|\delta} \to S\times \R^{0|\delta}\to X^{\im h} \hookrightarrow X$, and on morphisms is determined by~$S\times \uL \to \uL \stackrel{h}{\to} G$ where the first map is the projection. The construction of the image of this assignment can thus be displayed as follows:
\[
\xymatrix{& S \times \R^{d|\delta} \times \uL \ar@<0.5ex>[d] \ar@<-0.5ex>[d] \ar[r] & S \times X \times \uL \ar[r] & X\times \uL \ar[r] & X \times G \ar@<0.5ex>[d] \ar@<-0.5ex>[d] \\
& S \times \R^{d|\delta} \ar[d] \ar[r] & S\times \R^{0|\delta} \ar[r] & X^{\im h} \ar[r] & X \ar[d] \\
S & T_{\Lambda}^{d|\delta} \ar[l] \ar[rrr] & & & X \sq G.}
\]

\begin{prop}\label{prop:atlas}
The map \eqref{eq:generalatlas} is an atlas for $\mathcal{L}^{d|\delta}_0(X\sq G)$.
\end{prop}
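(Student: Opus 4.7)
The plan is to verify the two defining properties of an atlas on the geometric stack $\mathcal{L}^{d|\delta}_0(X\sq G)$: the map \eqref{eq:generalatlas} should be representable and a submersion, and it should be essentially surjective on $S$-points locally in~$S$. The key tool is \cref{cor:stackgrpd}, which identifies local $S$-points of $\mathcal{L}^{d|\delta}_0(X\sq G)$ with Lie groupoid maps $(S\times \R^{0|\delta})\sq \uL\to X\sq G$, so I can translate the problem into an explicit computation with Lie groupoid data.

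For essential surjectivity, I would take an $S$-point of $\mathcal{L}^{d|\delta}_0(X\sq G)$ and refine $S$ so that the underlying family of super tori is determined by a map $S\to \Lat$ and the map to $[X\sq G]$ is represented by a Lie groupoid map $T\colon (S\times \R^{0|\delta})\sq \uL\to X\sq G$. Writing $T$ as an object map $\phi_{\mathrm{ob}}\colon S\times \R^{0|\delta}\to X$ and a morphism map $\phi_{\mathrm{mor}}\colon S\times \R^{0|\delta}\times \uL\to X\times G$, the triviality of the $\uL$-action on $S\times \R^{0|\delta}$ forces $\phi_{\mathrm{mor}}(s,y,\ell) = (\phi_{\mathrm{ob}}(s,y),g(s,y,\ell))$ with $g(s,y,\ell)$ in the isotropy of $\phi_{\mathrm{ob}}(s,y)$. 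Since $G$ is a finite discrete group and $\R^{0|\delta}$ is connected, after further shrinking $S$ the map $g$ is independent of $(s,y)$ and functorial in $\ell$, hence is a group homomorphism $h\colon \uL\to G$. The isotropy condition becomes the condition that $\phi_{\mathrm{ob}}$ factors through $X^{\im h}$, and by the $\Map$-adjunction, $\phi_{\mathrm{ob}}$ corresponds to an $S$-point of $\Map(\R^{0|\delta},X^{\im h})$. This assembles into the desired lift to $\mathcal{U}(X\sq G)$.

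For representability and the submersion property, I would compute the 2-fiber product $S\times_{\mathcal{L}^{d|\delta}_0(X\sq G)} \mathcal{U}(X\sq G)$ for a test $S$-point. An element of this fiber product consists of a lift to $\mathcal{U}(X\sq G)$ together with a 2-isomorphism between the induced $S$-points of the stack; by \cref{lem:stackgrpd}, such 2-isomorphisms are parameterized by natural transformations between the associated Lie groupoid maps, which are in turn determined by maps $S\times \R^{0|\delta}\to G$ that intertwine the two chosen homomorphisms. Discreteness of $G$ forces these to be locally constant, so the fiber product is locally of the form $S\times (\text{discrete set})$, hence representable by a disjoint union of open sub-supermanifolds. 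Smoothness and surjectivity of the atlas map then reduce to the evident smoothness of the projection $\Lat\times \Map(\R^{0|\delta},X^{\im h})\to \Lat$ together with the local lift produced in the previous step.

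The main obstacle will be keeping careful track of the 2-isomorphisms in $\mathcal{L}^{d|\delta}_0(X\sq G)$ coming from global symmetries: two pieces of atlas data $(\Lambda,h,\phi)$ and $(\Lambda,h',\phi')$ with $h' = g h g^{-1}$ and $\phi' = g\cdot\phi$ for some $g\in G$ produce isomorphic objects in the target stack, so $\mathcal{U}(X\sq G)$ overparameterizes in a controlled way. The substantive technical point is to confirm that all such identifications are captured by the (representable) groupoid of the atlas, $\mathcal{U}(X\sq G)\times_{\mathcal{L}^{d|\delta}_0(X\sq G)}\mathcal{U}(X\sq G)$, and that no further stacky identifications intervene; once this is checked, the fact that \eqref{eq:generalatlas} is a representable submersion follows from the explicit description above.
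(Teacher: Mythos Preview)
Your essential surjectivity argument is correct and matches the paper's. The gap is in your representability step: you have misidentified the morphisms of $\mathcal{L}^{d|\delta}_0(X\sq G)$. An isomorphism in this stack is not merely a natural transformation between two Lie groupoid maps $(S\times\R^{0|\delta})\sq\uL\to X\sq G$; by definition of $\M(\X)(S)$ (see~\eqref{eq:twotriangles}) it is a fiberwise \emph{isometry} $T^{d|\delta}_{\Lambda}\to T^{d|\delta}_{\Lambda'}$ together with a 2-commuting triangle over~$[X\sq G]$. \Cref{lem:stackgrpd} only controls the triangle part for a \emph{fixed} source torus; it says nothing about isometries between different tori.

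Concretely, the isometries of $T^{d|\delta}_{\Lambda}$ contribute a continuous factor $\Iso(\R^{d|\delta})$ (from lifts to the universal cover) and a discrete factor $\SL_d(\Z)$ (from change of lattice basis), modulo the deck group $\uL$. Hence the 2-fiber product is not locally $S\times(\text{discrete set})$ as you claim; it is rather the quotient
\[
\big(\mathcal{U}(X\sq G)\times \Iso(\R^{d|\delta})\times G\times \SL_d(\Z)\big)\big/\uL,
\]
as the paper computes. Your last paragraph flags the $G$-conjugation redundancy but still omits the torus isometries and the lattice reparametrizations; until those are incorporated, the representability and submersion claims do not follow. Once you include this isometry data, the argument goes through exactly as in the paper.
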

\begin{proof}
We will verify the conditions for an atlas given in \Cref{prop:eqatlas}.

First we show that $\mathcal{U}(X\sq G)\to \mathcal{L}^{d|\delta}_0(X\sq G)$ is an epimorphism. Fix an $S$-point of $\mathcal{L}^{d|\delta}_0(X\sq G)$. By \Cref{cor:stackgrpd}, there exists an open cover $(S_i)$ of $S$ (which determines a surjective submersion $\coprod_i S_i\to S$) over which the given $S$-point is determined by the data of an $S_i$-family of based oriented lattices $\Lambda_i\colon S_i\times \uL\to S_i\times \bE^{d|\delta}$ and functors between Lie groupoids $S_i\times \R^{0|\delta}\sq \uL \to X\sq G$ for each $i$. Fixing $S_i$, the functor is then given by a pair $(\phi_{\rm ob},\phi_{\rm mor})$, where $\phi_{\rm ob}\colon S_i\times \R^{0|\delta}\to X$ and $\phi_{\rm mor}\colon S_i\times\R^{0|\delta} \times \uL \to G$. Because $G$ is discrete, the map $\phi_{\rm mor}$ necessarily factors through $S_i\times \uL$, and we use the same notation $\phi_{\rm mor}\colon S_i\times \uL \to G$ for this map. For the data $\phi_{\rm mor}$ and $\phi_{\rm ob}$ to determine a functor, $\phi_{\rm mor}$ must be an $S_i$-family of homomorphisms and $(\phi_{\rm ob},\phi_{\rm mor})$ must be determined by an $S_i$-point of $\coprod_h \Map(\R^{0|\delta},X^{\im h})$. We have produced a surjective submersion $\coprod_i S_i\to S$ that factors through the proposed atlas, and hence the map $\mathcal{U}(X\sq G)\to \mathcal{L}^{d|\delta}_0(X\sq G)$ is an epimorphism. 

It remains to show that $\mathcal{U}(X\sq G)\times_{\mathcal{L}^{d|\delta}_0(X\sq G)}\mathcal{U}(X\sq G)$ is representable and that the projection maps are submersions. An $S$-point of the 2-pullback $\mathcal{U}(X\sq G)\times_{\mathcal{L}^{d|\delta}_0(X\sq G)} \mathcal{U}(X\sq G)$ consists of a pair of $S$-points of $\mathcal{U}(X\sq G)$ and an isomorphism between the corresponding objects over $S$ in the stack $\mathcal{L}^{d|\delta}_0(X\sq G)$. Hence, an $S$-point of the 2-pullback is a pair of $S$-points related by an isomorphism determined by $(f,g)$ in the diagram 
\beq
\begin{tikzpicture}[baseline=(basepoint)];
\node (A) at (-3,0) {$S\times \R^{d|\delta}$};
\node (B) at (3,0) {$[(S\times \R^{0|\delta})\sq\uL]$};
\node (AA) at (0,0) {$S\times \R^{d|\delta}/\uL$};
\node (BB) at (0,-1.5) {$S\times \R^{d|\delta}/\uL$};
\node (D) at (-3,-1.5) {$S\times \R^{d|\delta}$};
\node (E) at (3,-1.5) {$[(S\times \R^{0|\delta})\sq\uL]$};
\node (F) at (6,-.75) {$[X\sq G]$,};
\draw[->] (A) to  (AA);
\draw[->] (AA) to (B);
\draw[->] (BB) to (E);
\draw[->] (AA) to node [left] {$f$} (BB);
\draw[->] (AA) to node [right] {$\cong$} (BB);
\draw[->,dashed] (A) to node [left] 
{$\tilde{f}$} (D);
\draw[->] (B) to  (E);
\draw[->] (D) to (BB);
\draw[->] (B) to node [above] {$[\phi]$} (F);
\draw[->] (E) to node [below] {$[\phi']$} (F);
\node (D) at (4,-.75) {$g\twocommute$};
\path (0,-.75) coordinate (basepoint);
\end{tikzpicture}\label{isooncov}
\eeq 
where the objects are specified in terms of functors $\phi,\phi'$ as in the previous paragraph. We can lift the isometry $f\colon (S\times \R^{d|\delta})/\uL \to (S\times \R^{d|\delta})/\uL$ to a map between universal covers which determines $\tilde{f}\in \Iso(\R^{d|\delta})(S)$ that commutes with the $\uL$ action relative to a family of homomorphisms $\gamma\colon S\to \SL_d(\Z)$. The datum $g$ pulls back to an isomorphism between trivial $G$-bundles, i.e., a map $S\times \R^{d|\delta}\to S\times \R^{0|\delta}\to G$, which (because $G$ is discrete) is determined by a map $\tilde{g}\colon S\to G$. Finally, triples $(\tilde{f},\gamma,\tilde{g})$ and $(\tilde{f}',\gamma',\tilde{g}')$ determine the same isomorphism if they differ by the action of $S\times \uL$ for the subgroup
\[
S\times \uL \to S\times \Iso(\R^{d|\delta})\times G\times \SL_d(\Z),
\]
that includes $S \times \uL$ along $(\Lambda,\phi_{\rm mor})\colon S\times \uL\to S\times \Iso(\R^{d|\delta})\times G$. This identifies an $S$-point of the 2-pullback with the quotient 
\[
\mathcal{U}(X\sq G)\times_{\mathcal{L}^{d|\delta}_0(X\sq G)}\mathcal{U}(X\sq G)\simeq (\mathcal{U}(X\sq G)\times \Iso(\R^{d|\delta})\times G\times \SL_d(\Z))/\uL
\] 
of a representable stack by a free $\uL$-action, hence the 2-pullback is representable. The maps to $\mathcal{U}(X\sq G)$ are determined by the projection and action of the bundle of groups on the atlas, both of which are submersions. Hence,~\eqref{eq:generalatlas} determines an atlas. \ep

\subsection{Computing power operations using an atlas}\label{eq:computecoop}

Recall from \eqref{eq:wrisos} that we have natural equivalences $(X\sq G)^{\times n}\sq\Sigma_n \simeq X^{\times n}\sq G\wr \Sigma_n$. We will construct a map $\widetilde{\coP}_n$ between atlases covering the geometric power cooperation, as displayed in the following diagram:
\beq\label{eq:atlascoop}
\xymatrix{\mathcal{U}(X^{\times n}\sq G\wr \Sigma_n) \ar[r]^-{\widetilde{\coP}_n} \ar[d] & \coprod_{t \le n}\mathcal{U}(X\sq G)^{\times t} \ar[d] \\ 
\mathcal{L}^{d|\delta}_0(X^{\times n}\sq G\wr \Sigma_n) \ar[r]^-{\coP_n} & \Sym^{\le n}(\mathcal{L}^{d|\delta}_0(X\sq G)).}
\eeq
Here, the bottom horizontal map is the restriction of the $n^{\rm th}$ geometric power cooperation as exhibited in \Cref{lem:cooprestriction}, while the atlas $\mathcal{U}(X^{\times n}\sq G\wr \Sigma_n)$ of $\mathcal{L}^{d|\delta}_0(X^{\times n}\sq G\wr \Sigma_n)$ was constructed in \Cref{prop:atlas}. It is a consequence of \Cref{lem:symatlas} that the right vertical map provides an atlas for $\Sym^{\le n}(\mathcal{L}^{d|\delta}_0(X\sq G))$.

Our goal in this section is to construct the map $\widetilde{\coP}_n$ (see \Cref{def:atlascoop}) and to then show in \Cref{thm:mainthm} that it makes Diagram \eqref{eq:atlascoop} 2-commute. In later sections, we will use this explicit description of $\widetilde{\coP}_n$ to provide a formula for the geometric power operation in special cases. 

We recall some notation from the previous sections. Recall that $\uL = \Z^d$ and assume without loss of generality that $S$ is connected. An $S$-point of the source supermanifold of $\widetilde{\coP}_n$ is a triple
\beq
(\Lambda\colon S \times \Z^d \to \R^{d}, h \colon \uL \to G \wr \Sigma_n, \xi \colon S \times \R^{0|\delta} \rightarrow (X^{\times n})^{\im h}),\label{eq:triple}
\eeq
where the first term is an $S$-family of oriented lattices, the second term is a group homomorphism, and the third term is a map to the fixed points. Let  $\underline{n}=\coprod_{k \in K} I_k$ be the decomposition of $\underline{n}$ into transitive $\L$-sets for the action given by the composition $\L\to G\wr\Sigma_n\to \Sigma_n=\Aut(\underline{n})$. We use the notation $\Sigma_{I_k}<\Sigma_n$ for the subset of bijections $\underline{n}\to \underline{n}$ that are the identity on the complement of~$I_k\subset \underline{n}$. The homomorphism $h$ can then be factored as 
\[
\xymatrix{ & \prod_{k \in K} G \wr \Sigma_{I_k} \ar[d] \\ \L \ar@{-->}[ur] \ar[r]^{h} & G \wr \Sigma_n.}
\]
Let $\uL_k\subset \uL = \Z^d$ denote the sublattice that is the kernel of the composition 
\[
\uL\to G\wr \Sigma_{I_k}\to \Sigma_{I_k},
\]
i.e., the stabilizer of any element of $I_k\subset \underline{n}$. In light of the exact sequence $G^{\times |I_k|}\to G\wr \Sigma_{I_k}\to \Sigma_{I_k}$, we get a commutative diagram (in which the diagonal arrow is not necessarily $h$)
\[
\xymatrix{\uL_k \ar[r] \ar[d] & G^{\times |I_k|} \ar[d] \\ \uL \ar[r] \ar[dr] & G \wr \Sigma_{I_k} \ar[d] \\ & G\wr \Sigma_n.}
\]

To construct $\widetilde{\coP}_n$, we make a number of choices. For each $k \in K$ fix an element $i_k\in I_k$ and for each sublattice $\uL' \subseteq \uL$, choose a basis $\uL' \cong \uL$ with the property that the composite $\uL \cong \uL' \subseteq \uL$ is orientation preserving (i.e., has positive determinant). In particular, we have chosen a basis $\uL_k \cong \uL$ and the composite
\beq
M_{\uL_k} \colon \uL \cong \uL_k \subset \uL\cong \uL\label{eq:matrix}
\eeq
is a $d\times d$ matrix with integer entries and positive determinant, $M_{\uL_k}\in M^{\det>0}_{d\times d}(\Z)$. Further, define $ h_k \colon \uL \to G$ to be the composite
\beq
h_k\colon \uL \cong \L_k \to G^{\times |I_k|} \lra{\pi_{i_k}} G,\label{eq:mapproj1}
\eeq
where $\pi_{i_k}$ is the projection onto the $i_k^{\rm th}$ factor. Note that different choices of projection lead to $G$-conjugate maps $h_k\colon \uL \to G$. The proof of the next lemma is deferred to \Cref{app:b}.

\begin{lem}\label{lem:technical}
There is a commutative diagram 
\[
\xymatrix{(X^{\times n})^{\im h} \ar[rr] \ar[d]_{\cong} & & X^{\times n} \ar[d]^{\pi_{i_k}} \\
\prod_{k\in K} X^{\im h_k} \ar[r]_-{\pi_{i_k}} & X^{\im h_k} \ar[r] & X,}
\]
where the maps $h_k$ from~\eqref{eq:mapproj1} and the isomorphism $(X^{\times n})^{\im h} \xrightarrow{\cong} \prod_{k\in K} X^{\im h_k}$ depend on the choice of $i_k\in I_k$ for each $k \in K$. 
\end{lem}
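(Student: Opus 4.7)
The plan is to decompose according to the orbit structure of $\underline{n}$ under $\uL$ and, on each orbit, use transitivity to reduce a fixed-point set for a wreath-product action to a fixed-point set on a single copy of $X$.

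First I would observe that since the $\uL$-action on $\underline{n}$ factors through $G\wr\Sigma_n\to\Sigma_n$ and preserves the orbit decomposition $\underline{n}=\coprod_{k\in K}I_k$, the map $h$ factors through the subgroup $\prod_k G\wr\Sigma_{I_k}\hookrightarrow G\wr\Sigma_n$. Writing $h'_k\colon\uL\to G\wr\Sigma_{I_k}$ for the component projections, the corresponding block decomposition $X^{\times n}\cong\prod_k X^{\times I_k}$ is $\uL$-equivariant, so taking fixed points yields
\[
(X^{\times n})^{\im h}\;\cong\;\prod_{k\in K}(X^{\times I_k})^{\im h'_k}.
\]
This reduces the problem to producing, for each $k\in K$, an isomorphism $(X^{\times I_k})^{\im h'_k}\cong X^{\im h_k}$ that is compatible with projection to the $i_k$-th coordinate.

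Next I would fix $k$ and write $h'_k(\lambda)=((g_{\lambda,i})_{i\in I_k},\sigma_\lambda)$. The explicit formula for the right action from \cref{lem:ev} says that a tuple $(x_i)_{i\in I_k}$ is fixed by $\im h'_k$ if and only if $x_i=x_{\sigma_\lambda(i)}\,g_{\lambda,i}$ for all $\lambda\in\uL$ and $i\in I_k$. Specializing to $\lambda\in\uL_k$, so that $\sigma_\lambda(i_k)=i_k$, this becomes $x_{i_k}=x_{i_k}\,g_{\lambda,i_k}$; in view of the definition of $h_k$ in \eqref{eq:mapproj1}, this is exactly the condition $x_{i_k}\in X^{\im h_k}$. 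For general $\lambda$, the equation at $i_k$ reads $x_{\sigma_\lambda(i_k)}=x_{i_k}\,g_{\lambda,i_k}^{-1}$, so by transitivity of $\uL$ on $I_k$ the whole tuple is determined by $x_{i_k}$. Evaluation at $i_k$ thus gives a candidate isomorphism with inverse $x\mapsto(x\,g_{\lambda_i,i_k}^{-1})_i$, where $\lambda_i\in\uL$ is any element with $\sigma_{\lambda_i}(i_k)=i$. Commutativity of the diagram in the statement is then immediate: both composites pick out the $i_k$-th coordinate and regard it as an element of $X$.

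The main obstacle is verifying that this inverse is well defined and actually lands in the fixed-point set. Concretely, if $\sigma_{\lambda_1}(i_k)=\sigma_{\lambda_2}(i_k)$ then $\lambda_1^{-1}\lambda_2\in\uL_k$, and a short bookkeeping with the wreath-product multiplication $(g,\sigma)(g',\sigma')=((g_{\sigma'(i)}g'_i)_i,\sigma\sigma')$ gives the cocycle identity $g_{\lambda_1,i_k}^{-1}\,g_{\lambda_2,i_k}=g_{\lambda_1^{-1}\lambda_2,i_k}$; under the isomorphism $\uL\cong\uL_k$ the right-hand side is $h_k(\mu)$ for an appropriate $\mu\in\uL$, so the hypothesis $x\in X^{\im h_k}$ forces $x\,g_{\lambda_1,i_k}^{-1}=x\,g_{\lambda_2,i_k}^{-1}$. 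The same identity, applied to $h'_k(\lambda)\,h'_k(\lambda')$, shows that the resulting tuple is indeed fixed by every element of $\im h'_k$. The dependence on the basepoints $i_k$ enters precisely through this calculation: different choices of $i_k$ yield $G$-conjugate subgroups $\im h_k$, as noted in the paper, with correspondingly permuted inverse formulas.
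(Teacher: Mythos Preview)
Your proposal is correct and follows essentially the same route as the paper's proof in \Cref{app:b}: both reduce to a single transitive orbit via the factorization $h\colon\uL\to\prod_k G\wr\Sigma_{I_k}$, then show that projection to the $i_k$-th coordinate is a bijection by constructing an explicit inverse using chosen lifts $\lambda_i$ with $\sigma_{\lambda_i}(i_k)=i$, and finally verify well-definedness and the fixed-point property via wreath-product multiplication identities. The paper carries out the bookkeeping more explicitly (writing the key element as $-l_a+l'+l_{b^{-1}a}\in\uL_1$ and computing its $e$-component in full), whereas your phrase ``the same identity'' for the fixed-point check slightly understates the work: the verification that the tuple is fixed requires the $i_k$-component of a triple product $h(-\lambda_{\sigma_\mu(i)})h(\mu)h(\lambda_i)$, not just the two-term cocycle you wrote, but the computation goes through for the same reason.
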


Making use of this lemma, we obtain the map
\begin{equation}\label{eq:betak}
\xi_k\colon S\times \R^{0|\delta} \xrightarrow{\xi} (X^{\times n})^{\im h} \cong \prod_{k \in K} X^{\im h_k} \xrightarrow{\pi_{i_k}} X^{\im h_k}.
\end{equation}

\begin{rmk}
Assume that $i_{k}' \in I_k$ differs from $i_k$ and that $h_{k}'$ and $\xi_{k}'$ are the maps described above built from the choice of $i_{k}'$. Then there exists $g \in G$ that conjugates $h_k$ to $h_{k}'$ and the action of $g$ on $X$ sends $\xi_k$ to $\xi_{k}'$.
\end{rmk}

\begin{defn}\label{def:atlascoop}
Assume without loss generality that $S$ is connected. Define the map $\widetilde{\coP}_n\colon \mathcal{U}(X^{\times n}\sq G\wr \Sigma_n) \to \coprod_{t \le n}\mathcal{U}(X\sq G)^{\times t}$ on an $S$-point of the source~\eqref{eq:triple} to be the $S$-point of the target given by the tuple
\begin{equation}\label{eq:tildeP}
(\Lambda\circ M_{\uL_k} \colon S \times \uL \to \R^d, h_k \colon \uL \to G, \xi_k\colon S \times \R^{0|\delta} \rightarrow X^{\im h_k})_{k \in K} \in \mathcal{U}(X\sq G)^{\times |K|}
\end{equation}
using equations~\eqref{eq:matrix},~\eqref{eq:mapproj1} and~\eqref{eq:betak}. %Note that  $|K| \le n$.
\end{defn}

\begin{thm} \label{thm:mainthm}
There is a 2-commutative diagram of stacks
\beq
\xymatrix{\mathcal{U}(X^{\times n}\sq G\wr \Sigma_n) \ar[r]^-{\widetilde{\coP}_n} \ar[d]_{p} & \coprod_{t \le n}\mathcal{U}(X\sq G)^{\times t} \ar[d]^{q} \\ \mathcal{L}^{d|\delta}_0(X^{\times n}\sq G\wr \Sigma_n) \ar[r]^-{\coP_n} & \Sym^{\le n}(\mathcal{L}^{d|\delta}_0(X\sq G)).}\label{eq:powercov}
\eeq 
\end{thm}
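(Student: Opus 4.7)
The plan is to verify 2-commutativity by chasing an $S$-point through both paths and exhibiting a canonical natural isomorphism between the two resulting objects of $\Sym^{\le n}(\mathcal{L}_0^{d|\delta}(X\sq G))(S)$. Assume $S$ is connected without loss of generality. An $S$-point of $\mathcal{U}(X^{\times n}\sq G\wr \Sigma_n)$ is a triple $(\Lambda,h,\xi)$ as in~\eqref{eq:triple}, and by the construction of the atlas in~\eqref{eq:generalatlas} the image $p(\Lambda,h,\xi)$ is the super torus $T_\Lambda^{d|\delta}=(S\times\R^{d|\delta})/\uL$ equipped with the map to $X^{\times n}\sq G\wr\Sigma_n$ presented by the Lie groupoid homomorphism $(S\times\R^{d|\delta})\sq\uL\to X^{\times n}\sq G\wr\Sigma_n$ determined by $\xi$ on objects and $h$ on morphisms.

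I would first compute $\coP_n\circ p$ on this $S$-point. Applying $\pi^!$ and using Lemma~\ref{lem:factor} with the chosen base points $i_k\in I_k$ produces an equivalence of Lie groupoids $\coprod_k (S\times\R^{d|\delta})\sq \uL_k \xrightarrow{\sim} (S\times\R^{d|\delta}\times\underline{n})\sq\uL$ presenting the pulled-back cover, together with a map to $(X^{\times n}\times\underline{n})\sq G\wr\Sigma_n$ whose restriction to the $k$-th component is picked out by $i_k$. Post-composing with the evaluation map of Lemma~\ref{lem:ev}, the $k$-th summand becomes a map $(S\times\R^{d|\delta})\sq\uL_k\to X\sq G$ whose effect on objects is $\xi_k$ via Lemma~\ref{lem:technical}, and whose effect on morphisms is $h_k$ via the projection $\pi_{i_k}\colon G^{\times|I_k|}\to G$, matching the definitions in~\eqref{eq:mapproj1} and~\eqref{eq:betak}.

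Next I would compute $q\circ \widetilde{\coP}_n$ on the same $S$-point. By Definition~\ref{def:atlascoop}, the output is a disjoint union indexed by $k\in K$ of super tori $(S\times\R^{d|\delta})/\uL$ whose lattice is reparametrized along $M_{\uL_k}$, each equipped with the map to $X\sq G$ associated to $(h_k,\xi_k)$ via the recipe of~\eqref{eq:generalatlas}. The chosen basis $\uL\cong\uL_k$ is precisely the isomorphism that identifies $(S\times\R^{d|\delta})/\uL$ (with the lattice $\Lambda\circ M_{\uL_k}$) with $(S\times\R^{d|\delta})/\uL_k$ (with the lattice $\Lambda$ restricted to $\uL_k$) as an $S$-family of super tori with $\M$-structure; the positive-determinant condition guarantees this is an orientation-preserving identification of based lattices. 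Assembling these isomorphisms for all $k\in K$ gives the desired natural isomorphism in $\Sym^{\le n}(\mathcal{L}_0^{d|\delta}(X\sq G))(S)$ witnessing 2-commutativity, and naturality in $S$ is immediate since every step is defined by pullback and post-composition.

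The main obstacle is showing that the 2-isomorphism is well-defined independently of the auxiliary choices of base points $i_k\in I_k$ and of bases $\uL\cong\uL_k$: changing $i_k$ to $i_k'\in I_k$ conjugates the pair $(h_k,\xi_k)$ by some $g\in G$, which produces a canonical 2-isomorphism between the two resulting maps to the stack $X\sq G$ (so the object of $\mathcal{L}_0^{d|\delta}(X\sq G)(S)$ is unchanged up to canonical isomorphism); changing the basis of $\uL_k$ to another orientation-compatible basis induces an isometry of the representing super torus and is absorbed by the 2-morphisms of the atlas description from Proposition~\ref{prop:atlas}. Once these compatibilities are verified, the collection of local 2-isomorphisms glues to a global 2-isomorphism filling~\eqref{eq:powercov}.
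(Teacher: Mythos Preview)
Your argument is essentially the same as the paper's: both compute $\coP_n\circ p$ via Lemma~\ref{lem:factor} and Lemma~\ref{lem:ev}, compute $q\circ\widetilde{\coP}_n$ from Definition~\ref{def:atlascoop}, and then use the chosen isomorphisms $\uL\cong\uL_k$ to build an equivalence of Lie groupoids over $X\sq G$ making the triangle commute, checking objects and morphisms separately.

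Your final paragraph, however, identifies a non-problem. The map $\widetilde{\coP}_n$ is \emph{defined} in terms of the fixed choices of $i_k\in I_k$ and of bases $\uL\cong\uL_k$ (Definition~\ref{def:atlascoop}); different choices yield a different map $\widetilde{\coP}_n$, and the theorem only asserts 2-commutativity for the particular $\widetilde{\coP}_n$ so constructed. There is therefore nothing to verify about independence of choices: the 2-isomorphism you build uses exactly the same choices that went into $\widetilde{\coP}_n$, and that is all the statement requires. The paper's proof accordingly makes no mention of this issue and simply checks naturality in $S$.
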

\bp
By construction, an $S$-point of an atlas as in \eqref{eq:generalatlas} gives rise to an $S$-point of $\mathcal{L}^{d|\delta}_0(X\sq G)$ coming from a map of Lie groupoids. Since $\coP_n$ sends an $S$-point coming from a map of Lie groupoids to an $S$-point coming from a map of Lie groupoids, it suffices to work with Lie groupoids in this proof.

We start with the $S$-point of $\mathcal{U}(X^{\times n}\sq G\wr \Sigma_n)$ given by the triple
 \beq\label{eq:tildePsource}
(\Lambda, h, \xi) = (\Lambda \colon S \times \uL \to \R^{d}, h \colon \uL \to G \wr \Sigma_n, \xi \colon S \times \R^{0|\delta} \rightarrow (X^{\times n})^{\im h}).
\eeq
Its image under $p$ is the $S$-point of $\mathcal{L}^{d|\delta}_0(X^{\times n}\sq G\wr \Sigma_n)$ determined by the map of Lie groupoids $\phi = p(\Lambda, h, \xi) \colon S \times \R^{d|\delta} \sq \uL \to X^{\times n} \sq G \wr \Sigma_n$. Making use of \eqref{eq:tildeP}, we have 
\[
\widetilde{\coP}_n(\Lambda, h, \xi) = (\Lambda\circ M_{\uL_k}, h_k, \xi_k)_{k \in K}.
\]
Let $\coprod_{k} \phi_k = \coprod_k q(\Lambda\circ M_{\uL_k}, h_k, \xi_k) \colon \Coprod{k} (S\times \R^{d|\delta})\sq \uL \to X \sq G$.

We wish to produce an equivalence of Lie groupoids making the following diagram commute:
\begin{equation} \label{eq:maindiagram}
\xymatrix{\Coprod{k} (S\times \R^{d|\delta})\sq \uL \ar[dr]_-{\coprod_{k} \phi_k} \ar@{-->}[rr]^-{\sim} & & (S \times \R^{d|\delta} \times \underline{n}) \sq \uL \ar[dl]^-{\coP_n(\phi)}  \\ & X \sq G. &  }
\end{equation}
The equivalence in question is given by the string of equivalences of Lie groupoids
\begin{equation} \label{eq:equivalencein}
\Coprod{k} (S\times \R^{d|\delta})\sq \uL \xrightarrow{\sim} \Coprod{k} (S\times \R^{d|\delta})\sq \uL_{k} \xrightarrow{\sim}  (S\times \R^{d|\delta} \times \underline{n}) \sq \uL.
\end{equation}
The first equivalence makes use of the chosen isomorphisms $\uL_k \cong \uL$ and the second equivalence is the equivalence of \cref{lem:factor}. 

We have explicit formulas for each map of Lie groupoids in \eqref{eq:maindiagram}. \cref{lem:factor} gives a formula for the equivalence. The definition of the atlas \eqref{eq:generalatlas} gives a formula for $\phi$ and $\phi_k$. \Cref{lem:ev} and \cref{lem:pull} together with \eqref{eq:liesquarepullbackformula} give a formula for $\coP_n(\phi)$.

To check that \eqref{eq:maindiagram} commutes on the objects of the Lie groupoids, fix $k \in K$ and consider the composite
\[
S\times \R^{d|\delta}\hookrightarrow S\times \R^{d|\delta}\times \underline{n} \to X^{\times n} \times \underline{n}\stackrel{\ev}{\to} X,
\]
where the first arrow is the inclusion at $i_k\in I_k\subset \underline{n}$, the second arrow comes from the pullback in the definition of $\coP_n$, and the third arrow is the evaluation map. The formula for this evaluation map in \Cref{lem:ev} shows that the composition is equal to 
\[
S\times \R^{d|\delta}\to S\times \R^{0|\delta} \lra{\xi} X^{\times n}\lra{\pi_{i_k}} X,
\]
where the first map is the projection to $\R^{0|\delta}$, and $\pi_{i_k}$ is the projection to the $i_k^{\rm th}$-factor.

To check that \eqref{eq:maindiagram} commutes on morphisms, note that the composite
\[
\uL \lra{M_{\uL_k}} \uL \lra{h} G \wr \Sigma_n\stackrel{p_{i_k}}{\to} G,\label{eq:compgrpd2}
\]
where $p_{i_k}$ is the projection onto the $i_k$th factor of $G$ ($p_{i_k}$ is not a group homomorphism unless $n=1$), is identical to $h_k$.

Finally, note that the construction of the equivalence in \eqref{eq:equivalencein} is natural in $S$.
\ep

\subsection{Adams operations on stacks of super tori}\label{sec:geoadams}

Super tori have canonical connected coverings associated with the sublattices of the form $n\Lambda\colon S\times \L\to S\times \bE^d$ for $n\in \N$, corresponding to muliplication of $\Lambda\colon S\times \L\to S\times \bE^d$ by~$n$. For an $S$-family of super tori over $\X$, we get a canonical $n^d$-fold covering space,
\beq
T^{d|\delta}_{n\Lambda} \to T^{d|\delta}_\Lambda\to \X\label{eq:adamsform}
\eeq
and the composition gives a new (connected) super torus over $\X$. 

\begin{defn} \label{defn:geometricAdams}The \emph{$n^{\rm th}$ geometric Adams cooperation} on super tori is the functor
\beq
\Psi_n\colon \mathcal{L}_{0}^{d|\delta}(\X)\to \mathcal{L}_{0}^{d|\delta}(\X)\label{eq:geoadams}
\eeq
that associates to an $S$-point of the source its canonical $n^d$-fold covering space~\eqref{eq:adamsform}. To a morphism given by a triangle as on the right in the diagram below, the Adams operation assigns the canonical map between covers
\beq
\begin{tikzpicture}[baseline=(basepoint)];
\node (A) at (0,0) {$T^{d|\delta}_{n\Lambda}$};
\node (B) at (3,0) {$T^{d|\delta}_\Lambda$};
\node (D) at (0,-1.5) {$T^{d|\delta}_{n\Lambda'}$};
\node (E) at (3,-1.5) {$T^{d|\delta}_{\Lambda'}$};
\node (F) at (6,-.75) {$\X$.};
\draw[->] (A) to  (B);
\draw[->,dashed] (A) to node [left] {$\tilde{f}$} (D);
\draw[->] (B) to node [left] {$f$} (E);
\draw[->] (D) to (E);
\draw[->] (B) to node [above] {$\phi$} (F);
\draw[->] (E) to node [below] {$\phi'$} (F);
\node (D) at (4,-.75) {$\twocommute$};
\path (0,-.75) coordinate (basepoint);
\end{tikzpicture}\nonumber
\eeq 

\end{defn}

The geometric Adams cooperation can be recovered from the geometric power cooperation. We will construct a map
\[
\tau_{nd} \colon \mathcal{L}_{0}^{d|\delta}(X \sq G) \to \mathcal{L}_{0}^{d|\delta}(X^{\times nd} \sq G \wr \Sigma_{nd}).
\]
Take an $S$-point of $\mathcal{L}^{d|\delta}(X \sq G)$, 
\[
(S \times \R^{d|\delta}) \sq \uL \to (S \times \R^{0|\delta}) \sq \uL \to X \sq G,
\]
determined by the map of supermanifolds $s \colon S \times \R^{0|\delta} \to X$ and the map of groups $h \colon \uL \to G$. The image of this $S$-point under $\tau_{nd}$ is given by the map of supermanifold obtained by composing with the diagonal,
\[
S \times \R^{0|\delta} \to X \to X^{\times n},
\]
together with the map of groups 
\[
\uL \lra{(h, t)} G \times \Sigma_{nd} \hookrightarrow G \wr \Sigma_{nd},
\]
where $t \colon \uL \to \uL/n\uL \hookrightarrow \Sigma_{nd}$ is the composite of the quotient map with the Cayley embedding, following \cite[proof of Proposition 2.9]{charpo}. By construction the kernel of the composite with the quotient map $G \wr \Sigma_{nd} \to \Sigma_{nd}$ is $n\uL$. It is easy to verify the following proposition:

\begin{prop}
There is an isomorphism of maps of stacks
\[
\coP_{nd} \circ \tau_{nd} \cong \Psi_n.
\]
\end{prop}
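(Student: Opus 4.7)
The plan is to use \cref{thm:mainthm} to reduce the statement to a computation on atlases. Fix an $S$-point $(\Lambda, h, \xi)$ of $\mathcal{U}(X\sq G)$ (assume $S$ connected, as is legitimate by the local nature of the atlas). First I would compute $\tau_{nd}$ on this point: its image in $\mathcal{U}(X^{\times nd}\sq G \wr \Sigma_{nd})$ is the triple consisting of the same lattice $\Lambda$, the diagonal composite $\Delta \circ \xi \colon S \times \R^{0|\delta} \to X \hookrightarrow X^{\times nd}$, and the homomorphism $h' \colon \uL \to G \wr \Sigma_{nd}$ sending $\lambda \mapsto \bigl((h(\lambda),\dots,h(\lambda));\, t(\lambda)\bigr)$, where $t \colon \uL \to \uL/n\uL \hookrightarrow \Sigma_{nd}$ is the Cayley embedding (so $nd = n^d = |\uL/n\uL|$).

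Next I would apply the explicit formula for $\widetilde{\coP}_{nd}$ from \cref{def:atlascoop}. The composite $\uL \xrightarrow{h'} G \wr \Sigma_{nd} \to \Sigma_{nd}$ equals $t$, which by definition of the Cayley embedding acts transitively on $\underline{nd}$ with stabilizer $n\uL$. Hence the decomposition $\underline{nd} = \coprod_k I_k$ into transitive $\uL$-sets has a single orbit $I_1 = \underline{nd}$, and $\uL_1 = n\uL$. Choosing the basis of $n\uL$ given by multiplication by $n$ (which has positive determinant $n^d$), the associated matrix $M_{\uL_1}$ is scalar multiplication by $n$, so $\Lambda \circ M_{\uL_1} = n\Lambda$. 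Restricting $h'$ to $n\uL$ kills the $\Sigma_{nd}$-component and produces the diagonal of $h$ in $G^{\times nd}$, so the map $h_1 \colon \uL \to G$ from \eqref{eq:mapproj1} equals $\mu \mapsto h(n\mu)$ independently of the choice of basepoint $i_1 \in I_1$. Similarly, $\pi_{i_1} \circ \Delta \circ \xi = \xi$, so $\xi_1 = \xi$. We conclude that $\widetilde{\coP}_{nd}(\tau_{nd}(\Lambda, h, \xi)) = (n\Lambda,\, h\circ M_{\uL_1},\, \xi)$.

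Finally I would compare with $\Psi_n(\Lambda, h, \xi)$ computed directly from \cref{defn:geometricAdams}. The $n^d$-fold cover $T_{n\Lambda}^{d|\delta} \to T_\Lambda^{d|\delta} \to X\sq G$ is represented by the map of Lie groupoids $(S \times \R^{d|\delta}) \sq n\uL \to X\sq G$ obtained by restricting the original morphism map to the sublattice $n\uL \subset \uL$; under the identification $\uL \cong n\uL$ by $M_{\uL_1}$, this restriction becomes the atlas point $(n\Lambda,\, h \circ M_{\uL_1},\, \xi)$, matching the computation above. Naturality in $S$ and compatibility with morphisms in $\mathcal{L}_0^{d|\delta}(X\sq G)$ follow from the $S$-naturality of $\widetilde{\coP}_{nd}$ established at the end of the proof of \cref{thm:mainthm} together with the evident naturality of $\Psi_n$ from \cref{defn:geometricAdams}. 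The main potential obstacle is bookkeeping the dependence of $\widetilde{\coP}_{nd}$ on the auxiliary choices in \cref{def:atlascoop}: here only the unique orbit $I_1$ plays a role and any alternative choices produce $G$-conjugate data, which define isomorphic atlas points, so no genuine ambiguity arises.
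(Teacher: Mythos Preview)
Your proof is correct. The paper itself omits the argument entirely (saying only that the proposition is ``easy to verify''), so your atlas computation via \cref{thm:mainthm} and the explicit formula of \cref{def:atlascoop} is considerably more detailed than what the paper provides; it is the natural way to make the verification precise and is fully consistent with the machinery developed in \cref{sec:Section3}.
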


It is also possible to construct a map of atlases
\[
\widetilde{\Psi}_n \colon \Lat\times \coprod_{h \colon \uL\to G} \Map(\R^{0|\delta},X^{\im h}) \to \Lat\times \coprod_{h \colon \uL\to G} \Map(\R^{0|\delta},X^{\im h})
\]
covering the Adams operation. We will produce this as a product of maps. The map $\Lat \to \Lat$ sends an oriented based lattice $\uL \to \R^d$ to the composite $\uL \lra{\times n} \uL \to \R^d$. We define a map
\[
\coprod_{h \colon \uL\to G} \Map(\R^{0|\delta},X^{\im h}) \to \coprod_{h \colon \uL \to G} \Map(\R^{0|\delta},X^{\im h})
\]
by sending the component corresponding to $h$ to the component corresponding $h \circ n \colon \uL \lra{\times n} \uL \lra{h} G$ and using the map
\[
\Map(\R^{0|\delta},X^{\im h}) \to \Map(\R^{0|\delta},X^{\im (h \circ n)})
\]
induced by the inclusion $X^{\im h} \hookrightarrow X^{\im (h \circ n)}$. The proof of the following proposition is then similar to the proof of \Cref{thm:mainthm}.
\begin{prop}\label{prop:adamsmain}
There is a $2$-commutative diagram of stacks
\[
\xymatrix{\mathcal{U}(X\sq G) \ar[r]^{\widetilde{\Psi}_n} \ar[d] & \mathcal{U}(X\sq G) \ar[d] \\ \mathcal{L}_{0}^{d|\delta}(X \sq G) \ar[r]^{\Psi_n} & \mathcal{L}_{0}^{d|\delta}(X \sq G).}
\]
\end{prop}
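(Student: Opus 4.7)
The plan is to mirror the proof of \Cref{thm:mainthm}: start with an $S$-point $(\Lambda, h, \xi)$ of $\mathcal{U}(X\sq G)$, trace it through both composites in the diagram, and exhibit a natural equivalence of Lie groupoids that intertwines the two resulting maps to $X\sq G$. Since the relevant cover here is the single canonical $n^d$-fold cover induced by the sublattice inclusion $n\uL\subset\uL$, the argument avoids the decomposition of $\underline{n}$ into $\uL$-transitive pieces that was central to \Cref{thm:mainthm}, and is therefore substantially simpler.

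Unwinding the left composite, $p(\Lambda,h,\xi)$ is the map of Lie groupoids $(S\times\R^{d|\delta})\sq\uL \to X\sq G$ with $\uL$ acting through $\Lambda$, object map induced by $\xi$, and morphism map $h$. Applying $\Psi_n$ produces the composition
\[
(S\times\R^{d|\delta})\sq n\uL \hookrightarrow (S\times\R^{d|\delta})\sq\uL \to X\sq G,
\]
where the first arrow is induced by $n\uL\subset\uL$. On the other side, $\widetilde{\Psi}_n(\Lambda,h,\xi)=(n\Lambda,h\circ n,\xi')$, with $\xi'$ denoting $\xi$ post-composed with the inclusion $X^{\im h}\hookrightarrow X^{\im(h\circ n)}$; applying the atlas map yields the Lie groupoid map $(S\times\R^{d|\delta})\sq\uL\to X\sq G$ with $\uL$ acting through $n\Lambda$, object map $\xi$, and morphism map $h\circ n$.

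The key step is to produce a Lie groupoid equivalence
\[
(S\times\R^{d|\delta})\sq\uL \xrightarrow{\sim} (S\times\R^{d|\delta})\sq n\uL
\]
given by the identity on objects and multiplication by $n$ on morphisms; this is an isomorphism because $\uL\xrightarrow{\times n} n\uL$ is a bijection of abstract lattices. Under this identification, the $n\Lambda$-action matches $\Lambda$ restricted to $n\uL$, and the morphism map $h\circ n$ matches $h$ restricted to $n\uL$, so the two composites agree as maps to $X\sq G$. The only subtlety is the compatibility between the fixed-point inclusion $X^{\im h}\hookrightarrow X^{\im(h\circ n)}$ and the evaluation of $\xi$, but this is immediate from the definition of the atlas~\eqref{eq:generalatlas}: both object maps factor as $S\times\R^{0|\delta}\xrightarrow{\xi} X^{\im h}\hookrightarrow X$. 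The main bookkeeping to be done (and the likely source of any difficulty) is checking that these identifications are natural with respect to $S$ and with respect to isomorphisms in the source atlas, so that the resulting comparison is a genuine 2-morphism of stack morphisms rather than just a pointwise isomorphism.
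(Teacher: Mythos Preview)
Your proposal is correct and follows exactly the approach the paper intends: it explicitly mirrors the proof of \Cref{thm:mainthm}, and the paper itself says only that the proof ``is then similar to the proof of \Cref{thm:mainthm}.'' Your observation that the single sublattice $n\uL\subset\uL$ replaces the decomposition $\underline{n}=\coprod_k I_k$ (so $|K|=1$ and $M_{\uL_1}$ is multiplication by~$n$) is precisely the simplification one expects, and the Lie groupoid isomorphism you write down plays the role of the equivalence~\eqref{eq:equivalencein} in that proof.
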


\section{Power operations in complexified K-theory}

In this section we study $\mathcal{L}^{1|1}_0(X\sq G)$, the moduli stack of constant super loops in $X\sq G$. Functions on $\mathcal{L}^{1|1}_0(X\sq G)$ give a cocycle model for the complexified $G$-equivariant $\K$-theory of~$X$, see \cite{DBE_EquivTMF}. This allows us to compare the classical power operations on $\K_G(X)$ with the geometric power cooperation on $\mathcal{L}^{1|1}_0(X\sq G)$ introduced in \cref{sec:geompowerops}.

We will assume~$X$ is a smooth compact $G$-manifold in this section. This guarantees two things. First, $\K_G(X)$ can be computed as the Grothendieck group of finite dimendsional $G$-equivariant complex vector bundles on~$X$. Second, the complexification~$\C\otimes \K(X)$ is canonically isomorphic to the 2-periodic de Rham cohomology of~$X$ with complex coefficients via the Chern character. There is a similar description, due to Atiyah and Segal in \cite{Atiyah_Segal}, of the complexification of the $G$-equivariant K-theory of~$X$.

\subsection{Power operations in K-theory}\label{sec41}

We start with a quick review of the classical power operations; the standard reference is~\cite{Atiyah}. 

Let $\Vect_G(X)$ denote the groupoid whose objects are finite dimensional $G$-equivariant complex vector bundles on~$X$ and whose morphisms are isomorphisms of $G$-equivariant vector bundles. The Grothendieck group of $\Vect_G(X)$ is the $G$-equivariant complex $\K$-theory of $X$, denoted $\K_G(X)$. The classical power operations in $\K$-theory come from the $n^{\rm th}$ external power of a $G$-equivariant vector bundle $V\to X$, 
\beq
\Vect_G(X)\to \Vect_{G\wr \Sigma_n}(X^{\times n}),\qquad (V\to X)\mapsto (V^{\boxtimes n}\to X^{\times n}). \label{eq:vectpower}
\eeq
Atiyah and Segal produce an isomorphism~\cite[Theorem~2]{Atiyah_Segal}
\beq
\C\otimes \K_G(X)\cong \Big(\prod_{g\in G} {\rm H}^{\ev}(X^g;\C)\Big)^G,\label{eq:Chernchar}
\eeq
where $X^g\subset X$ denotes the $g$-fixed points. The isomorphism is induced by the equivariant Chern character
\[
{\rm Ch}_G\colon \K_G(X)\to  \Big(\prod_{g\in G} {\rm H}^{\ev}(X^g;\C)\Big)^G.
\]
We will use the notation 
\[
{\rm Ch}_g\colon \K_G(X)\to  {\rm H}^{\ev}(X^g;\C)
\]
to denote the component of this map corresponding to $g$ and $[\omega_g]\in {\rm H}^{\ev}(X^g;\C)$ the component of a class~$[\omega]\in \C\otimes \K_G(X)$ under the decomposition in \eqref{eq:Chernchar}.

We can refine the equivariant Chern character to take values in $G$-invariant closed differential forms. Let $\Vect_G^\nabla(X)$ denote the groupoid of $G$-equivariant vector bundles with $G$-invariant connection $\nabla$ and connection preserving equivariant vector bundle isomorphisms. Let $\Omega^\bullet(X)$ denote the superalgebra of differential forms on $X$, $\Omega^\bullet_{\cl}(X)$ denote the closed forms, and $\Omega^{\ev}_{\cl}(X)$ denote the closed forms of even degree. Then the refinement of the equivariant Chern character to the level of differential forms is a map 
\[
{\rm Ch}_G\colon \Vect_G^\nabla(X)/{\sim}\to \Big(\prod_{g\in G} \Omega^\ev_\cl(X^g)\Big)^G
\]
from isomorphism classes of equivariant vector bundles with $G$-invariant connection to equivariant differential forms. It is given by the formula,
\beq
{\rm Ch}(V,\nabla)_g={\rm Tr}(\exp(\nabla^2)\circ \rho(g))\in \Omega^{\ev}_\cl(X^g),\label{equivChernformula}
\eeq
where $\rho(g)\in \Gamma(X^g,\End(V))$ is the family of endomorphisms determined by $g\in G$ and the equivariant structure on $V$. 

\begin{notation}\label{notation:omegag} Above and throughout, $\omega_g\in \Omega^\bullet(X^g)$ is the $g$th component of a differential form in the product~$\omega\in \prod_{g\in G} \Omega^\bullet(X^g)$.
\end{notation}
The power operation~\eqref{eq:vectpower} on equivariant vector bundles has an obvious lift to vector bundles with connection
\[
\P_{n}^{\mathrm{classical}}\colon \Vect_G^\nabla(X)\to \Vect_{G\wr \Sigma_n}^\nabla(X^{\times n}),\qquad (V,\nabla)\mapsto (V^{\boxtimes n},\nabla^{\boxtimes n}),
\]
and we can study the interaction of this power operation with the equivariant Chern character. 

For any $G$-manifold~$X$, there is an isomorphism of algebras
\beq
\Big(\prod_{g\in G} \Omega^\ev_\cl(X^g)\Big)^G\cong C^\infty(\mathcal{L}^{1|1}_0(X\sq G)).\label{eq:Kiso}
\eeq
This was proved in~\cite[\S2]{DBE_EquivTMF} and is reviewed in~\Cref{sec:Kgeo} below. We may use the geometric power operation $\P_n$ and \eqref{eq:Kiso} to construct a power operation on forms via the following square:
\[
\xymatrix{\displaystyle\Big(\prod_{g\in G} \Omega^\ev_\cl(X^g)\Big)^G \ar@{-->}[r]^-{\P_n} \ar[d]_{\cong} & \displaystyle\Big(\prod_{({\bf g},\sigma)\in G\wr \Sigma_n} \Omega^\ev_\cl((X^{\times n})^{({\bf g},\sigma)})\Big)^{G \wr \Sigma_n} \ar[d]^{\cong} \\ C^\infty(\mathcal{L}^{1|1}_0(X\sq G)) \ar[r]_-{\P_n} & C^\infty(\mathcal{L}^{1|1}_0(X^n\sq G \wr \Sigma_n)).}
\]
The main result of this section is the following. 

\begin{thm}\label{thm:main11}
Let $X$ be a smooth and compact $G$-manifold and $n \ge 1$. There is a commutative diagram
\[
\xymatrix{\mathrm{Vect}^{\nabla}_{G}(X) \ar[r]^-{\P_{n}^{\mathrm{classical}}} \ar[d]_{{\rm Ch}_G} & \mathrm{Vect}^{\nabla}_{G \wr \Sigma_n}(X^{\times n}) \ar[d]^{{\rm Ch}_{G\wr\Sigma_n}} \\  \displaystyle \Big(\prod_{g\in G} \Omega^\ev_\cl(X^g)\Big)^G \ar[r]_-{\P_n} & \displaystyle\Big(\prod_{({\bf g},\sigma)\in G\wr \Sigma_n} \Omega^\ev_\cl((X^{\times n})^{({\bf g},\sigma)})\Big)^{G \wr \Sigma_n},}
\]
where $\P_n$ is induced by the geometric power operation under the isomorphism~\eqref{eq:Kiso}. 
\end{thm}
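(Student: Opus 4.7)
The plan is to verify that both the classical and geometric operations, when translated to the atlas $\mathcal{U}(X^{\times n}\sq G\wr\Sigma_n)$ of \cref{thm:mainthm}, produce the same function; I will use the atlas formula for the geometric power operation on one side, and the classical formula for the Chern character of an external tensor power evaluated at a wreath product element on the other. First I would recall the explicit form of \eqref{eq:Kiso} from \cite{DBE_EquivTMF}: on the atlas $\mathcal{U}(X\sq G)=\Lat\times\coprod_g\Map(\R^{0|1},X^g)$, a $G$-invariant tuple $(\omega_g)\in(\prod_g\Omega^{\ev}_{\cl}(X^g))^G$ corresponds to the function $f_\omega$ whose restriction to the $g$-summand is
\[
(\Lambda,\xi)\;\longmapsto\;\xi^*\bigl(\textstyle\sum_{k\ge 0}\ell^k\omega_g^{(2k)}\bigr),
\]
where $\ell$ is the length of $\Lambda$ and one uses the identification $\Map(\R^{0|1},X^g)\cong T[1]X^g$ so that $\xi^*$ turns forms into functions. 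The two features I need are that $\omega\mapsto f_\omega$ is multiplicative in $\omega$ and that scaling $\ell\mapsto m\ell$ corresponds to the Adams-type rescaling $\omega_g^{(2k)}\mapsto m^k\omega_g^{(2k)}$.

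Next I would apply \cref{thm:mainthm} with $d|\delta=1|1$ and $\uL=\Z$. Given $h\colon\Z\to G\wr\Sigma_n$ with $h(1)=({\bf g},\sigma)$, the set $\underline{n}$ decomposes under $\langle\sigma\rangle$ into cycles $\underline{n}=\coprod_{k\in K}I_k$; the stabilizer of any $i_k\in I_k$ is $\uL_k=|I_k|\Z\subset\Z$, so $M_{\uL_k}$ is multiplication by $|I_k|$, and $h_k\colon\Z\to G$ sends $1$ to the cycle product $g_k^{\rm cyc}:=g_{\sigma^{|I_k|-1}(i_k)}\cdots g_{\sigma(i_k)}g_{i_k}$. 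Combining the explicit formula for $\widetilde{\coP}_n$ with the multiplicative extension to $\Sym^{\le n}(\mathcal{L}^{1|1}_0(X\sq G))$ from \cref{prop:oprestriction}, I obtain
\[
(\P_n f_\omega)(\ell,({\bf g},\sigma),\xi)=\prod_{k\in K}f_\omega\bigl(|I_k|\ell,\,g_k^{\rm cyc},\,\xi_k\bigr)=\prod_{k\in K}\sum_{j\ge 0}(|I_k|\ell)^j\,\xi_k^*\omega^{(2j)}_{g_k^{\rm cyc}},
\]
where $\xi_k$ is the composite defined in \eqref{eq:betak}.

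On the classical side, using \cref{lem:technical} to identify $(X^{\times n})^{({\bf g},\sigma)}\cong\prod_{k\in K}X^{g_k^{\rm cyc}}$, I would verify the formula
\[
{\rm Ch}(V^{\boxtimes n},\nabla^{\boxtimes n})_{({\bf g},\sigma)}=\prod_{k\in K}{\rm Tr}\bigl(\exp(|I_k|\nabla^2)\,\rho(g_k^{\rm cyc})\bigr)\big|_{X^{g_k^{\rm cyc}}}.
\]
The underlying linear algebra is that the trace on $V^{\otimes|I_k|}$ of a cyclic permutation of length $|I_k|$ twisted by $(g_{i_1},\dots,g_{i_{|I_k|}})$ collapses to the trace on a single copy of $V$ of the cycle product $g_k^{\rm cyc}$, while the pullback of $\nabla^2_{V^{\boxtimes|I_k|}}$ along the cycle's diagonal $X^{g_k^{\rm cyc}}\hookrightarrow X^{\times|I_k|}$ equals $|I_k|\cdot\nabla^2$. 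Expanding this tuple of forms through \eqref{eq:Kiso} applied to the pair $(X^{\times n},G\wr\Sigma_n)$ produces exactly the function displayed above for $\P_n f_\omega$, which closes the diagram.

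The main obstacle will be the classical trace and curvature computation: while each ingredient is well known in isolation, coordinating the cycle decomposition with the identifications of fixed loci from \cref{lem:technical} and tracking the precise factor of $|I_k|$ multiplying $\nabla^2$ on the cyclic diagonal requires careful bookkeeping, especially to ensure compatibility with the $G\wr\Sigma_n$-equivariance needed to land in the $G\wr\Sigma_n$-invariants on the right-hand side. A minor secondary issue is verifying that the multiplicative extension from $\mathcal{L}^{1|1}_0(X\sq G)$ to $\Sym^{\le n}(\mathcal{L}^{1|1}_0(X\sq G))$ does indeed recover $\P_n f_\omega$ via the atlas diagram of \cref{thm:mainthm}, which is immediate from \Cref{lem:multrestr} and \cref{prop:oprestriction} but requires tracking the multiplicativity of $\omega\mapsto f_\omega$ across the disjoint union of components indexed by $K$.
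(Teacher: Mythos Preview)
Your proposal is correct and follows essentially the same approach as the paper: compute the geometric power operation on the atlas via \cref{thm:mainthm} (the paper packages this as \cref{prop:11formula}), compute the classical Chern character of $(V^{\boxtimes n},\nabla^{\boxtimes n})$ at $({\bf g},\sigma)$ by decomposing along the cycles of $\sigma$ and using the standard trace/curvature reduction on each cycle, and match the two formulas. The only cosmetic difference is that the paper writes the cycle factor as $|I_k|^{\deg/2}{\rm Tr}(\exp(\nabla^2)\rho({\bf g}_k))$ rather than your equivalent ${\rm Tr}(\exp(|I_k|\nabla^2)\rho(g_k^{\rm cyc}))$, and you should double-check the ordering in your cycle product $g_k^{\rm cyc}$ against the paper's right-action conventions (the paper's ${\bf g}_k$ in~\eqref{eq:hk} has the factors in the opposite order from yours).
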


We prove this directly by computing the geometric power operation~$\P_n$ using the tools of the previous section and then comparing it with the formula for the power operation in equivariant K-theory. 

We recall that the equivalence relation of concordance (see \Cref{concordance2}) mediates between cocycles and cohomology classes. From \Cref{concordance}, the geometric power cooperation is compatible with taking concordance classes. Applying Stokes theorem, concordance classes of  closed differential forms are precisely de~Rham cohomology classes. Together with the isomorphism~\eqref{eq:Chernchar} we obtain the following corollary. 

\begin{cor}
With notation as in the previous theorem, taking concordance classes and forming the Grothendieck group of isomorphism classes of vector bundles in \Cref{thm:main11}, we obtain the commutative square
\[
\xymatrix{\K_G(X) \ar[r]^-{\P_{n}^{\mathrm{classical}}} \ar[d] & \K_{G \wr \Sigma_n}(X^{\times n}) \ar[d] \\ \C \otimes \K_G(X) \ar[r]_-{\P_n} & \C \otimes \K_{G \wr \Sigma_n}(X^{\times n}).} 
\]
\end{cor}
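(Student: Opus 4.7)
The plan is to deduce this corollary directly from \Cref{thm:main11} by descending each side of the cocycle-level square to the appropriate equivalence classes: isomorphism classes of equivariant vector bundles with connection pass to $\K$-theory, and closed equivariant differential forms pass to de~Rham cohomology, which is identified with complexified equivariant $\K$-theory via the Atiyah--Segal isomorphism \eqref{eq:Chernchar}. The commutativity in the corollary is then inherited from the commutativity in \Cref{thm:main11}.

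For the bottom row, I would first observe that for closed differential forms, concordance classes coincide with de~Rham cohomology classes: Stokes' theorem in one direction, and interpolation of a primitive against a smooth cutoff of $[0,1]$ in the other. By \Cref{concordance}, the geometric power operation $\P_n$ descends to concordance classes, hence to cohomology, and under \eqref{eq:Chernchar} this yields the bottom arrow of the corollary square. Since the equivariant Chern character of $(V,\nabla)$ depends on $\nabla$ only up to an exact form (by the standard Chern--Weil transgression argument), it descends through the forgetful map $\mathrm{Vect}^\nabla_G(X) \to \mathrm{Vect}_G(X)$ to a well-defined map on $\K$-theory, namely the classical Atiyah--Segal Chern character on $\C \otimes \K_G(X)$.

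For the top row, I would use that the external $n$-th power $V \mapsto V^{\boxtimes n}$ sends isomorphic $G$-equivariant bundles to isomorphic $G\wr\Sigma_n$-equivariant bundles and is independent of the chosen connection, so it descends to a well-defined set map on isomorphism classes of bundles; by the standard total external power formalism, this extends canonically (though non-additively) to a set map $\P_n^{\mathrm{classical}} \colon \K_G(X) \to \K_{G \wr \Sigma_n}(X^{\times n})$. Combining these facts with \Cref{thm:main11} and the identification of the target of the equivariant Chern character with $\C \otimes \K_{?}(-)$ on each side gives the commutative diagram of the corollary.

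The step I expect to require the most attention is verifying that the two horizontal arrows of the corollary genuinely descend from their cocycle-level counterparts and agree on virtual classes. On the right, this reduces to checking that $\P_n$ is $\C$-linear in a formal sense compatible with the Atiyah--Segal decomposition, which is immediate since $\P_n$ acts by pullback of smooth functions and the isomorphism \eqref{eq:Kiso} is $C^\infty$-linear. On the left, one must check that the extension of $V \mapsto V^{\boxtimes n}$ from bundles to virtual classes is compatible with direct sums in the sense dictated by the multiplicativity relation $\lambda_t(V \oplus W) = \lambda_t(V)\lambda_t(W)$ for the total power operation, and that the corresponding multiplicativity holds on the form side via \Cref{prop:opconsistenttheory}(1). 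Once these compatibilities are recorded, commutativity of the corollary's square follows termwise from \Cref{thm:main11}.
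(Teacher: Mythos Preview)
Your approach matches the paper's: the paper's entire argument is the short paragraph preceding the corollary, invoking \Cref{concordance}, Stokes' theorem, and the Atiyah--Segal isomorphism~\eqref{eq:Chernchar}, which is precisely what your first two paragraphs do.

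One genuine slip: you assert that ``$\P_n$ is $\C$-linear in a formal sense'' because it acts by pullback of functions. This is false. The geometric power operation is the composite of the multiplicative extension $C^\infty(\mathcal{L}^{1|1}_0(\X)) \to C^\infty_\otimes(\Sym(\mathcal{L}^{1|1}_0(\X)))$ with the pullback $\coP_n^*$; the first step is not additive, and the explicit formula in \Cref{prop:11formula} exhibits $\P_n(\omega)_{({\bf g},\sigma)}$ as a product of several components $\omega_{{\bf g}_k}$ whenever $\sigma$ has more than one cycle. The paper says so explicitly after \Cref{prop:oprestriction}. Fortunately this error is harmless: you never actually use linearity, since $\P_n$ is already defined on all closed equivariant forms and descends to concordance classes by \Cref{concordance}, so there is nothing further to check for the bottom row to be well-defined on $\C\otimes\K_G(X)$.

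Your final paragraph about extending commutativity from actual bundles to virtual classes goes beyond what the paper argues; the paper simply asserts the corollary after noting the descent to concordance classes. Your instinct that something must be said here is reasonable, and your pointer to the $\lambda_t$-formalism and \Cref{prop:opconsistenttheory}(1) is the right direction, though as written it remains a sketch. The paper does not fill in this detail either.
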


Making use of the isomorphism \eqref{eq:Kiso}, the $n^{\text{th}}$ geometric Adams cooperation $\Psi_n \colon \mathcal{L}^{1|1}(X \sq G) \to \mathcal{L}^{1|1}(X \sq G)$, induces the $n^{\rm th}$ Adams operation 
\[
\Psi^n\colon \Big(\prod_{g\in G} \Omega^\ev_\cl(X^g)\Big)^G \to \Big(\prod_{g\in G} \Omega^\ev_\cl(X^g)\Big)^G.
\]
\begin{notation} \label{notation:deg} Define the $C^\infty(X)$-linear derivation,
\[
\deg\colon \Omega^\bullet(X)\to \Omega^\bullet(X),
\]
to be the derivation that on a degree $j$-form $\omega \in \Omega^j(X)$ is multiplication by $j$, $\deg(\omega)=j\omega$. For a real number $r\in \R$, define the algebra endomorphism $r^{\deg} \colon \Omega^\bullet(X)\to \Omega^\bullet(X)$ by $r^{\deg}(\omega)=r^j\omega$ for $\omega\in \Omega^j(X)$. If $r$ is positive, we similarly define $r^{\deg/2}$ by $r^{\deg/2}(\omega)=r^{j/2}\omega$. 
\end{notation}

\begin{cor}\label{thm:KAdams}
The linear map $\Psi^n$ is determined by 
\beq
\Psi^n(\omega)_g= n^{\deg} \omega_{g^n}, \qquad \omega\in \prod_{g\in G} \Omega^{\ev}_{\rm cl}(X^g),\label{eq:Adams}
\eeq
where on the right side above, we regard $\omega_{g^n}\in \Omega^{\ev}_\cl(X^g)$ by restriction along the inclusion~$X^g\subset X^{g^n}$. Hence, the $n^{\rm th}$ geometric Adams operation $\Psi^n$ is a cocycle refinement of the classical $n^{\rm th}$ Adams operation on $\C\otimes \K_G(X)$. 
\end{cor}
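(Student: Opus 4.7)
The plan is to combine \Cref{prop:adamsmain}, which presents the geometric Adams cooperation at the level of an atlas, with the explicit form of the isomorphism \eqref{eq:Kiso} reviewed in~\cref{sec:Kgeo}. First, I would specialize to $d|\delta=1|1$, so $\uL=\Z$ and a homomorphism $h\colon\Z\to G$ is determined by $g=h(1)\in G$. Consequently, the atlas takes the form
\[
\mathcal{U}(X\sq G)\;\cong\;\R_{>0}\times\coprod_{g\in G}\Map(\R^{0|1},X^g),
\]
and the atlas-level Adams map $\widetilde{\Psi}_n$ of~\cref{sec:geoadams} sends the $g$-component to the $g^n$-component by multiplying the lattice parameter $\ell\in\R_{>0}$ by $n$, and by post-composing $\xi\colon\R^{0|1}\to X^g$ with the inclusion $\iota\colon X^g\hookrightarrow X^{g^n}$.

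Next, I would invoke the explicit identification in~\cite{DBE_EquivTMF}: the isomorphism \eqref{eq:Kiso} sends an invariant tuple $(\omega_g)_g$ to the function on the atlas whose dependence on the lattice coordinate $\ell$ and on the $\Pi T X^g$-coordinate is homogeneous, coupling the $\R_{>0}$-scaling to form degree. With this identification, the function attached to a pure degree-$j$ form $\omega_g$ transforms by $\ell\mapsto n\ell$ via multiplication by $n^j$. Pulling back the function associated to $(\omega_g)_g$ along $\widetilde{\Psi}_n$ thus yields, on the $g$-component, the function associated to $n^{\deg}\,\iota^{*}\omega_{g^n}=n^{\deg}\omega_{g^n}|_{X^g}$, establishing the displayed formula $\Psi^n(\omega)_g=n^{\deg}\omega_{g^n}$.

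For the final assertion, I would compare with the character-theoretic description of the classical Adams operation. Under~\eqref{eq:Chernchar}, the action of $\psi^n$ on $\C\otimes\K_G(X)$ has as its $g$-th component the precomposition $\omega\mapsto \omega_{g^n}|_{X^g}$ (the representation-theoretic relation $\chi_{\psi^n V}(g)=\chi_V(g^n)$) twisted by the non-equivariant Adams on $\operatorname{H}^{\ev}(X^g;\C)$. Combining these two effects produces precisely the formula derived above, so $\Psi^n$ and the classical $\psi^n$ agree on concordance classes; passing to concordance and applying \Cref{concordance} then promotes the cocycle-level identity to the cohomology-level statement.

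The main obstacle is the bookkeeping in the middle paragraph: one must check that the precise normalization convention chosen in \eqref{eq:Kiso}, governing how the lattice parameter $\ell$ couples to form degree via the odd rescaling built into the $1|1$ super Euclidean model geometry, delivers the exponent $n^{\deg}$ after pullback through $\widetilde{\Psi}_n$. Once the normalization conventions used on source and target atlases are aligned, the remainder is a direct substitution using the formulas of \cref{sec:geoadams}.
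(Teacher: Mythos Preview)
Your approach is exactly the paper's: specialize \Cref{prop:adamsmain} to $d|\delta=1|1$, read off $\widetilde{\Psi}_n(\ell,g,\xi)=(n\ell,g^n,\iota\circ\xi)$, and pull back functions using the explicit identification of \Cref{prop:Kthycocycle}. The only point worth sharpening is the normalization you flag: the paper's inclusion~\eqref{eq:inclimage} sends $\omega$ to $\ell^{\deg/2}\omega$, so under $\ell\mapsto n\ell$ a degree-$j$ form picks up $n^{j/2}$, and the proof in the paper accordingly obtains $n^{\deg/2}\omega_{g^n}$ (which is the correct factor for the classical Adams operation on $H^{2\bullet}$); the $n^{\deg}$ in the displayed statement is a typographical slip there, not something your argument needs to reproduce.
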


\subsection{The geometric model}\label{sec:Kgeo}

In this subsection we review results relating moduli spaces of $1|1$-dimensional super circles to complexified equivariant $\K$-theory. The main goal is to spell out \Cref{defn:edinvariantmaps} in the case of the $1|1$-dimensional rigid conformal model geometry (compare with~\cite[\S4.2]{ST11}) and to characterize functions on $\mathcal{L}^{1|1}_0(X\sq G)$ in terms of differential forms on fixed point sets. We start with the relevant model geometry. 

\begin{defn}\label{defn:11model} Define the super Lie group $\bE^{1|1}$ as having underlying supermanifold $\R^{1|1}$ with group structure
\[
(t,\theta)\cdot (s,\eta)=(t+s+\theta\eta,\theta+\eta),\quad (t,\theta),(s,\eta)\in \R^{1|1}(S). 
\]
Consider the action of $\R^\times$ on $\bE^{1|1}$ through homomorphisms given by
\[
\mu\cdot (t,\theta)=(\mu^2t,\mu\theta),\quad \mu\in \R^\times(S),\ (t,\theta)\in \bE^{1|1}(S).
\]
Define the \emph{rigid conformal model geometry} to be the model geometry with model space~$\R^{1|1}$ together with its (left) action by~$\bE^{1|1}\rtimes \R^\times$. The composition 
\[
S\times \R^{1|1}\to S\times (\bE^{1|1}\rtimes \R^\times)\times \R^{1|1}\stackrel{\rm act}{\to} S\times \R^{1|1}
\]
is \emph{an $S$-family of rigid conformal maps}, where the first arrow is defined by an $S$-point~$(t,\theta,\mu)\in (\bE^{1|1}\rtimes \R^\times)(S)$ of the rigid conformal transformation group, and the second arrow by the action of $\bE^{1|1}\rtimes \R^\times$ on $\R^{1|1}$. 
\end{defn}

We will use the notation $\bE<\bE^{1|1}$ to denote the reduced subgroup of $\bE^{1|1}$, i.e., $\R$ with its usual additive structure. The following is a specialization of \Cref{defn:supertori} relative to the rigid conformal model geometry above that we provide both for the reader's convenience and also to establish some notation. 

\begin{defn} Let $\ell\in \R_{>0}(S)$. An \emph{$S$-family of super circles} is a quotient
\[
T^{1|1}_\ell:=(S\times \R^{1|1})/\Z
\]
for the $\Z$-action on $S\times \R^{1|1}$
\[
n\colon (t,\theta)\mapsto (t+n\ell,\theta),\quad (t,\theta)\in \R^{1|1}(S), \quad n \in \Z.
\]
The projection $T^{1|1}_\ell\to S$ allows us to view $T^{1|1}_\ell$ as a bundle over $S$. The canonical cover
\[
S\times \R^{1|1}\to (S\times \R^{1|1})/\Z=T^{1|1}_\ell
\]
endows the $S$-family of super circles $T^{1|1}_\ell$ with a fiberwise rigid conformal structure.
\end{defn}

The following pair of definitions are special cases of \Cref{defn:stori} and \Cref{defn:edinvariantmaps}. 

\begin{defn} For a stack $\X$, the \emph{super loop stack of~$\X$}, denoted $\mathcal{L}^{1|1}(\X)$, is the stack associated to the prestack whose objects over $S$ are pairs $(\ell,\phi)$, where $\ell\in \R_{>0}(S)$ determines a family of super circles $T^{1|1}_\ell$ and $\phi\colon T^{1|1}_\ell\to \X$ is a map. Morphisms between such objects over~$S$ consist of triangles
\beq
\begin{tikzpicture}[baseline=(basepoint)];
\node (A) at (0,0) {$T^{1|1}_\ell$};
\node (B) at (3,0) {$T^{1|1}_{\ell'}$};
\node (C) at (1.5,-1.5) {$\X$};
\node (D) at (1.5,-.6) {$\twocommute$};
\draw[->] (A) to node [above=1pt] {$\cong$} (B);
\draw[->] (A) to node [left=1pt]{$\phi$} (C);
\draw[->] (B) to node [right=1pt]{$\phi'$} (C);
\path (0,-.75) coordinate (basepoint);
\end{tikzpicture}\label{11triangle}
\eeq
that 2-commute, where the horizontal arrow is an isomorphism of $S$-families of super rigid conformal $1|1$-manifolds.

\end{defn}

\begin{defn}
The \emph{stack of ghost super loops}, denoted $\mathcal{L}_0^{1|1}(\X)$, is the full substack of $\mathcal{L}^{1|1}(\X)$ generated by pairs $(\ell,\phi)$ where $\phi\colon T^{1|1}_\ell\to \X$ is a map given by the composition 
\[
T^{1|1}_\ell \stackrel{\sim}{\leftarrow} [S\times \R^{1|1}\sq \Z] \stackrel{\pi}{\to} [S\times\R^{0|1}\sq \Z]\to \X,
\]
where $\pi$ is induced by the projection $\R^{1|1}\to \R^{0|1}$.
\end{defn}

Since $\uL = \Z$ in this case, we have $\Hom(\uL,G) \cong G$ (where $h \mapsto h(1)$) and an oriented lattice $\Lambda\colon S\times \Z\to \R$ can be identified with $\ell:=\Lambda(1)\in \R_{>0}(S)$. The atlas from \Cref{prop:atlas} specializes to a map
\beq
\coprod_{g\in G} \R_{>0}\times \Map(\R^{0|1},X^g)\to \mathcal{L}^{1|1}_0(X\sq G).\label{eq:atlas11}
\eeq
This map takes an $S$-point $\ell\in \R_{>0}(S)$ and a map $\xi\colon S\times \R^{0|1}\to X^g$ to the map of stacks $T^{1|1}_\ell\to [X\sq G]$ obtained from the maps of Lie groupoids
\[
T^{1|1}_\ell=(S\times \R^{1|1})/\Z \leftarrow S\times \R^{0|1}\sq \Z\stackrel{\xi\sq \Z}{\to} X^g\sq \Z\to X\sq G.
\]
Here, the $\Z$-action on $X^g$ is trivial, the arrow labeled by $\xi\sq \Z$ is determined by~$\xi$ (using that the $\Z$-actions are trivial), and the map $X^g\sq \Z\to X\sq G$ comes from the map $\Z \to G$ picking out $g\in G$ along with the inclusion $X^g\subset X$. By virtue of~\eqref{eq:atlas11} being an atlas, functions on the stack $\mathcal{L}^{1|1}_0(X\sq G)$ form a subalgebra 
\beq
&&C^\infty(\cL_0^{1|1}(X\sq G))\hookrightarrow C^\infty\Big(\coprod_{g\in G} \R_{>0}\times \Map(\R^{0|1},X^g)\Big) \cong \Omega^\bullet\Big(\coprod_{g\in G} X^g;C^\infty(\R_{>0})\Big), \label{eq:inclusion}
\eeq
where the inclusion is the pullback of functions along~\eqref{eq:atlas11}. The isomorphism on the right uses two facts. The first fact is that, for any pair of supermanifolds $M$ and $N$, there is a canonical isomorphism $C^\infty(M\times N)\cong C^\infty(M; C^\infty(N))$ (see \cite[Example 49]{HST}) . The second fact is that there is a canonical isomorphism  $C^\infty(\Map(\R^{0|1},X))\cong \Omega^\bullet(X)$ (see \cite{HKST}). 

\begin{rmk}\label{eq:rmkonell}
 In a slight abuse of notation, we will use $\ell=\id\in \R_{>0}(\R_{>0})\hookrightarrow \R(\R_{>0})\subset C^\infty(\R_{>0})$ to denote the identity map, identified with the standard coordinate on~$\R_{>0}$. This is justified because $\ell\colon S=\R_{>0}\to \R_{>0}$ is the universal $S$-point of $\R_{>0}$. 
\end{rmk}

Our next goal is to give a more explicit description of~\eqref{eq:inclusion}. Define a map 
\beq\label{eq:inclimage}
&&\left(\prod_{g\in G} \Omega^\ev_{\cl}(X^g)\right)^G\hookrightarrow \prod_{g\in G} \Omega^\bullet( X^g;C^\infty(\R_{>0})), \qquad \omega\mapsto \ell^{\deg/2}\omega,
\eeq
where $\deg$ is defined in \cref{notation:deg}. This connects with complexified K-theory as follows.

\begin{prop}[{\cite[\S2]{DBE_EquivTMF}}] \label{prop:Kthycocycle} 
The functions on the atlas that descend to the stack are determined by the commutative square
\beq
\begin{tikzpicture}[baseline=(basepoint)];
\node (A) at (0,0) {$\left(\prod_{g\in G} \Omega^\ev_{\cl}(X^g)\right)^G$};
\node (B) at (6,0) {$C^\infty(\mathcal{L}^{1|1}_0(X\sq G))$};
\node (C) at (0,-1.5) {$\prod_{g\in G} \Omega^\bullet( X^g;C^\infty(\R_{>0}))$};
\node (D) at (6,-1.5) {$C^\infty\left(\coprod_{g\in G} \R_{>0}\times \Map(\R^{0|1},X^g)\right)$,};
\draw[->,dashed] (A) to node [above] {$\cong$} (B);
\draw[->,right hook-latex] (A) to (C);
\draw[->] (C) to node [above] {$\cong$} (D);
\draw[->,right hook-latex] (B) to (D);
\path (0,-.75) coordinate (basepoint);
\end{tikzpicture}\label{eq:Kthycocycle}
\eeq
where the inclusion on the left is the map~\eqref{eq:inclimage}. The isomorphism~\eqref{eq:Chernchar} implies that the ring $C^\infty(\mathcal{L}^{1|1}_0(X\sq G))$ is a cocycle model for complexified equivariant K-theory. 
\label{prop:11present}
\end{prop}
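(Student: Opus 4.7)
The plan is to extract the descent condition along the atlas \eqref{eq:atlas11} and identify the algebra of descending functions with the image of $\omega \mapsto \ell^{\deg/2}\omega$. Since functions on a stack are functions on an atlas that equalize the two projections from the fiber product with itself, the first step is to compute the Lie groupoid presentation provided by \eqref{eq:atlas11}.

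For this, I would unwind the triangle \eqref{11triangle}. Every $S$-family of fiberwise rigid conformal isomorphisms $T^{1|1}_\ell \to T^{1|1}_{\ell'}$ lifts to a map $S \times \R^{1|1} \to S \times \R^{1|1}$ given by an element of $\bE^{1|1} \rtimes \R^\times$ together with a compatible base change of the deck lattice $\Z$. The 2-morphism datum on the right-hand side of \eqref{11triangle} is a section of the gauge group, valued in $G$. Assembling these, the morphism supermanifold of the atlas groupoid decomposes into: a $G$-piece that sends the $g$-component to the $hgh^{-1}$-component via the action of $h \in G$ on fixed-point sets; the translation group $\bE^{1|1}$; and the $\R^\times$ rescaling. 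The $\pm 1$ automorphism of $\Z$ that would reverse orientation is ruled out for the oriented atlas by positivity of $\ell$.

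Next I would translate each invariance into a condition on differential forms, using $C^\infty(\Map(\R^{0|1}, X^g)) \cong \Omega^\bullet(X^g)$. The $G$-piece yields $G$-equivariance of the family $(\omega_g)_g$; even translations by $\bE < \bE^{1|1}$ act trivially on the atlas since atlas points factor through the projection $\R^{1|1} \to \R^{0|1}$; and odd translations generate the de Rham differential, forcing $d\omega_g = 0$. The key calculation is the $\R^\times$ rescaling: $\mu \in \R^\times$ sends $(\ell, \xi)$ to $(\mu^2\ell, \xi \circ [\mu^{-1}])$, and precomposition by $\theta \mapsto \mu^{-1}\theta$ scales a $k$-form by $\mu^{-k}$. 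The combined invariance condition on an expression of the form $\ell^{k/2}\omega$ is automatic for the connected component $\R^\times_{>0}$ (which is precisely what pins down the exponent $k/2$) and reduces for $\mu = -1$ to the condition $(-1)^k = 1$, forcing each $\omega_g$ to be of pure even degree.

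Putting these pieces together identifies the image of \eqref{eq:inclimage} with the subalgebra of descended functions, establishing the dashed isomorphism in \eqref{eq:Kthycocycle}. The main technical care will lie in tracking the $\R^\times$ action — in particular verifying the exponent $k/2$ and confirming that the evenness of $k$ makes $\ell^{k/2}$ an integer power of the positive function $\ell$, hence unambiguously defined on the groupoid. The identification of this algebra with complexified equivariant K-theory is then the Atiyah--Segal isomorphism \eqref{eq:Chernchar}, which expresses $\C \otimes \K_G(X)$ as the $G$-invariants of $\prod_g H^{\rm ev}(X^g;\C)$. For the full argument I would appeal directly to \cite[\S2]{DBE_EquivTMF}, from which this proposition is essentially extracted.
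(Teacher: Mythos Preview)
The paper does not supply its own proof of this proposition; it is quoted directly from \cite[\S2]{DBE_EquivTMF}, as the attribution in the statement indicates. Your sketch is a correct outline of the argument from that reference: you correctly identify the groupoid presentation induced by the atlas (conjugation by $G$, translations by $\bE^{1|1}$, and dilations by $\R^\times$, with $\SL_1(\Z)$ trivial), and the resulting conditions on a function $\ell^{j}\omega_g$ are exactly $G$-equivariance, closedness from the odd translations, the weight pinning $j=\deg/2$ from $\R^\times_{>0}$, and evenness of degree from $\mu=-1$. Since there is no in-paper proof to compare against, there is nothing further to add beyond noting that your approach matches the cited source.
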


\subsection{Computing the total geometric power operation}\label{sec:42} The goal of this section is to give a formula for the total geometric power operation for the model geometry and super circles described in the previous sections. As per \Cref{thm:mainthm}, we compute the geometric power operation in terms of the atlas for the stack $\mathcal{L}^{1|1}_0(X\sq G)$ described above. 

From \cref{def:atlascoop}, the general formula for the map $\widetilde{\coP}_n$ on an $S$-point of the atlas is
\beq
\widetilde{\coP}_n(\Lambda, h, \xi) = (\Lambda\circ M_{\uL_k}, h_k, \xi_k)_{k \in K},\label{eq:genform}
\eeq
where $\Lambda\colon S\times \L\to \R^d$ is a based lattice, $h\colon \L\to G\wr \Sigma_n$ is a homomorphism, and $\xi\colon S\times \R^{0|1}\to (X^{\times n})^{\im(h)}$ is a map of supermanifolds. We specialize this to our setting, where $\uL = \Z$. As mentioned before, a homomorphism $\Z \to G$ may be identified with the image of $1\in \Z$, i.e., an element of $G$. An oriented lattice $\Lambda\colon S\times \Z\to \R$ can be identified with $\ell:=\Lambda(1)\in \R_{>0}(S)$. So an $S$-point of the atlas for $\cL_0^{1|1}(X^{\times n}\sq G\wr\Sigma_n)$, given by a triple $(\Lambda, h, \xi)$, is equivalent to the data of the triple
\[
(\ell,({\bf g},\sigma),\xi),
\]
%\in (\R_{>0}\times G \wr \Sigma_n\times \Map(\R^{0|1},(X^n)^{({\bf g},\sigma)}))(S)
where $({\bf g},\sigma) = h(1) = (g_1, \ldots, g_n, \sigma)\in G\wr\Sigma_n$. Via this identification, the value of $\widetilde{\coP}_n$ on $(\ell,({\bf g},\sigma),\xi)$ can be understood as follows. Factor $\sigma = \sigma_1\cdots \sigma_{|K|}$, where $\sigma_k$ is a cycle, according to the decomposition $\underline{n} = \Coprod{k} I_k$ of $\underline{n}$ into transitive $\Z$-sets. It follows that $M_{\uL_k}$ is multiplication by the order of $\sigma_k$ (this is also $|I_k|)$ and $h_k$, which depends on our choice of $i_k \in I_k$, corresponds to 
\beq
{\bf g}_k = g_{i_k} g_{\sigma_k(i_k)} \cdots g_{(\sigma_k)^{|I_k|-1}(i_k)} \in G.\label{eq:hk}
\eeq
Thus $X^{\im h_k} = X^{{\bf g}_k}$. Using this notation, we see that $(\Lambda\circ M_{\uL_k}, h_k, \xi_k)$ is the same data as the triple
\beq
(|I_k|\ell, {\bf g}_k, \xi_k)\in (\R_{>0}\times G\times \Map(\R^{0|1},X^{{\bf g}_k}))(S).\label{eq:copntarget11}
\eeq
Using the map~\eqref{eq:inclimage} and \cref{prop:Kthycocycle}, the geometric power operation determines the map on differential forms
\beq
\big(\prod_{g\in G} \Omega^\ev_\cl(X^g)\big)^G&\cong& C^\infty(\mathcal{L}^{1|1}_0(X\sq G))\nonumber \\
&& \xrightarrow{\mathbb{P}_n} C^\infty(\mathcal{L}^{1|1}_0(X^{\times n}\sq G\wr\Sigma_n))\cong \left(\prod_{({\bf g},\sigma) \in G\wr \Sigma_n} \Omega^{\ev}_\cl((X^{\times n})^{({\bf g},\sigma)})\right)^{G\wr\Sigma_n}.\label{eq:diffpower}
\eeq
In the following proposition, for $\omega\in (\prod_{g\in G} \Omega^\ev_\cl(X^g))^G$, we denote the image under the above composition by $\P_n(\omega)$, with the isomorphisms to differential forms implicitly understood. We will also use \cref{notation:omegag} for~$\omega_g\in \Omega^\bullet(X^g)$.

\begin{prop}\label{prop:11formula}
The geometric power operation satisfies the formula
\begin{align*}
\P_n(\omega)_{({\bf g},\sigma)} & =(|I_1|^{\deg/2} \omega_{{\bf g}_{1}})(|I_2|^{\deg/2} \omega_{{\bf g}_{2}})\cdots (|I_{|K|}|^{\deg/2} \omega_{{\bf g}_{|K|}}) \\
& \in   \Omega^{\ev}_\cl(\prod_k X^{{\bf g}_{k}}) \cong \Omega^{\ev}_\cl((X^{\times n})^{({\bf g},\sigma)}),
\end{align*}
for $\omega\in (\prod_{g\in G} \Omega^\ev_\cl(X^g))^G$, $\P_n(\omega)\in(\prod_{({\bf g},\sigma) \in G\wr \Sigma_n} \Omega^{\ev}_\cl((X^{\times n})^{({\bf g},\sigma)}))^{G\wr\Sigma_n}$, $({\bf g},\sigma)\in G\wr\Sigma_n$, and ${\bf g}_{k}\in G$ defined in~\eqref{eq:hk}.
\end{prop}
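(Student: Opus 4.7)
The plan is to derive the formula directly from \Cref{thm:mainthm} combined with the cocycle description of functions on $\mathcal{L}^{1|1}_0(X\sq G)$ supplied by \Cref{prop:Kthycocycle}. Since $\coP_n$ factors through $\Sym^{\le n}(\mathcal{L}^{1|1}_0(X\sq G))$ and a multiplicative function on a free symmetric product is determined by its value on a single factor (\Cref{lem:multrestr}), pulling back a function along $\coP_n$ is computed on the atlas $\mathcal{U}(X^{\times n}\sq G\wr\Sigma_n)$ by multiplying the pullbacks along each of the $|K|$ components $(|I_k|\ell, {\bf g}_k, \xi_k)$ of $\widetilde{\coP}_n$ described in \eqref{eq:copntarget11}.

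Under \Cref{prop:Kthycocycle}, a form $\omega\in (\prod_g\Omega^\ev_\cl(X^g))^G$ corresponds to the function on $\coprod_g \R_{>0}\times\Map(\R^{0|1},X^g)$ whose restriction to the $g$-component is $\ell^{\deg/2}\omega_g$. Pulling this back along $\widetilde{\coP}_n$, the $({\bf g},\sigma)$-component on an $S$-point $(\ell,({\bf g},\sigma),\xi)$ becomes the product, over the transitive $\Z$-orbits $I_k\subset\underline{n}$, of the pullbacks via $\xi_k$ of $(|I_k|\ell)^{\deg/2}\omega_{{\bf g}_k}$. On each homogeneous component one has $(|I_k|\ell)^{\deg/2}\omega_{{\bf g}_k} = \ell^{\deg/2}(|I_k|^{\deg/2}\omega_{{\bf g}_k})$, so the total pullback factors as $\ell^{\deg/2}\prod_k |I_k|^{\deg/2}\omega_{{\bf g}_k}$, where the product is the exterior product on $\prod_k X^{{\bf g}_k}$. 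Transporting along the isomorphism $\prod_k X^{{\bf g}_k}\cong (X^{\times n})^{({\bf g},\sigma)}$ of \Cref{lem:technical} and inverting the embedding \eqref{eq:inclimage} to strip the $\ell^{\deg/2}$ factor yields the claimed formula.

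The main subtle point is matching the single factor of $\ell^{\deg/2}$ on the image side to the \emph{total} degree of the exterior product $\prod_k \omega_{{\bf g}_k}$, which equals $\sum_k \deg(\omega_{{\bf g}_k})$; this is precisely what the identity $(|I_k|\ell)^{\deg/2}\omega_{{\bf g}_k} = \ell^{\deg/2}|I_k|^{\deg/2}\omega_{{\bf g}_k}$ gives on each homogeneous component. The $G\wr\Sigma_n$-invariance of the resulting tuple follows from the $G$-invariance of $\omega$ together with the fact that changing $i_k\in I_k$ alters ${\bf g}_k$ only up to $G$-conjugation, while reordering the cycles of $\sigma$ only reorders the product. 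The hardest (though routine) verification is that the exterior product of pulled-back forms on the $X^{{\bf g}_k}$ corresponds under \Cref{lem:technical} to the pullback of the asserted form on $(X^{\times n})^{({\bf g},\sigma)}$; this is automatic from the construction of $\xi_k$ in \eqref{eq:betak} as projections of $\xi$ through that same isomorphism.
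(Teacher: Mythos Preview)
Your proof is correct and follows essentially the same approach as the paper's: both compute $\P_n$ on the atlas via \Cref{thm:mainthm}, identify functions on $\mathcal{L}^{1|1}_0$ with $\ell^{\deg/2}$-weighted forms using \Cref{prop:Kthycocycle}, pull back along the components $(|I_k|\ell,{\bf g}_k,\xi_k)$ of $\widetilde{\coP}_n$, and then strip the common $\ell^{\deg/2}$ factor via~\eqref{eq:inclimage}. Your write-up is slightly more explicit than the paper's about why the individual $\ell^{\deg/2}$ factors reassemble into a single $\ell^{\deg/2}$ on the exterior product and about $G\wr\Sigma_n$-invariance, but there is no substantive difference in method.
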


\begin{proof}
By \cref{prop:Kthycocycle} functions on $\cL_0^{1|1}(X\sq G)$ are in bijection with functions on the atlas of the form $(\ell^{\deg/2}\omega_g)_{g\in G}$ where $\omega_g\in \Omega^\ev_\cl(X^{g})$. The restriction of the power cooperation to the component of the atlas \eqref{eq:atlas11} indexed by $({\bf g},\sigma)\in G\wr \Sigma_n$ is
\beq
\R_{>0}\times \Map(\R^{0|1},(X^{\times n})^{({\bf g},\sigma)})\stackrel{\widetilde{\coP}_n}{\to} \coprod_{k\in K} \R_{>0}\times \Map(\R^{0|1},X^{{\bf g}_k})
\label{eq:coPnmap11}
\eeq
\beq
&&\Omega^\bullet(X^{{\bf g}_k};C^\infty(\R_{>0}))\cong C^{\infty}(\R_{>0}\times \Map(\R^{0|1},X^{{\bf g}_k}))\nonumber\\ 
&&\to C^{\infty}(\R_{>0}\times \Map(\R^{0|1},(X^{\times n})^{({\bf g},\sigma)}))\cong \Omega^\bullet((X^{\times n})^{({\bf g},\sigma)};C^\infty(\R_{>0})),\nonumber
\eeq
where we have pre- and post-composed with the isomorphism between $C^\infty(\R_{>0})$-valued differential forms and functions on the atlas from~\eqref{eq:inclusion}. Next, we apply each of these maps to $\ell^{\deg/2}\omega_{{\bf g}_k}$ and take their product over $k$, obtaining
\begin{align*}
    \widetilde{\coP}_n^*(\ell^{\deg/2}\omega_{{\bf g}_k})_{k\in K} & =\prod_k ((|I_k|\ell)^{\deg/2}\omega_{{\bf g}_k}) \\
    & \in \Omega^{\ev}_\cl(\prod_k X^{{\bf g}_{k}};C^\infty(\R_{>0})) \cong \Omega^{\ev}_\cl((X^{\times n})^{({\bf g},\sigma)};C^\infty(\R_{>0})).
\end{align*}
By \cref{thm:mainthm}, the function $\widetilde{\coP}_n^*(\ell^{\deg/2}\omega_{{\bf g}_k})_{k\in K}$ on the atlas descends to a function on the stack $\cL_0^{1|1}((X^{\times n})\sq G\wr \Sigma_n)$ and computes the geometric power operation. Finally, we identify this map between functions on stacks with the map of differential forms~\eqref{eq:diffpower}. Using~\eqref{eq:inclimage} this has the effect of removing the dependence on $\ell$, obtaining the formula for the power operation in the statement of the proposition. 
\ep

Next we compare with the total power operation in K-theory. 
\begin{proof}[Proof of \Cref{thm:main11}] 
From the definition of the power operation in K-theory and the equivariant Chern character, for ${({\bf g},\sigma)\in G\wr \Sigma_n}$ we have
\beq
{\rm Ch}(\P_n^{\rm classical}(V,\nabla))_{({\bf g},\sigma)}&=&{\rm Ch}(V^{\boxtimes n},\nabla^{\boxtimes n})_{({\bf g},\sigma)}\nonumber\\
&=& {\rm Tr}(\exp((\nabla^{\boxtimes n})^2)\circ \rho({\bf g},\sigma))\in \Omega^{\ev}_\cl((X^{\times n})^{({\bf g},\sigma)}),\nonumber
\eeq
where we have used the notation for the Chern character from~\eqref{equivChernformula}. We can express 
\[
\exp\Big((\nabla^{\boxtimes n})^2\circ \rho({\bf g},\sigma)\Big)\in \Gamma\Big((X^{\times n})^{({\bf g},\sigma)};\End(V)\Big)
\] 
as an external tensor product of endomorphisms, one for each subset $I_k \subset \underline{n}$ on which $\sigma$ acts transitively. Indeed, we have
\[
\exp((\nabla^{\boxtimes n})^2\circ \rho({\bf g},\sigma))= \boxtimes_{k \in K} \exp((\nabla^{\boxtimes |I_k|})^2\circ \rho(({\bf g},\sigma)_k)),
\]
and so the trace can be written as a product of the factors,
\beq
{\rm Tr}(\exp((\nabla^{\boxtimes n})^2\circ \rho({\bf g},\sigma)))=\prod_{k\in K} {\rm Tr}( \exp((\nabla^{\boxtimes |I_k|})^2\circ \rho(({\bf g},\sigma)_k))).\label{eq:classicalfactors}
\eeq
In the above, $({\bf g},\sigma)_k$ denotes the restriction of the action of $({\bf g},\sigma) \in G \wr \Sigma_n$ on $V^{\boxtimes n}$ to $V^{\boxtimes |I_k|}$. We observe that
\[
{\rm Tr}(\exp((\nabla^{\boxtimes |I_k|})^2\circ \rho(({\bf g},\sigma)_k)=|I_k|^{\deg/2}{\rm Tr}(\exp(\nabla^2)\circ \rho({\bf g}_k))
\]
and so ${\rm Ch}(\P_n^{\rm classical}(V,\nabla))_{({\bf g},\sigma)}$ is the product over $k\in K$ of the factors on the right.

On the other hand, we can apply the geometric power operation $\P_n$ as computed in \Cref{prop:11formula} to the differential form $\omega={\rm Ch}_G(V,\nabla)\in (\prod_{g\in G} \Omega^\ev_\cl(X^g))^G$.
From the definition of the equivariant Chern character~\eqref{equivChernformula}, we have the equality
\beq
{\rm Tr}(\exp(\nabla^2)\circ \rho({\bf g}_k))={\rm Ch}_{{\bf g}_k}(V,\nabla)=\omega_{{\bf g}_k}.\label{eq:equalityofPn}
\eeq
Multiplying by the factor $|I_k|^{\deg/2}$ and taking the product over~$k$, the left hand side of~\eqref{eq:equalityofPn} gives the formula~\eqref{eq:classicalfactors} for ${\rm Ch}_G(\P_n^{\rm classical}(V,\nabla))$ and the right hand side is the formula for $\P_n(\omega)$. Hence $\P_n({\rm Ch}_G(V,\nabla))={\rm Ch}_G(\P_n^{\rm classical}(V,\nabla))$, proving the theorem. 
\ep

\begin{proof}[Proof of \Cref{thm:KAdams}] 
We compute using the atlas from~\Cref{prop:11present} and consider maps
\beq
\widetilde{\Psi}_n\colon \R_{>0}\times \Map(\R^{0|1},X^g)\to \R_{>0}\times \Map(\R^{0|1},X^{g^n})\label{eq:11adamscomp}
\eeq
from~\Cref{prop:adamsmain} for each $g\in G$. Then
\[
\widetilde{\Psi}_n(\ell,g,\xi)= (n\ell,g^n,i\circ \xi),\qquad \ell\in \R_{>0}(S), \ g\in G, \xi\in \Map(\R^{0|1},X^g)(S),
\] 
where $i\circ \xi$ is the composition $S\times \R^{0|1}\xrightarrow{\xi} X^g\stackrel{i}{\hookrightarrow} X^{g^n}$. Using \Cref{prop:Kthycocycle} to identify functions on the atlas that descend to the stack, the pullback of such functions along~\eqref{eq:11adamscomp} is given by 
\[
\ell^{\deg/2}\omega_g\mapsto (n\ell)^{\deg/2}\omega_{g^n}=n^{\deg/2}(\ell^{\deg/2}\omega_{g^n})\in \Omega^\ev(X;C^\infty(\R_{>0})),\qquad \omega_g\in \Omega^{\ev}_{\rm cl}(X^g). 
\]
This gives precisely the claimed formula for the Adams operations on cocycles. 
\ep

\begin{rmk} 
The factor of $n^{\deg/2}$ in \eqref{eq:Adams} comes about precisely because of the dependence of the function on the length of the super circle, together with how function on the stack $\cL_0^{1|1}(X\sq G)$ behaves under rescalings of this length. This points to a salient difference between geometric and topological field theories: without a length parameter on loops in~$X$, it is difficult to see how the Adams operations could emerge from the geometry. 
\end{rmk}

\section{Power operations in equivariant elliptic cohomology} \label{ellsection}

Next we turn our attention to the stack $\mathcal{L}^{2|1}_0(X\sq G)$ of maps from $2|1$-dimensional super tori to~$[X\sq G]$. Functions on $\mathcal{L}^{2|1}_0(X\sq G)$ give a cocycle model for a version of equivariant elliptic cohomology over~$\C$, reviewed in \Cref{sec:Devoto} below. The geometric power operation can then be used to construct power operations in complexified equivariant elliptic cohomology. 

The main results in this section are \Cref{prop:21formula} and \Cref{thm:TMFAdams}. The former gives an explicit formula for the total geometric power operation in equivariant elliptic cohomology, and the latter gives a formula for the effect of the geometric Adams operation in equivariant elliptic cohomology. As far as the authors are aware, these are the first constructions of these operations. So in contrast to the situation for K-theory, there is nothing explicit to which we can compare these formulas. However, the essence of these power operations conforms to the expected structures in chromatic homotopy theory, e.g., see~\cite{Baker,Andopower,Tamanoi2,Tamanoi1,Ganterpower, charpo}. We comment on this in more detail in \Cref{Sec:Ethy} by comparing with the character of the total power operation in Morava $\E$-theory.

\subsection{Equivariant elliptic cohomology over $\C$}\label{sec:Devoto}

For $G$ a finite group, Devoto~\cite{Devoto} used equivariant Thom spectra to define a $G$-equivariant refinement of the elliptic cohomology of Landweber, Ravenel, and Stong~\cite{LandweberRavenelStong}. Devoto's construction relies on an equivariant elliptic genus and elliptic cohomology with level structure for the congruence subgroup~$\Gamma_0(2)<\SL_2(\Z)$. In their study of moonshine and elliptic cohomology, Baker and Thomas~\cite{BakerThomas,Thomas} hypothesized an extension of Devoto's theory for the full modular group~$\SL_2(\Z)$ using the equivariant Witten genus. Over~$\C$, the construction of such an extension is straightforward. It is this version of equivariant elliptic cohomology that appears in the work of Ganter~\cite{GanterHecke} and Morava~\cite{Moravamoon}. As the literature can be somewhat diffuse, in this subsection we give a self-contained definition of this version of equivariant elliptic cohomology over~$\C$. 

\begin{defn}
Let $\Lat\subset \C\times \C$ denote the complex manifold of based, oriented lattices consisting of monomorphisms $\Lambda \colon \Z^2\to \C$ such that the ratio of the generators $\ell = \Lambda(1,0)$ and $\ell' = \Lambda(0,1)$ lies in the upper-half plane, $\ell'/\ell \in \mathfrak{h}\subset \C$. We observe that there is a diffeomorphism 
\[
\Lat\stackrel{\sim}{\to} \C^\times\times \mathfrak{h},\qquad (\ell,\ell')\mapsto (\ell,\ell'/\ell).
\]
\end{defn}

\begin{rmk} 
The upper half plane condition on oriented lattices above is equivalent to $(\ell,\ell')$ giving an oriented basis for $\R^2$. 
\end{rmk} 

There is an action of $\C^\times\times \SL_2(\Z)$ on $\Lat$ given by
\beq
&&(\ell,\ell')\mapsto \Big(\mu(a\ell+b\ell'),\mu(c\ell+d\ell')\Big),\qquad \mu \in \C^\times,\qquad \left[\begin{array}{cc} a & b \\ c & d \end{array}\right]\in \SL_2(\Z). \label{eq:Latact}
\eeq

\begin{defn}\label{defn:MF}
Let $j \in \Z$ and let $C^\infty_j(\Lat)\subset C^\infty(\Lat)$ denote the subspace of holomorphic functions satisfying $f(\mu\cdot \ell,\mu\cdot \ell')=\mu^{-j}f(\ell,\ell')$ for $\mu \in \C^\times$ acting as in~\eqref{eq:Latact}. Let $\mathcal{O}(\Lat)$ denote holomorphic functions on $\Lat$ that are meromorphic as $\ell\to \infty$ (or $\ell'\to \infty$) while keeping $\ell'$ (or $\ell$) fixed. Let $\mathcal{O}_j(\Lat) \subset C^\infty_j(\Lat)$ similarly denote the subspace of holomorphic functions that are meromorphic as $\ell\to \infty$ (or $\ell'\to \infty$) while keeping $\ell'$ (or $\ell$) fixed. 
\end{defn}

\begin{rmk} 
\emph{Modular forms} of weight $j$ are the elements of $\mathcal{O}_{j}(\Lat)$ that are invariant under the ${\rm SL}_2(\Z)$-action on $\Lat$. 
\end{rmk}

\begin{rmk}
We have the (strict) inclusions of algebras
\[
\bigoplus_{j\in \Z} C^\infty_j(\Lat)\hookrightarrow C^\infty(\Lat),\qquad \bigoplus_{j \in \Z} \mathcal{O}_j(\Lat)\hookrightarrow \mathcal{O}(\Lat),
\]
where the sources have a grading. The targets, however, do not carry a compatible grading. 
\end{rmk}

For $G$ a finite group, define $\CG$ to be the set of pairs of commuting elements of~$G$ so that $\CG \cong \Hom(\Z^2,G)$. There is an action of $G$ on $\CG$ by conjugation, $\zeta\colon (g,g')\mapsto (\zeta g_1\zeta^{-1},\zeta g_2\zeta^{-1})$. The set $\CG$ carries a left action by ${\rm SL}_2(\Z)$ given by 
\beq
(g,g')\mapsto (g^dg'^{-b},g^{-c}g'^a),\quad \left[\begin{array}{cc} a & b \\ c& d\end{array}\right]\in{\rm SL}_2(\Z).\label{eq:Gact}
\eeq

\begin{defn} \label{elldef}
For $X$ a manifold with an action of a finite group~$G$, the \emph{complexified equivariant elliptic cohomology of $X$} is 
\beq 
\Ell_G(X):= \bigoplus_{j \in \Z} \left(\prod_{(g,g')\in \CG}\left({\rm H}_{\rm dR}^{2j}(X^{\langle g,g'\rangle};\mathcal{O}_{j}(\Lat))\right)\right)^{G\times\SL_2(\Z)},\label{eq:Elldef}
\eeq
where $\SL_2(\Z)$ acts on $\CG$ as in~\eqref{eq:Gact}, $\langle g,g'\rangle$ is the subgroup of $G$ generated by $g$ and $g'$, $\Lat$ is defined in~\eqref{eq:Latact}, and $\zeta \in G$ acts by the component-wise diffeomorphism $X^{\langle g,g'\rangle}\to X^{\langle \zeta g\zeta^{-1},\zeta g'\zeta^{-1}\rangle}.$ 
\end{defn}

\begin{rmk} When $X=\pt$, $\Ell_G(\pt)$ can be interpreted as functions on the moduli stack of $G$-bundles on elliptic curves; see~\cite[\S2]{GanterHecke}. In fact, $\Ell_G(\pt)$ is the zeroth equivariant elliptic cohomology group of the point. Geometrically, the higher degree cohomology groups $\Ell^k_G(\pt)$ for $k\in \Z$ correspond to tensoring with powers of the Hodge bundle on the moduli stack of elliptic curves~\cite[Proposition~3.4]{BET19}. For $k\in \Z$, $\Ell_G^k(X)$ is 
\[
\Ell_G^k(X):= \bigoplus_{i-2j=k} \left(\prod_{(g,g')\in \CG}\left({\rm H}_{\rm dR}^i(X^{\langle g,g'\rangle};\mathcal{O}_{j}(\Lat))\right)\right)^{G\times\SL_2(\Z)}.
\]
%See, for example,~\cite{DBE_EquivTMF}. 
\end{rmk}

\subsection{The geometric model}

In this subsection we review results relating moduli spaces of $2|1$-dimensional super tori to equivariant elliptic cohomology over~$\C$. This amounts to spelling out \Cref{defn:edinvariantmaps} in the case of the $2|1$-dimensional rigid conformal model geometry (compare to~\cite[\S4.2]{ST11}) and characterizing functions on $\mathcal{L}^{2|1}_0(X\sq G)$ in terms of elliptic cocycles.

We briefly review a standard description of the $S$-points of $\R^2\cong \C$. We have 
\[
\R^2(S):={\sf SMfld}(S,\R^2)\cong \{x,y\in C^\infty(S)^\ev \mid (x)_{\rm red}=\overline{(x)}_{\rm red},(y)_{\rm red}=\overline{(y)}_{\rm red}\},
\]
where the condition on functions is imposed on restriction to the reduced manifold of~$S$. Indeed, $C^\infty(S)$ does not have a real structure (see \cref{rmk:real}), and so this condition only makes sense on the restriction to $S_{\rm red}$. The diffeomorphism of manifolds~$\C\cong \R^2$ determines an isomorphism of supermanifolds. Setting $z=x+iy$ and $w=z-iy$, we find the description of $S$-points
\[
{\sf SMfld}(S,\R^2)\cong {\sf SMfld}(S,\C)\cong \{z,w\in C^\infty(S)^\ev\mid (z)_{\rm red}=\overline{(w)}_{\rm red}\}.
\]
In a standard abuse of notation, write $(z,\bar z)=(z,w)\in \C(S)$ to denote an $S$-point of $\C$, though we emphasize that (as there is no complex conjugation in $C^\infty(S)$) $z\in C^\infty(S)$ is only the conjugate of $\bar z\in C^\infty(S)$ on restriction to $C^\infty(S_{\rm red})$. Similarly, we use the notation
\[
(\mu,\bar\mu)\in \C^\times(S)\quad (\ell,\bar\ell,\ell',\bar\ell')\in \Lat(S)\subset (\C\times \C)(S). 
\]

\begin{defn}\label{defn:21model} Define the super Lie group $\bE^{2|1}$ with underlying supermanifold $\R^{2|1}$ and group structure given by the $S$-point formula
\[
(z,\bar z,\theta)\cdot (w,\bar w,\eta)=(z+w,\bar z+\bar w+\theta\eta,\theta+\eta),\quad (z,\bar z,\theta),(w, \bar w,\eta)\in \R^{2|1}(S). 
\]
Consider the action of $\C^\times$ on $\bE^{2|1}$ through homomorphisms given by
\[
(\mu,\bar\mu)\cdot (z,\bar z,\theta)=(\mu^2z,\bar \mu^2 \bar z,\bar\mu\theta),\quad (\mu,\bar\mu)\in \C^\times(S),\ (z,\bar z,\theta)\in \bE^{2|1}(S).
\]
Consider the semidirect product $\bE^{2|1}\rtimes \C^\times$. Define the \emph{rigid conformal model geometry} as the model space~$\R^{2|1}$ together with its (left) action by the rigid conformal transformation group~$\bE^{2|1}\rtimes \C^\times$. The composition 
\[
S\times \R^{2|1}\to S\times (\bE^{2|1}\rtimes \C^\times) \times \R^{2|1}\stackrel{\rm act}{\to} S\times \R^{2|1}
\]
is \emph{an $S$-family of rigid conformal maps}, where the first arrow is defined by an $S$-point of the rigid conformal transformation group,~$(z,\bar z,\theta,\mu,\bar\mu)\in (\bE^{2|1}\rtimes \C^\times)(S)$ and the second arrow is the action of $\bE^{2|1}\rtimes \C^\times$ on $\R^{2|1}$. 
\end{defn}

We will use the notation $\bE^2<\bE^{2|1}$ to denote the reduced subgroup of $\bE^{2|1}$, i.e., $\R^2$ with its usual additive structure. The following is a specialization of \Cref{defn:supertori}.

\begin{defn} For $(\ell,\bar\ell,\ell',\bar\ell')\in \Lat(S)$ an $S$-family of based lattices, an \emph{$S$-family of super tori} is a quotient of the form 
\[
T^{2|1}_{\ell,\ell'}:=(S\times \R^{2|1})/\Z^2
\]
for the action by $S\times \Z^2\subset S\times \bE^2\subset S\times (\bE^{2|1}\rtimes \C^\times)$ through fiberwise rigid conformal maps given by
\[
(n,m)\colon (z,\bar z,\theta)\mapsto (z+n\ell+m\ell',\bar z +n\bar\ell+m\bar\ell',\theta), \quad (n,m) \in \Z^2(S), \quad (z,\bar z,\theta)\in \R^{2|1}(S),
\]
determined by an $S$-family of homomorphisms $\langle (\ell,\bar\ell),(\ell',\bar\ell')\rangle \colon S\times \Z^2\to \bE^2\cong \C$ with generators $(\ell,\bar\ell)$ and $(\ell',\bar\ell')$ in $\C(S)\cong \bE^2(S)$ using the inclusion $\Lat\hookrightarrow \C\times \C$. The canonical cover
\[
S\times \R^{2|1}\to (S\times \R^{2|1})/\Z^2=T^{2|1}_{\ell,\ell'}
\]
endows the $S$-family of super tori $T^{2|1}_{\ell,\ell'}$ with a fiberwise rigid conformal structure.
\end{defn}

The following pair of definitions are special cases of \Cref{defn:stori} and \Cref{defn:edinvariantmaps}.

\begin{defn} For a stack $\X$, the \emph{super double loop stack of~$\X$}, denoted $\mathcal{L}^{2|1}(\X)$, is the stackification of the prestack whose objects over $S$ are given by pairs $(T^{2|1}_{\ell,\ell'},\phi)$, where $T^{2|1}_{\ell,\ell'}$ is a family of super tori and $\phi\colon T^{2|1}_{\ell,\ell'} \to \X$ is a map. Morphisms between these objects over~$S$ consist of triangles
\beq
\begin{tikzpicture}[baseline=(basepoint)];
\node (A) at (0,0) {$T^{2|1}_{\ell_1,\ell_1'}$};
\node (B) at (3,0) {$T^{2|1}_{\ell_2,\ell_2'}$};
\node (C) at (1.5,-1.5) {$\X$};
\node (D) at (1.5,-.6) {$\twocommute$};
\draw[->] (A) to node [above=1pt] {$\cong$} (B);
\draw[->] (A) to node [left=1pt]{$\phi$} (C);
\draw[->] (B) to node [right=1pt]{$\phi'$} (C);
\path (0,-.75) coordinate (basepoint);
\end{tikzpicture}\label{21triangle}
\eeq
that commute up to isomorphism, where the horizontal arrow is a fiberwise rigid conformal map between families of super tori.
\end{defn}

\begin{defn}
The \emph{stack of super double ghost loops}, denoted $\mathcal{L}_0^{2|1}(\X)$, is the full substack of $\mathcal{L}^{2|1}(\X)$ containing the objects $(T^{2|1}_{\ell,\ell'},\phi)$, where $\phi\colon T^{2|1}_{\ell,\ell'}\to \X$ is a map given by the composition 
\[
T^{2|1}_{\ell,\ell'} \stackrel{\sim}{\leftarrow} [S\times \R^{2|1}\sq \Z^2] \stackrel{\pi}{\to} [S\times\R^{0|1}\sq \Z^2]\to \X,
\] 
where $\pi$ is induced by the projection $\R^{2|1}\to \R^{0|1}$. 
\end{defn}

In this case, the atlas from \Cref{prop:atlas} has the form
\beq
\coprod_{(g,g')\in \CG} \Lat\times \Map(\R^{0|1},X^{\langle g,g'\rangle})\to \mathcal{L}^{2|1}_0(X\sq G),\label{eq:21atlas}
\eeq
and hence there is an injection of algebras
\beq\label{eq:inclusion2}
C^\infty(\cL^{2|1}_0(X\sq G)\hookrightarrow C^\infty\Big(\coprod_{(g,g')\in \CG} \Lat\times \Map(\R^{0|1},X^{\langle g,g'\rangle})\Big)
\eeq
We observe the isomorphism of Fr\'echet spaces (e.g., see \cite[Example 49]{HST})
\beq
&&C^\infty(\Lat\times \Map(\R^{0|1},X^{\langle g,g'\rangle}))\cong \Omega^\bullet(X^{\langle g,g'\rangle};C^\infty(\Lat)).\label{eq:algisos}
\eeq

\begin{rmk}
 Similar to the abuse of notation in \Cref{eq:rmkonell}, we will use $(\ell,\bar\ell,\ell',\bar\ell')=\id\in \Lat(\Lat)\hookrightarrow (\C\times \C)(\Lat)\subset C^\infty(\Lat)^{\times 4}$ to denote the identity map, which we identify with the functions on~$\Lat$ that are the restriction of the standard (complex) coordinate functions under the inclusion $\Lat\subset \C\times \C$. This is justified because $\id\colon S=\Lat\to \Lat$ is the universal $S$-point of~$\Lat$. 
\end{rmk}

Let $\vol \in C^\infty(\Lat)$ denote the real-valued function
\[
\vol(\ell,\bar\ell,\ell',\bar\ell')=\frac{\ell'\bar\ell-\ell\bar\ell'}{2i}
\] 
that reads off the volume of the (ordinary) torus associated with a based lattice. The above formula determines a natural transformation $\Lat\to C^\infty(-)$ from $S$-points of $\Lat$ to functions on $S$, thereby defining a function on $\Lat$. Define a map 
\beq
\Big( \bigoplus_{j \in 2\Z} \prod_{(g,g')\in \CG} \Omega^{j}_{\cl}(X^{\langle g,g'\rangle};C^\infty_{j/2}(\Lat))\Big)^{G\times \SL_2(\Z)}&\hookrightarrow&\bigoplus_{j\in \Z} \prod_{(g,g')\in G^{(2)}} \Omega^j( X^{\langle g,g'\rangle};C^\infty(\Lat))\nonumber\\
\omega&\mapsto& \vol^{\deg/2}\omega.\label{eq:inclimage2}
\eeq
\begin{prop}[{\cite[\S3]{DBE_EquivTMF}}] \label{prop:21present} The functions on the atlas that descend to the stack are determined by the commuting square
\[
\resizebox{\textwidth}{!}{
\xymatrix{
\displaystyle\Big( \bigoplus_{j \in 2\Z} \prod_{(g,g')\in \CG} \Omega^{j}_{\cl}(X^{\langle g,g'\rangle};C^\infty_{j/2}(\Lat))\Big)^{G\times \SL_2(\Z)} \ar@{-->}[r]^-{\cong} \ar@{^{(}->}[d] & C^\infty(\mathcal{L}^{2|1}_0(X\sq G)) \ar@{^{(}->}[d] \\
\displaystyle\bigoplus_{j\in \Z} \prod_{(g,g')\in G^{(2)}} \Omega^j( X^{\langle g,g'\rangle};C^\infty(\Lat))\ar[r]^-{\cong} & C^\infty\displaystyle\Big(\coprod_{(g,g')\in G^{(2)}} \Lat\times \Map(\R^{0|1},X^{\langle g,g'\rangle})\Big),
}
}
\]
where the inclusion on the left is the map~\eqref{eq:inclimage2}. 
\end{prop}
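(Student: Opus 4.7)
The approach is to adapt the proof strategy of \Cref{prop:Kthycocycle} (the $1|1$-dimensional analog) to the $2|1$-dimensional setting. By \Cref{prop:atlas}, the atlas \eqref{eq:21atlas} presents $\mathcal{L}^{2|1}_0(X\sq G)$ as a quotient stack, so a function on the stack is precisely a function on the atlas that is invariant under pullback along the two projections of the groupoid $\mathcal{U}(X\sq G) \times_{\mathcal{L}^{2|1}_0(X\sq G)} \mathcal{U}(X\sq G)\rightrightarrows\mathcal{U}(X\sq G)$. A computation paralleling the proof of \Cref{prop:atlas} identifies this 2-pullback with a quotient of $\mathcal{U}(X\sq G) \times \Iso(\R^{2|1})\times G \times \SL_2(\Z)$ by the free $\Z^2$-action associated with the lattice.

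The descent condition then breaks into four pieces: (i) conjugation by $\zeta\in G$ acts on $\CG$ by conjugation and on $X^{\langle g,g'\rangle}$ via $G$, yielding the $G$-invariance condition; (ii) change of lattice basis by $\gamma \in \SL_2(\Z)$ acts via \eqref{eq:Latact} on $\Lat$ and \eqref{eq:Gact} on $\CG$, yielding the $\SL_2(\Z)$-invariance; (iii) the even translations by $\bE^2\subset \bE^{2|1}$ act trivially on the atlas; and (iv) the odd translation $\R^{0|1}\subset \bE^{2|1}$ together with the rigid conformal rescaling by $\C^\times$ produce the remaining conditions. Via the identification $C^\infty(\Map(\R^{0|1},X^{\langle g,g'\rangle}))\cong \Omega^\bullet(X^{\langle g,g'\rangle})$, invariance under the odd translation corresponds to annihilation by the de~Rham differential, forcing closedness of the form part. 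The element $-1\in \C^\times$ acts by $\theta\mapsto -\theta$, multiplying a degree-$j$ form by $(-1)^j$ and thereby enforcing even degree.

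The key calculation is invariance under the rigid conformal $\C^\times$-action. An isomorphism via $\mu\in \C^\times$ identifies the atlas point $(\ell_1,\bar\ell_1,\ell_1',\bar\ell_1',\xi_1)$ with $(\mu^2\ell_1,\bar\mu^2\bar\ell_1,\mu^2\ell_1',\bar\mu^2\bar\ell_1',\xi_2)$, where $\xi_1=\xi_2\circ r_{\bar\mu}$ for the map $r_{\bar\mu}\colon\theta\mapsto\bar\mu\theta$ induced on $\R^{0|1}$. For a function of the form $f_0(\ell,\ell')\cdot\vol^{j/2}\cdot F_{\omega_j}(\xi)$ with $f_0\in C^\infty_{j/2}(\Lat)$ and $\omega_j\in\Omega^j(X^{\langle g,g'\rangle})$, one checks that the three factors transform as $f_0\mapsto\mu^{-j}f_0$, $\vol^{j/2}\mapsto(\mu\bar\mu)^j\vol^{j/2}$, and $F_{\omega_j}\mapsto\bar\mu^{-j}F_{\omega_j}$, whose product is $1$. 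Thus the map $\omega\mapsto\vol^{\deg/2}\omega$ lands in descent-compatible functions, verifying the inclusion of the source of the dashed arrow into $C^\infty(\mathcal{L}_0^{2|1}(X\sq G))$.

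The main obstacle is the reverse inclusion, namely that every descent-compatible smooth function is of this form, and in particular that the coefficient functions on $\Lat$ must lie in $C^\infty_{j/2}(\Lat)$, which requires holomorphicity in the lattice parameters. The $\mu$-weight condition follows directly from the rescaling computation above, but the vanishing of any $\bar\mu$-weight (equivalently, annihilation by $\partial_{\bar\ell},\partial_{\bar\ell'}$) is subtle: I expect this to follow from combining the $\C^\times$-weight analysis with the super-algebraic relation $[\partial_\theta,\partial_\theta]=2\partial_{\bar z}$ in $\bE^{2|1}$, which couples annihilation by the odd translation with an annihilation condition involving the $\bar z$-direction and, in the family over $\Lat$, propagates to annihilation by antiholomorphic derivatives of the lattice parameters. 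The detailed verification is carried out in~\cite[\S3]{DBE_EquivTMF}.
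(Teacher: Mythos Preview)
The paper does not give its own proof of this proposition; it is stated with attribution to \cite[\S3]{DBE_EquivTMF} and used as input. Your proposal therefore cannot be compared to a proof in the present paper, and in fact your sketch is more detailed than what the paper itself provides. Your outline of the forward inclusion is correct: the breakdown of the descent condition into $G$-conjugation, $\SL_2(\Z)$ change of basis, even translations, odd translation (giving closedness), the element $-1\in\C^\times$ (giving even degree), and the $\C^\times$-weight computation are all accurate, and your cancellation $\mu^{-j}\cdot(\mu\bar\mu)^j\cdot\bar\mu^{-j}=1$ is exactly the verification that $\vol^{\deg/2}\omega$ descends.

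One caution about your proposed mechanism for the reverse inclusion. The relation $[\partial_\theta,\partial_\theta]=2\partial_{\bar z}$ lives on $\R^{2|1}$, but the atlas for $\mathcal{L}_0^{2|1}(X\sq G)$ is $\Lat\times\Map(\R^{0|1},X^{\langle g,g'\rangle})$, where the $\bar z$-direction has already been projected out; the induced isometry action on the atlas passes through $\theta\mapsto\eta+\bar\mu\theta$ on $\R^{0|1}$ and does not see $\bar z$ directly. The $\C^\times$-weight condition alone, using the decoupling of $\mu$ and $\bar\mu$ on nilpotents, yields the single antiholomorphic Euler relation $(\bar\ell\partial_{\bar\ell}+\bar\ell'\partial_{\bar\ell'})g_j=0$, not full holomorphicity. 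Extracting the stronger statement that $g_j\in C^\infty_{j/2}(\Lat)$ (i.e., genuinely holomorphic) requires the more careful argument in the cited reference, so your deferral is appropriate, but the heuristic you offer for how it should go is not quite the right one.
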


The cocycle model for equivariant elliptic cohomology requires that we restrict to a subalgebra $\mathcal{O}(\mathcal{L}^{2|1}_0(X\sq G))\subset C^\infty(\mathcal{L}^{2|1}_0(X\sq G))$ of appropriately holomorphic functions. Physically, this holomorphy is an expected consequence of the chiral supersymmetry; when $M=\pt$ and $G=\{e\}$, Stolz and Teichner prove that functions in the image of the restriction map~\eqref{eq:FTres} from $2|1$-dimensional field theories are indeed holomorphic~\cite[Theorem 1.15]{ST11}.  It turns out (though it is not obvious at this stage) that the geometric power operations also restricts to the subalgebra of holomorphic functions.

\begin{notation}
Below we use the notation 
\[
\prod_{(g,g')\in \CG} \Omega^{2\bullet}_{\cl}(X^{\langle g,g'\rangle};\mathcal{O}_{\bullet}(\Lat))=\bigoplus_{j\in \Z}\prod_{(g,g')\in \CG} \Omega^{2j}_{\cl}(X^{\langle g,g'\rangle};\mathcal{O}_{j}(\Lat))
\]
and similarly for coefficients in $C^{\infty}_\bullet(\Lat)$.
\end{notation}

\begin{defn} \label{defn:holo}
The algebra of \emph{holomorphic functions} on $\mathcal{L}^{2|1}_0(X\sq G)$, denoted $\mathcal{O}(\mathcal{L}^{2|1}_0(X\sq G))$, is the subalgebra of smooth functions that under the identification from~\Cref{prop:21present} lie in the subalgebra,
\[
\Big(\prod_{(g,g')\in \CG} \Omega^{2\bullet}_{\cl}(X^{\langle g,g'\rangle};\mathcal{O}_{\bullet}(\Lat))\Big)^{G\times \SL_2(\Z)}\subset \Big(\prod_{(g,g')\in \CG} \Omega^{2\bullet}_{\cl}(X^{\langle g,g'\rangle};C^\infty_{\bullet}(\Lat))\Big)^{G\times \SL_2(\Z)}
\]
of functions with holomorphic dependence on $\Lat$.
\end{defn}

This connects to equivariant elliptic cohomology by way of the following result. 

\begin{prop}[{\cite[\S3]{DBE_EquivTMF}}]
There is a natural isomorphism of algebras 
\beq
&&\mathcal{O}(\mathcal{L}^{2|1}_0(X\sq G))\cong \left(\prod_{(g,g')\in \CG} \Omega^{2\bullet}_{\cl}(X^{\langle g,g'\rangle};\mathcal{O}_{\bullet}(\Lat))\right)^{G\times \SL_2(\Z)}.\label{eq:funonstacksmooth}
\eeq
\label{prop:21presentsmooth}
\end{prop}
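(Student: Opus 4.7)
The proof proposal is to derive Proposition 5.5 directly by restricting the smooth isomorphism of Proposition 5.2 to the holomorphic subalgebra. By \cref{defn:holo}, the algebra $\mathcal{O}(\mathcal{L}^{2|1}_0(X\sq G))$ is defined as the preimage, under the isomorphism of \cref{prop:21present}, of the subalgebra
\[
\Big(\prod_{(g,g')\in \CG} \Omega^{2\bullet}_{\cl}(X^{\langle g,g'\rangle};\mathcal{O}_{\bullet}(\Lat))\Big)^{G\times \SL_2(\Z)} \ \subset \ \Big(\prod_{(g,g')\in \CG} \Omega^{2\bullet}_{\cl}(X^{\langle g,g'\rangle};C^\infty_{\bullet}(\Lat))\Big)^{G\times \SL_2(\Z)}.
\]
Thus as a bijection of sets, the desired isomorphism is immediate. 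What needs to be verified is that this bijection is an isomorphism of algebras, and that the target really is a subalgebra of the smooth one.

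First I would check that the target is a subalgebra, namely that products preserve the holomorphy conditions. This amounts to three elementary observations: (i) holomorphic functions on $\Lat$ form a subalgebra of smooth functions, (ii) the weight grading is additive under multiplication, i.e.\ $\mathcal{O}_j(\Lat) \cdot \mathcal{O}_{j'}(\Lat) \subseteq \mathcal{O}_{j+j'}(\Lat)$, which follows immediately from the definition (\cref{defn:MF}) since the $\C^\times$-scaling behavior is multiplicative, and (iii) the meromorphy condition at infinity in either variable is preserved under products of meromorphic functions.

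Next I would verify that the algebra structure matches. Multiplication of functions on the stack corresponds under \cref{prop:21present} to the evident product on differential forms, i.e.\ wedge product of form parts combined with pointwise multiplication of the $C^\infty(\Lat)$-valued coefficients. The factor $\vol^{\deg/2}$ appearing in the inclusion \eqref{eq:inclimage2} that defines \cref{prop:21present} is a unit in total degree zero in both variables separately, and so is multiplicative on homogeneous components: the factor produced by multiplying two classes of weights $j/2$ and $j'/2$ is $\vol^{(j+j')/2}$, matching the prescription on the product. Invariance under $G\times\SL_2(\Z)$ is also preserved under products since these groups act by algebra automorphisms.

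The main (and only) obstacle is really bookkeeping: tracking that the $\vol^{\deg/2}$ twist in \eqref{eq:inclimage2} is consistent with the grading, and that the holomorphy/meromorphy conditions in \cref{defn:MF} are stable under the algebra operations. Naturality in $X$ and $G$ is inherited from \cref{prop:21present}, since pullback along a $G$-equivariant map $X \to X'$ and along group homomorphisms $G \to G'$ preserves all of the structures (differential forms, closedness, weight, holomorphy, invariance) appearing on both sides.
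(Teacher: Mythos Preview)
The paper does not supply its own proof of this proposition; it is stated with a citation to \cite[\S3]{DBE_EquivTMF}. In the presentation of this paper, however, the statement is essentially tautological: \cref{defn:holo} \emph{defines} $\mathcal{O}(\mathcal{L}^{2|1}_0(X\sq G))$ to be the preimage, under the isomorphism of \cref{prop:21present}, of the holomorphic subalgebra on the right-hand side. Your proposal correctly identifies this and then carries out the due diligence of checking that the right-hand side is genuinely a subalgebra (closure of $\mathcal{O}_\bullet(\Lat)$ under products, additivity of weights, preservation of meromorphy at infinity) and that the $\vol^{\deg/2}$ twist in \eqref{eq:inclimage2} is multiplicative on homogeneous pieces, so that the restricted bijection is an algebra isomorphism. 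This is exactly the right argument given how the paper has set things up, and there is nothing to compare it against in the paper itself.
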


The following is an immediate consequence. 

\begin{cor}
The algebra $\mathcal{O}(\mathcal{L}^{2|1}_0(X\sq G))$ is a cocycle model for equivariant elliptic cohomology over~$\C$. 
\end{cor}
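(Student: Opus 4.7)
The plan is to deduce this corollary from Proposition~\ref{prop:21presentsmooth} by translating the notion of ``cocycle model'' into a concrete statement about concordance classes (Definition~\ref{concordance2}) and then matching these against the definition of $\Ell_G(X)$ in Definition~\ref{elldef}. Specifically, I would argue that concordance classes of functions in $\mathcal{O}(\mathcal{L}^{2|1}_0(X\sq G))$ correspond to de~Rham classes on each fixed-point stratum $X^{\langle g,g'\rangle}$, valued in $\mathcal{O}_\bullet(\Lat)$, invariant under $G\times \SL_2(\Z)$.

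First I would invoke Proposition~\ref{prop:21presentsmooth} to identify a function $f\in \mathcal{O}(\mathcal{L}^{2|1}_0(X\sq G))$ with an invariant tuple $(\omega_{g,g'})$ of closed equivariant forms with weighted-holomorphic $\Lat$-coefficients. By naturality of this identification in the target stack, applied to $[X\sq G]\times \R$ together with the inclusions $i_0,i_1\colon X\sq G\hookrightarrow (X\sq G)\times \R$, a concordance between $f_0$ and $f_1$ corresponds to a closed form on $X^{\langle g,g'\rangle}\times \R$ restricting to $\omega_{0,g,g'}$ and $\omega_{1,g,g'}$ at the two endpoints.

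Next I would apply the standard homotopy formula for differential forms (Stokes' theorem combined with fiber integration along $\R$) componentwise to conclude that two closed forms give concordant functions if and only if they differ by an exact form with the same coefficient type. Since $G$ is finite and the $\SL_2(\Z)$-action on $\Lat$ and $\CG$ is compatible with cohomology, this equivalence descends to the $G\times\SL_2(\Z)$-invariants, yielding a natural isomorphism between concordance classes and
\[
\left(\prod_{(g,g')\in \CG} {\rm H}_{\rm dR}^{2\bullet}(X^{\langle g,g'\rangle};\mathcal{O}_{\bullet}(\Lat))\right)^{G\times\SL_2(\Z)}.
\]
Unpacking the internal weight grading recovers $\Ell_G(X)$ as in Definition~\ref{elldef}.

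The main technical point deserving care is verifying that the concordance relation can be realized within the holomorphic subalgebra (Definition~\ref{defn:holo}), rather than merely within smooth functions. This should follow from naturality together with the fact that the inclusions $i_0, i_1$ and the projection $(X\sq G)\times\R \to X\sq G$ all preserve the $\mathcal{O}_\bullet(\Lat)$-valued structure appearing on the right-hand side of \eqref{eq:funonstacksmooth}, but it is the step where one must check compatibility between the geometric concordance notion and the analytic holomorphy condition most carefully.
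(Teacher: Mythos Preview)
The paper gives essentially no proof here: it states the corollary as ``an immediate consequence'' of Proposition~\ref{prop:21presentsmooth} and moves on. The intended reading is simply that the isomorphism~\eqref{eq:funonstacksmooth} identifies $\mathcal{O}(\mathcal{L}^{2|1}_0(X\sq G))$ with closed forms on the fixed-point loci valued in $\mathcal{O}_\bullet(\Lat)$ and invariant under $G\times\SL_2(\Z)$; comparing with Definition~\ref{elldef}, one sees that passing to de~Rham classes yields $\Ell_G(X)$, so the algebra of functions is a ``cocycle model'' in the same informal sense used for $\K$-theory in Proposition~\ref{prop:Kthycocycle}.

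Your argument is correct and is a genuine elaboration of what the paper leaves implicit. You make precise the step the paper glosses over---that concordance classes of these functions are exactly de~Rham classes---by applying naturality of~\eqref{eq:funonstacksmooth} to $X\times\R$ and then Stokes' theorem, exactly parallel to the remark the paper makes in the $\K$-theory setting just before Corollary~4.2. Your flagged concern about whether concordances can be realized within the holomorphic subalgebra is legitimate and is the only point requiring any thought; your proposed resolution (naturality in $X$ of the identification, together with the fact that the $\Lat$-dependence is unchanged under pullback along $X^{\langle g,g'\rangle}\times\R\to X^{\langle g,g'\rangle}$) is the right one. So your proof is not a different route so much as an honest unpacking of what ``immediate'' means here.
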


\subsection{Computing the total geometric power operation}

We proceed similarly to~\cref{sec:42}, specializing the input and output of $\widetilde{\coP}_n$ (\cref{def:atlascoop}) to this setting. Recall that $\uL = \Z^2$. In this case, an $S$-point of the atlas for $\cL_0^{2|1}(X^{\times n}\sq G\wr\Sigma_n)$, which is given by a triple $(\Lambda, h, \xi)$, is equivalent to the data of the triple
\[
((\ell,\ell'),(({\bf g},\sigma),({\bf g}',\sigma')),\xi),
\]
for $(\ell,\ell')\in \Lat(S)$, $(({\bf g},\sigma),({\bf g}',\sigma'))\in (G \wr \Sigma_n)^{(2)}$, $\xi\colon S\times \R^{0|1}\to X^{\langle({\bf g},\sigma),({\bf g}',\sigma')\rangle}$. 
This is the input data for the source of $\widetilde{\coP}_n$. 

Now we reinterpret the target of $\widetilde{\coP}_n$. Recall that
\[
\widetilde{\coP}_n(\Lambda, h, \xi) = (\Lambda\circ M_{\uL_k}, h_k, \xi_k)_{k\in K}
\]
and that this formula depends on a number of fixed choices. The matrix $M_{\uL_k} \colon \uL \to \uL$ was chosen so that $\det(M_{\uL_k}) > 0$ and so that $\ker(M_{\uL_k}) = \uL_k \subseteq \uL$. Further, we fixed $i_k \in I_k \subseteq \underline{n}$ and we may define ${\bf g}_k$ and ${\bf g}_{k}'$ as the image of the basis elements of $\uL = \Z^2$ in $G$ from the following diagram:
\[
\xymatrix{ 
\null & G  \\
\Z^2  \ar[r] \ar[ur]^{({\bf g}_{k},{\bf g}_{k}')} \ar[d]_{M_{\uL_k}} & G^{I_k} \ar[u]_{\pi_{i_k}} \ar[d]  \\ 
\Z^2 \ar[r]^-{h} & G\wr\Sigma_{I_k}.}
\]
Making use of this notation, we have
\beq
\widetilde{\coP}_n((\ell,\ell'),(({\bf g},\sigma),({\bf g}',\sigma')),\xi) = (M_{\uL_k}(\ell, \ell'), ({\bf g}_{k}, {\bf g}_k'), \xi_k)_{k \in K},\label{eq:copntarget21}
\eeq
where $M_{\uL_k}$ acts on $(\ell, \ell')$ by the formula in \eqref{eq:Latact}.

For $M\in M_{2\times 2}^{\mathrm{det} > 0}(\Z)$, we obtain a map $M\colon \Lat\to \Lat$ by restricting the action of $M_{2\times 2}^{\mathrm{det} > 0}(\Z)$ on $\C^2$ to $\Lat\subset \C^2$. Furthermore, this map is $\C^\times$-equivariant (being linear) and so restricts to an action on the subspaces $\mathcal{O}_{\bullet}(\Lat)\subset \mathcal{O}(\Lat)$. For $\omega\in \prod \Omega^{2\bullet}(X^{\langle g,g'\rangle};\mathcal{O}_{\bullet}(\Lat))$, let $M^*\omega$ denote the pullback of $\omega$ along this induced action on coefficients.

\begin{notation}\label{notation:omegag2}
For $\omega\in \prod_{(g,g')\in G^{(2)}} \Omega^\bullet( X^{\langle g,g'\rangle};C^\infty(\Lat))$, let $\omega_{g,g'}\in \Omega^\bullet(X^{\langle g,g'\rangle};C^\infty(\Lat))$ be the component of $\omega$ indexed by the factor $(g,g')$.
\end{notation}

Using \cref{prop:21presentsmooth}, the geometric power operation determines the map on differential forms
\beq
\begin{array}{l}
\Big( \prod_{(g,g')\in \CG} \Omega^{2\bullet}_{\cl}(X^{\langle g,g'\rangle};\mathcal{O}_{\bullet}(\Lat))\Big)^{G\times \SL_2(\Z)} \\
\phantom{BLAH}\cong \mathcal{O}(\mathcal{L}^{2|1}_0(X\sq G))\stackrel{\P_n}{\longrightarrow}\mathcal{O}(\mathcal{L}^{1|1}_0(X^{\times n}\sq G\wr\Sigma_n))\\
\phantom{BLAH}\cong \Big( \prod_{(({\bf g},\sigma),({\bf g}',\sigma'))\in (G\wr \Sigma_n)^{(2)}} \Omega^{2\bullet}_{\cl}((X^{\times n})^{\langle ({\bf g},\sigma),({\bf g}',\sigma')\rangle};\mathcal{O}_{\bullet}(\Lat))\Big)^{G\times \SL_2(\Z)}.
\end{array}
\label{needanumber}
\eeq
In the following proposition, for $\omega\in ( \prod_{(g,g')\in \CG} \Omega^{2\bullet}_{\cl}(X^{\langle g,g'\rangle};\mathcal{O}_{\bullet}(\Lat)))^{G\times \SL_2(\Z)}$, we denote the image under the above composition by $\P_n(\omega)$, with the isomorphisms to differential forms implicitly understood. We refer to~\Cref{notation:deg} for the definition of the map $\deg$ used below.

\begin{thm}\label{prop:21formula}
Suppose $X$ is a smooth and compact $G$-manifold and let $n \ge 1$. The geometric power operation is characterized by the formula
\begin{align*}
\P_n(\omega)_{({\bf g},\sigma),({\bf g}',\sigma')} & =\prod_k  \det(M_{\uL_k})^{\deg/2} M_{\uL_k}^*(\omega_{{\bf g}_{k},{\bf g}_{k}'}) \\
& \in \Omega^{2\bullet}_\cl(\prod_k X^{\langle {\bf g}_k,{\bf g}_k'\rangle};\mathcal{O}_{\bullet}(\Lat))\cong \Omega^{2\bullet}_\cl((X^{\times n})^{\langle ({\bf g},\sigma),({\bf g}',\sigma')\rangle};\mathcal{O}_{\bullet}(\Lat)),
\end{align*}
for 
$(({\bf g},\sigma),({\bf g}',\sigma')) \in (G\wr\Sigma_n)^{(2)}$, and $({\bf g}_{k},{\bf g}_{k}')\in \CG$ using the notation described above. 
\end{thm}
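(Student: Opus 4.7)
The strategy is to mirror the argument used for \cref{prop:11formula} in the $1|1$-dimensional case, now with the atlas \eqref{eq:21atlas} and the identification of \cref{prop:21present} in place of their $1|1$-dimensional analogs. By \cref{thm:mainthm}, the geometric power operation is computed by pulling functions back along the explicit map of atlases $\widetilde{\coP}_n$ from \cref{def:atlascoop}. Specializing \eqref{eq:tildeP} to $\uL=\Z^2$ yields the formula \eqref{eq:copntarget21}, so on the component of $\mathcal{U}(X^{\times n}\sq G\wr\Sigma_n)$ labeled by $(({\bf g},\sigma),({\bf g}',\sigma'))\in(G\wr\Sigma_n)^{(2)}$, the map $\widetilde{\coP}_n$ restricts to
\[
\Lat\times \Map(\R^{0|1},(X^{\times n})^{\langle({\bf g},\sigma),({\bf g}',\sigma')\rangle})\to \prod_{k\in K}\Lat\times \Map(\R^{0|1},X^{\langle {\bf g}_k,{\bf g}_k'\rangle})
\]
sending $((\ell,\ell'),\xi)\mapsto (M_{\uL_k}(\ell,\ell'),\xi_k)_{k\in K}$. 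By \cref{prop:21present}, an invariant form $\omega\in(\prod_{(g,g')}\Omega^{2\bullet}_{\cl}(X^{\langle g,g'\rangle};C^\infty_\bullet(\Lat)))^{G\times\SL_2(\Z)}$ corresponds to the function $\vol^{\deg/2}\omega$ on the atlas; dually, $\P_n(\omega)$ is read off from $\widetilde{\coP}_n^*(\vol^{\deg/2}\omega)$ after stripping the $\vol^{\deg/2}$ factor from the target.

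The central computation is the behavior of the function $\vol$ under the linear map $M\in M_{2\times 2}^{\det>0}(\Z)$. Writing $M=\bigl(\begin{smallmatrix}a&b\\c&d\end{smallmatrix}\bigr)$, a direct expansion using $\overline{a\ell+b\ell'}=a\bar\ell+b\bar\ell'$ (which makes sense on the reduced manifold, cf.\ the $S$-point conventions for $\C$) yields
\[
\vol\bigl(M(\ell,\ell')\bigr)=\frac{(c\ell+d\ell')(a\bar\ell+b\bar\ell')-(a\ell+b\ell')(c\bar\ell+d\bar\ell')}{2i}=(ad-bc)\,\vol(\ell,\ell'),
\]
so $M^*\vol=\det(M)\cdot\vol$. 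Combined with the fact that $M$ acts $\C^\times$-equivariantly on $\Lat$, pullback along $M$ preserves each weight space $C^\infty_j(\Lat)$ and restricts to an action on $\mathcal{O}_j(\Lat)$; hence $M_{\uL_k}^*\omega_{{\bf g}_k,{\bf g}_k'}$ again has coefficients in $\mathcal{O}_\bullet(\Lat)$.

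Putting these pieces together,
\[
\widetilde{\coP}_n^*(\vol^{\deg/2}\omega)_{({\bf g},\sigma),({\bf g}',\sigma')}=\prod_{k\in K}\bigl(M_{\uL_k}^*\vol\bigr)^{\deg/2}\cdot M_{\uL_k}^*\omega_{{\bf g}_k,{\bf g}_k'}=\vol^{\deg/2}\prod_{k\in K}\det(M_{\uL_k})^{\deg/2}M_{\uL_k}^*\omega_{{\bf g}_k,{\bf g}_k'}
\]
(using $\sum_k\dim X^{\langle{\bf g}_k,{\bf g}_k'\rangle}=\dim\prod_k X^{\langle{\bf g}_k,{\bf g}_k'\rangle}$ to collect the scalar $\vol$ factors into a single $\vol^{\deg/2}$ acting on the external product), and the claimed formula follows by stripping the $\vol^{\deg/2}$ factor via \cref{prop:21present}. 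The point that will take some care — and which I view as the main obstacle — is to check that the output indeed lies in the holomorphic subalgebra of \cref{defn:holo} and is invariant under the wreath product $G\wr\Sigma_n$ together with $\SL_2(\Z)$. The holomorphy is automatic since $M_{\uL_k}^*$ preserves $\mathcal{O}_\bullet(\Lat)$, and the invariance follows from \cref{thm:mainthm} together with the fact that $\coP_n$ is a map of stacks; concretely, different choices of the $i_k\in I_k$ produce $G$-conjugate data and different choices of basis for $\uL_k$ produce data related by an $\SL_2(\Z)$-symmetry, both of which are absorbed by the invariance of $\omega$.
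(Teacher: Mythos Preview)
Your proof is correct and follows essentially the same route as the paper: pull back $\vol^{\deg/2}\omega$ along the explicit atlas map $\widetilde{\coP}_n$, use $M^*\vol=\det(M)\vol$, strip the $\vol^{\deg/2}$ via \cref{prop:21present}, and observe that holomorphy is preserved. One minor quibble: the parenthetical about $\sum_k\dim X^{\langle{\bf g}_k,{\bf g}_k'\rangle}$ is the wrong justification---the $\vol$ factors collect because $\deg$ is additive on external products of forms, not because of any statement about dimensions of the fixed-point manifolds.
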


\begin{proof}
By \Cref{prop:21present}, functions on $\cL_0^{2|1}(X\sq G)$ are in bijection with functions on the atlas of the form $(\vol^{\deg/2}\omega_{g,g'})_{(g,g')\in \CG}$, where $\omega_{g,g'}\in\Omega^{2\bullet}(X^{\langle g,g'\rangle};\mathcal{O}_\bullet(\Lat))$. The restriction of the power cooperation to the component of the atlas~\eqref{eq:21atlas} indexed by $(({\bf g},\sigma),({\bf g}',\sigma'))\in (G\wr \Sigma_n)^{(2)}$ is 
\beq
&&\Lat\times \Map(\R^{0|1},(X^{\times n})^{\langle ({\bf g},\sigma),({\bf g}',\sigma')\rangle}) \xrightarrow{\widetilde{\coP}_n} \coprod_k \Lat\times \Map(\R^{0|1},X^{\langle {\bf g}_k,{\bf g}'_k\rangle}). \label{eq:coPnmap21}
\eeq
Restricting to the component indexed by $({\bf g}_k,{\bf g}_k')\in \CG$ and pulling back along $\widetilde{\coP}_n$ gives a map of algebras
\beq
&&\Omega^{\bullet}(X^{\langle {\bf g}_k,{\bf g}'_k\rangle};C^\infty(\Lat))\cong C^\infty(\Lat\times \Map(\R^{0|1},X^{\langle {\bf g}_k,{\bf g}'_k\rangle}))\nonumber\\
&&\to C^\infty(\Lat\times \Map(\R^{0|1},(X^n)^{(({\bf g},\sigma),({\bf g}',\sigma'))}))\cong \Omega^\bullet((X^n)^{(({\bf g},\sigma),({\bf g}',\sigma'))};C^\infty(\Lat)),\nonumber
\eeq 
where we have pre- and post-composed with the isomorphism between $C^\infty(\Lat)$-valued differential forms and functions on the atlas from~\eqref{eq:algisos}. Next, we apply each of these maps to $\vol^{\deg/2}\omega_{{\bf g}_k,{\bf g}_k'}$ and take their product over $k$, obtaining
\beq
\widetilde{\coP}_n^*(\vol^{\deg/2}\omega_{{\bf g}_k,{\bf g}'_k})_{k\in K}&=&\prod_k (\det(M_{\uL_k})\vol)^{\deg/2}M_{\uL_k}^*\omega_{{\bf g}_k,{\bf g}'_k}\nonumber\\
&&\in\Omega^{2\bullet}_\cl(\prod_k X^{\langle {\bf g}_k,{\bf g}_k'\rangle};C^\infty_\bullet(\Lat))\cong \Omega^{2\bullet}((X^{\times n})^{({\bf g},\sigma)};C^\infty_\bullet(\Lat)).\nonumber
\eeq
By \cref{thm:mainthm}, the function $\widetilde{\coP}_n^*(\vol^{\deg/2}\omega_{({\bf g}_k,\sigma),({\bf g}'_k,\sigma')})_{k\in K}$ on the atlas descends to a $C^\infty$-function on the stack $\cL_0^{2|1}((X^{\times n})\sq G\wr \Sigma_n)$ and computes the geometric power operation. We identify this map between functions on stacks with the map of differential forms~\eqref{needanumber}, though so far only for coefficients in $C^\infty_\bullet(\Lat)$ (rather than $\mathcal{O}_\bullet(\Lat)$). Using~\eqref{eq:inclimage2}, this has the effect of removing the factors of $\vol$, obtaining the claimed formula in the statement of the theorem. 

Finally, if the input differential form $\omega$ has coefficients in $\mathcal{O}_\bullet(\Lat)\subset C^\infty_\bullet(\Lat)$, we observe that the output $\P_n(\omega)$ also has holomorphic coeffcients. More explicitly, each factor~$\det(M_{\uL_k})^{\deg/2}M_{\uL_k}^*\omega_{{\bf g}_k,{\bf g}'_k}$ has coefficients in $\mathcal{O}_\bullet(\Lat)$ (since pulling back along the action of $2\times 2$ matrices and multiplication by a scalar $\det(M_{\uL_k})$ preserves the subalgebra of holomorphic functions), and so the product does as well. This completes the proof. \ep

\subsection{Adams operations} 

Fix a natural number $n \ge 1$.
We compute the effect of the $n$th geometric Adams operation on differential forms in terms of the composition,
\beq
\begin{array}{l}
\Big( \prod_{(g,g')\in \CG} \Omega^{2\bullet}_{\cl}(X^{\langle g,g'\rangle};\mathcal{O}_{\bullet}(\Lat))\Big)^{G\times \SL_2(\Z)} \\
\phantom{BLAH}\cong \mathcal{O}(\mathcal{L}^{2|1}_0(X\sq G))\stackrel{\Psi_n}{\longrightarrow}\mathcal{O}(\mathcal{L}^{1|1}_0(X\sq G))\label{eq:21Adams}\\
\phantom{BLAH}\cong \Big( \prod_{(g,g')\in \CG} \Omega^{2\bullet}_{\cl}(X^{\langle g,g'\rangle};\mathcal{O}_{\bullet}(\Lat))\Big)^{G\times \SL_2(\Z)}.
\end{array}
\eeq
Below we use the same notation $\Psi_n$ to denote the map on differential forms given by the composition above. We give a formula for $\Psi_n$ using \cref{notation:omegag2}.

\begin{thm}\label{thm:TMFAdams}
The $n^{\rm th}$ geometric Adams operation~\eqref{eq:21Adams}
is given by the formula
\beq
\omega_{g,g'}\mapsto n^{\deg/2} \omega_{g^n,g'^n}\qquad \omega_{g,g'}\in \Omega^{2\bullet}_{\rm cl}(X^{\langle g,g'\rangle};\mathcal{O}_{\bullet}(\Lat))\label{eq:EllAdams}.
\eeq
\end{thm}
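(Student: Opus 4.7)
The plan is to adapt the strategy of Theorem~\ref{thm:KAdams} (the $1|1$-case), leveraging Proposition~\ref{prop:adamsmain} to reduce the computation to the map of atlases $\widetilde{\Psi}_n$ and then performing the analogue of the pullback calculation given at the end of Section~4. First I would specialize $\widetilde{\Psi}_n$ to the case $d=2$, using the atlas \eqref{eq:21atlas}. For $\uL = \Z^2$ a homomorphism $h\colon \Z^2\to G$ is identified with a commuting pair $(g,g')\in \CG$, the rescaled homomorphism $h\circ n$ with $(g^n,(g')^n)$, and the action $(1,0),(0,1)\mapsto n\cdot(1,0),n\cdot(0,1)$ on $\uL$ translates to $(\ell,\ell')\mapsto (n\ell,n\ell')$ on $\Lat$. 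Thus on the $(g,g')$-component,
\[
\widetilde{\Psi}_n\bigl((\ell,\ell'),(g,g'),\xi\bigr) = \bigl((n\ell,n\ell'),(g^n,(g')^n),i\circ \xi\bigr),
\]
where $i\colon X^{\langle g,g'\rangle}\hookrightarrow X^{\langle g^n,(g')^n\rangle}$ is the inclusion of fixed-point sets.

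Next I would compute the pullback of the function on the atlas associated under Proposition~\ref{prop:21present} to $\omega_{g^n,(g')^n} \in \Omega^{2\bullet}_{\cl}(X^{\langle g^n,(g')^n\rangle};\mathcal{O}_\bullet(\Lat))$, namely $\vol^{\deg/2}\omega_{g^n,(g')^n}$. Two rescaling factors arise and must be tracked carefully. On one hand, $\vol(n\ell,n\ell')=n^2\vol(\ell,\ell')$, so $\vol^{\deg/2}$ pulls back to $(n^2\vol)^{\deg/2}=n^{\deg}\vol^{\deg/2}$. On the other hand, because $n\in\Z$ acts on $\Lat$ through the $\C^\times$-action, the defining transformation law $f(\mu\ell,\mu\ell')=\mu^{-j}f(\ell,\ell')$ for $f\in\mathcal{O}_j(\Lat)$ implies that the coefficient in degree $2j$ acquires a factor of $n^{-j}=n^{-\deg/2}$. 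Multiplying and restricting along $i$, one finds
\[
\widetilde{\Psi}_n^*\bigl(\vol^{\deg/2}\omega_{g^n,(g')^n}\bigr) \;=\; n^{\deg/2}\,\vol^{\deg/2}\,\omega_{g^n,(g')^n}\big|_{X^{\langle g,g'\rangle}}.
\]

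Finally, I would invoke Proposition~\ref{prop:21presentsmooth} to identify holomorphic functions on $\mathcal{L}_0^{2|1}(X\sq G)$ with the asserted algebra of differential forms; this identification absorbs the $\vol^{\deg/2}$-factor via \eqref{eq:inclimage2} and yields the claimed formula $\omega_{g,g'}\mapsto n^{\deg/2}\omega_{g^n,(g')^n}$. Naturality of Proposition~\ref{prop:adamsmain} together with \cref{concordance} then ensures that the computed pullback descends from the atlas to the stack and preserves the holomorphy/equivariance conditions.

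The only delicate point, rather than a true obstacle, is the bookkeeping of the two scaling factors: the quadratic scaling of $\vol$ under $(\ell,\ell')\mapsto(n\ell,n\ell')$ would on its own give a factor of $n^{\deg}$, but it is partially cancelled by the $\C^\times$-equivariance built into the coefficient ring $\mathcal{O}_\bullet(\Lat)$. This partial cancellation is precisely what yields the modular weight $\deg/2$ expected of the $n^{\rm th}$ Adams operation on equivariant elliptic cohomology, and it is the analogue in the $2|1$-setting of the factor of $n^{\deg/2}$ extracted in the proof of Theorem~\ref{thm:KAdams}.
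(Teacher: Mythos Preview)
Your proposal is correct and follows essentially the same route as the paper: reduce to the atlas via \Cref{prop:adamsmain}, compute $\widetilde{\Psi}_n$ on the $(g,g')$-component as $(\ell,\ell')\mapsto(n\ell,n\ell')$, and track the two rescaling factors (the $n^{\deg}$ from $\vol$ and the $n^{-\deg/2}$ from the $\C^\times$-weight of $\mathcal{O}_\bullet(\Lat)$) to obtain the net $n^{\deg/2}$. The only superfluous step is your appeal to \cref{concordance}: descent to the stack is already guaranteed by the 2-commutative square of \Cref{prop:adamsmain}, and concordance plays no role in this cocycle-level statement.
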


\begin{proof}
By~\Cref{prop:adamsmain}, the geometric Adams operation is determined by the maps
\beq
\Lat \times \Map(\R^{0|1},X^{\langle g,g'\rangle})\stackrel{\widetilde{\Psi}_n}{\longrightarrow} \Lat \times \Map(\R^{0|1},X^{\langle g^n,g'^n\rangle})\label{eq:21adamscomp}
\eeq
for each pair $(g,g') \in G^{(2)}$. On $S$-points, we have
\[
\widetilde{\Psi}_n(\Lambda,(g,g'),\xi)=(n\Lambda,(g^n,g'^n),\xi'),
\]
where $\xi'$ is the composition
\[
S\times \R^{0|1}\stackrel{\xi}{\to} X^{\langle g,g'\rangle}\hookrightarrow X^{\langle g^n,g'^n\rangle }.
\]
We observe that 
\[
n\Lambda=M\Lambda,\qquad M=\left[\begin{array}{cc} n & 0 \\ 0 & n \end{array}\right].
\]
Hence, pulling back along~\eqref{eq:21adamscomp}, we find 
\beq
\widetilde{\Psi}_n^*(\vol^{\deg/2}\omega_{g,g'})&=&(n^2\vol)^{\deg/2}M^*\omega_{g^n,g'^n}\nonumber\\ 
&=& n^{\deg/2} \vol^{\deg/2}\omega_{g^n,g'^n}\in \Omega^{2\bullet}_\cl(X^{g^n,g'^n};C^\infty(\Lat)),\label{eq:emphasizethis}
\eeq
where we have used that $M^*\omega_{g^n,g'^n}=n^{-\deg/2}\omega_{g^n,g'^n}$ for $\omega_{g^n,g'^n} \in \Omega^{2\bullet}(X^{\langle g^n,g'^n\rangle};\mathcal{O}_{\bullet}(\Lat))$. This in turn follows from the fact that for $F\in \mathcal{O}_k(\Lat)$, $F(n\Lambda)=n^{-k}F(\Lambda)$. Using~\Cref{prop:21present} to identify functions on the atlas with differential forms, the resulting assignment is the claimed formula on cocycles for the Adams operation. 
\ep
\begin{rmk}
We emphasize a somewhat miraculous cancellation that occurs above: the factors $n^{\deg}$ and $n^{-\deg/2}$ in~\eqref{eq:emphasizethis} combine to give the correct total factor of~$n^{\deg/2}$ for the Adams operation. This cancellation depends critically on how volumes of certain covering spaces of tori are related to the volume of their base, as well as on the somewhat subtle definition of holomorphy from~\cite{DBE_EquivTMF}, see~\Cref{defn:holo} above.
\end{rmk}

\section{Comparison with operations in $\E$-theory} \label{Sec:Ethy}

Morava $\E$-theory is a fundamental object in chromatic homotopy theory. The essentially unique $\mathbb{E}_{\infty}$-ring structure on $\E$-theory, constructed by Goerss, Hopkins, and Miller in \cite{ghobstructiontheory}, gives rise to power operations and Adams operations \cite{andoisogenies}. Hopkins, Kuhn, and Ravenel \cite{hkr} constructed a character map for $\E$-theory, which approximates the Borel equivariant $\E$-cohomology of a finite $G$-CW complex $X$ by the rational cohomology of a $G$-CW complex built out of $X$. The relationship between power operations and character theory was explored by the first and third author in \cite{charpo}. The formulas in \cite{charpo} motivated several of the questions asked in this paper. In this section we review the relationship between power operations and character theory for Morava $\E$-theory and explain how this relates to the formulas of the previous sections.

\subsection{Character theory for Morava $\E$-theory}

In this subsection we give a brief introduction to character theory for Morava $\E$-theory. The Stolz--Teichner program suggests a close relationship between geometric $d$-dimensional field theories and certain height $d$ cohomology theories. Dimensional reduction for field theories approximates a $d$-dimensional field theory by a $0$-dimensional field theory. Character theory for height $d$ Morava $\E$-theory can be viewed as an analogue of dimensional reduction. For a more thorough introduction, see Appendix A.2 of \cite{peterson_book}, which was written by the third author.

Recall that \emph{Morava $\E$-theory} $\E_d$ of height $d$ is the Landweber exact ring spectrum associated to the universal deformation $\G$ of a height $d$ formal group law $\G_0$ over a perfect field $\kappa$ of characteristic $p$. For concreteness, we may choose $\G_0$ to be the Honda formal group law over $\kappa = \F_{p^d}$, which yields the coefficient ring 
\[
\E_d^* = \pi_{-*}\E_d \cong \W\F_{p^d}\llbracket u_1,\ldots,u_{d-1}\rrbracket[u^{\pm 1}],
\]
where $\W\F_{p^d}$ denotes the ring of Witt vectors on $\F_{p^d}$, the power series generators $u_i$ are in degree $0$, and $u$ is of degree $-2$. When $d=1$ and $\kappa = \F_p$, the universal deformation $\G$ is the multiplicative formal group law over $W(\F_p) \cong \Z_p$, $\E_{1}^{*} \cong \Z_p[u^{\pm 1}]$, and $\E_1 = \K^{\wedge}_{p}$ is $p$-adic $\K$-theory.

An analogue of dimensional reduction for Morava $\E$-theory is given by Hopkins--Kuhn--Ravenel character theory \cite{hkr}, which we shall now recall. Let $\L = \Z_p^d$. For any finite group $G$, the set $\Hom(\L,G)$ of commuting $p$-power order elements of $G$ admits a conjugation action by $G$ and we write $\Hom(\L,G)_{/\sim}$ for the corresponding quotient. 

Hopkins, Kuhn, and Ravenel construct a faithfully flat extension $C_0$ of $\Q \otimes \E_d^0$ that is used to construct a $2$-periodic rational cohomology theory $HC_0$ given by
\[
HC_0^*(Y) = C_0 \otimes_{\Q \otimes \E_d^*} (\Q \otimes \E_d)^*(Y) 
\]
for any space $Y$ equivalent to a finite CW-complex. Note that $C_0$ depends on $d$, although this is suppressed from the notation. This cohomology theory is just $2$-periodic singular cohomology with coefficients in $C_0$. Given a finite $G$-CW complex $X$, they then construct a \emph{(generalized) character map}
\[
\chi_d\colon \E_d^*(EG \times_G X) \longrightarrow \left (\Prod{[h] \in \Hom(\L,G)_{/\sim}}HC_0^*(X^{\im h}) \right )^G,
\]
generalizing the work of \cite{Atiyah_Segal} from complex $\K$-theory to Morava $\E$-theory.
There is an action of $\Aut(\L)$ on $C_0$ with fixed points given by $C_{0}^{\Aut(\L)} \cong \Q \otimes \E_{d}^0$. Combining this action with the action on $\Hom(\L,G)_{/\sim}$ by precomposition, they show that their character map lands in the fixed points
\[
\left (\Prod{[h] \in \Hom(\L,G)_{/\sim}}HC_0^*(X^{\im h}) \right )^{G \times \Aut(\L)}.
\]
This should be compared with \cref{prop:Kthycocycle} and \cref{elldef}.

\begin{thm}[Hopkins--Kuhn--Ravenel~\cite{hkr}] \label{charactermap}
For any finite $G$-CW complex $X$, the character map $\chi_d$ induces an isomorphism after base change to $C_0$:
\[
C_0 \otimes_{\E_{d}^{0}} \E_d^*(EG \times_G X) \overset{\simeq}{\longrightarrow} \left (\Prod{[h] \in \Hom(\L,G)_{/\sim}}HC_0^*(X^{\im h}) \right )^G.
\]
Moreover, $\chi_d$ is equivariant with respect to the natural $\Aut(\L)$-action on both sides (on the left hand side this action is just on $C_0$), and descends to an isomorphism of Borel equivariant cohomology theories
\[
\Q \otimes \E_d^*(EG \times_G X) \overset{\simeq}{\longrightarrow} \left (\Prod{[h] \in \Hom(\L,G)_{/\sim}}HC_0^*(X^{\im h}) \right )^{G \times \Aut(\L)}
\]
after taking $\Aut(\L)$-fixed points.
\end{thm}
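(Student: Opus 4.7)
The plan is to reduce the first isomorphism to an explicit computation for finite abelian $p$-groups, then deduce the equivariance and fixed-point statements by Galois-theoretic considerations over the faithfully flat extension $C_0 \supset \Q \otimes \E_d^0$.

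First I would unpack the construction of $C_0$ and $\chi_d$. The ring $C_0$ should be built so that $\Spec C_0$ represents the functor sending a $\Q \otimes \E_{d}^{0}$-algebra $R$ to the set of isomorphisms $\L \xrightarrow{\sim} \G(R)[p^\infty]$; concretely, one takes the colimit $p^{-1}\colim_k \E_d^0(B(\L/p^k\L))$, modulo the relations forcing $[p^k]$-torsion on the tautological tuple of coordinates. Faithful flatness over $\Q \otimes \E_{d}^{0}$ is inherited from flatness at each finite stage, and $\Aut(\L) = \GL_d(\Z_p)$ acts by its natural action on level structures with fixed ring $\Q \otimes \E_d^0$. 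The character map is then defined componentwise: for $h \colon \L \to G$ with image $H$, the $G$-map $G \times_H X^H \to X$ induces $\E_d^*(EG \times_G X) \to \E_d^*(BH \times X^H)$, and composing with the ``evaluation at the tautological level structure'' $\E_d^*(BH) \to C_0$ (together with a K\"unneth isomorphism after base change) yields the $[h]$-component of $\chi_d$.

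The proof that $\chi_d$ becomes an isomorphism after base change to $C_0$ proceeds by a two-step d\'evissage. A cellular/Mayer--Vietoris argument on $X$ shows that both sides are $G$-equivariant cohomology theories in $X$, so by induction on the cells it suffices to check the statement on orbits $G/H$; by the change-of-groups formula this reduces to the case $X = \mathrm{pt}$ with $G$ varying. A second Mayer--Vietoris argument over the poset of abelian $p$-subgroups of $G$ (in the spirit of Quillen's stratification, implemented via transfers at the level of $\E_d$) then reduces to the case where $G = A$ is a finite abelian $p$-group. Here $\E_d^0(BA) \cong \mathcal{O}_{\Hom(A^*,\G)}$, the formal scheme of $A^*$-divisors on the formal group; after base change to $C_0$, this scheme splits as a disjoint union of points indexed by $\Hom(\L, A)$, and tracing through the definitions identifies $\chi_d \otimes C_0$ with the resulting projection isomorphism.

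The main obstacle is the detection step reducing general finite $G$ to abelian $p$-subgroups: unlike the rational case, which uses Brauer induction directly, the chromatic analog requires a careful transfer argument in $\E_d$-cohomology and depends on $C_0$ being large enough to split all relevant extensions. Granting this, the second statement follows formally: equivariance of $\chi_d$ under $\Aut(\L)$ is built into the definition, since the action on $C_0$ (via Galois) and the action on $\Hom(\L, G)_{/\sim}$ (by precomposition) are both induced by $\Aut(\L)$ acting on the universal level structure. Taking $\Aut(\L)$-fixed points then uses $C_0^{\Aut(\L)} = \Q \otimes \E_{d}^{0}$ together with faithful flatness and vanishing of higher continuous $\Aut(\L)$-cohomology on the resulting discrete modules, so fixed points commute with the base change and yield the Borel-equivariant isomorphism.
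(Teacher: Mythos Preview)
The paper does not give a proof of this statement: it is quoted as a theorem of Hopkins--Kuhn--Ravenel with a citation to \cite{hkr}, and the surrounding section only reviews the statement as background for the comparison with geometric power operations. So there is no ``paper's own proof'' to compare your proposal against.

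That said, your outline is a faithful sketch of the original Hopkins--Kuhn--Ravenel argument: reduction to $X=\mathrm{pt}$ by a cellular/Mayer--Vietoris argument, reduction to finite abelian $p$-groups by a complex-oriented form of Artin/Quillen induction, and an explicit identification in the abelian case using the splitting of the $p$-divisible group over $C_0$. Your description of $C_0$ as the ring carrying the universal level structure, with $\Aut(\L)$ acting as a Galois group with fixed ring $\Q\otimes \E_d^0$, is also the standard picture. The one place to be careful is the final step: passing from the $C_0$-isomorphism to the rational isomorphism is not quite ``vanishing of higher continuous cohomology'' in general, but rather a direct computation that the $\Aut(\L)$-action on the right-hand side factors through a finite quotient for which the averaging argument applies; this is how the descent is handled in \cite{hkr}.
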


\begin{ex}
We now specialize the Hopkins--Kuhn--Ravenel character map to the case $X = \pt$, the case of interest to us in the next subsection. In this case, the target of $\chi_d$ can be identified with
\[
\Cl_d(G,C_0) := \mathrm{Map}(\Hom(\L,G)_{/\sim},C_0),
\]
the ring of $C_0$-valued functions on the set $\Hom(\L,G)_{/\sim}$. We refer to the elements of $\Cl_d(G,C_0)$ as \emph{(generalized) class functions}.
\end{ex}

\subsection{The character of the total power operation for Morava $\E$-theory}

By the Goerss--Hopkins--Miller theorem \cite{ghobstructiontheory}, the ring spectrum $\E_d$ admits an $\mathbb{E}_{\infty}$-ring structure, which is unique up to a contractible space of choices. Consequently, there is a unique theory of power operations for $\E_d$. These are, for any space $X$, multiplicative non-additive maps
\[
\P_n\colon \E_d^0(X) \longrightarrow \E_d^0(E\Sigma_n \times_{\Sigma_n} X^{\times n}),
\]
defined by sending a class $[X \to \E_d]$ to the class $[E\Sigma_n \times_{\Sigma_n} X^{\times n} \to (E\Sigma_n)_{+} \wedge_{\Sigma_n} \E_d^{\wedge n} \to \E_d]$ using the $\mathbb{E}_{\infty}$-ring structure maps for $\E_d$. 

The $\Aut(\L)$-action on $C^0$ by $\E_d^0$-algebra maps that plays a role in \cref{charactermap} extends to an action by ring maps of the monoid $\End_{\text{fin}}(\L)$ of finite index endomorphism of $\L$. Let $\Lat_{\text{fin}}(\L)$ be the set of finite index sublattices of $\L$. There is a canonical surjection $\End_{\text{fin}}(\L) \twoheadrightarrow \Lat_{\text{fin}}(\L)$ sending a finite index endomorphism to its image. 

Given a conjugacy class $[h \colon \L \to G \wr \Sigma_n]$, we have an associated $\L$-set 
\[
\underbar{n} = \Coprod{k} I_k
\]
as in the discussion at the beginning of \Cref{eq:computecoop}. There we observe that we can extract well-defined conjugacy classes $[h_k \colon \L_k \to G]$ corresponding to the transitive components of $\underbar{n}$. Given a section $\phi$ of $\End_{\text{fin}}(\L) \twoheadrightarrow \Lat_{\text{fin}}(\L)$, the map $\phi_{\L_k}= \phi(\L_k) \colon \L \to \L$ has image $\L_k$. Thus there is an induced isomorphism $\psi_{\L_k} \colon \L \lra{\cong} \L_k$.

\begin{defn}
Let $\phi$ be a section to the canonical map $\End_{\text{fin}}(\L) \to \Lat_{\text{fin}}(\L)$. We define the \emph{pseudo-power operation} associated to $\phi$ to be the natural map
\[
\P_n^{\phi}\colon \Cl_d(G,C_0) \longrightarrow \Cl_d(G \wr \Sigma_n,C_0)
\]
that sends a class function $f \in \Cl_d(G,C_0)$ to the class function on $G \wr \Sigma_n$ given by 
\[
\P_{n}^{\phi}(f)([h \colon \L \to G \wr \Sigma_m]) = \prod_k \phi_{\L_k}f([\psi_{\L_k}^{*}h_k ]).
\]
\end{defn}
The formula above should be compared with \cref{prop:11formula} and \cref{prop:21formula}.

These operations satisfy a number of compatibility properties similar to the total power operations, justifying the terminology pseudo-power operation. The main result of \cite{charpo} is the following: 

\begin{thm}\label{thm:charpo}
For any section $\phi$ as above, there is a commutative diagram:
\[
\xymatrix{\E_d^0(BG) \ar[r]^-{\chi_d} \ar[d]_{\P_n} & \Cl_d(G,C_0) \ar[d]_{\P_n^{\phi}} & \Q\otimes \E_d^0(BG) \ar[d]^{\P_n^{\Q}} \ar[l] \\
\E_d^0(BG \wr \Sigma_n) \ar[r]_-{\chi_d} & \Cl_d(G \wr \Sigma_n, C_0) & \Q\otimes \E_d^0(BG\wr \Sigma_n). \ar[l]}
\]
The right square is induced by taking $\Aut(\L)$-fixed points of $\P_n^{\phi}$. The resulting map $\P_n^{\Q}$ is independent of $\phi$ and has the structure of a global power functor in the sense of \cite{globalgreen}. 
\end{thm}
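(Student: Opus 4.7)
The plan is to verify commutativity of the left square directly, then deduce the right square and independence from $\phi$ by taking $\Aut(\L)$-fixed points, and finally extract the global power functor structure from that of $\P_n$ itself. I would first reduce to the universal case and use the representability of $\E_d^0(-)$ to write the power operation $\P_n$ as postcomposition with the $\mathbb{E}_\infty$ structure map $\mu_n \colon (E\Sigma_n)_+ \wedge_{\Sigma_n} \E_d^{\wedge n} \to \E_d$. On the character side, an element of $\Cl_d(G,C_0)$ is determined by its values on conjugacy classes $[h \colon \L \to G]$, and these values are extracted via the natural map $C_0 \otimes_{\E_d^0} \E_d^0(BG) \to C_0 \otimes_{\E_d^0} \E_d^0(B\Lambda)$ induced by a finite quotient $\L \twoheadrightarrow \Lambda$ through which $h$ factors. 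The task then becomes computing $\chi_d(\P_n(f))$ by evaluating on an arbitrary conjugacy class $[h \colon \L \to G\wr\Sigma_n]$.

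The main step is the following calculation. A homomorphism $h \colon \L \to G\wr\Sigma_n$ endows $\underline{n}$ with an $\L$-action; its decomposition $\underline{n} = \coprod_k I_k$ into transitive $\L$-sets with stabilizers $\L_k$ factors $h$ through the internal product $\prod_k (G\wr\Sigma_{I_k}) \hookrightarrow G\wr\Sigma_n$ and yields maps $h_k \colon \L_k \to G$ (well-defined up to conjugacy). By naturality of the $\mathbb{E}_\infty$-structure with respect to this wreath product inclusion, $\mu_n$ pulled back along $h$ splits as a product, over $k$, of pullbacks of the "single-orbit" structure maps $\mu_{|I_k|}$ along $h_k$. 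Each of these factors is, at the character-theoretic level, an internal norm indexed by the $\L$-cover $\L/\L_k$; rationally, this norm produces the evaluation of $f$ at the restricted homomorphism $h_k$, but viewed through the chosen identification $\psi_{\L_k} \colon \L \xrightarrow{\cong} \L_k$ coming from $\phi$. The failure of $\psi_{\L_k}$ to be canonical is absorbed by the action of $\phi_{\L_k} \in \End_{\rm fin}(\L)$ on $C_0$. Collecting the factors gives exactly $\P_n^\phi(f)([h]) = \prod_k \phi_{\L_k} f([\psi_{\L_k}^* h_k])$, commuting the left square.

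For the right square: any two sections $\phi, \phi'$ of $\End_{\rm fin}(\L) \twoheadrightarrow \Lat_{\rm fin}(\L)$ differ, lattice by lattice, by an element of $\Aut(\L)$; under the corresponding action on both $C_0$ and on $\Hom(\L, G)_{/\sim}$, the expressions $\phi_{\L_k} f([\psi_{\L_k}^* h_k])$ and $\phi'_{\L_k} f([\psi'_{\L_k}{}^* h_k])$ are related by the $\Aut(\L)$-action, and hence coincide on $\Aut(\L)$-fixed points. Thus $\P_n^\phi$ descends to a well-defined $\P_n^\Q$ on $\Q\otimes\E_d^0(BG)$, independent of $\phi$, and commutativity of the right square is the $\Aut(\L)$-fixed points of the commuting left square together with the $\Aut(\L)$-equivariance of $\chi_d$ established in \cite{hkr}. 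The global power functor axioms for $\P_n^\Q$ — compatibility with restriction along subgroup inclusions, multiplicativity, and the relations between $\P_j$, $\P_k$, and $\P_{jk}$ — follow from the corresponding properties of $\P_n$ as an $\mathbb{E}_\infty$-power operation by naturality of $\chi_d$ and the compatibilities verified in \cite{globalgreen}.

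The principal obstacle is the orbit-wise splitting of $\mu_n$: one must establish that the pullback of $\mu_n$ along $B(\prod_k G\wr\Sigma_{I_k}) \to B(G\wr\Sigma_n)$, when paired with the HKR character map, factorizes as the product over $k$ of the single-orbit norm formulas, with the correct reparametrizations by $\phi$. This is a delicate equivariant computation mixing the transfer structure intrinsic to $\mathbb{E}_\infty$-ring spectra with the somewhat subtle sublattice combinatorics entering the definition of $\phi$; it is also precisely the point at which the formulas of Sections 3--5 of the present paper make direct contact with the chromatic picture, as emphasized in the analogy with the matrices $M_{\L_k}$ appearing in the geometric power cooperation formula.
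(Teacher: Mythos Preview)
The paper does not prove this theorem. It is stated as ``the main result of \cite{charpo}'' and is quoted without proof; the surrounding section is a summary of known results about Morava $\E$-theory meant to set up the analogy with the geometric power operations constructed earlier. So there is no proof in the paper to compare your proposal against.

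That said, your sketch is a reasonable outline of the strategy actually used in \cite{charpo}: reduce the evaluation of $\chi_d\circ\P_n$ at a class $[h\colon \L\to G\wr\Sigma_n]$ to the orbit decomposition $\underline{n}=\coprod_k I_k$, factor through $\prod_k G\wr\Sigma_{I_k}$, and identify each transitive factor with a norm/restriction along the finite-index sublattice $\L_k\subset\L$, with the section $\phi$ supplying the reparametrization $\L\cong\L_k$ and the twist on $C_0$. Your identification of the ``principal obstacle'' is accurate: the substantive work in \cite{charpo} is precisely the single-orbit case, where one must show that the composite of $\P_m$ with restriction along a transitive $\L\to\Sigma_m$ is computed, after applying $\chi_d$, by evaluation at the induced $\L'\to G$ for the stabilizer $\L'$, twisted by the monoid action on $C_0$. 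Your proposal does not actually carry out that step, and it is not a formality; it requires a careful analysis of how the $\E_\infty$-structure interacts with the character isomorphism, and in \cite{charpo} it passes through a comparison with Strickland's description of $\E_d^0(B\Sigma_{p^k})/I_{\mathrm{tr}}$ and the structure of level structures on the formal group. So as a proof your proposal has a genuine gap at exactly the point you flag, but as a roadmap it is correct.
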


This should be compared with \cref{thm:main11}.

\subsection{Adams operations for Morava $\E$-theory}
The Adams operations in $\E$-theory were first defined by Ando in \cite{andoisogenies}. To define them we must review an important result concerning the $\E$-cohomology of symmetric groups.

\begin{thm}[Strickland~\cite{etheorysym}] \label{subgroups}
Let $k \ge 0$. There is a canonical isomorphism 
\[
\E_{d}^0(B\Sigma_{p^k})/I_{tr} \cong \Gamma \Sub_{p^k}(\G),
\]
where $I_{tr}$ is the image of the transfer map in $\E$-cohomology along the inclusion $\Sigma_{p^{k-1}}^{\times p} \subset \Sigma_{p^k}$ and $\Gamma\Sub_{p^k}(\G)$ is the ring of functions on the scheme $\Sub_{p^k}(\G)$ that classifies subgroup schemes of order $p^k$ in $\G$.
\end{thm}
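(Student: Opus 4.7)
The plan is to follow Strickland's original strategy \cite{etheorysym}, passing through a geometric model of $\E_d^0(B\Sigma_n)$ in terms of divisors on the universal deformation~$\G$.

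First I would establish the input data. Using the splitting principle and the fundamental computation $\E_d^0(BU(1)) \cong \Gamma\G$ (the ring of functions on the formal group), one identifies $\E_d^0(BU(n))$ with $\Gamma\mathrm{Div}_n(\G)$, where $\mathrm{Div}_n(\G)$ is the scheme of effective divisors of degree $n$ on~$\G$. Concretely, an $R$-point of $\mathrm{Div}_n(\G)$ is a degree-$n$ monic polynomial whose coefficients are obtained as the elementary symmetric functions of an unordered $n$-tuple of points of $\G(R)$. The inclusion $\Sigma_n \hookrightarrow U(n)$ as permutation matrices induces a restriction map $\E_d^0(BU(n)) \to \E_d^0(B\Sigma_n)$, and a finite-generation argument (using the Atiyah--Hirzebruch spectral sequence, together with Hopkins--Kuhn--Ravenel character theory to control the rank) shows that $\E_d^0(B\Sigma_n)$ is a finitely generated, even-concentrated $\E_d^0$-algebra whose spectrum sits over $\mathrm{Div}_n(\G)$.

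Second, I would define the subgroup scheme $\Sub_{p^k}(\G) \subset \mathrm{Div}_{p^k}(\G)$ as the locus of divisors $D$ that are closed under the formal group law, i.e.\ $D = [H]$ for a finite flat subgroup scheme $H \subset \G$ of order $p^k$. There is an evident natural transformation from subgroup schemes to divisors, and using deformation-theoretic arguments (formal smoothness of $\Sub_{p^k}(\G)$ over $\Spf \E_d^0$, which in turn uses that $\G$ is a universal deformation) one constructs a canonical ring map
\[
\E_d^0(B\Sigma_{p^k}) \longrightarrow \Gamma \Sub_{p^k}(\G).
\]
The image of the transfer $\E_d^0(B\Sigma_{p^{k-1}}^{\times p}) \to \E_d^0(B\Sigma_{p^k})$ should be killed by this map: geometrically, a transferred class corresponds to decomposing a divisor of degree $p^k$ as a sum of $p$ divisors of degree $p^{k-1}$ in a way incompatible with any subgroup structure (one $p^{k-1}$-piece would have to be a subgroup, which is a proper specialization). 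This is verified via the double coset formula applied to $\Sigma_{p^{k-1}}^{\times p} \subset \Sigma_{p^k}$, which yields a concrete element of the transfer ideal that restricts to a regular function cutting out the complement of $\Sub_{p^k}(\G)$ inside the locus of divisors admitting a $p$-fold splitting.

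Finally, I would prove surjectivity and injectivity of the induced map $\E_d^0(B\Sigma_{p^k})/I_{tr} \to \Gamma\Sub_{p^k}(\G)$. Surjectivity follows from the fact that both rings are generated over $\E_d^0$ by the elementary symmetric functions in the coordinates of the points of the subgroup (these come from the Chern classes of the permutation representation). For injectivity one uses a rank count: at height $d$ over the residue field, $\Sub_{p^k}(\G_0)$ has a known finite length and agrees with the rank of $\E_d^0(B\Sigma_{p^k})/I_{tr}$ as computed from the HKR character map (each conjugacy class of $h\colon \L \to \Sigma_{p^k}$ killed by the transfer precisely corresponds to a transitive $\L$-set, hence to a subgroup of $\L$ of index $p^k$, which in turn corresponds to a level-$p^k$ subgroup of $\G$ after base change to $C_0$). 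A Nakayama-style argument then upgrades this equality of ranks to an isomorphism over the complete local ring $\E_d^0$. The main obstacle is the geometric identification of $I_{tr}$ in Step~2: isolating precisely which functions on $\mathrm{Div}_{p^k}(\G)$ lie in the transfer ideal requires the fine interplay between the double coset formula and the formal group law, and is the technical heart of Strickland's theorem.
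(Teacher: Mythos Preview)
The paper does not prove this statement: it is quoted as a result of Strickland with a citation to \cite{etheorysym} and no argument is given. There is therefore nothing in the paper to compare your proposal against.

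Your sketch is a reasonable high-level outline of Strickland's original approach (divisors on $\G$, identification of the transfer ideal, rank count via HKR), but since the paper treats this as a black box, any detailed proof you supply goes well beyond what the paper itself contains. If your goal is only to match the paper, simply citing Strickland suffices.
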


It turns out that $\E_{d}^0(B\Sigma_{p^k})$ is a free $\E_{d}^0$-module of finite rank. Thus 
\[
\E_{d}^0(B\Sigma_{p^k} \times X) \cong \E_{d}^0(B\Sigma_{p^k}) \otimes_{\E_{d}^0} \E_{d}^0(X)
\]
for all spaces $X$. Restriction along the diagonal $B\Sigma_{p^k} \times X \to E\Sigma_{p^k} \times_{\Sigma_{p^k}} X^{\times {p^k}}$, gives a power operation
\[
P_{p^k} \colon \E_{d}^0(X) \xrightarrow{\P_{p^k}} \E_d^0(E\Sigma_{p^k} \times_{\Sigma_{p^k}} X^{\times {p^k}}) \to \E_{d}^0(B\Sigma_{p^k}) \otimes_{\E_{d}^0} \E_{d}^0(X).
\] 

There are canonical subgroup schemes of $\G$ given by the $p^k$-torsion $\G[p^k]$. The order of $\G[p^k]$ as a group scheme is $p^{kd}$, thus \cref{subgroups} implies that there is a map of $\E^0$-algebras $\E^0(B\Sigma_{p^{kd}})/I_{tr} \to \E^0$ classifying the subgroup scheme $\G[p^k] \subset \G$. Applying \cref{subgroups}, we may form the ring endomorphism $\psi^{p^k}$ of $\E_{d}^0(X)$ given by the composite
\[
\psi^{p^k}\colon \E_{d}^0(X) \lra{P_{p^{kd}}} \E_{d}^0(B\Sigma_{p^{kd}}) \otimes_{\E_{d}^0} \E_{d}^0(X) \to \E_{d}^0(B\Sigma_{p^{kd}})/I_{tr} \otimes_{\E_{d}^0} \E_{d}^0(X) \lra{\G[p^k] \otimes 1} \E_{d}^0(X). 
\]
Applying \Cref{thm:charpo} to $\psi^{p^k}$ when $X = BG$ for $G$ a finite group gives a formula for $\psi^{p^k}$ on the level of class functions. Given $f \in \Cl_d(G,C_0)$ and a conjugacy class $[\L \rightarrow G]$ represented by a $d$-tuple of commuting elements $(g_1, \ldots, g_d)$, we find that
\[
\psi^{p^k}(f)((g_1,\ldots, g_d)) = f((g_{1}^{p^k},\ldots, g_{d}^{p^k})).
\]
This formula should be compared with \cref{thm:KAdams} and \cref{thm:TMFAdams}.

\appendix

\section{A brief review of super geometry and stacks}\label{append:A}

\subsection{Supermanifolds}\label{sec:superman}\label{appen:super} 

Recall that a super commutative algebra $A$ is a $\Z/2$-graded algebra such that $ab = (-1)^{|a||b|}ba$ for any two homogeneous elements $a,b \in A$.

\begin{defn} 
Define the supermanifold~$\R^{k|l}$ to be the locally ringed space with underlying topological space $\R^k$ and structure sheaf of super commutative $\C$-algebras defined by $U\mapsto C^\infty(U)\otimes_\C\Lambda^\bullet(\C^l)$, where $C^\infty(U)$ is $\C$-valued smooth functions on an open subset $U\subset \R^k$ and $\Lambda^\bullet(\C^l)$ is the $\Z/2$-graded exterior algebra on $\C^l$. 
\end{defn}

\begin{defn}
A \emph{$k|l$-dimensional supermanifold} is a locally ringed space whose underlying space is second countable, Hausdorff, and locally isomorphic to $\R^{k|l}$. Supermanifolds and maps between them (as locally ringed spaces) form a category denoted~${\sf SMfld}$. 
\end{defn}

\begin{rmk} 
The above flavor of supermanifolds are called \emph{$cs$-manifolds} in~\cite{DM}, and differ slightly from another common definition of supermanifolds with structure sheaves defined over~$\R$. 
\end{rmk}

There is a fully faithful embedding of the category of ordinary smooth manifolds and smooth maps into ${\sf SMfld}$ that on objects regards a manifold with its sheaf of complex-valued functions as a supermanifold. There is also a \emph{reduction} functor from ${\sf SMfld}$ to ordinary manifolds: let $N_{\rm red}$ denote the manifold built from~$N$ by taking the quotient of the structure sheaf by its nilpotent ideal. If we then regard the ordinary manifold $N_{\rm red}$ as a supermanifold, there is an evident monomorphism~$N_{\rm red}\hookrightarrow N$ in ${\sf SMfld}$.

 Following the usual notation, we write $C^\infty(N)$ for the global sections of the structure sheaf of a supermanifold~$N$. We observe (by a standard partition of unity argument) that supermanifolds are affine, meaning that a map of supermanifolds $N\to N'$ is determined by a map of super commutative algebras $C^\infty(N')\to C^\infty(N)$. We use the notation $C^\infty(N)^{\ev}\oplus C^\infty(N)^{\odd}\cong C^\infty(N)$ to denote the direct sum decomposition into even and odd functions on $N$.

\begin{ex}\label{ex:Batch} 
Let $E$ be a complex vector bundle over an ordinary manifold~$M$. Then $\Pi E:=(M,\Lambda^\bullet E^\vee)$ is a supermanifold with $(\Pi E)_{\rm red}=M$ and $C^\infty(\Pi E)=\Gamma(M,\Lambda^\bullet E^\vee)$. By Batchelor's Theorem~\cite{batchelor}, any supermanifold~$N$ is isomorphic to~$\Pi E$ for some complex vector bundle over an ordinary manifold. 
\end{ex}

\begin{rmk}\label{rmk:real}
Recall that a \emph{real structure} on a complex vector space is a $\C$-antilinear involution. The sheaf of complex-valued functions on a smooth manifold has a real structure given by complex conjugation of smooth functions. Functions on supermanifolds typically do not have a real structure: the involution on $C^\infty(S_{\rm red})$ need not have an extension to $C^\infty(S)$. Indeed, in the previous example such an extension would be the data of a real structure on the vector bundle~$E$. 
\end{rmk}

We will frequently use the functor of points to study the category of supermanifolds. This means that we will identify a supermanifold~$N$ with the presheaf on the category of supermanifolds given by $S\mapsto {\sf SMfld}(S,N)$. We call ${\sf SMfld}(S,N)$ the set of $S$-points of $N$, and often denote this set by $N(S)$. By the Yoneda lemma, maps between such presheaves are in bijection with maps between supermanifolds.

\begin{ex} 
We can describe $\R^{k|l}$ in terms of its $S$-points for a test supermanifold~$S$. We have
\beq
&&{\sf SMfld}(S,\R^{k|l})\cong \{x_1,\dots,x_k\in C^\infty(S)^\ev,\ \theta_1,\dots,\theta_l\in C^\infty(S)^\odd\mid (x_i)_{\rm red}=\overline{(x_i)}_{\rm red}\},\label{RnmSpot}
\eeq
where we emphasize that the condition $(x_i)_{\rm red}=\overline{(x_i)}_{\rm red}$ on the functions $x_i$ is only on restriction to the reduced manifold, $S_{\rm red}\hookrightarrow S$.
\end{ex}

\subsection{(Super) Lie groupoids and (super) stacks}\label{superstacks}

The purpose of this subsection is to give a brief introduction to super Lie groupoids and super stacks. Our main reference is~\cite[\S7]{HKST}. For the correspondence between differentiable stacks and Lie groupoids, we refer to \cite{blottiere}, \cite[Section 2.6]{behrendxu_stacks}, and~\cite[Section 2]{schommerpries_centralext} for further details. 

\begin{defn}
A \emph{super Lie groupoid} $\mathcal{G}=\{\mathcal{G}_1\rightrightarrows \mathcal{G}_0\}$ consists of a supermanifold of objects $\mathcal{G}_0$, a supermanifold of morphisms $\mathcal{G}_1$, source and target maps $s,t\colon\mathcal{G}_1\to \mathcal{G}_0$ that are required to be submersions, a unit map $\mathcal{G}_0\to \mathcal{G}_1$ and a composition map $\mathcal{G}_1\times_{\mathcal{G}_0}\mathcal{G}_1\to \mathcal{G}_1$. These data are required to satisfy the axioms of a groupoid object. A \emph{functor} $\mathcal{G}\to \mathcal{H}$ is the data of maps of supermanifolds $\mathcal{G}_i\to \mathcal{H}_i$ for $i=0,1$ satisfying the axioms of a functor. A \emph{natural transformation} is the data of a map of supermanifolds $\mathcal{G}_0\to\mathcal{H}_1$ satisfying the axioms of a natural transformation. We will often drop the modifier ``super" when discussing super Lie groupoids. Let ${\sf Grpd}$ denote the 2-category whose objects are Lie groupoids, 1-morphisms are functors between Lie groupoids, and 2-morphisms are natural transformations between functors. 
\end{defn}

\begin{ex}\label{ex:actgrpd} Given an action of a super Lie group $G$ on a supermanifold $N$, the \emph{quotient groupoid} $N\sq G$ has objects~$N$ and morphisms $G\times N$. The source map is the projection and the target map is the action map. The unit is the inclusion along the identity of~$G$ and composition is determined by multiplication in~$G$. 
\end{ex}

\begin{defn}
A \emph{super stack} is a category fibered in groupoids over supermanifolds satisfying descent with respect to surjective submersions of supermanifolds. We will often drop the modifier ``super" when discussing super stacks. Let ${\Stack}$ denote the 2-category of super stacks, fibered functors, and fibered natural transformations. 
\end{defn}

We recall that any stack $\X$ defines a lax 2-functor from the opposite category of supermanifolds to (the 2-category of) groupoids. On objects, this functor assigns to a supermanifold~$S$ the fiber of~$\X$ at~$S$. The Grothendieck construction gives an equivalence between the 2-category of stacks viewed as fibered categories and the 2-category of stacks viewed as lax 2-functors. We will freely pass between these equivalent points of view on stacks.

\begin{defn}[{\cite[Definition~7.21]{HKST}}]\label{defn:symmonstack}
A \emph{symmetric monoidal} category fibered over supermanifolds is a fibered category ${\sf C}\to {\sf SMfld}$ together with fibered functors $\otimes\colon {\sf C}\times_{\sf SMfld} {\sf C}\to {\sf C}$ and $1\colon {\sf SMfld}\to{\sf C}$ and the obvious fibered natural transformations as per the standard definition of a symmetric monoidal category. A \emph{symmetric monoidal stack} is a fibered symmetric monoidal category satisfying descent in the symmetric monoidal sense: the groupoids in the descent diagram have symmetric monoidal structures, and we require the equivalence to be an equivalence of symmetric monoidal categories.
\end{defn}

Recall (e.g.,~\cite[\S7]{HKST}) that there is a \emph{stackification} functor from the 2-category of groupoid-valued presheaves on supermanifolds to super stacks, which is characterized as the left adjoint to the canonical inclusion.

\begin{ex}\label{ex:liestackcomparison} 
Given a Lie groupoid $\mathcal{G}=\{\mathcal{G}_1\rightrightarrows \mathcal{G}_0\}$, we obtain a presheaf of groupoids whose value on a supermanifold $S$ is $\mathcal{G}(S)=\{\mathcal{G}_1(S)\rightrightarrows \mathcal{G}_0(S)\}$. In fact, this extends to a 2-functor from the 2-category ${\sf Grpd}$ of Lie groupoids to the 2-category of (lax) presheaves of groupoids on supermanifolds. Postcomposition with stackification then induces a functor
\beq
[-]\colon {\sf Grpd}\to {\Stack}.\label{eq:bundlization1}
\eeq
We use the notation~$\mathcal{G}\mapsto [\mathcal{G}]$ and $\{f\colon \mathcal{G}\to\mathcal{G}'\}\mapsto \{[f]\colon [\mathcal{G}]\to[\mathcal{G}']\}$ to denote the images of objects and 1-morphisms under this functor. If a stack $\X$ is equivalent to $[\mathcal{G}]$, we say that $\mathcal{G}$ is a \emph{groupoid presentation} of the stack~$\X$. We observe that the $2$-functor~\eqref{eq:bundlization1} in particular gives a map of groupoids
\beq
{\sf Grpd}(\mathcal{G},\mathcal{G}') \to {\Stack}([\mathcal{G}],[\mathcal{G}']). \label{eq:bundlization}
\eeq
The 2-functor $[-]$ map can be understood geometrically using the language of bibundles, e.g., see~\cite[Section 2]{schommerpries_centralext}.
\end{ex}

\begin{ex} The stackification of the action groupoid from \Cref{ex:actgrpd} has as objects over~$S$ pairs $(P,\phi)$ for $P\to S$ a principal $G$-bundle and $\phi\colon P\to N$ a $G$-equivariant map. In particular, $[\pt\sq G]$ is the stack that classifies principal $G$-bundles on the site of supermanifolds. 
\end{ex}

\begin{ex}\label{ex:bibundles} 
Let $G$ be a group acting on a stack $\X$. Then the (stack) quotient $\X\sq G$ is the stack whose $S$-points are principal $G$-bundles $P\to S$ with a $G$-equivariant map $P\to \X$. Morphisms over $S$ are 2-commuting diagrams
\beq
\begin{tikzpicture}[baseline=(basepoint)];
\node (A) at (0,-.75) {$S$};
\node (B) at (3,0) {$P$};
\node (E) at (3,-1.5) {$P'$};
\node (G) at (6,-.75) {$\X$.};
\node (H) at (4,-.75) {$\twocommute$};
\draw[->] (B) to  (A);
\draw[->] (B) to node[left] {$\cong$} (E);
\draw[->] (E) to (A);
\draw[->] (B) to  (G);
\draw[->] (E) to (G);
\path (0,-.75) coordinate (basepoint);
\end{tikzpicture}\nonumber
\eeq
Principal bundles and equivariant maps pull back along base changes $S\to S'$. For more details on group actions on stacks, we refer the reader to Appendix A in \cite{stoffel_eqchernchar}.
\end{ex}

\begin{defn} \label{defn:atlas}
An \emph{atlas} for a stack $\X$ is a supermanifold $U$ and a map $U\to \X$ such that for any map $S\to \X$ from a supermanifold $S$, the 2-pullback $S\times_\X U$ is representable (as a supermanifold) and the canonical map $S\times_\X U\to S$ is a surjective submersion. 
\end{defn} 

There is an equivalent description of an atlas for a stack, given by Behrend and Xu in \cite[Proposition~2.2]{behrendxu_stacks}, that we will use in the proof of \Cref{prop:atlas}: Let $\mathcal{U}$ and~$\mathcal{Y}$ be stacks. A morphism $\mathcal{U}\to \mathcal{Y}$ is an \emph{epimorphism} if for every map $S\to \mathcal{Y}$ there exists a surjective submersion $\tilde{S}\to S$ and a 2-commuting square 
\beq
\begin{tikzpicture}[baseline=(basepoint)];
\node (A) at (0,0) {$\tilde{S}$};
\node (B) at (3,0) {$\mathcal{U}$};
\node (D) at (0,-1.5) {$S$};
\node (E) at (3,-1.5) {$\mathcal{Y}.$};
\draw[->] (A) to  (B);
\draw[->] (A) to (D);
\draw[->] (B) to (E);
\draw[->] (D) to (E);
\node (D) at (1.5,-.75) {$\twocommute$};
\path (0,-.75) coordinate (basepoint);
\end{tikzpicture}\nonumber
\eeq 

\begin{prop}[\cite{behrendxu_stacks}]\label{prop:eqatlas}
An epimorphism $\mathcal{U}\to \mathcal{Y}$ is an atlas for $\mathcal{Y}$ if and only if it satisfies the following conditions:
    \begin{enumerate}
        \item $\mathcal{U}$ is representable,
        \item $\mathcal{U}\times_\mathcal{Y}\mathcal{U}$ is representable, and 
        \item the two canonical maps $\mathcal{U}\times_\mathcal{Y}\mathcal{U}\to \mathcal{U}$ are submersions.
    \end{enumerate} 
\end{prop}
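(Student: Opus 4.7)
The proof splits into two directions. For the forward direction, assume $\mathcal{U} \to \mathcal{Y}$ is an atlas in the sense of \Cref{defn:atlas}. Condition (1) is immediate since an atlas is, by definition, a supermanifold. For (2) and (3), I would apply the atlas property with $S = \mathcal{U}$ along the atlas map itself, obtaining that $\mathcal{U} \times_\mathcal{Y} \mathcal{U}$ is representable and one of the two canonical projections to $\mathcal{U}$ is a surjective submersion; the other projection inherits the same property via the symmetry of the $2$-pullback that swaps the two factors. To verify the epimorphism condition, for any $S \to \mathcal{Y}$ I would take $\tilde{S} := S \times_\mathcal{Y} \mathcal{U}$, which is representable and surjectively submersive over $S$ by the atlas property, equipped with the other projection to $\mathcal{U}$ fitting into the required $2$-commuting square.

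For the backward direction, assume $\mathcal{U} \to \mathcal{Y}$ is an epimorphism satisfying (1)--(3), and fix a map $S \to \mathcal{Y}$ from a supermanifold $S$. I would use the epimorphism property to produce a surjective submersion $\tilde{S} \to S$ together with a $2$-commuting lift $\tilde{S} \to \mathcal{U}$. Then there is a canonical equivalence
\[
\tilde{S} \times_\mathcal{Y} \mathcal{U} \simeq \tilde{S} \times_\mathcal{U} (\mathcal{U} \times_\mathcal{Y} \mathcal{U}),
\]
where the right-hand side is a $2$-pullback of supermanifolds along the submersion $\mathcal{U} \times_\mathcal{Y} \mathcal{U} \to \mathcal{U}$ supplied by (2) and (3). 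Hence $\tilde{S} \times_\mathcal{Y} \mathcal{U}$ is representable, and its projection to $\tilde{S}$ is a surjective submersion as the base change of a surjective submersion.

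The main obstacle is descending representability of $\tilde{S} \times_\mathcal{Y} \mathcal{U}$ along the surjective submersion $\tilde{S} \to S$ in order to conclude that $S \times_\mathcal{Y} \mathcal{U}$ itself is representable. I would exhibit the relevant descent datum using the groupoid $\tilde{S} \times_S \tilde{S} \rightrightarrows \tilde{S}$ presenting $S$ together with its induced action on $\tilde{S} \times_\mathcal{Y} \mathcal{U}$ coming from the universal property of the $2$-pullback, and then appeal to the fact that surjective submersions are effective descent morphisms in ${\sf SMfld}$. Once $S \times_\mathcal{Y} \mathcal{U}$ is known to be a supermanifold, the projection to $S$ is a surjective submersion because being a surjective submersion is local on the target along $\tilde{S} \to S$, where it holds by the previous paragraph.
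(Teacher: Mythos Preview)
The paper does not actually prove this proposition; it is stated with a citation to \cite{behrendxu_stacks} (their Proposition~2.2) and used as a black box. So there is no proof in the paper to compare against, and your argument stands on its own as a reconstruction of the standard proof.

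Your outline is essentially correct and follows the expected route. Two small points deserve attention. First, in the backward direction you assert that the projection $\tilde{S}\times_\mathcal{Y}\mathcal{U}\to\tilde{S}$ is a surjective submersion ``as the base change of a surjective submersion,'' but condition~(3) only gives that $\mathcal{U}\times_\mathcal{Y}\mathcal{U}\to\mathcal{U}$ is a \emph{submersion}. Surjectivity is not hypothesized; you need to observe separately that the diagonal $\mathcal{U}\to\mathcal{U}\times_\mathcal{Y}\mathcal{U}$ furnishes a section, whence each projection is automatically surjective. Second, in the descent step you should note explicitly that $S\times_\mathcal{Y}\mathcal{U}$ is a priori only a stack, and that one first checks it is equivalent to a sheaf (trivial automorphism groups) because this holds after pullback to the cover $\tilde{S}$; only then does effective descent for supermanifolds along surjective submersions apply to yield representability. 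With these two clarifications, the argument is complete.
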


A stack is \emph{geometric} if it admits an atlas. An atlas also determines a groupoid presentation of a stack whose object supermanifold is~$U$ and whose morphisms are the 2-pullback,
\beq
\begin{tikzpicture}
\node (A) at (0,0) {$U\times_\X U$};
\node (B) at (3,0) {$U$};
\node (D) at (0,-1.5) {$U$};
\node (E) at (3,-1.5) {$\X$,};
\draw[->] (A) to node[above] {$s$}  (B);
\draw[->] (A) to node[left] {$t$} (D);
\draw[->] (B) to (E);
\draw[->] (D) to (E);
\end{tikzpicture}\label{diag:weak2}
\eeq 
where $s$ and $t$ denote the source and target maps in the groupoid. 

\begin{defn}\label{defn:finitecover} 
A morphism of stacks $\X\to \Y$ is a \emph{finite cover} if for any map $S\to \Y$ where $S$ is an ordinary supermanifold, the 2-pullback
\beq
\begin{tikzpicture}[baseline=(basepoint)];
\path (0,-.75) coordinate (basepoint);
\node (A) at (0,0) {$\widetilde{S}$};
\node (B) at (3,0) {$\X$};
\node (D) at (0,-1.5) {$S$};
\node (E) at (3,-1.5) {$\Y$,};
\draw[->] (A) to  (B);
\draw[->] (A) to (D);
\draw[->] (B) to (E);
\draw[->] (D) to (E);
\end{tikzpicture}\label{diag:ncovering}
\eeq 
is representable and the map $\widetilde{S}\to S$ is a $n$-sheeted cover. 
\end{defn}

We now turn to a discussion of symmetric powers of stacks, the free symmetric monoidal stack on a given stack, and their atlases. 

\begin{defn}\label{def:symstack}
Let $\X$ be a stack. For any $n\ge 0$, we define the $n^{\rm th}$ symmetric power of $\cX$ to be the quotient stack $\Sym^{n}(\cX) = \cX^{\times n} \sq \Sigma_n$ and write $\Sym^{\le n}(\cX)$ for the disjoint union $\coprod_{i=0}^n\Sym^{i}(\cX)$. 
\end{defn}

The free symmetric monoidal stack on $\cX$ is given by the coproduct
\[
\Sym(\cX) = \coprod_{n \ge 0}\Sym^{n}(\cX) = \coprod_{n \ge 0} (\cX^{\times n} \sq \Sigma_n).
\]
Equivalently, $\Sym(\cX)$ is obtained by taking the free symmetric monoidal groupoid on each $S$-point of $\X$ and then stackifying.

\begin{lem}\label{lem:symatlas}
Let $p\colon U \to \X$ be an atlas for a stack $\X$ and $n \ge 1$. Then the canonical map $U^{\times n} \to \Sym^{n}(\X)$ is an atlas for $\Sym^{n}(\X)$.
\end{lem}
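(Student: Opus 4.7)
The plan is to verify the three conditions listed in \Cref{prop:eqatlas}. Representability of $U^{\times n}$ as a supermanifold is clear, so the main work is to show that the canonical map $U^{\times n} \to \Sym^n(\X)$ is an epimorphism, that the 2-pullback $U^{\times n} \times_{\Sym^n(\X)} U^{\times n}$ is representable, and that the two induced projections are submersions.

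For the epimorphism property, I would use the description of $S$-points of $\Sym^n(\X) = \X^{\times n}\sq \Sigma_n$ from \Cref{ex:bibundles}: such an $S$-point is a principal $\Sigma_n$-bundle $P\to S$ together with a $\Sigma_n$-equivariant map $P\to \X^{\times n}$. Since $\Sigma_n$ is finite and discrete, the bundle $P$ trivializes after passage to a suitable open cover $\{S_i\to S\}$; restricting each trivialization along the identity section yields a map $S_i\to \X^{\times n}$ whose image in $\Sym^n(\X)$ recovers the original $S_i$-point. Because $p\colon U\to \X$ is an atlas, the product map $p^{\times n}\colon U^{\times n}\to \X^{\times n}$ is as well, so each $S_i\to \X^{\times n}$ lifts, after further refinement along a surjective submersion, to a map landing in $U^{\times n}$. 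Assembling these produces the required factorization through the proposed atlas.

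To handle the fiber product, I would describe its $S$-points as pairs $u,v\in U^{\times n}(S)$ together with an isomorphism in $\Sym^n(\X)(S)$ between their images. An isomorphism between the trivialized $\Sigma_n$-bundles $S\times\Sigma_n$ equipped with the canonical equivariant maps to $\X^{\times n}$ coming from $u$ and $v$ is determined, locally in $S$, by a locally constant permutation $\sigma\in\Sigma_n$ together with 2-isomorphisms $p(u_i)\simeq p(v_{\sigma(i)})$ in $\X(S)$ for each $i$. This data is exactly an $S$-point of $(U\times_\X U)^{\times n}$, leading to the identification
\[
U^{\times n} \times_{\Sym^n(\X)} U^{\times n} \simeq \coprod_{\sigma\in \Sigma_n} (U\times_\X U)^{\times n}.
\]
Since $p$ is an atlas, $U\times_\X U$ is representable and the two projections to $U$ are submersions, so the right-hand side is representable and the two projections to $U^{\times n}$ are submersions on each component of the coproduct.

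The main obstacle I anticipate is the careful identification of the $S$-points of the fiber product above: one must keep track of the trivializations of the principal $\Sigma_n$-bundles arising on both sides, reconcile the symmetric-group reshuffling of the $n$ coordinates with the pointwise atlas data, and ensure that the local choices of $\sigma$ glue into a well-defined presentation as a disjoint union indexed by $\Sigma_n$. Once this identification is in place, the conditions of \Cref{prop:eqatlas} reduce to the already-established atlas property of $p^{\times n}\colon U^{\times n}\to \X^{\times n}$ together with the discreteness of $\Sigma_n$.
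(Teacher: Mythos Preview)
Your proposal is correct, but the paper takes a more direct route. Rather than verifying the three conditions of \Cref{prop:eqatlas}, the paper checks the original \Cref{defn:atlas} directly: given any $S\to \Sym^n(\X)$, it factors the pullback through the intermediate stack $\X^{\times n}$ via the tower of 2-pullbacks
\[
S \times_{\Sym^n(\X)} U^{\times n} \longrightarrow \widetilde S := S\times_{\Sym^n(\X)}\X^{\times n} \longrightarrow S,
\]
observing that $\widetilde S\to S$ is a $\Sigma_n$-cover (hence a surjective submersion of supermanifolds) and that $S\times_{\Sym^n(\X)}U^{\times n}\to \widetilde S$ is a surjective submersion because $p^{\times n}\colon U^{\times n}\to \X^{\times n}$ is already an atlas. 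This avoids any explicit description of the self-fiber product $U^{\times n}\times_{\Sym^n(\X)}U^{\times n}$. Your approach, by contrast, yields the concrete groupoid presentation $\coprod_{\sigma\in\Sigma_n}(U\times_\X U)^{\times n}\rightrightarrows U^{\times n}$ of $\Sym^n(\X)$, which is extra information but costs the bookkeeping you flag in your last paragraph; the paper's argument sidesteps that bookkeeping entirely by never unpacking the isomorphisms in $\Sym^n(\X)(S)$.
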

\begin{proof}
Let $S \to \Sym^{n}(\X)$ be a map from a supermanifold $S$ and consider the following diagram of 2-pullbacks:
\[
\begin{tikzpicture}
\node (A) at (0,0) {$S \times_{\Sym^{n}(\X)}U^{\times n}$};
\node (B) at (3,0) {$U^{\times n}$};
\node (C) at (0,-1.5) {$\widetilde{S}$};
\node (D) at (3,-1.5) {$\X^{\times n}$};
\node (E) at (6,-1.5) {$\pt$};
\node (F) at (0,-3) {$S$};
\node (G) at (3,-3) {$\Sym^{n}(\X)$};
\node (H) at (6,-3) {$\pt\sq \Sigma_n$.};
\draw[->] (A) to (B);
\draw[->] (A) to (C);
\draw[->] (B) to node[right] {$p^{\times n}$} (D);
\draw[->] (C) to (D);
\draw[->] (D) to (E);
\draw[->] (C) to (F);
\draw[->] (D) to node[right] {$\pi$} (G);
\draw[->] (E) to (H);
\draw[->] (F) to (G);
\draw[->] (G) to (H);
\end{tikzpicture}
\]
By construction, $\widetilde{S} \to S$ is a $\Sigma_n$-cover of $S$, hence it is a surjective submersion of supermanifolds. Since $p^{\times n}\colon U^{\times n} \to \cX^{\times n}$ is an atlas, the canonical map $S \times_{\Sym^{n}(\X)}U^{\times n} \to \widetilde{S}$ is a surjective submersion of supermanifolds as well. It follows that the composite $S \times_{\Sym^{n}(\X)}U^{\times n} \to S$ is a surjective submersion, so $U^{\times n} \to \Sym^{n}(\X)$ is an atlas as claimed.
\end{proof}

\subsection{Recap of Stolz--Teichner model geometries}\label{appen:model}

\begin{defn}
An \emph{open cover} of a supermanifold~$S$ is a collection of maps $(U_i\to S)$ with the property that
\begin{enumerate}
    \item $((U_i)_{\rm red}\to S_{\rm red})$ is an open cover of the manifold $S_{\rm red}$ in the usual sense and
    \item each $U_i\to S$ is a local isomorphism of supermanifolds.
\end{enumerate} 
\end{defn}

\begin{defn} 
A \emph{model geometry} is the data of 
    \begin{enumerate}
        \item a supermanifold $\M$ and
        \item a super Lie group $\Iso(\M)$ acting on $\M$.
    \end{enumerate} 
We call $\M$ the \emph{model space} and $\Iso(\M)$ the \emph{(group of) isometries}.
\end{defn} 

The following is an abridged version of~\cite[Definitions~2.33 and~4.4]{ST11}; more details can be found there. 

\begin{defn}\label{defn:rigid} An $S$-family of supermanifolds with $\M$-structure is the data of 
    \begin{enumerate}
        \item a smooth submersion $p\colon T\to S$;
        \item a maximal atlas $(U_i)$ of $T$ with charts equipped with isomorphisms over $S$, $\varphi_i \colon U_i \xrightarrow{\sim} V_i \subset S \times \M$ where $V_i$ is open;
        \item transition data $g_{ij}\colon p(U_i\bigcap U_j)\to \Iso(\M)$. 
    \end{enumerate}
The isomorphisms $\varphi_i$ are required to be compatible with transition data $g_{ij}$ and the transition data must satisfy a further cocycle condition. \emph{Isomorphisms} between $S$-families of supermanifolds with $\M$-structure are maps $T\to T'$ over $S$ that on an open cover are determined by the action of~$\Iso(\M)$ on open sub supermanifolds of $\M$. 
\end{defn}

\begin{rmk}\label{rmk:covermodel}
We observe that for an $S$-family $T\to S$ of supermanifolds with $\M$-structure and a covering space $\widetilde{T}\to T$, we obtain a new $S$-family of supermanifolds with $\M$-structure from $\widetilde{T}\to S$: we simply pull back the open cover and transition data that defines the $\M$-structure on $T$. Furthermore, for finite covers~$\widetilde{T}_1,\widetilde{T}_2\to T$ where $T$ is an $S$-family of manifolds with $\M$-structure, an isomorphism of covers $\widetilde{T}_1\to \widetilde{T}_2$ over $T$ is automatically an isomorphism of $\M$-manifolds where the covers are endowed with their canonical $\M$-structure coming from $T$.
\end{rmk}

\begin{rmk} \label{rmk:itsastack}
The category of super manifolds with $\M$-structure determines a category fibered over supermanifolds. This fibered category is in fact a stack (which is implicit in~\cite[\S2.8]{ST11}). Indeed, supermanifolds with $\M$-structure can be pulled back along base changes $S\to S'$. Descent comes from observing that a maximal atlas $(U_j)$ of $T$ (as in \cref{defn:rigid}) together with an open cover $(S_i)$ of $S$ can be refined to a maximal atlas~$(T_k)$ of~$T$ via the pullback square
\beq
\begin{tikzpicture}[baseline=(basepoint)];
\path (0,-.75) coordinate (basepoint);
\node (A) at (0,0) {$\coprod T_k$};
\node (B) at (3,0) {$\coprod U_j$};
\node (D) at (0,-1.5) {$\coprod p^{-1}S_i$};
\node (E) at (3,-1.5) {$T$};
\draw[->] (A) to  (B);
\draw[->] (A) to (D);
\draw[->] (B) to (E);
\draw[->] (D) to (E);
\end{tikzpicture}\nonumber 
\eeq 
that restricts to a maximal atlas on each $T|_{S_i}$. From the existence of this open cover $(T_k)$ that mutually refines $(U_j)$ and $(p^{-1}S_i)$, standard arguments for fiber bundles show that descent is satisfied. Hence, supermanifolds with $\M$ structure form a stack. This type of argument also shows that $\M(\X)$ (defined in~\cref{sec:21}) is a stack, using descent for the stack $\X$ and the fact that a cover $(S_i)$ of $S$ gives the cover $(p^{-1}S_i)$ of $T$.
\end{rmk}

\begin{ex}[Super Euclidean geometries, {\cite[\S4.2]{ST11}}]\label{ex:superEuc} The super Euclidean model spaces arise from the data of:
    \begin{enumerate}
        \item a real vector space~$V$ with inner product;
        \item a complex spinor representation~$\Delta$ of $\Spin(V)$;
        \item and a $\Spin(V)$-equivariant symmetric pairing $\Gamma\colon \Delta\otimes \Delta\to V_\C$
    \end{enumerate} 
The pairing $\Gamma$ endows $V\times \Pi \Delta$ with a group structure given in terms of the maps on $S$-points,
\[
(V\times \Pi \Delta)\times (V\times \Pi \Delta)\to (V\times \Pi \Delta),\quad (v,\sigma)\cdot (v',\sigma')=(v+v'+\Gamma(\sigma,\sigma'),\sigma+\sigma'),
\]
for $(v,\sigma),(v',\sigma')\in (V\times \Pi \Delta)(S)$. When ${\rm dim}_\R(V)=d$ and ${\rm dim}_\C(\Delta)=\delta$, we employ the notation $\bE^{d|\delta}:=V\times \Pi \Delta$. We call this the group of \emph{super Euclidean translations} when viewing it as a group and the \emph{super Euclidean space}, denoted $\R^{d|\delta}$, when viewing it as a supermanifold on which this group acts. Note that $\Gamma$ is part of the data of the group of super Euclidean translations but (as per the standard convention) omitted from the notation $\bE^{d|\delta}$. 

We also have an action of $\Spin(V)$ on $V\times \Pi \Delta$ via the spinor representation on $\Delta$ and through the homomorphism $\Spin(V)\to \SO(V)$ on $V$. This defines a super group $(V\times \Pi \Delta)\rtimes \Spin(V)$, the \emph{super Euclidean isometry group.} The pair $\M=\R^{d|\delta}$ and $\Iso(\M)=\bE^{d|\delta}\rtimes \Spin(\R^d)$ define a \emph{super Euclidean model geometry}. \label{ex:modelsuper}
\end{ex}

\begin{ex}[Super Euclidean tori] \label{ex:supertori}
A choice of lattice $\Z^d\subset V$ gives a subgroup of~$\bE^{d|\delta}$,
\[
\Lambda\colon \Z^d\hookrightarrow V\subset V\times \Pi \Delta\simeq \bE^{d|\delta},
\]
which in turn determines a $\Z^d$-action on $\R^{d|\delta}$. Similarly, an $S$-family of lattices $\Lambda\colon S\times \Z^d\hookrightarrow S\times V$ determines an $S$-family of $\Z^d$-actions on $S\times \R^{d|\delta}$ with quotient defined as 
\[
T^{d|\delta}_{\Lambda}:=(S\times \R^{d|\delta})/\Z^d. 
\]
We observe that the $S$-family $T^{d|\delta}_{\Lambda}\to S$ is uniquely equipped with the structure of a family of super Euclidean manifolds: for a sufficiently fine open cover of $T^{d|\delta}_{\Lambda}$, a component of the preimage of an open set $U_i\subset T^{d|\delta}_{\Lambda}$ along the quotient map
\[
q\colon S\times \R^{d|\delta}\to T^{d|\delta}_{\Lambda}
\]
determines an open subset $V_i\subset S\times \R^{d|\delta}=S\times \M$ for which $q$ restricts to an isomorphism $q|_{U_i}:=\varphi_i\colon U_i\to V_i$. Furthermore, when $U_i\bigcap U_j$ is nonempty, there is a unique map $U_i\bigcap U_j\to \Iso(\M)$ coming from the $\Z^d\subset \bE^{d|\delta}$-action on $\R^{d|\delta}$ that permutes the components of $p^{-1}(U_j)$. These data are compatible by construction. We call the super Euclidean manifolds $T^{d|\delta}_{\Lambda}$ \emph{super Euclidean tori}.
\end{ex}

The examples of model geometries relevant in this paper will come from extending super Euclidean model geometries to include global dilations of $\R^{d|\delta}$; see \Cref{defn:11model} and \Cref{defn:21model}. Since these model geometries come from enlarging the isometry group, the super Euclidean tori define tori in these model geometries as well.

\section{A proof of \Cref{lem:technical}}\label{app:b}

Recall that $\L = \Z^d$. Let $\Sigma_n = \Aut(\underline{n})$, let $\pi \colon G \wr \Sigma_n \to \Sigma_n$ be the canonical surjection, and let $\alpha \colon \L \to G \wr \Sigma_n$ be a group homomorphism. Assume that $\underline{n}=\coprod_{k\in K} I_k$ is a decomposition of $\underline{n}$ into transitive $\L$-sets for the action given by the composite $\pi h \colon \L \to \Sigma_n$ and let $\Sigma_{I_k} = \Aut(I_k)$.

For any $j_k \in I_k$, let
\[
\rho_{j_k} \colon \L \to G \wr \Sigma_n \to G
\]
be the map of sets given by projecting on the $j_{k}^{\text{th}}$-factor of $G^{\times n}$ in the wreath product. Let $\uL_k\subset \uL$ denote the kernel of the composite
\[
\uL\to G\wr \Sigma_{I_k}\to \Sigma_{I_k}.
\]
Thus $\L_k$ is a finite index sublattice of $\uL$. By construction the map $\uL_k \to G \wr \Sigma_{I_k}$ factors through $G^{\times |I_k|}$. Thus $\rho_{j_k} |_{\L_k}$ is a group homomorphism even though $\rho_{j_k}$ is not.

Given a $G$-space $X$, there is a $G \wr \Sigma_n$-action on $X^{\times n}$ and thus an $\L$-action on $X^{\times n}$ through $\alpha$. Write $\tau_{j_k}\colon X^{\times n} \to X$ for the projection onto the factor corresponding to $j_k \in \underline{n}$. Given a fixed point ${\bf x} \in (X^{\times n})^{\im h}$, it follows that $\tau_{j_k}({\bf x}) \in X^{\im \rho_{j_k}|_{\L_k}}$. \Cref{lem:technical} is a consequence of the following result:

\begin{lem}\label{appen:lemma} 
For each $k \in K$, fix $j_k \in I_k$. The map
\[
(X^{\times n})^{\im h} \to \prod_{k\in K} X^{\im \rho_{j_k}|_{\L_k}}
\]
sending ${\bf x}$ to $(\tau_{j_k}(\bf x))$ is a homeomorphism. When $X$ is a $G$-manifold, this map is a diffeomorphism. 
\end{lem}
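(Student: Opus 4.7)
The strategy is to construct an explicit inverse and verify it is well-defined and smooth. Throughout, write $h(\lambda) = ({\bf g}(\lambda), \sigma(\lambda)) \in G \wr \Sigma_n$ for $\lambda \in \L$, where $\sigma = \pi \circ h$. Following the convention of \Cref{lem:ev}, the right wreath product action satisfies $({\bf x} \cdot \lambda)_i = x_{\sigma(\lambda)(i)} \rho_i(\lambda)$, so the fixed-point condition for ${\bf x} \in (X^{\times n})^{\im h}$ reads
\[
x_i = x_{\sigma(\lambda)(i)} \, \rho_i(\lambda) \quad \text{for all } \lambda \in \L, \, i \in \underline{n}.
\]
Combined with transitivity of $\L$ on each $I_k$, this will immediately supply both a proof of injectivity and the formula for the inverse.

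First I would verify that the map is well-defined and injective. For $\lambda \in \L_k$ we have $\sigma(\lambda)(j_k) = j_k$, so the fixed-point condition at $i = j_k$ degenerates to $x_{j_k} = x_{j_k}\,\rho_{j_k}(\lambda)$, confirming $\tau_{j_k}({\bf x}) \in X^{\im \rho_{j_k}|_{\L_k}}$. Given any $i \in I_k$, transitivity supplies $\lambda_i \in \L$ with $\sigma(\lambda_i)(j_k) = i$, and applying the fixed-point condition at $j_k$ with this $\lambda_i$ yields $x_i = x_{j_k}\,\rho_{j_k}(\lambda_i)^{-1}$. Thus ${\bf x}$ is recovered from $(\tau_{j_k}({\bf x}))_k$.

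The main obstacle lies in surjectivity. Given $(y_k)$ in the target, define $x_i := y_k\,\rho_{j_k}(\lambda_i)^{-1}$ for each $i \in I_k$. The delicate step is checking independence of the choice of $\lambda_i$: if $\sigma(\lambda)(j_k) = \sigma(\lambda')(j_k) = i$, then $\lambda^{-1}\lambda' \in \L_k$, and a direct computation in the wreath product (which uses that $\rho_{j_k}|_{\L_k}$ is a group homomorphism while $\rho_{j_k}$ on all of $\L$ is merely a map of sets) establishes the key identity $\rho_{j_k}(\lambda)^{-1}\rho_{j_k}(\lambda') = \rho_{j_k}|_{\L_k}(\lambda^{-1}\lambda')$. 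Since $y_k$ is fixed by this element, the two candidate values of $x_i$ agree. A parallel wreath product calculation gives $\rho_{j_k}(\mu\lambda) = g_i(\mu)\rho_{j_k}(\lambda)$ whenever $\sigma(\lambda)(j_k) = i$, which verifies that the constructed ${\bf x}$ satisfies the fixed-point condition for every $\mu \in \L$ and hence lies in $(X^{\times n})^{\im h}$.

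Finally, both the forward map and the inverse are assembled from projections, inversions in the finite group $G$, and the $G$-action on $X$. Since $G$ is finite discrete and the $G$-action is smooth, both maps are continuous, and smooth when $X$ is a $G$-manifold; this upgrades the bijection to the claimed homeomorphism (respectively diffeomorphism).
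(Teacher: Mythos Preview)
Your proposal is correct and follows essentially the same approach as the paper: construct an explicit inverse by choosing, for each element of each orbit, a lattice element moving the basepoint there, and then verify via wreath-product identities that the result is well-defined and lands in the fixed points. The only cosmetic differences are that the paper first reduces to a single transitive orbit and reparametrizes by $A=\L/\L_1$ via the Cayley embedding (using a left action convention), whereas you work orbit-by-orbit in the right action convention of \Cref{lem:ev}; the underlying computations are the same.
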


\begin{proof}
The first reduction is to note that it suffice to prove this for each transitive component of $\pi \alpha$ as there is a factorization of $\alpha$
\[
\xymatrix{ & \prod_{k\in K} G \wr \Sigma_{I_k} \ar[d] \\ \L \ar[ur] \ar[r]^-{\al} & G \wr \Sigma_n.}
\]
Thus we may assume that $\pi \alpha$ is transitive and hence $|K|=1$.

Under this assumption, we will set up some conventions in order to be able to efficiently manipulate the image of $\alpha$. Since $\pi \alpha$ is transitive, $\im \pi \alpha$ is a transitive abelian subgroup of $\Sigma_n$. Let $A = \L/\L_1$, then the induced map $A \hookrightarrow \Sigma_n$ can be identified with the Cayley map $A \to \Sigma_A = \Aut_{\text{Set}}(A)$. As above, for each $a \in A$, we have a map of sets $\rho_a\colon \L \to G$ for $a \in A$. 

Let $l \in \L$ and assume that $\pi \alpha (l) = a$, then
\[
\alpha (l) = ({\bm \rho}(l), a) \in G \wr \Sigma_A,
\]
where ${\bm \rho}(l)$ is the $A$-tuple such that $[{\bm \rho}(l)]_{b} = \rho_b(l)$ for any $b \in A$. Here and below, we write $[-]_b$ for the $b^{\rm th}$-coordinate of an $A$-tuple of elements.  Let $a' \in A$ and pick an $l' \in \L$ with the property that $\pi \alpha(l') =a'$. Then
\[
\alpha(l+l') = ({\bm \rho}(l+l'), aa')
\]
and
\begin{equation}\label{eq:multiplicationformula}
\alpha(l)\alpha(l') = ({\bm \rho}(l), a)({\bm \rho}(l'), a') = ({\bm \rho}(l)(a \cdot {\bm \rho}(l')), aa'),
\end{equation}
where $[a \cdot {\bm \rho}(l')]_b = \rho_{a^{-1}b}(l')$ for each $b \in A$. 

Using this notation, we wish to show that the projection
\[
(X^{A})^{\im \al} \to X^{\im \rho_e|_{\L_1}}
\] 
is a homeomorphism, where $X^A$ denotes the space of functions from $A$ to $X$ and the projection map is induced by evaluation at the identity element $e$ in $A$. We will produce a two-sided inverse to the projection.

For each $a \in A$ fix an element $l_a \in \L$ such that $l_a$ maps to $a \in A = \L/\L_1$. Define the $A$-tuple ${\bm \rho}(l) \in G^{\times |A|}$ by
\[
{\bm \rho}(l) = (\rho_a(l_a))_{a \in A}.
\]
We may use the $G$-action on $X$ to define a map
\[
X^{\im \rho_e|_{\L_1}} \to X^A, \qquad x \mapsto {\bm \rho}(l)x = (\rho_a(l_a)x)_{a \in A}.
\]
This map is continuous as the group action is continuous. 

We will now show that this map lands in $(X^{A})^{\im \al}$. For $({\bf g},b) \in G \wr \Sigma_A$, the action of $G \wr \Sigma_A$ on an element ${\bf x}\in X^A$ is given by
\begin{equation}\label{eq:theaction}
({\bf g},b){\bf x} = (g_ax_{b^{-1}a})_{a\in A}.
\end{equation}

Let $l' \in \L$ be an element mapping to some $b \in A$. We want to show that $\alpha(l'){\bm \rho}(l)x = {\bm \rho}(l)x$. As above, we may write $\alpha(l') = ({\bm \rho}(l'),b) \in G \wr \Sigma_A$. Using \eqref{eq:theaction}, we thus have
\[
\alpha(l'){\bm \rho}(l)x = (\rho_a(l')\rho_{b^{-1}a}(l_{b^{-1}a})x)_{a \in A}.
\] 
It suffices to show that, for each $a \in A$:
\[
\rho_a(l')\rho_{b^{-1}a}(l_{b^{-1}a})x = \rho_a(l_a)x.
\]
Since $x \in X^{\im \rho_e|_{\L_1}}$, we must show that $(\rho_a(l_a))^{-1}\rho_a(l')\rho_{b^{-1}a}(l_{b^{-1}a}) \in \im \rho_e|_{\L_1}$. The key ingredient to seeing this is to note that 
\[
\alpha(-l_{a}) = \alpha(l_a)^{-1} = (a^{-1}\cdot({\bm \rho}(l_a))^{-1},a^{-1}),
\]
which can be verified using \eqref{eq:multiplicationformula}. Moreover, $-l_{a}+l'+l_{b^{-1}a} \mapsto e \in A$ under $\pi\alpha$ and is thus contained in $\L_1$. Therefore we may calculate that
\begin{align*}
\alpha(-l_{a}+l'+l_{b^{-1}a}) 
&= \alpha(-l_a)\alpha(l')\alpha(l_{b^{-1}a}) \\
&= (a^{-1}\cdot({\bm \rho}(l_a))^{-1},a^{-1})({\bm \rho}(l'),b)({\bm \rho}(l_{b^{-1}a}),b^{-1}a) \\ 
&= (a^{-1}\cdot({\bm \rho}(l_a))^{-1},a^{-1})({\bm \rho}(l')(b\cdot{\bm \rho}(l_{b^{-1}a})),a) \\
&=((a^{-1}\cdot({\bm \rho}(l_a))^{-1})(a^{-1}\cdot  ({\bm \rho}(l')(b\cdot{\bm \rho}(l_{b^{-1}a})))),e)
\end{align*}
and
\begin{align*}
[(a^{-1}\cdot({\bm \rho}(l_a))^{-1})(a^{-1}\cdot  ({\bm \rho}(l')(b\cdot{\bm \rho}(l_{b^{-1}a}))))]_e 
& = [a^{-1}\cdot({\bm \rho}(l_a))^{-1}]_e [a^{-1}\cdot  ({\bm \rho}(l')(b\cdot{\bm \rho}(l_{b^{-1}a})))]_e \\
& = (\rho_a(l_a))^{-1}[{\bm \rho}(l')(b\cdot{\bm \rho}(l_{b^{-1}a}))]_a \\
& = (\rho_a(l_a))^{-1}[{\bm \rho}(l')]_a[b\cdot{\bm \rho}(l_{b^{-1}a})]_a  \\
& = (\rho_a(l_a))^{-1}\rho_a(l')\rho_{b^{-1}a}(l_{b^{-1}a}).
\end{align*}
Thus we may conclude that $(\rho_a(l_a))^{-1}\rho_a(l')\rho_{b^{-1}a}(l_{b^{-1}a}) \in \im \rho_e|_{\L_1}$. Consequently, we have produced a map $X^{\im \rho_e|_{\L_1}} \to (X^{A})^{\im \al}$. By construction, this is a section to the projection. To check that $(X^{A})^{\im \al} \to X^{\im \rho_e|_{\L_1}} \to (X^{A})^{\im \al}$ is the identity, it suffices to notice that, for ${\bf x} \in (X^{A})^{\im \alpha}$,
\[
[{\bf x}]_a = [\alpha(l_a){\bf x}]_a = [({\bm\rho}(l_a),a){\bf x}]_a = \rho_a(l_a)[{\bf x}]_e,
\]
where the first equality uses that ${\bf x}$ is fixed by $\im \alpha$.
\end{proof}

\bibliographystyle{amsalpha}
\bibliography{references}

\end{document}